\newtheorem{thm}{Theorem}[section]
\newtheorem*{main}{Main Theorem}
\newtheorem{prop}[thm]{Proposition}
\newtheorem{lem}[thm]{Lemma}
\newtheorem{cor}[thm]{Corollary}
\theoremstyle{definition}
\newtheorem{defn}[thm]{Definition}
\theoremstyle{remark}
\newtheorem{rem}[thm]{Remark}
\theoremstyle{remark}
\begin{document}

\binoppenalty=10000
\relpenalty=10000

\numberwithin{equation}{section}

\newcommand{\QQ}{\mathbb{Q}}
\newcommand{\RR}{\mathbb{R}}
\newcommand{\ZZ}{\mathbb{Z}}
\newcommand{\Hilb}{\mathrm{Hilb}}
\newcommand{\CC}{\mathbb{C}}
\newcommand{\PP}{\mathcal{P}}
\newcommand{\Hom}{\mathrm{Hom}}
\newcommand{\Rep}{\mathrm{Rep}}
\newcommand{\gl}{\mathfrak{gl}}
\newcommand{\hh}{\mathfrak{h}}
\newcommand{\ppp}{\mathfrak{p}}
\newcommand{\VV}{\mathcal{V}}
\newcommand{\NN}{\mathbb{N}}
\newcommand{\OO}{\mathcal{O}}
\newcommand{\UU}{\mathcal{U}}
\newcommand{\DD}{\mathcal{D}}
\newcommand{\WW}{\mathcal{W}}
\newcommand{\AAA}{\mathcal{A}}
\newcommand{\WWo}{\WW^\circ}
\newcommand{\MM}{\mathcal{M}}
\newcommand{\SHH}{\mathcal{S}\ddot{\mathcal{H}}}
\newcommand{\HH}{\ddot{\mathcal{H}}}
\newcommand{\git}{/\kern-.35em/}
\newcommand{\KK}{\mathbb{K}}
\newcommand{\Proj}{\mathrm{Proj}}
\newcommand{\quot}{\mathrm{quot}}
\newcommand{\core}{\mathrm{core}}
\newcommand{\Sym}{\mathrm{Sym}}
\newcommand{\FF}{\mathbb{F}}
\newcommand{\Span}[1]{\mathrm{span}\{#1\}}
\newcommand{\UTor}{U_{\qqq,\ddd}(\ddot{\mathfrak{sl}}_\ell)}
\newcommand{\Sss}{\mathcal{S}}
\newcommand{\Hecke}{\mathcal{H}}
\newcommand{\ee}{\widetilde{S}}
\newcommand{\xx}{\mathbf{x}}
\newcommand{\del}{\partial}
\newcommand{\ad}{\mathrm{ad}}
\newcommand{\tr}{\mathrm{tr}}
\newcommand{\rrr}{\mathfrak{r}}
\newcommand{\zzz}{\mathsf{z}}
\newcommand{\ZZZ}{\mathsf{Z}}
\newcommand{\qqq}{\mathsf{q}}
\newcommand{\ttt}{\mathsf{t}}
\newcommand{\KKK}{\mathbb{K}}
\newcommand{\LL}{\mathbb{L}}
\newcommand{\RRR}{\mathbf{R}}
\newcommand{\CCC}{\mathfrak{C}}
\newcommand{\XX}{\mathbf{X}}
\newcommand{\XXu}{\underline{\mathbf{X}}}
\newcommand{\DDD}{\mathbf{D}}
\newcommand{\YY}{\mathbf{Y}}
\newcommand{\YL}{ {}^L\YY}
\newcommand{\YLu}{ {}^L\underline{\YY}}
\newcommand{\YR}{ {}^R\YY}

\title{Harish-Chandra isomorphism for cyclotomic double affine Hecke algebras}
\author{Joshua Jeishing Wen}
\keywords{deformation quantization, double affine Hecke algebras, multiplicative quiver varieties}
\subjclass[2020]{Primary: 20C08, 33D52, 53D55; Secondary: 16G20.}
\address{Department of Mathematics, Northeastern University, Boston, MA, USA}
\address{Fakult\"{a}t f\"{ur} Mathematik, Universit\"{a}t Wien, Vienna, Austria}
\email{joshua.jeishing.wen@univie.ac.at}
\maketitle

\begin{abstract}
We confirm a conjecture of Braverman--Etingof--Finkelberg that the spherical subalgebra of their cyclotomic double affine Hecke algebra (DAHA) is isomorphic to a quantized multiplicative quiver variety for the cyclic quiver, as defined by Jordan.
The isomorphism is constructed as a $\qqq$-analogue of Oblomkov's cyclotomic radial parts map for the rational case.
In the appendix, we also prove that the spherical cyclotomic DAHA is isomorphic to the image of a shifted quantum toroidal algebra under Tsymbaliuk's GKLO homomorphism.
\end{abstract}

\section{Introduction}
This paper aims to shine a speck of light on the largely-unexplored world of integrable systems of Ruijsenaars--Schneider type for complex reflection groups.
The Hamiltonians for the quantizations of such systems should be $\qqq$-difference operators.
For Calogero--Moser systems and their generalizations to \textit{real} reflection groups, the Olshanetsky--Perelomov systems \cite{OlshPer}, the quantum Hamiltonians are instead differential operators, and their study can be facilitated by \textit{Dunkl operators} \cite{DunklRef}.
The latter are commuting permutation-differential operators with explicit formulas, and one obtains the quantum Calogero--Moser--Olshanetsky--Perelomov Hamiltonians by symmetrizing them.

Thus, one can define Calogero--Moser systems for complex reflection groups by generalizing the Dunkl operators, which was done by Dunkl--Opdam \cite{DunkOpComp}.
Moreover, an entire algebraic structure for studying such systems can be generalized: these are the \textit{rational Cherednik algebras}.
The double affine Hecke algebra (DAHA) was originally defined by Cherednik for Weyl groups, who applied it with great success to solving Macdonald's conjectures \cite{ChereAnnals,ChereEval}.
In type $A$, the DAHA provides the algebraic structure for the trigonometric Ruijsenaars--Schneider system; its rational degenerations, which includes the rational Cherednik algebra, in turn yielded many new insights to Calogero--Moser systems.
Rational Cherednik algebras for complex reflection groups were defined by Etingof--Ginzburg within the larger context of \textit{symplectic reflection algebras} \cite{EtGinz}, and the cases for certain groups have been studied extensively.

One should be careful in removing the ``rational'' in ``rational Cherednik algebra for complex reflection groups'' because the DAHA for such groups has yet to be defined.
While Hecke algebras have been defined in this generality \cite{BMR}, the naive definition of an affine Hecke algebra requires the group to preserve a lattice, which is only true in the real case.
Thus, it is not straightforward to define such an object.
For the group $\Sigma_n\wr\ZZ/\ell\ZZ$, Braverman--Etingof--Finkelberg \cite{BEF} gave a definition of \textit{cyclotomic DAHA} that sidesteps the issue.
Their cyclotomic DAHA is like the subalgebra obtained from a fictional DAHA for $\Sigma_n\wr\ZZ/\ell\ZZ$ by sphericalizing away the cyclic group factors; in fact, they proved that their cyclotomic DAHA degenerates to the analogous such subalgebra for the rational Cherednik algebra.

It is thus sensible to expect results for the spherical subalgebra in the rational case to lift to the cyclotomic DAHA, and this paper confirms one of these expectations, conjectured in \cite{BEF}.
Namely, in the rational case, results of Oblomkov \cite{ObCyc} and Gordon \cite{GorCycDiff} combine to show that the spherical subalgebra of the rational Cherednik algebra for $\Sigma_n\wr\ZZ/\ell\ZZ$ is isomorphic to a \textit{quantized Nakajima quiver variety} (cf. \cite{NakaKM}) for the $\widetilde{A}_{\ell-1}$ quiver.
This isomorphism opened the door to the geometric representation theory of the cyclotomic rational Cherednik algebra \cite{GorCyc}, which in turn was crucial in the proof of Haiman's wreath Macdonald positivity conjecture \cite{BezFinkMc,HaimCSH}. 

For the cyclotomic DAHA, the candidate replacement for the Nakajima quiver variety is the \textit{multiplicative} quiver variety (cf. \cite{CBShaw}), also for the $\widetilde{A}_{\ell-1}$ quiver.
The multiplicative quiver varieties for such quivers were studied by Chalykh--Fairon \cite{ChalFai} as phase spaces for generalized Ruijsenaars--Schneider integrable systems.
Their quantizations were defined by Jordan \cite{JordanMult}, although the case $\ell=1$ requires a new definition, given in this paper.
The cyclotomic DAHA depends on three parameters $(\ZZZ,\qqq,\ttt)$.
Our main result is:
\begin{main}
The spherical cyclotomic DAHA for $\Sigma_n\wr\ZZ/\ell\ZZ$ is isomorphic to a quantized multiplicative quiver variety for the $\widetilde{A}_{\ell-1}$ quiver (\ref{CyclicQ}) when:
\begin{itemize}
\item $\ttt=\qqq^k$ for $k>2n\ell$ and $\qqq$ is generic
\item or $(\qqq,\ttt)$ is generic.
\end{itemize}
\end{main}
\noindent In their work \cite{ChalFai}, Chalykh--Fairon identified three families of Poisson-commuting functions, and an analogous three families of commuting elements exist in the spherical cyclotomic DAHA.
Our isomorphism takes manifest quantizations of the former and sends them to the latter.

The proof of the main theorem basically follows that of the rational case; in fact, perhaps the biggest surprise of this paper is that there are no surprises.
Neither algebra in the theorem has a presentation via generators and relations, so the principal difficulty comes from constructing a homomorphism from one to the other.
We do this by constructing a $\qqq$-analogue of Oblomkov's cyclotomic radial parts map \cite{ObCyc}.
To do so, it is important to have an analogous isomorphism established between the usual (non-cyclotomic) spherical DAHA of $GL_n$ and the a quantized multiplicative quiver variety for the Jordan quiver (single vertex with a loop), which was proved in our earlier work \cite{QHarish}.
The cyclotomic DAHA is a subalgebra of $GL_n$-DAHA, and we define an embedding from the quantized $\widetilde{A}_{\ell-1}$ quiver variety into the quantized Jordan quiver variety; our radial parts map is then just the restriction of the isomorphism from \cite{QHarish} to these subalgebras.

Despite the lack of surprises, this paper can get technical.
Oblomkov's radial parts map is defined using natural constructions on rings of differential operators: localization, restriction, etc.
In our $\qqq$-deformed setting, there is a shortage of functors and thus we must build these homomorphisms by hand.
Additionally, showing that the sphericalized cyclotomic $\qqq$-Dunkl elements of \cite{BEF} lie in the image of the radial parts map requires some tricks; for this, we found much inspiration from the manipulations in \cite{DiKedqt} involving the Gaussian.
We also mention that, as in \cite{QHarish}, our radial parts map relies on the Etingof--Kirillov realization of Macdonald polynomials via intertwiners for $U_\qqq(\gl_n)$ \cite{EtKirQuant}.
The beautiful idea to use the Etingof--Kirillov realization comes from \cite{VarVassRoot}.

Finally, we mention that in the setting of \textit{3d mirror symmetry}, multiplicative quiver varieties are multiplicative \textit{Higgs branches}.
The authors of \cite{BEF} also conjectured an isomorphism between the spherical cyclotomic DAHA and a quantized \textit{$K$-theoretic Coulomb branch} \cite{BFN,FTMult, FTShift}.
In the appendix, we show that the spherical cyclotomic DAHA is isomorphic to a \textit{truncated shifted quantum toroidal algebra} \cite{TsymGKLO}.
The latter algebras are generally expected to be isomorphic to quantized $K$-theoretic Coulomb branches.

\subsection*{Acknowledgements}
I would like to thank Valerio Toledano Laredo for many stimulating discussions.
This work received support from NSF-RTG grant ``Algebraic Geometry and Representation Theory at Northeastern University'' (DMS-1645877) and ERC consolidator grant No. 101001159 ``Refined invariants in combinatorics, low-dimensional topology and geometry of moduli spaces''.

\subsection*{Notation}
With the intent of providing clarity, we have sought to assign symbols according to the following principles:
\begin{itemize}
\item ground rings are denoted using blackboard bold ($\CC$, $\LL$, $\KK$);
\item more complicated algebras are denoted using calligraphic letters ($\mathcal{R}$, $\HH_n$);
\item parameters are denoted using sans-serif fonts: ($\qqq$, $\ttt$, $\ZZZ_i$);
\item most DAHA elements are denoted using capital Roman letters ($T_i$, $X_i$, $Y_i$);
\item matrices will be denoted using bold capital letters ($\mathbf{X}$, $\mathbf{Y}$, $\mathbf{D}$).
\end{itemize}

\section{Cyclotomic DAHA} 
In what follows, we will work over the ring $\LL:=\CC[\qqq^{\pm 1},\ttt^{\pm 1}]$ and its field of fractions $\KKK:=\CC(\qqq,\ttt)$.

\subsection{$\mathbf{GL_n}$-DAHA}
The cyclotomic DAHA sits inside the usual DAHA of $GL_n$.
We begin by reviewing the definition and structures of the latter that will be relevant to our study.
Our reference is \cite{ChereDAHA}, although our conventions will be closer to those of \cite{KirLec}.

\subsubsection{Definition of $\HH_n^0(\qqq,\ttt)$ and PBW}\label{PBW}
The $GL_n$-DAHA $\HH_n^{0}(\qqq,\ttt)$ is the $\KKK$-algebra with generators
\[
\{T_i,
X_j^{\pm 1},
\pi^{\pm 1}\, |\,i=1,\ldots, n-1\hbox{ and } j=1,\ldots, n\}
\]
and relations
\begin{equation*}
\begin{aligned}
&(T_i-\ttt)(T_i+\ttt^{-1})=0;&&X_iX_j=X_jX_i; \\
&T_iT_{i+1}T_i=T_{i+1}T_iT_{i+1};&& T_iT_j=T_jT_i\hbox{ for }j\not=i, i+1;\\
&T_iX_j=X_jT_i\hbox{ for }j\not=i, i+1; && T_iX_iT_i=X_{i+1};  \\
&\pi T_i=T_{i+1}\pi; && \pi^n T_i=T_i\pi^n;\\
&\pi X_i=X_{i+1}\pi;&& \pi X_n=\qqq^{-2}X_1\pi
\end{aligned}
\end{equation*}
The extra superscript ``$0$'' is to place it in a family with the cyclotomic algebras, to be introduced in \ref{Cyclo} below.
We can also define this as an $\LL$-algebra, which we denote by $\HH^{0}_n(\qqq,\ttt)_\LL$.

There is also an alternative presentation via the elements
\begin{equation}
Y_i:= T_i\cdots T_{n-1}\pi^{-1} T_1^{-1}\cdots T_{i-1}^{-1}
\label{YDef}
\end{equation}
for $i=1,\ldots,n$ 
The elements $\{Y_1,\ldots, Y_n\}$ generate a polynomial subalgebra, and $\HH_n^{0}(\qqq,\ttt)$ has generators
\[
\left\{ T_i, X_j^{\pm 1}, Y_j^{\pm 1} \, | \, i=1,\ldots, n-1\hbox{ and }j=1,\ldots, n\right\}
\]
and relations
\begin{equation}
\begin{aligned}
&(T_i-\ttt)(T_i+\ttt^{-1})=0;\\
&T_iT_{i+1}T_i=T_{i+1}T_iT_{i+1};&& T_iT_j=T_jT_i\hbox{ for }j\not=i, i+1;\\
&X_iX_j=X_jX_i; && Y_iY_j=Y_jY_i;\\
&T_i X_i T_i=X_{i+1}\hbox{ for }i\not=n; && T_i^{-1}Y_iT_i^{-1}=Y_{i+1}\hbox{ for }i\not=n;\\
&T_iX_j=X_jT_i\hbox{ for }j\not=i, i+1;&&T_iY_j=Y_jT_i\hbox{ for }j\not=i, i+1;\\
&Y_1\cdots Y_nX_j=\qqq^2X_jY_1\cdots Y_n;&& X_1\cdots X_nY_j=\qqq^{-2}Y_jX_1\cdots X_n;\\
&X_1Y_2=Y_2T^2_1X_1
\end{aligned}
\label{DAHAY}
\end{equation}
We note that this presentation is also valid for $\HH_n^{0}(\qqq,\ttt)_\LL$.
 
The elements $\{T_1,\ldots, T_{n-1}\}$ generate a Hecke algebra for the symmetric group $\Sigma_n$ with parameter $\ttt$.
Let $s_i\in \Sigma_n$ be the $i$th adjacent transposition.
For any $w\in\Sigma_n$ with reduced expression $w=s_{i_1}\cdots s_{i_k}$, we the element
\[
T_w:=T_{i_1}\cdots T_{i_k}
\] 
is well-defined (independent of the reduced expression).

\begin{thm}[\cite{ChereDAHA}]
$\HH_n^0(\qqq,\ttt)_{\LL}$ has an $\LL$-basis given by elements of the form
\[
M_XM_YT_w
\]
where $M_X$ and $M_Y$ are monomials in $\{X_i\}$ and $\{Y_i\}$, respectively, and $w\in\Sigma_n$.
\end{thm}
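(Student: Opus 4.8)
The plan is to establish the PBW theorem in two stages: first a spanning statement, then linear independence. For the spanning statement, I would argue that every monomial in the generators $T_i$, $X_j^{\pm 1}$, $\pi^{\pm 1}$ can be rewritten as an $\LL$-linear combination of elements of the form $M_X M_Y T_w$. Using the defining relations, one moves all $X$-monomials to the left and all $Y$-monomials (equivalently, powers of $\pi$, via \eqref{YDef}) to the middle, paying attention to the straightening relations $T_i X_i T_i = X_{i+1}$, $\pi X_i = X_{i+1}\pi$, $\pi X_n = \qqq^{-2} X_1 \pi$, and their analogues on the $Y$-side, as well as the cross relation $X_1 Y_2 = Y_2 T_1^2 X_1$ from \eqref{DAHAY}. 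Each such move either strictly decreases a suitable length/degree statistic or produces lower-order correction terms, so an induction on a well-chosen monomial ordering (lexicographic in the number of generator letters, refined by the length of the $\Sigma_n$-part) terminates. Finally, the subalgebra generated by $\{T_1,\dots,T_{n-1}\}$ is a quotient of the group algebra of the braid group of $\Sigma_n$, and the quadratic relation $(T_i-\ttt)(T_i+\ttt^{-1})=0$ lets us reduce any word in the $T_i$ to an $\LL$-combination of the $T_w$, $w\in\Sigma_n$; this is the standard Hecke-algebra argument. Assembling these reductions gives the spanning statement over $\LL$.

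For linear independence, the cleanest route is to exhibit a faithful representation — the polynomial (Bernstein–Zelevinsky-type) representation on $\LL[X_1^{\pm 1},\dots,X_n^{\pm 1}]$ (or its $\KKK$-form), in which the $X_i$ act by multiplication, the $T_i$ by Demazure–Lusztig operators, and $\pi$ by the obvious cyclic shift composed with multiplication by a monomial. One checks the relations hold, and then that the images of the putative basis elements $M_X M_Y T_w$ are $\LL$-linearly independent: applying such an element to $1$ and tracking leading terms (with respect to a dominance order on exponents, using that the $Y_i$ act by $q$-difference/Cherednik operators whose top term is a monomial shift) separates them. Alternatively, one can invoke the known PBW theorem for the affine Hecke algebra of $GL_n$ together with the Bernstein presentation and deduce the double-affine statement by a filtration argument, but constructing the explicit module is more self-contained.

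The main obstacle is the linear-independence half: the spanning argument is a finite, if tedious, rewriting computation, whereas proving that no further relations hold requires genuinely producing the polynomial representation and verifying faithfulness. The delicate point there is checking that the cyclic generator $\pi$ (and hence the $Y_i$) acts consistently with all relations — in particular $\pi X_n = \qqq^{-2} X_1 \pi$ and $\pi^n T_i = T_i \pi^n$ — and that the leading-term analysis genuinely distinguishes $M_X M_Y T_w$ for distinct triples $(M_X, M_Y, w)$, which uses that the $X$-degree, the $Y$-degree, and the permutation are read off from three nested layers of the leading monomial. Since this is precisely the content of Cherednik's theorem cited as \cite{ChereDAHA}, I would in the write-up either reproduce this module-theoretic argument in the conventions of \cite{KirLec} or simply cite it, noting that the $\LL$-integral form follows because all structure constants in the rewriting lie in $\LL = \CC[\qqq^{\pm 1},\ttt^{\pm 1}]$.
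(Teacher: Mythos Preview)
Your proposal is a reasonable sketch of the standard argument, but note that the paper does not actually prove this statement: it is simply quoted as a known result from \cite{ChereDAHA} with no proof given. So there is nothing to compare against beyond the citation itself.

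That said, your outline matches the standard route and is consistent with the surrounding material in the paper. In particular, the faithful polynomial representation you propose to use for linear independence is exactly the content of Theorem~\ref{PolyRep}, which the paper also records (again citing Cherednik) immediately after this PBW statement. Your observation that the $\LL$-integral form follows because all rewriting coefficients lie in $\LL$ is correct and is precisely why the paper can state the result over $\LL$ rather than $\KKK$. If you were to write this up, citing \cite{ChereDAHA} for the argument and pointing to Theorem~\ref{PolyRep} for the faithfulness ingredient would be entirely in keeping with how the paper handles it.
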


\subsubsection{Spherical subalgebra}\label{Spherical}
For $w\in\Sigma_n$, let $l(w)$ be its \textit{length}.
The \textit{Hecke symmetrizer} element
\[
\ee:=\frac{1}{[n]_{\ttt^2}!}\sum_{w\in\Sigma_n}\ttt^{l(w)}T_w
\]
is idempotent.
Here, for a parameter $\mathsf{s}$ and $k\in\ZZ_{\ge 0}$, we denote:
\begin{equation*}
\begin{aligned}
[k]_{\mathsf{s}}&:=\frac{\mathsf{s}^{n}-\mathsf{s}^{-n}}{\mathsf{s}-\mathsf{s}^{-1}}, & [n]_{\mathsf{s}}!&:=\prod_{k=1}^n[k]_{\mathsf{s}}
\end{aligned}
\end{equation*}

We define the \textit{spherical subalgebra} $\SHH_n^{0}(\qqq,\ttt)$ to be
\[
\SHH_n^{0}(\qqq,\ttt):=\ee\HH_n^{0}(\qqq,\ttt)\ee\subset\HH_n^{0}(\qqq,\ttt)
\]
Notice that $\ee$ is not an element of $\HH_n^{0}(\qqq,\ttt)_\LL$.
By abuse of notation, we define $\SHH_n^{0}(\qqq,\ttt)_\LL$ to be the corresponding subalgebra after base change:
\[
\SHH_n^{0}(\qqq,\ttt)_\LL:=\ee\left( \HH_n^{0}(\qqq,\ttt)_\LL\otimes \LL\left[\frac{1}{[n]_{\ttt^2}!}\right] \right)\ee
\] 
For $k\in\ZZ$, we define the specialization $\SHH_n^{0}(\qqq,\qqq^k)$ by
\[
\SHH_n^{0}(\qqq,\qqq^k):=\CC(\qqq)\otimes\left(\SHH_n^{0}(\qqq,\ttt)_{\LL}\bigg/(\ttt-\qqq^k)\right)
\]

Let $\HH_n^0(\qqq,\ttt)^+$, $\HH_n^0(\qqq,\ttt)^+_\LL$, and $\HH_n(\qqq,\qqq^k)$ be the subalgebras generated by positive powers of $\{X_i\}$ and $\{Y_i\}$, and let $\SHH_n^0(\qqq,\ttt)^+$, $\SHH_n^0(\qqq,\ttt)_\LL^+$, and $\SHH_n^0(\qqq,\qqq^k)^+$ be their respective spherical subalgebras.
By the DAHA relations, the powers of $\ee X_1\cdots X_n\ee$ and $\ee Y_1\cdots Y_n\ee$ generate an Ore set, and we obtain $\SHH_n^0(\qqq,\ttt)_\LL$ from $\SHH_n^0(\qqq,\ttt)_\LL^+$ by inverting them.
We have the following generation lemma:

\begin{lem}\label{GenLem0}
$\SHH_n^{0}(\qqq,\ttt)$ is generated by $\ee\KK[X^{\pm 1}_1,\ldots,X^{\pm 1}_n]\ee$ and $\ee\KK[Y^{\pm 1}_1,\ldots,Y^{\pm 1}_n]\ee$.
Similary, for $k>2n$, $\SHH_n^{0}(\qqq,\qqq^k)$ is generated by $\ee\CC(\qqq)[X^{\pm 1}_1,\ldots,X^{\pm 1}_n]\ee$ and $\ee\CC(\qqq)[Y^{\pm 1}_1,\ldots,Y^{\pm 1}_n]\ee$.
\end{lem}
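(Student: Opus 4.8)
We outline the argument; write $A\subseteq\SHH_n^0(\qqq,\ttt)$ for the subalgebra generated by $\ee\KK[X_1^{\pm1},\dots,X_n^{\pm1}]\ee$ and $\ee\KK[Y_1^{\pm1},\dots,Y_n^{\pm1}]\ee$, so that we must show $A=\SHH_n^0(\qqq,\ttt)$, and similarly in the specialized setting. The plan is to reduce to a bidegree-by-bidegree statement and prove it by triangularity. One first observes that $\HH_n^0(\qqq,\ttt)$ carries a $\ZZ^2$-grading with $\deg X_i^{\pm1}=(\pm1,0)$, $\deg Y_i^{\pm1}=(0,\pm1)$, $\deg T_i=(0,0)$ (equivalently $\deg\pi=(0,-1)$): every relation in \eqref{DAHAY} and in the $\pi$-presentation is homogeneous for it. As $\ee$ is homogeneous of degree $0$, both $\SHH_n^0(\qqq,\ttt)$ and $A$ are graded, so it is enough to show they agree in each bidegree $(i,j)$. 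Combining the PBW theorem with $\ee T_w=\ttt^{l(w)}\ee=T_w\ee$, the bidegree-$(i,j)$ part of $\SHH_n^0(\qqq,\ttt)$ is the $\KK$-span of the finitely many $\ee X^{c}Y^{a}\ee$ with $X^{c},Y^{a}$ Laurent monomials of degrees $i,j$. Finally $\ee X_1\cdots X_n\ee=(X_1\cdots X_n)\ee$ is a unit in $A$, with inverse $(X_1\cdots X_n)^{-1}\ee\in\ee\KK[X^{\pm1}]\ee$, and likewise $\ee Y_1\cdots Y_n\ee$; hence it suffices to prove the positive version — that the subalgebra $A^+$ generated by $\ee\KK[X_1,\dots,X_n]\ee$ and $\ee\KK[Y_1,\dots,Y_n]\ee$ equals $\SHH_n^0(\qqq,\ttt)^+$ — since inverting these two elements recovers the Laurent statement (this is the localization recorded just before the lemma, which imposes no new condition), and the $\ttt=\qqq^k$ case likewise follows from its positive analogue.

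Next one identifies the two ``polynomial'' spherical subalgebras. A symmetric Laurent polynomial in the $X_i$ commutes with every $T_i$, hence with $\ee$ — transparent on the faithful polynomial representation, where the $T_i$ act by Demazure--Lusztig operators and symmetric functions are $\Sigma_n$-invariant — so $p(X)\ee=\ee p(X)$ for $p\in\KK[X^{\pm1}]^{\Sigma_n}$, and for a monomial $\ee X^{c}\ee=p_c(X)\ee$ with $p_c=\ee(X^c)$ the associated Hall--Littlewood-type symmetrization, triangular in the monomial symmetric functions with an invertible leading coefficient (a power of $\ttt$). Thus $\ee\KK[X^{\pm1}]\ee=\KK[X^{\pm1}]^{\Sigma_n}\ee$, and by the same argument applied to the polynomial representation on which the $Y_i$ act by multiplication, $\ee\KK[Y^{\pm1}]\ee=\KK[Y^{\pm1}]^{\Sigma_n}\ee$. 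Consequently $A^+$ is the $\KK$-span of the ``words'' $q_0(Y)p_1(X)q_1(Y)p_2(X)\cdots\ee$, with $p_r$ symmetric in the $X_i$ and $q_r$ symmetric in the $Y_i$ (all polynomial), and the lemma says exactly that these words exhaust $\SHH_n^0(\qqq,\ttt)^+$.

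One then inducts on the total degree $i+j$ — the cases $i=0$ or $j=0$ being in $A^+$ by definition of its generators — and, within a fixed bidegree, performs a secondary descending induction with respect to a refinement of the product of the dominance orders on the exponent multisets. The moves are: left multiplication by $\ee p(X)\ee$, which folds $p(X)$ into the $X$-part since $p(X)\ee=\ee p(X)$; right multiplication by $\ee q(Y)\ee$, folding $q(Y)$ into the $Y$-part; and — the essential one — left multiplication by $\ee q(Y)\ee$, for which one commutes the $Y$'s past the $X$'s via $T_iX_i=X_{i+1}T_i-(\ttt-\ttt^{-1})X_{i+1}$, $T_iX_{i+1}=X_iT_i+(\ttt-\ttt^{-1})X_{i+1}$, $T_iY_i=Y_{i+1}T_i+(\ttt-\ttt^{-1})Y_i$ and variants, writing the interior $\ee$ as $\tfrac1{[n]_{\ttt^2}!}\sum_v\ttt^{l(v)}T_v$ and reabsorbing trailing Hecke elements into $\ee$. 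Each move yields, inside the fixed bidegree, an expansion
\begin{equation*}
(\text{element already in }A^+)\cdot(\ee\,\bullet\,\ee)=\kappa\,\ee X^{c}Y^{a}\ee+\sum_{(c',a')\prec(c,a)}\kappa'\,\ee X^{c'}Y^{a'}\ee,
\end{equation*}
with $\kappa\in\KK^{\times}$ a nonzero rational function of $\qqq,\ttt$ of controlled bidegree (built from $\ttt$-integers and from monomial differences $\qqq^{a}\ttt^{b}-\qqq^{-a}\ttt^{-b}$ with $|a|,|b|$ bounded in terms of $i+j$ and $n$). Dividing by $\kappa$ and invoking the inductive hypotheses disposes of $\ee X^{c}Y^{a}\ee$; ranging over all $(c,a)$ gives $A^+=\SHH_n^0(\qqq,\ttt)^+$, hence $A=\SHH_n^0(\qqq,\ttt)$. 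The same argument applies verbatim to $\SHH_n^0(\qqq,\qqq^k)^+$, and hence to $\SHH_n^0(\qqq,\qqq^k)$, once the finitely many factors of the $\kappa$'s remain nonzero after $\ttt\mapsto\qqq^k$; this is exactly what $k>2n$ (with $\qqq$ generic) guarantees.

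The hard part is the triangularity estimate for the third move: the commutations $T_vX^{c}=X^{v(c)}T_v+(\text{correction})$ and $T_vY^{a}=Y^{v(a)}T_v+(\text{correction})$ produce corrections lying in the \emph{same} bidegree, which can re-involve the original monomial, so one must identify a genuinely finer well-founded order in which they strictly decrease, check that $\kappa$ is honestly nonzero, and — for the second bullet — track exactly which factors of $\kappa$ occur so as to see that $k>2n$ suffices. This is the $\qqq$-difference shadow of the classical fact that the diagonal invariants $\KK[X^{\pm1},Y^{\pm1}]^{\Sigma_n}$ are not generated by the two one-sided invariant subrings: the ``polarized'' elements $\sum_i X_i^{p}Y_i^{q}$ arise only by genuinely mixing the $X$'s and $Y$'s — that is, only by the third move — so the non-commutativity of the $T_i$ with Laurent monomials is doing essential work, and the estimate quantifies exactly how much.
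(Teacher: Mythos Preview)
Your reduction to the positive spherical subalgebra and the use of the $\ZZ^2$-bigrading with finite-dimensional pieces matches the paper exactly. After that, however, the two arguments diverge, and your route has a genuine gap.

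The paper does \emph{not} attempt a direct triangularity argument. Instead it degenerates: for the generic case it specializes $\ttt\to 1$, and for the $\ttt=\qqq^k$ case it specializes $\qqq\to e^{2\pi i/k}$. In both degenerations the spherical DAHA becomes the $\Sigma_n$-invariants of a quantum torus, where the generation statement is established by an explicit argument (the proof of Lemma~A.15.2 in \cite{VarVassRoot}; this is where $k>2n$ enters). One then lifts back to generic parameters by applying Nakayama's Lemma to each finite-rank bigraded piece. This bypasses entirely the problem of tracking leading coefficients through Hecke commutations.

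Your approach, by contrast, posits a triangular expansion
\[
(\text{element of }A^+)\cdot(\ee\,\bullet\,\ee)=\kappa\,\ee X^{c}Y^{a}\ee+\sum_{(c',a')\prec(c,a)}\kappa'\,\ee X^{c'}Y^{a'}\ee
\]
and then, in your final paragraph, you candidly acknowledge that establishing this \emph{is} the hard part: you have not specified the refined order $\prec$, have not verified that $\kappa\ne 0$, and have not tracked which factors occur in $\kappa$ to justify the bound $k>2n$. This is not a cosmetic omission. As you yourself observe, the commutative limit fails (diagonal invariants are not generated by one-sided invariants), so the nonzero leading coefficient must arise from genuinely noncommutative correction terms in the Hecke commutations $T_vX^c=X^{v(c)}T_v+\cdots$; extracting it and proving it nonvanishing is exactly the substance of the lemma, and your outline does not do this. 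Without that step the argument is a strategy, not a proof. The degeneration-plus-Nakayama approach is what makes the paper's proof short: it reduces to a setting where the commutators are transparent and the needed nonvanishing is a concrete root-of-unity condition.
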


\begin{proof}
It suffices to prove the analogous generation statement for $\SHH_n^0(\qqq,\ttt)^+$ and $\SHH_n^0(\qqq,\qqq^k)^+$.
If we assign 
\[
\deg(X_i)=(1,0),\,
\deg(Y_i)=(0,1),\,
\deg(T_i)=(0,0)
\]
then $\SHH_n^0(\qqq,\ttt)_\LL^+$ is bigraded with finite rank pieces.
For the case of $(\qqq,\ttt)$, we set $t=1$, and for the case of $(\qqq,\qqq^k)$, we set $\qqq= e^{2\pi i/k}$.
In both cases, the spherical DAHA becomes a symmetrized quantum torus.
The proof of Lemma A.15.2 of \cite{VarVassRoot} proves the result at these specific specializations ($k>2n$ is important here).
We obtain the result at generic parameters by applying Nakayama's Lemma to each (finite-rank) bigraded piece.
\end{proof}

\subsubsection{Macdonald polynomials}
Let $\LL[\mathbf{x}_n^\pm]:=\LL[x_1^{\pm 1},\ldots, x_n^{\pm 1}]$.
The following is well-known:

\begin{thm}[\cite{ChereAnnals}]\label{PolyRep}
There is a faithful representation $\mathfrak{r}$ of $\HH_n^0(\qqq,\ttt)_\LL$ on $\LL[\mathbf{x}_n^\pm]$ given by the formulas: for $f\in \LL[\mathbf{x}_n^\pm]$,
\begin{align}
\label{Demazure}
\mathfrak{r}(T_i)&= \ttt s_i+ (\ttt-\ttt^{-1})\frac{s_i -1}{x_{i}/x_{i+1}-1}\\
\nonumber\mathfrak{r}(X_i)\cdot f&= x_i f\\
\nonumber\mathfrak{r}(\pi)\cdot f(x_1,\ldots, x_n)&= f(x_2, x_3,\ldots, x_n, \qqq^{-2}x_1)
\end{align}
Moreover, this representation remains faithful at any specialization of $(\qqq,\ttt)$ where $\qqq$ is not a root of unity.
\end{thm}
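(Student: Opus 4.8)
The statement is classical, going back to Cherednik \cite{ChereAnnals}; the plan is to treat its two halves in turn. For the first half --- that \eqref{Demazure} defines an action of $\HH_n^0(\qqq,\ttt)_\LL$ on $\LL[\mathbf{x}_n^\pm]$ --- I would run through the relations of \S\ref{PBW}. One first checks that $\mathfrak{r}(T_i)$ maps $\LL[\mathbf{x}_n^\pm]$ into itself: since $s_if-f$ is divisible by $x_i-x_{i+1}$, the pole of $(s_i-1)/(x_i/x_{i+1}-1)$ along $x_i=x_{i+1}$ is spurious. The relations $X_iX_j=X_jX_i$ and $T_iT_j=T_jT_i$ for $|i-j|>1$ are immediate (multiplications commute; the $T$'s act on disjoint pairs of variables), while the quadratic relation and the braid relation $\mathfrak{r}(T_i)\mathfrak{r}(T_{i+1})\mathfrak{r}(T_i)=\mathfrak{r}(T_{i+1})\mathfrak{r}(T_i)\mathfrak{r}(T_{i+1})$ are the classical Demazure--Lusztig identities. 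The relations $\mathfrak{r}(T_i)\mathfrak{r}(X_j)=\mathfrak{r}(X_j)\mathfrak{r}(T_i)$ for $j\neq i,i+1$ and $\mathfrak{r}(T_i)\mathfrak{r}(X_i)\mathfrak{r}(T_i)=\mathfrak{r}(X_{i+1})$ are short direct computations, the second being the standard intertwining identity for Demazure operators and using that $(s_i-1)/(x_i/x_{i+1}-1)$ annihilates functions symmetric in $x_i,x_{i+1}$. Finally, writing $\mathfrak{r}(\pi)$ as the twisted cyclic shift $f(x_1,\dots,x_n)\mapsto f(x_2,\dots,x_n,\qqq^{-2}x_1)$, the relations $\pi X_i=X_{i+1}\pi$ for $i<n$, $\pi X_n=\qqq^{-2}X_1\pi$, and $\pi T_i=T_{i+1}\pi$ follow by inspection from the cyclic relabeling of variables, and $\pi^nT_i=T_i\pi^n$ holds because $\mathfrak{r}(\pi)^n$ is the simultaneous rescaling $x_j\mapsto\qqq^{-2}x_j$, which commutes with $s_i$ and with $(s_i-1)/(x_i/x_{i+1}-1)$ since the latter depends only on the scale-invariant ratio $x_i/x_{i+1}$.

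For faithfulness: since $\HH_n^0(\qqq,\ttt)_\LL$ is free over $\LL$ with the PBW basis, it is enough to prove faithfulness after $\otimes_\LL\KKK$, i.e.\ $\KKK$-linear independence of $\{\mathfrak{r}(M_XM_YT_w)\}$ in $\mathrm{End}_{\KKK}(\KKK[\mathbf{x}_n^\pm])$, and I would argue in two steps. First, the subalgebra generated by $\{T_i\}$ and $\{Y_j^{\pm1}\}$ is the affine Hecke algebra --- it equals $\bigoplus_w\KKK[Y_1^{\pm1},\dots,Y_n^{\pm1}]T_w$ by PBW --- and its action on $\KKK[\mathbf{x}_n^\pm]$ via $\mathfrak{r}$ is its polynomial representation, which is classically faithful whenever $\qqq$ is not a root of unity; this handles the $Y$- and $T_w$-factors. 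The mechanism is the triangularity of the Cherednik operators $\mathfrak{r}(Y_j)$ on the monomial basis $\{x^\mu\}_{\mu\in\ZZ^n}$ with respect to the standard partial order on $\ZZ^n$: the leading eigenvalue on $x^\mu$ is a nonzero monomial in $\qqq,\ttt$, and the joint $Y$-eigenvalues of distinct $x^\mu$ are distinct exactly when $\qqq$ is not a root of unity --- this is the triangularity underlying the nonsymmetric Macdonald polynomials. Second, $\mathfrak{r}(X^a)$ is multiplication by the Laurent monomial $x^a$, so it translates the Newton polytope of any Laurent polynomial by $a$, whereas $\mathfrak{r}(Y_j)$ and $\mathfrak{r}(T_w)$ displace it by a bounded amount; hence a relation $\sum_a\mathfrak{r}(X^a)\mathfrak{r}(h_a)=0$ with each $h_a$ in the affine Hecke algebra can be separated by evaluating on $x^\mu$ and letting $\mu$ tend to infinity along a generic direction, the extreme-weight contributions isolating one value of $a$ at a time and forcing $\mathfrak{r}(h_a)=0$, hence $h_a=0$ by the first step.

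The last sentence of the theorem then follows because nothing above requires $(\qqq,\ttt)$ to be generic: the PBW theorem holds over $\LL$, the leading coefficients produced by the triangularity are monomials in $\qqq,\ttt$ --- units of $\LL$ --- and the only genuine hypothesis used is that $\qqq$ is not a root of unity ($\ttt$ being unconstrained), and all of these survive base change to any such specialization. I expect the main obstacle to be precisely this faithfulness step: making the triangularity of the Cherednik operators quantitative enough to untangle the three families $X_i$, $Y_i$, $T_w$ simultaneously, and checking that the separation of the joint $Y$-eigenvalues holds uniformly across all admissible specializations of the parameters.
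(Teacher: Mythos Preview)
The paper does not prove this theorem; it is quoted from \cite{ChereAnnals}. Your relation-checking is standard and fine. For faithfulness, your two-step scheme has the right architecture, but Step~2 contains a genuine error: the claim that $\mathfrak r(T_w)$ and $\mathfrak r(Y_j)$ displace the Newton polytope ``by a bounded amount'' is false. Already $\mathfrak r(T_i)\cdot x^\mu$ is supported on the whole root string from $\mu$ to $s_i\mu$, whose length $|\mu_i-\mu_{i+1}|$ is unbounded in $\mu$; more generally the support of $\mathfrak r(h)\cdot x^\mu$ for $h$ in the affine Hecke subalgebra spreads over shifted $\Sigma_n$-permutations of $\mu$, so an extreme-monomial argument cannot directly isolate the $X^a$'s. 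Your Step~1 is also incomplete as written: triangularity of the $Y_j$'s on monomials yields faithfulness of $\KKK[Y^{\pm1}]$, but passing to the full affine Hecke subalgebra requires controlling the $T_w$'s as well, and doing so via intertwiners on the eigenbasis brings in denominators that may vanish at special $\ttt$.

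Cherednik's route repairs both steps at once by working in the skew group ring $\KKK(\mathbf x_n)\rtimes\widehat W$, with $\widehat W$ the extended affine Weyl group generated by the $s_i$ and $\pi$. On the affine Hecke subalgebra this is the Demazure--Lusztig realization: $\mathfrak r(T_{\hat w})$ is Bruhat upper-triangular in $\widehat W$ with invertible top coefficient, so the family $\{\mathfrak r(T_{\hat w})\}_{\hat w\in\widehat W}$ --- hence, by the Bernstein presentation, $\{\mathfrak r(Y^bT_w)\}$ --- is linearly independent over $\KKK(\mathbf x_n)$. This is the corrected Step~1, with no eigenvalue separation needed. Then $\mathfrak r(X^aY^bT_w)=x^a\,\mathfrak r(Y^bT_w)$, and multiplying a $\KKK(\mathbf x_n)$-independent family by distinct monomials $x^a$ keeps it $\KKK$-independent --- the corrected Step~2. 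Finally, $\KKK(\mathbf x_n)\rtimes\widehat W$ acts faithfully on $\LL[\mathbf x_n^\pm]$ by linear independence of distinct field automorphisms, and the elements of $\widehat W$ give pairwise distinct automorphisms exactly when $\qqq$ is not a root of unity; this yields the generic case and all specializations at once, with no hypothesis on $\ttt$.
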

\noindent We call $\mathfrak{r}$ the \textit{polynomial representation}.
By abuse of notation, we will also use $\mathfrak{r}$ to denote the representations obtained via base-change from $\LL$.

Let $\Lambda_n^{\pm}(\qqq,\ttt)_\LL$ denote the base-changed subring of symmetric polynomials:
\[
\Lambda_n^\pm (\qqq,\ttt)_\LL:=\LL[\mathbf{x}_n^\pm]^{\Sigma_n}\otimes \LL\left[ \frac{1}{[n]_{\ttt^2}!} \right]
\]
By (\ref{Demazure}), it is easily follows that $\SHH_n^0(\qqq,\ttt)_\LL$ preserves $\Lambda_n^\pm(\qqq,\ttt)_\LL$.
Because $\ee$ is idempotent, we have that this action is also faithful.
Let $\Lambda_n^\pm(\qqq,\ttt):= \Lambda_n^\pm(\qqq,\ttt)_\LL\otimes \KKK$.
\begin{thm}\label{MacThm}
We have the following:
\begin{enumerate}
\item For each partition $\lambda$, there exists a unique $P_\lambda(\qqq,\ttt)\in\Lambda_n^\pm(\qqq,\ttt)$ satisfying:
\begin{enumerate}
\item for all $f\in\Lambda_n^\pm(\qqq,\ttt)$, 
\[\mathfrak{r}\big(f(Y_1,\ldots, Y_n)\big)\cdot P_\lambda(\qqq,\ttt)= f(\qqq^{2\lambda_1}\ttt^{n-1}, \qqq^{2\lambda_2}t^{n-3},\ldots,\qqq^{2\lambda_n}t^{1-n})P_\lambda(\qqq,\ttt)\]
\item the coefficient of $x_1^{\lambda_1}\cdots x_n^{\lambda_n}$ is $1$.
\end{enumerate}
The collection $\{P_\lambda(\qqq,\ttt)\}$ forms a basis of $\Lambda_n^\pm(\qqq,\ttt)$.
\item The set of polynomials $\{P_\lambda(\qqq,\ttt)\}$ is defined over a localization of $\LL$ at a multiplicative set $M$ not containing $(\ttt-\qqq^k)$ for $k\ge 1$.
\end{enumerate}
\end{thm}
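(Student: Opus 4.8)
The plan is to construct $P_\lambda$ by the classical triangularity argument using a single difference operator, and then to read off the denominators directly from that construction. For $f\in\Lambda_n^\pm(\qqq,\ttt)$, the element $f(Y_1,\dots,Y_n)\in\HH_n^0(\qqq,\ttt)$ commutes with $T_1,\dots,T_{n-1}$ (a standard DAHA fact), so $\mathfrak{r}(f(Y))$ commutes with $\ee$ and therefore preserves $\Lambda_n^\pm(\qqq,\ttt)$ and its $\LL$-form. Writing $\chi_\mu(f):=f(\qqq^{2\mu_1}\ttt^{n-1},\dots,\qqq^{2\mu_n}\ttt^{1-n})$, a direct computation from (\ref{Demazure}) — this is exactly where the Macdonald--Cherednik theory enters — shows that each $\mathfrak{r}(f(Y))$ acts on the monomial symmetric (Laurent) polynomials $m_\mu$ triangularly with respect to the dominance order, with diagonal entry $\chi_\mu(f)$; in the basic case $f=e_1$ this reads
\[
\mathfrak{r}(e_1(Y))\,m_\mu \;=\; \chi_\mu(e_1)\,m_\mu \;+\;\sum_{\nu<\mu} a_{\nu\mu}\,m_\nu,\qquad a_{\nu\mu}\in\LL .
\]
The monomials $\qqq^{2\mu_i}\ttt^{\,n+1-2i}$ appearing in $\chi_\mu(e_1)=\sum_{i=1}^n\qqq^{2\mu_i}\ttt^{\,n+1-2i}$ have pairwise distinct $\ttt$-exponents, so $\chi_\mu(e_1)\in\KKK$ determines the sequence $\mu$; in particular $\chi_\lambda(e_1)\neq\chi_\mu(e_1)$ whenever $\lambda\neq\mu$. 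Hence the commuting triangular operators $\mathfrak{r}(f(Y))$ are simultaneously diagonalizable with one-dimensional joint eigenspaces, the one indexed by $\lambda$ spanned by a vector $P_\lambda=m_\lambda+\sum_{\mu<\lambda}u_{\lambda\mu}m_\mu$, which then satisfies $\mathfrak{r}(f(Y))P_\lambda=\chi_\lambda(f)P_\lambda$ for all symmetric $f$; $\{P_\lambda\}$ is a basis by unitriangularity. (For $\lambda$ with negative parts one runs the same argument on the Laurent side, or reduces to partitions with nonnegative parts using that multiplication by $x_1\cdots x_n$ intertwines the constructions for $\lambda$ and $\lambda+(1^n)$.) This is part~(1).

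For part~(2), comparing coefficients of $m_\mu$ in $(\mathfrak{r}(e_1(Y))-\chi_\lambda(e_1))P_\lambda=0$ gives the recursion
\[
u_{\lambda\mu}\;=\;\frac{1}{\chi_\lambda(e_1)-\chi_\mu(e_1)}\sum_{\mu<\nu\le\lambda}a_{\nu\mu}\,u_{\lambda\nu},\qquad u_{\lambda\lambda}=1 ,
\]
so by downward induction on $\mu$ in the dominance order every $u_{\lambda\mu}$ lies in the localization of $\LL$ at the multiplicative set $M$ generated by the differences $\chi_\sigma(e_1)-\chi_\tau(e_1)$ over all pairs of partitions $\sigma>\tau$; the entries $a_{\nu\mu}\in\LL$ contribute no further denominators. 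It remains to check that $\ttt-\qqq^k$ with $k\ge1$ never divides such a difference, i.e.\ that after the substitution $\ttt=\qqq^k$ (with $\qqq$ transcendental) the Laurent polynomial
\[
\chi_\lambda(e_1)-\chi_\mu(e_1)\big|_{\ttt=\qqq^k}\;=\;\sum_{i=1}^n\qqq^{\,2\lambda_i+k(n+1-2i)}\;-\;\sum_{i=1}^n\qqq^{\,2\mu_i+k(n+1-2i)}
\]
is nonzero whenever $\lambda\neq\mu$. Here is the point: since $k\ge1$ and $\lambda,\mu$ are weakly decreasing, the exponent sequence $\big(2\lambda_i+k(n+1-2i)\big)_{i=1}^n$ is \emph{strictly} decreasing — the consecutive gap is $2(\lambda_i-\lambda_{i+1})+2k\ge 2k>0$ — and likewise for $\mu$; two strictly decreasing sequences of the same length agree iff their terms match position by position, i.e.\ iff $\lambda=\mu$. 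Thus for $\lambda\neq\mu$ the two sums are distinct Laurent polynomials in $\qqq$, their difference is nonzero, and hence $\ttt-\qqq^k\nmid\chi_\lambda(e_1)-\chi_\mu(e_1)$; in particular no irreducible factor of any generator of $M$ is an associate of $\ttt-\qqq^k$.

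The only genuine content is the last estimate, and the hypothesis $k\ge1$ is used essentially: for $k\le 0$ the exponent sequence need not be monotone, the $\lambda$- and $\mu$-sums can have cancelling terms, and indeed already for $n=2$, $\lambda=(2,0)$, $\mu=(1,1)$, $k=-1$ one has $\chi_\lambda(e_1)-\chi_\mu(e_1)=(\qqq^4-\qqq^2)\ttt+(1-\qqq^2)\ttt^{-1}$, which is divisible by $\ttt-\qqq^{-1}$. A secondary, purely bookkeeping task is to match $\mathfrak{r}(e_1(Y))$ in the present normalization with the first Macdonald $\qqq$-difference operator, so as to justify the dominance-triangularity and the integrality $a_{\nu\mu}\in\LL$; this is classical.
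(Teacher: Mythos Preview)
Your argument is correct. Part~(1) is the standard triangularity construction, and for part~(2) your eigenvalue-separation argument is clean: the strict monotonicity of the exponent sequence $2\lambda_i+k(n+1-2i)$ for $k\ge 1$ is exactly what forces the differences $\chi_\lambda(e_1)-\chi_\mu(e_1)$ to survive the specialization $\ttt=\qqq^k$, and since $\LL$ is a UFD and $\ttt-\qqq^k$ is irreducible, this suffices to show $\ttt-\qqq^k\notin M$. The integrality $a_{\nu\mu}\in\LL$ follows from Theorem~\ref{PolyRep} (the polynomial representation is defined over $\LL$) together with your observation that $\mathfrak{r}(e_1(Y))$ preserves symmetric polynomials.

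The paper takes a different route for part~(2): it invokes the tableaux sum formula for $P_\lambda$ (Macdonald, Chapter~VI), from which one reads off explicit denominators of the shape $1-\qqq^{a}\ttt^{b}$ with $a\ge 0$, $b\ge 1$, manifestly nonvanishing at $\ttt=\qqq^k$ for $k\ge 1$. That approach yields sharper information about the localization --- the denominators factor into simple cyclotomic-type terms --- at the cost of importing a substantial combinatorial result. Your approach is more self-contained: it needs only the triangularity of the first Macdonald operator, and the denominators you produce, while less explicit, are exactly adequate for the statement and for the paper's sole use of part~(2), namely that $P_\lambda(\qqq,\qqq^k)$ is well-defined for $k\ge 1$.
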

\noindent The functions $\{P_\lambda(\qqq,\ttt)\}$ are called the \textit{Macdonald polynomials}.
Part (1) is classical, and part (2) follows from the \textit{tableaux sum formula} for $P_\lambda(\qqq,\ttt)$ (see \cite{MacBook} for both).
From part (2), it follows that we can specialize the Macdonald polynomials at $\ttt=\qqq^k$ for $k\ge 1$.
By part (1), we have that $\{P_\lambda(\qqq,\qqq^k)\}$ forms a basis of $\Lambda_n^\pm(\qqq):=\CC(\qqq)[x_1,\ldots, x_n]^{\Sigma_n}$.

%
\subsection{Cyclotomic subalgebra}\label{Cyclo}
For $\ell\ge 0$, let $\mathsf{Z}_1,\ldots, \mathsf{Z}_\ell\in\CC^\times$ and set $\ZZZ=(\ZZZ_1,\ldots, \ZZZ_\ell)$.
When $\ell=0$, we treat $\ZZZ$ as an empty list.

\subsubsection{Cyclotomic $\qqq$-Dunkl operators}
The \textit{cyclotomic DAHA} of Braverman--Etingof--Finkelberg \cite{BEF} is a subalgebra of $\HH_n^0(\qqq,\ttt)$ that can be defined using certain \textit{$\qqq$-Dunkl operators}:
\begin{align*}
D_1&:= X_1^{-1}\left(\qqq^{-1}\ttt^{1-n}Y_1^{-1}-\ZZZ_1\right)\cdots\left(\qqq^{-1}\ttt^{1-n}Y_1^{-1}-\ZZZ_\ell\right)\\
D_i&:= T_{i-1}^{-1}\cdots T_1^{-1} D_1T_1^{-1}\cdots T_{i-1}^{-1}
\end{align*}
These elements can be obtained by conjugating $\{X_1^{-1},\ldots, X_n^{-1}\}$ by certain elements lying in a completing of $\HH_n^0(\qqq,\ttt)$.
Specifically, let
\begin{equation}
\begin{aligned}
\gamma_{\ZZZ_a}=\gamma_{\ZZZ_a}(Y_1,\ldots, Y_n;\qqq,\ttt)&:=\exp\left( \sum_{m>0}\sum_{i=1}^n\frac{(\qqq^{-1}\ttt^{1-n}\ZZZ_aY_i)^{-m}-\ttt^{m(n-2i)}}{(1-\qqq^{2m})m} \right)\\
\gamma_{\ZZZ}&:=\prod_{a=1}^\ell\gamma_{\ZZZ_a}
\end{aligned}
\label{GammaZ}
\end{equation}
We denote the \textit{$\qqq$-Pochhammer symbols} by
\begin{align*}
(x;\qqq)_\infty&:=\prod_{i=0}^\infty(1-\qqq^ix)\\
(x;\qqq)_m&:=\frac{(x;\qqq)_\infty}{(\qqq^mx;\qqq)_\infty}\hbox{ for }m\in\ZZ
\end{align*}
\begin{prop}\label{GammaZProp}
The following properties hold for $\gamma_\ZZZ$:
\begin{enumerate}
\item $\gamma_\ZZZ X_i^{-1}\gamma_\ZZZ^{-1}= (-1)^\ell(\ZZZ_1\cdots\ZZZ_\ell)^{-1} D_i$;
\item the action on the Macdonald polynomial $P_\lambda(\qqq,\ttt)$ is given by
\begin{equation}
\mathfrak{r}(\gamma_\ZZZ) \cdot P_\lambda(\qqq,\ttt)=\left( \prod_{a=1}^\ell\prod_{i=1}^n\left(\qqq^{-1}\ttt^{-2(n-i)}\ZZZ_a^{-1};\qqq^{2}\right)_{-\lambda_i} \right)P_\lambda(\qqq,\ttt)
\label{GammaEigen}
\end{equation}
\end{enumerate}
\end{prop}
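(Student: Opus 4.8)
The plan is to verify both statements by direct computation, exploiting the fact that $\gamma_{\ZZZ}=\prod_a\gamma_{\ZZZ_a}$ is a product of exponentials of elements built from the commuting family $\{Y_1,\dots,Y_n\}$. Since conjugation by a product is the composition of conjugations, and since each $\gamma_{\ZZZ_a}$ commutes with every $\gamma_{\ZZZ_b}$ (they are all expressed in the $Y_i$'s, which commute), it suffices to treat a single factor $\gamma_{\ZZZ_a}$ and then multiply the results over $a=1,\dots,\ell$. For part (1), the core identity to establish is the single-factor relation
\[
\gamma_{\ZZZ_a}\,X_i^{-1}\,\gamma_{\ZZZ_a}^{-1}=-\ZZZ_a^{-1}\,T_{i-1}^{-1}\cdots T_1^{-1}\,X_1^{-1}\bigl(\qqq^{-1}\ttt^{1-n}Y_1^{-1}-\ZZZ_a\bigr)\,T_1^{-1}\cdots T_{i-1}^{-1},
\]
after which part (1) follows by induction on $\ell$ once one checks that conjugating by $\gamma_{\ZZZ_{a}}$ leaves intact the factors $(\qqq^{-1}\ttt^{1-n}Y_1^{-1}-\ZZZ_b)$ for $b\ne a$ — which it does, since those commute with $\gamma_{\ZZZ_a}$.

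For the single-factor relation, I would first reduce to the case $i=1$ by observing that $\gamma_{\ZZZ_a}$ commutes with each $T_j$: indeed $\gamma_{\ZZZ_a}$ is a symmetric function of the $Y_i$'s in the appropriate completed sense (the sum $\sum_{i=1}^n$ over all $Y_i$ with the same coefficient structure is $\Sigma_n$-invariant up to the $\ttt^{m(n-2i)}$ shift — here one must be slightly careful, but the commutation $\gamma_{\ZZZ_a}T_j=T_j\gamma_{\ZZZ_a}$ for the relevant range can be checked from the DAHA relations $T_j^{-1}Y_jT_j^{-1}=Y_{j+1}$ together with the fact that the extra $\ttt$-powers are absorbed). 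Then for $i=1$ the computation is a $\qqq$-difference analogue of the standard ``dilation operator'' manipulation: one uses $X_1 Y_1 = \qqq^{2}\,(\text{correction})\,Y_1 X_1$-type relations extracted from (\ref{DAHAY}) — more precisely, from $X_1Y_2=Y_2T_1^2X_1$ and the commuting relations — to see that conjugating $X_1^{-1}$ by $\exp(\sum_m c_m Y_1^{-m}/m\cdots)$ multiplies $X_1^{-1}$ by a ratio of the form $(1-\qqq^{-1}\ttt^{1-n}\ZZZ_a Y_1^{-1})$ up to a monomial prefactor; telescoping the exponential sum produces exactly the linear factor $(\qqq^{-1}\ttt^{1-n}Y_1^{-1}-\ZZZ_a)$ times $-\ZZZ_a^{-1}$. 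The $\ttt^{m(n-2i)}$ terms in the exponent are precisely the normalization needed to kill the monomial prefactor so that the answer is the clean Dunkl factor.

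For part (2), I would work inside the polynomial representation $\mathfrak{r}$ and apply Theorem \ref{MacThm}(1a): since $P_\lambda(\qqq,\ttt)$ is a joint eigenvector for $\mathfrak{r}(f(Y_1,\dots,Y_n))$ with eigenvalue $f(\qqq^{2\lambda_1}\ttt^{n-1},\dots,\qqq^{2\lambda_n}\ttt^{1-n})$, and $\gamma_{\ZZZ}$ lies in a completion of the polynomial subalgebra generated by the $Y_i$'s, $P_\lambda(\qqq,\ttt)$ is automatically an eigenvector of $\mathfrak{r}(\gamma_\ZZZ)$ with eigenvalue obtained by substituting $Y_i\mapsto \qqq^{2\lambda_i}\ttt^{n-2i+1}$ into the defining exponential. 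So the content of (2) is purely the scalar identity
\[
\exp\!\left(\sum_{m>0}\sum_{i=1}^n\frac{(\qqq^{-1}\ttt^{1-n}\ZZZ_a\,\qqq^{2\lambda_i}\ttt^{n-2i+1})^{-m}-\ttt^{m(n-2i)}}{(1-\qqq^{2m})m}\right)=\prod_{i=1}^n\bigl(\qqq^{-1}\ttt^{-2(n-i)}\ZZZ_a^{-1};\qqq^2\bigr)_{-\lambda_i},
\]
which reduces term-by-term in $i$ to the classical expansion $\log(x;\qqq^2)_{-\lambda} = \sum_{m>0}\frac{x^{-m}(1-\qqq^{-2m\lambda})}{?}$ — i.e. one checks $(a;\qqq^2)_{-\lambda}=\prod_{j=1}^{\lambda}(1-\qqq^{-2j}a)^{-1}\cdots$ against the exponential of its logarithm, using $\log(1-z)=-\sum_{m>0}z^m/m$ and summing the finite geometric series in the $\lambda$-variable; the subtracted term $\ttt^{m(n-2i)}$ matches the $\lambda_i=0$ normalization so that the eigenvalue is $1$ when $\lambda=0$. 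Multiplying over $a=1,\dots,\ell$ gives the stated formula.

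The main obstacle I anticipate is part (1), specifically making the formal manipulation with the infinite exponential $\gamma_{\ZZZ_a}$ rigorous: one must work in an appropriate completion of $\HH_n^0(\qqq,\ttt)$ in which $\gamma_{\ZZZ_a}$ makes sense and in which the conjugation identity is an honest equation, and one must justify the commutation $\gamma_{\ZZZ_a}T_j=T_j\gamma_{\ZZZ_a}$ carefully given the asymmetric $\ttt$-shifts in the exponent. Part (2), by contrast, is a self-contained (if fiddly) $\qqq$-series identity that I would relegate to a short lemma or cite from the $q$-Pochhammer literature.
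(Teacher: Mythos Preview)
Your approach is essentially the same as the paper's: for part (1) the paper simply cites Lemma~3.21 of \cite{BEF} and records the key functional equation, while for part (2) it says ``direct calculation using Theorem~\ref{MacThm}(1)'', exactly your eigenvalue-substitution argument.

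Two small corrections to your sketch. First, your worry about $\gamma_{\ZZZ_a}T_j=T_j\gamma_{\ZZZ_a}$ is easily dispatched: the terms $\ttt^{m(n-2i)}$ in the exponent are \emph{scalars} (they involve no $Y_i$), so the exponent is a scalar plus a power-sum symmetric function of $Y_1,\ldots,Y_n$; symmetric functions in the $Y_i$ commute with the finite Hecke algebra, so the commutation holds with no subtlety. Second, and for the same reason, your claim that these $\ttt^{m(n-2i)}$ terms ``kill the monomial prefactor'' in part (1) is wrong: being central, they cancel in the conjugation $\gamma_{\ZZZ_a}X_1^{-1}\gamma_{\ZZZ_a}^{-1}$ and play no role whatsoever there. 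Their only purpose is the one you correctly identify in part (2), namely normalizing the eigenvalue to $1$ at $\lambda=0$. The conjugation identity for $i=1$ comes out clean without them.
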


\begin{proof}
Part (1) follows from the proof of Lemma 3.21 in \cite{BEF}.
Namely, one can check that
\[
\gamma_{\ZZZ}(X_1^{-1},\ldots,\qqq^{-1} X_i^{-1},\ldots, X_n;\qqq^{-1},\ttt^{-1})=(1-\qqq\ttt^{n-1}\ZZZ_i^{-1}X_i)\gamma_\ZZZ(X_1^{-1},\ldots, X_n^{-1};\qqq^{-1},\ttt^{-1})
\]
Part (2) is by direct calculation using Theorem \ref{MacThm}(1).
\end{proof}

\noindent The corollary below was proved in \cite{BEF}:
\begin{cor}
The elements $\{D_1,\ldots, D_n\}$ pairwise commute.
\end{cor}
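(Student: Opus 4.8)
The plan is to deduce the commutativity of the $q$-Dunkl operators $\{D_1,\ldots,D_n\}$ from part (1) of Proposition~\ref{GammaZProp}, which identifies each $D_i$ (up to the common scalar $(-1)^\ell(\ZZZ_1\cdots\ZZZ_\ell)^{-1}$) with the conjugate $\gamma_\ZZZ X_i^{-1}\gamma_\ZZZ^{-1}$. The commuting family $\{X_1^{-1},\ldots,X_n^{-1}\}$ is conjugated \emph{simultaneously} by the single element $\gamma_\ZZZ$; hence their images commute, and since the scalar is common to all $D_i$ it does not interfere. The only subtlety is that $\gamma_\ZZZ$ is not an element of $\HH_n^0(\qqq,\ttt)$ itself but of a completion, so one must make sense of the conjugation.

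Concretely, first I would set up the completion: since the $\{Y_i\}$ generate a commutative polynomial (Laurent) subalgebra, the defining series $\gamma_{\ZZZ_a}$ in \eqref{GammaZ} makes sense in a completion $\widehat{\HH}_n^0(\qqq,\ttt)$ of $\HH_n^0(\qqq,\ttt)$ with respect to a suitable filtration (e.g.\ by the degree in the $Y_i$ coming from the PBW basis $M_XM_YT_w$), exactly as in the relevant part of \cite{BEF}. In this completion $\gamma_\ZZZ$ is invertible, being an exponential of a topologically nilpotent element. Conjugation by $\gamma_\ZZZ$ is then a well-defined algebra automorphism of $\widehat{\HH}_n^0(\qqq,\ttt)$.

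Next I would observe that Proposition~\ref{GammaZProp}(1) asserts precisely that $\gamma_\ZZZ X_i^{-1}\gamma_\ZZZ^{-1}$ equals the honest element $(-1)^\ell(\ZZZ_1\cdots\ZZZ_\ell)^{-1}D_i\in\HH_n^0(\qqq,\ttt)$. Since conjugation by $\gamma_\ZZZ$ is an algebra homomorphism and $X_i^{-1}X_j^{-1}=X_j^{-1}X_i^{-1}$, we get
\begin{equation*}
\bigl((-1)^\ell(\ZZZ_1\cdots\ZZZ_\ell)^{-1}\bigr)^2 D_iD_j
=\gamma_\ZZZ X_i^{-1}X_j^{-1}\gamma_\ZZZ^{-1}
=\gamma_\ZZZ X_j^{-1}X_i^{-1}\gamma_\ZZZ^{-1}
=\bigl((-1)^\ell(\ZZZ_1\cdots\ZZZ_\ell)^{-1}\bigr)^2 D_jD_i.
\end{equation*}
Cancelling the nonzero scalar $\bigl((\ZZZ_1\cdots\ZZZ_\ell)^{-1}\bigr)^2$ (the $\ZZZ_a$ lie in $\CC^\times$) gives $D_iD_j=D_jD_i$ in $\widehat{\HH}_n^0(\qqq,\ttt)$, hence in $\HH_n^0(\qqq,\ttt)$ since both sides already lie there.

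The main obstacle, such as it is, is purely bookkeeping: verifying that $\gamma_\ZZZ$ genuinely lives in a completion in which (a) the exponential series converges, (b) $\gamma_\ZZZ$ is invertible, and (c) conjugation restricts to give honest elements of $\HH_n^0(\qqq,\ttt)$ when applied to the $X_i^{-1}$ — all of which is essentially the content imported from \cite{BEF} via Proposition~\ref{GammaZProp}. Given that proposition, there is no real mathematical content left: the corollary is an immediate consequence of the fact that conjugation by one element preserves commutation relations. I would therefore keep the write-up to a few lines, citing Proposition~\ref{GammaZProp}(1) and the standing completion conventions.
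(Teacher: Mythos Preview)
Your proposal is correct and matches the paper's approach: the corollary is stated immediately after Proposition~\ref{GammaZProp} precisely because part (1) exhibits all the $D_i$ as simultaneous conjugates (in a completion) of the commuting elements $X_i^{-1}$ by the single element $\gamma_\ZZZ$, and the paper simply attributes the result to \cite{BEF} without writing anything further. Your discussion of the completion is appropriate caution, but as you note, this is exactly the content already packaged into Proposition~\ref{GammaZProp}(1).
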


\subsubsection{Definition of $\HH_n^\ell(\ZZZ,\qqq,\ttt)$ and PBW}
The cyclotomic DAHA $\HH_n^\ell(\ZZZ,\qqq,\ttt)$ is defined to be the subalgebra of $\HH_n^0(\qqq,\ttt)$ generated by
\[
\{T_i, Y_j^{\pm 1}, X_j, D_j\, |\, i=1,\ldots n-1\hbox{ and }j=1,\ldots n\}
\]
We emphasize that only $X_j$ is included, not $X_j^{-1}$.
The Dunkl elements $\{D_j\}$ can be defined in $\HH_n^0(\qqq,\ttt)_\LL$, and we define $\HH_n^\ell(\ZZZ,\qqq,\ttt)_\LL$ and $\HH_n^\ell(\ZZZ,\qqq,\qqq^k)$ analogously.
We likewise define $\HH_n^\ell(\ZZZ,\qqq,\ttt)^-$, $\HH_n^\ell(\ZZZ,\qqq,\ttt)_\LL^-$, and $\HH_n^\ell(\ZZZ,\qqq,\qqq^k)^-$ to be the subalgebras generated by
\[
\{T_i, Y_j^{-1}, X_j, D_j\, |\, i=1,\ldots n-1\hbox{ and }j=1,\ldots n\}
\]
i.e. without $\{Y_j\}$.

\begin{rem}
In \cite{BEF}, their Dunkl operators are instead 
\begin{align*}
D_1^{\mathrm{BEF}}&:=X_1^{-1}(Y_1-\ZZZ_1)\cdots(Y_1-\ZZZ_\ell)\\
D_i^{\mathrm{BEF}}&= T_{i-1}^{-1}\cdots T_1^{-1} D_1^{\mathrm{BEF}}T_1^{-1}\cdots T_{i-1}^{-1}
\end{align*}
It follows that our $\HH_n^\ell(\ZZZ,\qqq,\ttt)$ coincides with theirs with rescaled and inverted $\ZZZ$-parameters.
\end{rem}

\begin{prop}\label{CyclicPBW}
We have the following:
\begin{enumerate}
\item There exists a filtration on $\HH_n^\ell(\ZZZ,\qqq,\ttt)_\LL^-$ where
\[
\deg(X_i)=\ell,\, \deg(Y_i^{-1})=2,\, \deg(D_i)=\ell,\, \deg(T_i)=0
\]
\item $\HH_n^\ell(\ZZZ,\qqq,\ttt)^-_\LL$ has an $\LL$-basis given by elements of the form
\[
M_DM_XM_YT_w
\]
where:
\begin{itemize}
\item $M_D$ is a monomial in the $\{D_i\}$;
\item $M_X$ is a monomial in the $\{X_i\}$;
\item $M_Y$ is a monomial in the $\{Y_i\}$ where every variable has exponent $\le 0$ and $>-\ell$;
\item $w\in \Sigma_n$ and $T_w$ is defined as in \ref{PBW}.
\end{itemize}
\end{enumerate}
\end{prop}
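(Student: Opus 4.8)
The plan is to deduce both parts simultaneously from the known PBW basis for $\HH_n^0(\qqq,\ttt)_\LL$ (the cited theorem of Cherednik, stating that $\{M_XM_YT_w\}$ is an $\LL$-basis), together with the alternative $Y$-presentation (\ref{DAHAY}) and the defining formula $D_i = T_{i-1}^{-1}\cdots T_1^{-1}D_1 T_1^{-1}\cdots T_{i-1}^{-1}$ with $D_1 = X_1^{-1}(\qqq^{-1}\ttt^{1-n}Y_1^{-1}-\ZZZ_1)\cdots(\qqq^{-1}\ttt^{1-n}Y_1^{-1}-\ZZZ_\ell)$. First I would set up the filtration of part (1): assign the weights $\deg(X_i)=\ell$, $\deg(Y_i^{-1})=2$, $\deg(D_i)=\ell$, $\deg(T_i)=0$, and check that the defining relations of $\HH_n^\ell(\ZZZ,\qqq,\ttt)_\LL^-$ are filtered for these weights. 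The only relations that mix the three families of generators in a nontrivial way are the cross-relations $X_1Y_2 = Y_2 T_1^2 X_1$ (and its $T$-conjugates), the commutation $X_1\cdots X_n Y_j = \qqq^{-2} Y_j X_1\cdots X_n$, and the expansion of $D_i$ in terms of $X_i^{-1}$ and powers of $Y_i^{-1}$. Crucially, $X_i$ has degree $\ell$ but $X_i^{-1}$ is \emph{not} a generator of the minus-subalgebra, so one must verify that whenever $X_i^{-1}$ appears (only inside $D_i$), it is always balanced by the $X$-part of the same $D_i$; since $D_i$ is a single generator of degree $\ell$, this is automatic. The genuinely filtered (as opposed to graded) phenomenon is that $Y_1^{-\ell}$ has degree $2\ell > \ell = \deg(D_i)$, reflecting that $D_i$ is a polynomial of degree $\ell$ in $Y_i^{-1}$ but its top-degree-in-$Y$ term is $\qqq^{-\ell}\ttt^{\ell(1-n)} X_i^{-1} Y_i^{-\ell}$; I would record the associated graded algebra and note that in $\mathrm{gr}$ the element $D_i$ becomes (a scalar times) $X_i^{-1}Y_i^{-\ell}$.

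For part (2), the strategy is to show that the proposed set $\{M_D M_X M_Y T_w\}$, with the $Y$-exponents restricted to $\{0,-1,\dots,-(\ell-1)\}$, is (a) a spanning set over $\LL$ and (b) linearly independent, hence a basis. For spanning: using the relations one rewrites an arbitrary product of generators into the normal-ordered form $M_D M_X M_Y T_w$; the $Y$-exponent bound is achieved because any occurrence of $Y_i^{-\ell}$ (or lower) can be traded, via the leading term of the relation $D_i = \qqq^{-\ell}\ttt^{\ell(1-n)} X_i^{-1}Y_i^{-\ell} + (\text{lower order in }Y^{-1})$, for a $D_i$ times an $X_i$ times a $Y$-monomial of strictly higher (i.e.\ less negative) exponent, modulo lower-filtration terms, so one induces on the filtration degree of part (1). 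This is the standard ``PBW straightening'' argument, and I would phrase it as: the $D_i^m$ for $m\ge 0$ together with $Y_i^{-r}$ for $0\le r<\ell$ span the same $\LL$-span (inside $\HH_n^0(\qqq,\ttt)_\LL[\tfrac{1}{[n]!}]$... no---inside $\HH_n^0(\qqq,\ttt)_\LL$ directly, since no denominators are needed) as all the $X_i^{-s}Y_i^{-t}$ with $t\ge 0$. For independence: by the filtration of part (1) it suffices to prove the images $\mathrm{gr}(M_D M_X M_Y T_w)$ are linearly independent in $\mathrm{gr}\,\HH_n^\ell(\ZZZ,\qqq,\ttt)_\LL^-$, and there $D_i$ is identified with $\qqq^{-\ell}\ttt^{\ell(1-n)}X_i^{-1}Y_i^{-\ell}$; so $M_D M_X M_Y$ becomes a monomial $\prod X_i^{a_i}\prod Y_i^{-b_i}$ in which the constraint ``$M_X$ arbitrary in $X_i$, $M_Y$ has exponents in $(-\ell,0]$'' together with ``$M_D$ arbitrary'' produces \emph{each} monomial $X^a Y^{-b}$ ($a\in\ZZ_{\ge 0}^n$, $b\in\ZZ_{\ge0}^n$) exactly once, upon writing $b_i = \ell\cdot(\deg_{D_i}) + r_i$ with $0\le r_i<\ell$. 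Hence linear independence of $\{M_D M_X M_Y T_w\}$ reduces to linear independence of $\{X^a Y^{-b} T_w : a,b\in\ZZ_{\ge0}^n, w\in\Sigma_n\}$, which is immediate from Cherednik's PBW theorem (the monomials $M_X M_Y T_w$ with $M_Y$ built from the $Y_i^{-1}$ are part of an $\LL$-basis).

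The main obstacle I anticipate is the bookkeeping in the straightening step for spanning, specifically controlling the ``lower-order in $Y^{-1}$'' corrections that appear when substituting $D_i$ for $X_i^{-1}Y_i^{-\ell}$: each such correction is itself a product of $X_i^{-1}$ with a \emph{lower} power of $Y_i^{-1}$ (times scalars in $\ZZZ_a$), and $X_i^{-1}$ is not a generator of the minus-subalgebra. One must therefore argue that these correction terms, once their $Y$-exponents fall into the allowed window $(-\ell,0]$, still lie in $\HH_n^\ell(\ZZZ,\qqq,\ttt)^-_\LL$: indeed $X_i^{-1}Y_i^{-r}$ for $0\le r<\ell$ does \emph{not} obviously belong, so the resolution is to \emph{not} substitute term-by-term but rather observe that the full element $X_i^{-1}(\qqq^{-1}\ttt^{1-n}Y_i^{-1}-\ZZZ_1)\cdots(\qqq^{-1}\ttt^{1-n}Y_i^{-1}-\ZZZ_\ell) = (-1)^\ell(\ZZZ_1\cdots\ZZZ_\ell)X_i^{-1} + \cdots$ can be inverted: $X_i^{-1}\prod_a(\dots) = D_i$ gives $X_i^{-1} = D_i\cdot\big[\prod_a(\qqq^{-1}\ttt^{1-n}Y_i^{-1}-\ZZZ_a)\big]^{-1}$ formally, but this needs $Y$-denominators. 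The clean fix, which I would carry out, is a downward induction on the most negative $Y$-exponent $t$ appearing: if $t\le -\ell$, replace one factor $Y_i^{-\ell}$ inside $X_i^{-1}Y_i^{t}\cdots$ by writing $X_i^{-1}Y_i^{-\ell} = \qqq^{\ell}\ttt^{\ell(n-1)}\big(D_i - (\text{terms with }Y_i\text{-exponent}>-\ell)\big)$ and noting the \emph{subtracted} terms, being $X_i^{-1}$ times $Y_i$-exponent in $(-\ell,0]$, must be handled by absorbing the leftover $X_i^{-1}$ into a neighbouring genuine $X_j$-factor or $D_j$-factor present in the original word—so one should organize the induction over whole words $M_D M_X M_Y T_w$ rather than over isolated monomials in one variable. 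With that organization the argument closes; the rest is routine application of the relations (\ref{DAHAY}) to normal-order within each family.
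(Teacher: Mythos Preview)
Your overall strategy---linear independence via a leading-term comparison with Cherednik's PBW basis of $\HH_n^0(\qqq,\ttt)_\LL$, spanning via straightening, and the filtration from generator degrees---is the approach of \cite{BEF}, which the paper simply cites (noting only that the relations there are over $\LL$, so spanning and the filtration descend, while linear independence over $\LL$ is immediate from linear independence over $\KK$). So the paper's proof is essentially a reference; you are attempting to redo that reference.

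Two points in your execution do not go through as written. For linear independence, your claim that in the associated graded $D_i$ becomes a scalar times $X_i^{-1}Y_i^{-\ell}$ is correct only for $i=1$. For $i\ge 2$ the top-degree part is $T_{i-1}^{-1}\cdots T_1^{-1}X_1^{-1}Y_1^{-\ell}T_1^{-1}\cdots T_{i-1}^{-1}$, and commuting the $T$'s through produces Hecke corrections (already for $\ell=1$, $T_1Y_1^{-1}T_1^{-1}=Y_2^{-1}T_1^{-2}\neq Y_2^{-1}$), so its PBW expansion in $\HH_n^0$ involves several $X^aY^bT_w$. Your ``each monomial $X^aY^{-b}$ exactly once'' bijection therefore fails; a triangularity argument still works but needs a finer order and more care. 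For spanning, you presuppose that ``the relations'' allow normal-ordering into $M_DM_XM_YT_w$, but those relations are exactly the cross-relations among $D_i,X_j,Y_k^{-1},T_m$ \emph{internal to} $\HH_n^\ell$, and they must be derived---they do not come for free from the ambient $\HH_n^0$-relations without introducing $X_i^{-1}$. Your ``main obstacle'' paragraph misdiagnoses this: if one straightens using only relations among the $\HH_n^\ell$-generators, no $X_i^{-1}$ ever appears, so there is nothing to ``absorb into a neighbouring factor''; the real missing ingredient is the list of filtered commutation relations (e.g.\ how $D_iX_j$ and $D_iY_k^{-1}$ rewrite inside $\HH_n^\ell$), which is the substantive content of \cite{BEF} and is what simultaneously establishes spanning and the compatibility of the filtration.
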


\begin{proof}
Linear independence follows from the analogous statement over $\KK$, which is proved in \cite{BEF}.
The relations used in \textit{loc. cit.} to show the spanning property are defined over $\LL$.
This yields a presentation for $\HH_n^\ell(\qqq,\ttt)_\LL^-$; the relations in \textit{loc. cit.} shows the existence of the filtration.
\end{proof}

Next, let us consider the action of $\ZZ/\ell\ZZ$ on $\CC[z,w]$ where the generator $\sigma\in\ZZ/\ell\ZZ$ acts by
\begin{equation*}
\begin{aligned}
\sigma\cdot z&= e^{2\pi \sqrt{-1}/\ell}z,&
\sigma\cdot w&= e^{-2\pi \sqrt{-1}/\ell}w
\end{aligned}
\end{equation*}
Taking the tensor product, we obtain an action of $(\ZZ/\ell\ZZ)^n$ on $\CC[z_1,\ldots, z_n,w_1,\ldots, w_n]$.
We set
\begin{equation}
\mathcal{R}_\ell:=\CC[z_1,\ldots, z_n,w_1,\ldots, w_n]^{(\ZZ/\ell\ZZ)^n}
\label{RellDef}
\end{equation}

\begin{cor}\label{PBWCor}
$\HH_n^\ell(\ZZZ,\qqq,\ttt)_\LL^-$ is a flat deformation of a filtered algebra whose associated graded is isomorphic to $\mathcal{R}_\ell\rtimes\Sigma_n$.
\end{cor}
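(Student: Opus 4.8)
The plan is to deduce this corollary from Proposition~\ref{CyclicPBW} by identifying the associated graded of the filtration introduced there. Recall from part (1) of that proposition that $\HH_n^\ell(\ZZZ,\qqq,\ttt)_\LL^-$ carries a filtration with $\deg(X_i)=\ell$, $\deg(Y_i^{-1})=2$, $\deg(D_i)=\ell$, $\deg(T_i)=0$. First I would observe that flatness over $\LL$ is essentially immediate from the PBW basis in part~(2): the monomials $M_DM_XM_YT_w$ form an $\LL$-basis, so the algebra is a free $\LL$-module, hence $\LL$-flat, and its associated graded is likewise $\LL$-free with the corresponding graded monomial basis. What remains is to identify $\mathrm{gr}\,\HH_n^\ell(\ZZZ,\qqq,\ttt)_\LL^-$, base-changed to $\CC$ via $\qqq\mapsto 1$, $\ttt\mapsto 1$, with $\mathcal{R}_\ell\rtimes\Sigma_n$.

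The key step is to write down the identification on generators and check the relations. In the associated graded, the symbols $\sigma(X_i)$, $\sigma(D_i)$, $\sigma(Y_i^{-1})$ become pairwise commuting among themselves within each family, and at $\qqq=\ttt=1$ the cross-relations degenerate: the DAHA relation $X_1Y_2=Y_2T_1^2X_1$ and the relation $Y_1\cdots Y_n X_j = \qqq^2 X_j Y_1\cdots Y_n$ both become commutation relations in the associated graded (the $\qqq^2$ is lower-order since $\deg$ ignores scalars, and the $T_1^2$ term contributes $\ttt^{\pm}$-corrections that vanish at $\ttt=1$). Meanwhile the $\{T_i\}$ generate, at $\ttt=1$, the group algebra $\CC[\Sigma_n]$, acting on the $X$'s, $Y$'s and $D$'s by permutation as dictated by the DAHA relations $T_iX_iT_i=X_{i+1}$, $T_i^{-1}Y_iT_i^{-1}=Y_{i+1}$, and the defining relation $D_i=T_{i-1}^{-1}\cdots T_1^{-1}D_1 T_1^{-1}\cdots T_{i-1}^{-1}$. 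So I would set $w_i := \sigma(Y_i^{-1})$ playing the role of the $\ZZ/\ell\ZZ$-coordinate of weight $-1$ scaled appropriately, and I would build $z_i$ out of the symbols $\sigma(X_i)$ and $\sigma(D_i)$: the crucial point, coming from Proposition~\ref{GammaZProp}(1), is that $D_i$ is a twist of $X_i^{-1}$, so modulo the filtration $\sigma(D_i)$ and $\sigma(X_i)$ are related — more precisely $D_1 = X_1^{-1}(\qqq^{-1}\ttt^{1-n}Y_1^{-1}-\ZZZ_1)\cdots(\qqq^{-1}\ttt^{1-n}Y_1^{-1}-\ZZZ_\ell)$ has top-degree part $X_1^{-1}Y_1^{-\ell}$ up to scalar, and $X_1 X_1^{-1}$ is degree $2\ell$. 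The monomial basis of part~(2) — with $Y$-exponents constrained to lie in $(-\ell,0]$ — is exactly what one needs so that $\{X_i, D_i, Y_i^{-1}\}$ assemble into the invariant ring: $X_i D_i$ (degree $2\ell$) and $w_i^\ell = \sigma(Y_i^{-\ell})$ generate the degree-$\ell$-invariant polynomial ring $\CC[z_i,w_i]^{\ZZ/\ell\ZZ}$ in the variables $z_i \sim \sigma(X_i)$, $w_i \sim \sigma(Y_i^{-1})$, the relation $z_i w_i^\ell \cdot$ (appropriate monomial) reproducing $X_i D_i$. Tensoring over $i=1,\dots,n$ and adjoining $\Sigma_n$ via the $T_w$ gives precisely $\mathcal{R}_\ell\rtimes\Sigma_n$ as in (\ref{RellDef}).

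The main obstacle — and the step deserving genuine care rather than a one-line dismissal — is verifying that this map $\mathcal{R}_\ell\rtimes\Sigma_n \to \mathrm{gr}$ is an \emph{isomorphism} and not merely a surjection: one must match graded dimensions piece by piece. Here I would use the PBW basis of Proposition~\ref{CyclicPBW}(2) to compute the Hilbert series of each graded piece of $\mathrm{gr}\,\HH_n^\ell(\ZZZ,\qqq,\ttt)_\LL^-$ and compare it with the Hilbert series of $\mathcal{R}_\ell\rtimes\Sigma_n$ under the grading where $z_i,w_i$ have degree $1$ (so $\CC[z,w]^{\ZZ/\ell\ZZ}$ has a standard generating-function count via the $\ZZ/\ell\ZZ$-action) — the constraint ``$Y$-exponents in $(-\ell,0]$'' together with the free $X$- and $D$-monomials is engineered to make these match. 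The subtlety is purely in keeping the degree bookkeeping honest across the twist relating $D_i$ and $X_i^{-1}$; since both have degree $\ell$, no collapse or unexpected identification occurs, and flatness over $\LL$ then propagates the isomorphism from the special fiber $\qqq=\ttt=1$ to the generic statement. I expect the argument to run cleanly once the dictionary $z_i\leftrightarrow \sigma(X_i)$, $w_i\leftrightarrow \sigma(Y_i^{-1})$, $z_i w_i^\ell\leftrightarrow \sigma(X_iD_i)$ is fixed, with all the real content already packaged in Proposition~\ref{CyclicPBW} and Proposition~\ref{GammaZProp}(1).
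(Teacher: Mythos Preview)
Your overall strategy is exactly the paper's: flatness from the PBW basis of Proposition~\ref{CyclicPBW}(2), then specialize $\qqq=\ttt=1$, pass to the associated graded, and write down an explicit isomorphism with $\mathcal{R}_\ell\rtimes\Sigma_n$. The gap is that the dictionary you propose is incorrect, and this is not merely cosmetic.

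You set $z_i\leftrightarrow\sigma(X_i)$ and $w_i\leftrightarrow\sigma(Y_i^{-1})$, but neither $z_i$ nor $w_i$ lies in $\mathcal{R}_\ell=\CC[z_1,\ldots,w_n]^{(\ZZ/\ell\ZZ)^n}$; that ring is generated by $z_i^\ell$, $z_iw_i$, $w_i^\ell$. Your assignments also violate the grading you invoke ($\deg z_i=\deg w_i=1$): $z_i$ would have degree $1$ while $\sigma(X_i)$ has degree $\ell$, and $w_i$ degree $1$ while $\sigma(Y_i^{-1})$ has degree $2$. Relatedly, the phrase ``$D_1$ has top-degree part $X_1^{-1}Y_1^{-\ell}$'' is meaningless since $X_1^{-1}\notin\HH_n^\ell(\ZZZ,\qqq,\ttt)^-$; the correct statement is that at $\qqq=\ttt=1$ the relation $X_iD_i=(Y_i^{-1}-\ZZZ_1)\cdots(Y_i^{-1}-\ZZZ_\ell)$ degenerates in the associated graded to $X_iD_i=Y_i^{-\ell}$. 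The paper's map is
\[
X_i\mapsto z_i^\ell,\qquad Y_i^{-1}\mapsto z_iw_i,\qquad D_i\mapsto w_i^\ell,\qquad T_i\mapsto s_i,
\]
which is degree-preserving, lands in $\mathcal{R}_\ell$, and respects $X_iD_i=Y_i^{-\ell}$ via $z_i^\ell\cdot w_i^\ell=(z_iw_i)^\ell$. With this in hand the isomorphism is immediate from Proposition~\ref{CyclicPBW}(2): the PBW monomials $D^{b}X^{a}Y^{-k}T_w$ with $0\le k_i<\ell$ map bijectively to $\prod_i z_i^{a_i\ell+k_i}w_i^{b_i\ell+k_i}\cdot w$, which is exactly a monomial basis of $\mathcal{R}_\ell\rtimes\Sigma_n$. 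No separate Hilbert-series computation is needed.
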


\begin{proof}
Setting $\qqq=\ttt=1$, the Hecke algebra generated by $\{T_i\}$ becomes the group algebra $\CC[\Sigma_n]$ while the elements $\{X_i\}\cup\{D_i\}\cup\{Y_i\}$ commute and satisfy
\begin{equation}
X_iD_i=(Y_i^{-1}-\ZZZ_1)\cdots(Y_i^{-1}-\ZZZ_\ell)
\label{NaiveCyc}
\end{equation}
In the associated graded of $\HH_n^\ell(\ZZZ,1,1)$ for the filtration from Proposition \ref{CyclicPBW}, (\ref{NaiveCyc}) becomes $X_iD_i=Y_i^\ell$.

Now consider $\mathcal{R}_\ell$.
If we denote the variables in the $i$th tensorand by $\{z_i, w_i\}$, then the map
\begin{equation}
\begin{aligned}
X_i&\mapsto z_i^\ell,  &
t^{1-n}Y_i^{-1}&\mapsto z_iw_i,&
D_i&\mapsto w_i^\ell, & 
T_i&\mapsto s_i
\end{aligned}
\label{DegenMap}
\end{equation}
defines a ring homomorphism $\mathrm{gr}\HH_n^\ell(\ZZZ,1,1)\rightarrow \mathcal{R}_\ell\rtimes\Sigma_n$.
It follows from Proposition \ref{CyclicPBW}, this homomorphism is an isomorphism.
\end{proof}

\subsubsection{Spherical generators}
The spherical subalgebra is defined to be $\SHH_n^\ell(\ZZZ,\qqq,\ttt):=\ee\HH_n^\ell(\ZZZ,\qqq,\ttt)\ee$.
We similarly define $\SHH_n^\ell(\ZZZ,\qqq,\ttt)_\LL$ and $\SHH_n^\ell(\ZZZ,\qqq,\qqq^k)$ as well as the `minus' versions $\SHH_n^\ell(\ZZZ,\qqq,\ttt)^-$, $\SHH_n^\ell(\ZZZ,\qqq,\ttt)_\LL^-$, and $\SHH_n^\ell(\ZZZ,\qqq,\qqq^k)^-$.
These minus subalgebras inherit the filtration from Proposition \ref{CyclicPBW} because $\deg(T_i)=0$.

\begin{lem}\label{CycGen}
$\SHH_n^\ell(\ZZZ,\qqq,\ttt)$ is generated by:
\[
\ee\KK[X_1,\ldots, X_n]\ee\cup\ee\KK[Y_1^{\pm 1},\ldots, Y_n^{\pm 1}]\ee\cup\ee\KK[D_1,\ldots, D_n]\ee
\]
For $\ell>0$ and $k>2n\ell$, $\SHH_n^\ell(\ZZZ,\qqq,\qqq^k)$ is generated by:
\[
\ee\CC(\qqq)[X_1,\ldots, X_n]\ee\cup\ee\CC(\qqq)[Y_1^{\pm 1},\ldots, Y_n^{\pm 1}]\ee\cup\ee\CC(\qqq)[D_1,\ldots, D_n]\ee
\]
\end{lem}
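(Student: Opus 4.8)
The plan is to reduce this to the non-cyclotomic generation statement of Lemma \ref{GenLem0} together with the filtration/associated-graded description of $\SHH_n^\ell(\ZZZ,\qqq,\ttt)_\LL^-$ coming from Proposition \ref{CyclicPBW} and Corollary \ref{PBWCor}. As in that lemma, it suffices to prove the corresponding statement for the `minus' subalgebra $\SHH_n^\ell(\ZZZ,\qqq,\ttt)^-$: once we can invert $\ee Y_1\cdots Y_n\ee$ inside the subalgebra generated by $\ee\KK[Y_i^{\pm1}]\ee$, the relations $Y_1\cdots Y_n X_j = \qqq^2 X_j Y_1\cdots Y_n$ etc.\ in (\ref{DAHAY}) let us pass from the `plus/minus' version to the full $\SHH_n^\ell(\ZZZ,\qqq,\ttt)$ exactly as in the non-cyclotomic case. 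So fix attention on $\SHH_n^\ell(\ZZZ,\qqq,\ttt)^-$.

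First I would work with the filtration from Proposition \ref{CyclicPBW}(1), under which $\deg X_i=\deg D_i=\ell$, $\deg Y_i^{-1}=2$, $\deg T_i=0$. Since $\ee$ is a degree-$0$ idempotent, the spherical subalgebra is filtered and its associated graded is $\ee\,\mathrm{gr}\,\HH_n^\ell(\ZZZ,\qqq,\ttt)_\LL^-\,\ee$. By Corollary \ref{PBWCor} this associated graded is (after the base change making $\ee$ available) a symmetrized version of $\mathcal R_\ell\rtimes\Sigma_n$, i.e.\ $\big(\mathcal R_\ell\big)^{\Sigma_n}$, via the map (\ref{DegenMap}). The point is that $\big(\CC[z_i,w_i]^{(\ZZ/\ell\ZZ)^n}\big)^{\Sigma_n}$ is generated by the $\Sigma_n$-invariant functions in $z_i^\ell$, in $z_iw_i$, and in $w_i^\ell$ — this is a classical invariant-theory fact for the wreath product $\Sigma_n\wr\ZZ/\ell\ZZ$ acting on $\CC^{2n}$, the ring of invariants being generated by power sums $\sum_i z_i^{a}w_i^{b}$ with $a\equiv b\pmod\ell$, and every such monomial is a polynomial in $\sum z_i^\ell$-type, $\sum (z_iw_i)^j$-type, and $\sum w_i^\ell$-type symmetric functions (this uses $\ell>0$). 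Translating back through (\ref{DegenMap}), the images of $\ee\KK[X_i]\ee$, $\ee\KK[Y_i^{-1}]\ee$, $\ee\KK[D_i]\ee$ generate $\mathrm{gr}\,\SHH_n^\ell(\ZZZ,\qqq,\ttt)^-$. A standard filtered-algebra argument (lift generators, induct on filtration degree) then gives that $\ee\KK[X_i]\ee\cup\ee\KK[Y_i^{-1}]\ee\cup\ee\KK[D_i]\ee$ generates $\SHH_n^\ell(\ZZZ,\qqq,\ttt)^-$ itself. For the specialization $\ttt=\qqq^k$ with $k>2n\ell$, I would run the same argument over $\CC(\qqq)$ after reducing mod $(\ttt-\qqq^k)$; flatness from Proposition \ref{CyclicPBW}(2) and Corollary \ref{PBWCor} ensures the associated graded does not degenerate, and the bound $k>2n\ell$ is what is needed to keep $\ee$ (equivalently $[n]_{\ttt^2}!$) invertible and, following the role of $k>2n$ in Lemma \ref{GenLem0}, to keep the relevant finite-rank pieces nondegenerate so that Nakayama's lemma / the filtered argument applies.

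The main obstacle I anticipate is the classical generation statement for the wreath-product invariant ring $\big(\CC[z_i,w_i]^{(\ZZ/\ell\ZZ)^n}\big)^{\Sigma_n}$ in the form needed here — namely that one can generate using \emph{only} the three prescribed families (symmetric functions of $z_i^\ell$, of $z_iw_i$, and of $w_i^\ell$), rather than the a priori larger set of all mixed power sums $\sum_i z_i^a w_i^b$ with $a\equiv b \pmod \ell$. The mixed power sums with $a,b$ both positive and $a\neq b$ must be expressed in terms of the three families; this is where $\ell>0$ is essential (when $\ell=1$ the relevant degeneration map and the whole strategy change, which is presumably why the $\ell=1$ multiplicative quiver variety needs a separate treatment elsewhere in the paper). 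I would either cite the relevant invariant-theory reference for $\Sigma_n\wr\ZZ/\ell\ZZ$ or give a short direct argument: a mixed power sum $p_{a,b}=\sum_i z_i^a w_i^b$ can be built from $\sum_i(z_iw_i)^{\min(a,b)}$ times a lower monomial by Newton-type identities, reducing to the case $\min(a,b)=0$, which is then symmetric functions of $z_i^\ell$ or of $w_i^\ell$. The secondary technical point — lifting a generating set through the filtration and handling the non-generic specialization via Nakayama exactly as in Lemma \ref{GenLem0} — is routine given that the paper has already set up that template.
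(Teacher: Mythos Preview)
Your overall architecture (work in $\SHH_n^\ell(\ZZZ,\qqq,\ttt)^-$, pass to the associated graded, lift via Nakayama) matches the paper's, but the crucial step fails: the claim that $\mathcal{R}_\ell^{\Sigma_n}$ is generated by the \emph{three separated} commutative subrings --- symmetric polynomials in the $z_i^\ell$, in the $z_iw_i$, and in the $w_i^\ell$ --- is simply false for $\ell>1$. Take $n=2$, $\ell=2$, and look at bidegree $(3,1)$ in $(z,w)$. The $(\ZZ/2\ZZ)^2\rtimes\Sigma_2$-invariants there are $2$-dimensional, spanned by $z_1^3w_1+z_2^3w_2$ and $z_1^2z_2w_2+z_1z_2^2w_1$. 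But any product of separated generators landing in bidegree $(3,1)$ must come from one factor in $z$-degree $2$ and one in $zw$-degree $1$, so the only such product is $(z_1^2+z_2^2)(z_1w_1+z_2w_2)$, giving a $1$-dimensional image. Your proposed ``Newton-type'' reduction does not work either: writing $\sum_i z_i^a w_i^b = \sum_i (z_iw_i)^{\min(a,b)} z_i^{a-b}$ (say $a>b$) is not a product of symmetric functions, because the second factor is attached to the same index $i$.

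The paper uses the commutative degeneration $\mathcal{R}_\ell^{\Sigma_n}$ only to show, via Weyl's theorem, that the \emph{mixed} elements
\[
P_{a,k,b}=\ee\Big(\sum_i X_i^{a}Y_i^{-k}D_i^{b}\Big)\ee
\]
generate $\SHH_n^\ell(\ZZZ,\qqq,\ttt)^-$. The passage from the three separated families to the $P_{a,k,b}$ is then carried out at a genuinely \emph{noncommutative} specialization: one sets $\ttt=1$ with $\qqq$ generic (respectively $\qqq=e^{2\pi i/k}$ in the $\ttt=\qqq^k$ case), takes the associated graded, and uses the $\qqq$-commutator relations
\[
[Y_i^{-1},X_i]=(\qqq^{-2}-1)X_iY_i^{-1},\quad
[D_i,Y_i^{-1}]=(\qqq^{-2}-1)Y_i^{-1}D_i,\quad
[D_i,X_i]=(\qqq^{-2\ell}-1)X_iD_i
\]
to manufacture each $P_{a,k,b}$ out of iterated commutators of symmetric functions of $X$, of $Y^{-1}$, and of $D$, exactly as in Lemma~A.15.2 of \cite{VarVassRoot}. (This is also where the bound $k>2n\ell$ actually enters, to keep the relevant factors such as $\qqq^{-2\ell}-1$ and their iterates nonzero.) Nakayama on each finite-rank filtered piece then lifts generation to generic $\ttt$, respectively to $\ttt=\qqq^k$. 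So the missing idea in your proposal is that the production of the mixed $P_{a,k,b}$ from the separated families requires the noncommutative structure at an intermediate specialization; it cannot be done in the fully commutative limit $\mathcal{R}_\ell^{\Sigma_n}$.
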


\begin{proof}
Because $\SHH_n^\ell(\ZZZ,\qqq,\ttt)^-_\LL$ is a direct summand of $\HH_n^\ell(\ZZZ,\qqq,\ttt)_\LL^-$, it follows from Corollary \ref{PBWCor} that $\SHH_n^\ell(\ZZZ,\qqq,\ttt)_\LL^-$ is a flat deformation of a filtered algebra whose associated graded is isomorphic to 
\[
\mathcal{R}_\ell^{\Sigma_n}=\CC[z_1,\ldots, z_n,w_1,\ldots, w_n]^{\Sigma_n\wr\ZZ/\ell\ZZ}
\]
By a theorem of Weyl \cite{WeylInv} (cf. \cite{GanChev}), $\mathcal{R}_\ell^{\Sigma_n}$ is generated by power sums
\begin{equation}
p_{a,k,b}:=\sum_{i=1}^n z_i^{k+a\ell}w_i^{k+b\ell}
\label{CycPow}
\end{equation}
where $a,k,b\in\ZZ_{\ge 0}$ and $0\le k<\ell$.
Under the $(\qqq,\ttt)\mapsto (1,1)$ degeneration followed by taking the associated graded and applying the map (\ref{DegenMap}), the element
\begin{equation}
P_{a,k,b}:=\ee\left( \sum_{i=1}^n X_i^aY_i^{-k}D_{i}^b \right)\ee
\label{Pakb}
\end{equation}
is sent to $p_{a,k,b}$.
Applying Nakayama's lemma to each piece of the filtration (which has finite-rank), it follows that $\{P_{a,k,b}\}$ generates $\SHH_n^\ell(\ZZZ,\qqq,\ttt)^-$ and $\SHH_n^\ell(\ZZZ,\qqq,\qqq^k)^-$.

To prove the lemma, the steps are similar to the proof of Lemma \ref{GenLem0}.
Namely, 
\begin{enumerate}
\item for $\HH_n^\ell(\ZZZ,\qqq,\ttt)^-$, we consider $\HH_n^\ell(\ZZZ,\qqq,1)_\LL^-$;
\item for $\HH_n^\ell(\ZZZ,\qqq,\qqq^k)^-$, we consider $\HH_n^\ell(\ZZZ,e^{2\pi i/k}, 1)^-$.
\end{enumerate}
After these specializations, we take the associated graded, wherein we have the relations
\begin{align*}
[Y_i^{-1},X_i]&= (\qqq^{-2}-1)X_iY_i^{-1}\\
[D_i,Y_i^{-1}]&= (\qqq^{-2}-1)Y_i^{-1}D_i\\
[D_i, X_i]&= (\qqq^{-2\ell}-1)X_i D_i
\end{align*}
The same steps as in the proof of Lemma A.15.2 in \cite{VarVassRoot} allow us to produce every $P_{a,k,b}$ at these specializations, and the lemma follows from applying Nakayama's lemma to each filtered piece.
\end{proof}

\section{Quantum groups}
The following is an abbreviated recap of Section 3 from our previous work \cite{QHarish}.

\subsection{Quantized enveloping algebra}
In this subsection, we review the algebra $\UU:=U_\qqq(\gl_n)$ and properties of its category of finite-dimensional representations.

\subsubsection{Definition}
First, let us establish some basic notation for roots and weights of $\gl_n$:
\begin{itemize}
\item Let $\{\epsilon_i\}\subset \RR^n$ be the coordinate basis and let $P\cong\ZZ^n$ be the lattice spanned by $\{\epsilon_i\}$.
\item The set $R:=\{\epsilon_i-\epsilon_j\}$ is the set of \textit{roots}, $R^+:=\{\epsilon_i-\epsilon\}_{i<j}\subset R$ is the set of \textit{positive} roots, and the adjacent difference $\alpha_i:=\epsilon_i-\epsilon_{i+1}\subset R^+$ is the \textit{$i$th simple root}.
The sublattice $Q\subset \RR^n$ spanned by $R$ is the \textit{root lattice}.
\item Under the standard pairing $\langle -,-\rangle$ on $\RR^n$ whereby $\{\epsilon_i\}$ forms an orthonormal basis, we identify $P$ with the \textit{weight lattice} (for $\gl_n$).
\item Note that $P$ has another basis $\{\omega_i\}$, where:
\[
\omega_i=\epsilon_1+\cdots+\epsilon_i
\]
We call $\omega_i$ the \textit{$i$th fundamental weight}.
\item We call $\lambda\in P$ \textit{dominant} if it pairs nonnegatively with all elements of $R_+$.
Let $P^+\subset P$ be the subset of dominant weights.
\item We denote by $\delta$ the \textit{staircase partition}:
\[
\delta:=(n-1, n-2,\ldots, 0)\in Q^+
\]
A somewhat similar element is the \textit{Weyl vector}
\[
\rho:=\left( \frac{n-1}{2}, \frac{n-3}{2},\ldots, -\frac{n-1}{2} \right)=\delta-\frac{n-1}{2}\left( 1,\ldots, 1 \right)
\]
Both satisfy $\langle \delta,\alpha_i\rangle=\langle\rho,\alpha_i\rangle=1$ for all $i$.
\end{itemize}

The \textit{quantized enveloping algebra} $\UU=U_\qqq(\gl_n)$ is the $\CC(\qqq)$-algebra with generators
\[
\{E_i, F_i, \qqq^h\, |\, i=1,\ldots, n-1\hbox{ and }h\in P\}
\]
and relations
\begin{gather*}
\qqq^{\vec{0}}=1,\,\qqq^{h_1}\qqq^{h_2}=\qqq^{h_1+h_2},\\
\qqq^hE_i\qqq^{-h}=\qqq^{\langle\alpha_i,h\rangle}E_i,\,\qqq^hF_i\qqq^{-h}=\qqq^{-\langle\alpha_i,h\rangle}F_i,\\
E_iF_j-F_jE_i=\delta_{ij}\dfrac{\qqq^{\alpha_i}-\qqq^{-\alpha_i}}{\qqq-\qqq^{-1}},\\
E_i^2E_{i+1}-(\qqq+\qqq^{-1})E_iE_{i+1}E_i+E_iE_{i+1}^2=0,\\
F_i^2F_{i+1}-(\qqq+\qqq^{-1})F_iF_{i+1}F_i+F_iF_{i+1}^2=0,
\end{gather*}
where $\delta_{ij}$ is the Kronecker delta.
$\UU$ is a Hopf algebra with the coproduct $\Delta_\UU$, counit $\epsilon_\UU$, and antipode $\iota_\UU$ given by
\begin{gather*}
\Delta_\UU(E_i)=E_i\otimes \qqq^{\alpha_i}+1\otimes E_i,\hspace{.5cm}\Delta_\UU(F_i)=F_i\otimes 1+\qqq^{-\alpha_i}\otimes F_i,\hspace{.5cm}\Delta_{\UU}(\qqq^h)=\qqq^h\otimes \qqq^h;\\
\epsilon_\UU(E_i)=\epsilon_\UU(F_i)=0,\hspace{.5cm}\epsilon_\UU(\qqq^h)=1;\\
\iota_\UU(E_i)=-E_i\qqq^{-\alpha_i},\hspace{.5cm}\iota_\UU(F_i)=-\qqq^{\alpha_i}F_i,\hspace{.5cm}\iota_\UU(\qqq^h)=\qqq^{-h}.
\end{gather*}
We will use \textit{Sweedler notation} to denote coproducts:
\[\Delta_\UU(x)=x_{(1)}\otimes x_{(2)}\]
With this notation, the \textit{adjoint action} of $\UU$ on itself can be written as:
\[
\ad_x(y)=x_{(1)}y\iota_\UU(x_{(2)})
\]
Finally, we note here that for $x\in\UU$,
\begin{equation}
\iota_\UU^2(x)=\qqq^{2\rho}x\qqq^{-2\rho}
\label{AntiSquare}
\end{equation}

\subsubsection{R-matrix}\label{RMatrix}
The \textit{universal $R$-matrix} $\widetilde{\RRR}$ is an invertible element of a suitable completion of $\UU^{\otimes 2}$ that satisfies the following properties:
\begin{align}
\label{RMatrixCo1}
(\Delta_\UU\otimes 1)\widetilde{\RRR}&= \widetilde{\RRR}_{13}\widetilde{\RRR}_{23}\\
\label{RMatrixCo2}
(1\otimes\Delta_\UU)\widetilde{\RRR}&= \widetilde{\RRR}_{13}\widetilde{\RRR}_{12}\\
\label{RCoprod}
\widetilde{\RRR}(x_{(1)}\otimes x_{(2)})&= (x_{(2)}\otimes x_{(1)})\widetilde{\RRR}\hbox{ for }x\in \UU
\end{align}
Here, $\widetilde{\RRR}_{ab}$ means that the first tensorand of $\widetilde{\RRR}$ is placed in the $a$th position and the second tensorand in the $b$th position.
Some consequences of these properties are:
\begin{gather}
\nonumber
(\epsilon_\UU\otimes 1)\widetilde{\RRR}=(1\otimes\epsilon_\UU)\widetilde{\RRR}=1\\
\nonumber
(\iota_\UU\otimes 1)\widetilde{\RRR}=(1\otimes \iota_\UU)\widetilde{\RRR}=\widetilde{\RRR}^{-1}\\
\label{QYBE}
\widetilde{\RRR}_{12}\widetilde{\RRR}_{13}\widetilde{\RRR}_{23}=\widetilde{\RRR}_{23}\widetilde{\RRR}_{13}\widetilde{\RRR}_{12}
\end{gather}
The final equation (\ref{QYBE}) is known as the \textit{quantum Yang-Baxter Equation}.
Finally, $\widetilde{\RRR}$ factors in the following manner:
\begin{equation}
\widetilde{\RRR}=\qqq^{-\sum \epsilon_i\otimes\epsilon_i}\left(1\otimes 1 +\widetilde{\RRR}^*\right)
\label{RFactor}
\end{equation}
where, if we let $\UU^+$ and $\UU^-$ be the subspaces of $\UU$ spanned by products of $\{E_i\}$ and $\{F_i\}$ respectively, then $\widetilde{\RRR}^*$ is an element of a suitable completion of $\UU^+\otimes\UU^-$.

The action of $\widetilde{\RRR}$ is well-defined on highest weight representations of $\UU$.
We will be particularly interested in its action on the \textit{vector representation} $\mathbb{V}\cong\CC(\qqq)^n$.
Let $\{e_1,\ldots, e_n\}$ be a basis of $\mathbb{V}$.
If we let $E_{ij}$ be the matrix unit
\[
E_{ij}e_k=\delta_{jk}e_i
\]
then the specialized $R$-matrix has the form
\[
\RRR':=\widetilde{\RRR}\big|_{\mathbb{V}\otimes\mathbb{V}}
=\qqq\sum_i E_{ii}\otimes E_{ii}+\sum_{i\not=j}E_{ii}\otimes E_{jj}+(\qqq-\qqq^{-1})\sum_{i<j}E_{ji}\otimes E_{ij}
\]
We will use more often the transpose of this matrix:
\[
\RRR=\qqq\sum_i E_{ii}\otimes E_{ii}+\sum_{i\not=j}E_{ii}\otimes E_{jj}+(\qqq-\qqq^{-1})\sum_{i>j}E_{ji}\otimes E_{ij}
\]
Let $\tau:\mathbb{V}\otimes\mathbb{V}\rightarrow\mathbb{V}\otimes\mathbb{V}$ be the tensor swap.
$\RRR$ then satisfies the \textit{Hecke condition}:
\begin{equation}
\tau\RRR-\RRR^{-1}\tau=\qqq-\qqq^{-1}
\label{HeckeCond}
\end{equation}

Finally, we will also make use of the \textit{Drinfeld element} $u$ and the \textit{ribbon element} $\nu$.
To define $u$, it is helpful to introduce Einstein notation for $\widetilde{\RRR}$:
\[
\widetilde{\RRR}=\sum_{s}{}_s r\otimes r_s=:{}_s r\otimes r_s
\]
We then set $u:=\iota_\UU(r_s){}_sr$.
In \cite{DrinAlmost}, it was shown that
\[\iota_\UU^2(x)=u xu^{-1}\]
As a consequence of (\ref{AntiSquare}), we have that
\[
\nu:=(\qqq^{-2\rho}u)^{-1}=u^{-1}\qqq^{2\rho}
\]
is central.

\subsubsection{Representations}\label{Representations}
Let $\CCC$ be the category of finite-dimensional $\UU$-modules, and let $\CCC_\ZZ\subset\CCC$ be the subcategory generated by irreducible modules with highest weight in $P^+$.
For $\lambda\in P^+$, we denote the corresponding irreducible module by $V_\lambda\in\CCC_\ZZ$.
By \cite{ReshTurRibbon}, $\CCC$ and $\CCC_\ZZ$ both have the structure of a \textit{ribbon category}, and this gives us a graphical calculus for working with matrix elements of representations.
Here, we review the structures needed in this paper---more details can be found in \cite{BakKir}.
\begin{enumerate}
\item \textit{Tensor and dual}: 
A morphism $f: V\rightarrow W$ will be denoted by an upward pointing arrow from $V$ to $W$ with a coupon labeled by $f$.
We will drop the coupon when denoting the identity morphism, and the identity morphism of $V^*$ will be denoted using a \textit{downward} point arrow from $V$ to $V$.
Tensor product of objects and morphisms will be denoted using horizontal juxtaposition.
For instance, the diagram below denotes $f\otimes g\otimes \mathrm{id}: U_1\otimes V_1\otimes W^*\rightarrow U_2\otimes V_2\otimes W^*$:
\[
\includegraphics{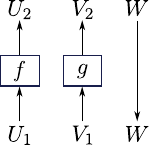}
\]
By (\ref{AntiSquare}), the action of $\qqq^{2\rho}$ identifies $V\cong V^{**}$, and since $\Delta_\UU(\qqq^{2\rho})=\qqq^{2\rho}\otimes\qqq^{2\rho}$, this identification respects tensor products.
\item \textit{Characters}:
For $\mathsf{C}\in \CC(\qqq)^\times$, $\UU$ has a character $\chi_\mathsf{C}$ where
\begin{equation*}
\begin{aligned}
\chi_\mathsf{C}(E_i)&= 0, & \chi_\mathsf{C}(F_i)&= 0, & \chi_\mathsf{C}(\qqq^{h})&= \mathsf{C}^{\langle h,\omega_n\rangle}
\end{aligned}
\end{equation*}
We denote by $\mathbb{1}_\mathsf{C}$ the corresponding representation.
Observe that $\mathbb{1}:=\mathbb{1}_1$ is the trivial representation while $\mathbb{1}_\qqq$ is the \textit{quantum determinant} representation.
\item \textit{(Quantum) Co/evaluations}:
For $V\in\CCC$, let $\left\{ v_i \right\}$ be a basis of $V$ with corresponding dual basis $\left\{ v^i \right\}$.
The canonical maps
\begin{align*}
\mathbb{1}\ni a&\mapsto av_i\otimes v^i\in V\otimes V^*\\
V^*\otimes V\ni v^i\otimes v_j&\mapsto \delta_{ij}\in\mathbb{1}
\end{align*}
are homomorphisms $\mathrm{coev}_V:\mathbb{1}\rightarrow V\otimes V^*$ and $\mathrm{ev}_V:V^*\otimes V\rightarrow\mathbb{1}$, respectively.
Graphically, we will omit depicting $\mathbb{1}$; $\mathrm{ev}_V$ and $\mathrm{coev}_V$ appear as caps and cups oriented towards the left:
\[
\includegraphics{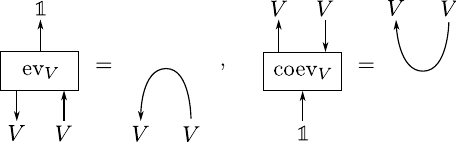}
\]
Using $\mathrm{ev}_W$ and $\mathrm{coev_V}$, we can define for $f:V\rightarrow W$ an adjoint $f^*:W^*\rightarrow V^*$.

To define maps with the opposite ordering, we will use $\qqq^{-2\rho}:V^{**}\rightarrow V$:
\begin{align*}
\mathbb{1}\ni a&\mapsto av^i\otimes \qqq^{-2\rho}v_i\in V^*\otimes V\\
V\otimes V^*\ni \qqq^{-2\rho}v_i\otimes v^j&\mapsto \delta_{ij}\in\mathbb{1}
\end{align*}
We denote these maps by $\mathrm{qcoev}_V$ and $\mathrm{qev}_V$, respectively.
Graphically, they will be depicted as cups and caps with orientations opposite from before:
\[
\includegraphics{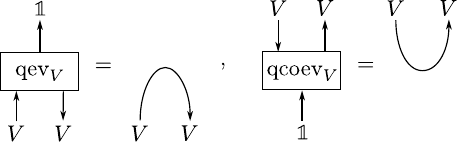}
\]
\item \textit{Braiding}: Recall the universal $R$-matrix $\widetilde{\RRR}$ from \ref{RMatrix}.
Let $\tau_{V,W}:V\otimes W\rightarrow W\otimes V$ be the tensor swap.
By (\ref{RCoprod}), $\beta_{V,W}:=\tau_{V,W}\widetilde{\RRR}: V\otimes W\rightarrow W\otimes V$ is a $\UU$-module isomorphism, and in fact it endows $\mathfrak{\CCC}$ with the structure of a \textit{braided monoidal category}.
We depict $\beta_{V,W}$ and $\beta_{W,V}$ by braid crossings:
\[
\includegraphics{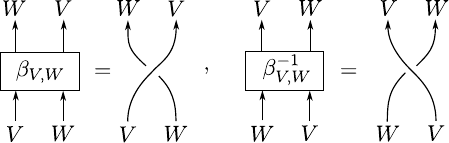}
\]
\item \textit{Ribbon structure}: The action of the ribbon element $\nu$ gives a $\UU$-endomorphism of $V$, which can be drawn in two ways:
\[
\includegraphics{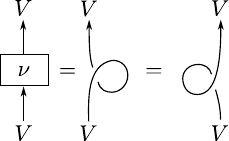}
\]
In \cite{DrinAlmost}, it was computed by Drinfeld that
\begin{equation}
\nu\big|_{V_\lambda}=\qqq^{\langle\lambda,\lambda+2\rho\rangle}
\label{RibbonAct}
\end{equation}
\end{enumerate}

Drawing these decorated strands in $\RR^3$ as emanating upwards from a fixed horizontal axis, we can enhance them into ribbons with a front and back side.
We require the front side to always face the reader at the start and end of the ribbon---the back side may appear in the middle due to the loop given by $\nu$, in which case the ribbon is twisted twice.
The graphical calculus assigns to each morphism in $\CCC$ such a \textit{$\CCC$-colored ribbon tangle}.
\begin{thm}[\cite{ReshTurRibbon}]
A morphism in $\CCC$ only depends on the isotopy type of its associated $\CCC$-colored ribbon tangle.
\end{thm}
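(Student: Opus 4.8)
The statement to prove is the Reshetikhin--Turaev theorem: a morphism in $\CCC$ depends only on the isotopy type of its associated $\CCC$-colored ribbon tangle. Since the paper is recapping known material from \cite{ReshTurRibbon}, the plan is to explain the standard strategy rather than reprove it from scratch. First I would set up the target precisely: $\CCC$-colored ribbon tangles between two sequences of colored, oriented points form a strict monoidal category $\mathrm{Rib}_{\CCC}$ (objects are finite words in $\mathrm{Ob}(\CCC)\times\{+,-\}$, morphisms are isotopy classes of framed tangles colored by $\CCC$), and one must exhibit a monoidal functor $F:\mathrm{Rib}_{\CCC}\to\CCC$ sending a colored point $(V,+)$ to $V$, a point $(V,-)$ to $V^*$, the elementary cup/cap tangles to $\mathrm{coev}_V,\mathrm{ev}_V,\mathrm{qcoev}_V,\mathrm{qev}_V$, the positive/negative crossings to $\beta_{V,W}^{\pm 1}$, the curl to the ribbon twist $\nu|_{V}$, and each coupon labeled $f$ to $f$ itself; then show $F$ is well-defined (isotopic tangles go to equal morphisms) and that it restricts to the claimed bijection.

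The key steps, in order, are: (1) \emph{Generators and relations for ribbon tangles.} Invoke the presentation of $\mathrm{Rib}_{\CCC}$ by the elementary tangles above modulo a finite list of local relations -- the Reidemeister-type moves for framed tangles (RII, RIII, the zig-zag/snake identities, the sliding of coupons past cups, caps, and crossings, and the twist/ribbon relation relating a curl to $\nu$). This is a theorem of Turaev (equivalently Yetter, Shum) identifying $\mathrm{Rib}_{\CCC}$ as the free ribbon category on $\CCC$; I would cite it. (2) \emph{$F$ respects the relations.} Each local relation becomes an identity of $\UU$-module maps that has already been packaged into the structures recalled in this section: RIII is the quantum Yang--Baxter equation \eqref{QYBE}; RII is $\beta_{W,V}\beta_{V,W}=\mathrm{id}$, immediate from $\widetilde{\RRR}$ being invertible; the zig-zag identities are the defining adjunction properties of $(\mathrm{coev}_V,\mathrm{ev}_V)$ and $(\mathrm{qcoev}_V,\mathrm{qev}_V)$, where the latter uses precisely the map $\qqq^{-2\rho}:V^{**}\to V$ introduced above together with \eqref{AntiSquare}; naturality of the braiding with respect to coupons is \eqref{RCoprod}; compatibility of braiding with cups and caps follows from $(\iota_\UU\otimes 1)\widetilde{\RRR}=\widetilde{\RRR}^{-1}$ and the counit identity; and the curl relation is the computation \eqref{RibbonAct} of $\nu$ on $V_\lambda$ (extended to all of $\CCC$ by the Drinfeld identity $\iota_\UU^2(x)=uxu^{-1}$ and centrality of $\nu$). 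Checking each is a short diagrammatic verification, not a lengthy calculation. (3) \emph{Well-definedness on isotopy classes.} Combine (1) and (2): any two tangle diagrams representing isotopic framed colored tangles differ by a finite sequence of the local moves, so their images under $F$ agree. (4) \emph{Faithfulness/surjectivity onto morphisms.} Every morphism in $\CCC$ arises as $F$ of some tangle because, e.g., any $f:V\to W$ is literally the image of the one-coupon tangle; and the statement as phrased -- ``a morphism only depends on the isotopy type'' -- is exactly the assertion that $F$ is well-defined on isotopy classes, which is (3).

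The main obstacle is step (2) for the ribbon/twist relation and the duality coherence: getting the normalizations right so that the two ways of drawing the twist (the two pictures in the \emph{Ribbon structure} item) genuinely agree, and so that $\mathrm{qev}_V,\mathrm{qcoev}_V$ are the correct ``rotated'' duals rather than off by a scalar. This is where \eqref{AntiSquare}, the fact that $\Delta_\UU(\qqq^{2\rho})=\qqq^{2\rho}\otimes\qqq^{2\rho}$, and Drinfeld's $\nu=u^{-1}\qqq^{2\rho}$ all have to be used in concert; once the pivotal structure given by $\qqq^{2\rho}$ is shown to be compatible with the braiding (i.e. $\CCC$ is genuinely ribbon, not merely pivotal braided), the rest of the verification is routine and the theorem follows. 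Since this is entirely classical, in the paper I would simply state it with the citation to \cite{ReshTurRibbon} and the above as the sketch of why it applies to $\UU=U_\qqq(\gl_n)$.
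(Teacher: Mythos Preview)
Your sketch of the Reshetikhin--Turaev argument is accurate and well-organized, but you should be aware that the paper does not prove this theorem at all: it is stated with the citation to \cite{ReshTurRibbon} and used as a black box. Your closing sentence already anticipates this, and that is exactly what the paper does---no proof, not even a sketch, is given. So there is nothing to compare; your proposal is simply more than the paper offers.
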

\noindent In practice, we will work with strands instead of ribbons but keep track of loops coming from $\nu$.

\subsection{Reflection equation algebra}
Whereas $\UU$ deforms the universal enveloping algebra, we will also make use of a $\UU$-equivariant quantization $\OO_G$ of the ring of functions on $G=GL_n$.
This algebra has both a categorical description as well as one by generators and relations.

\subsubsection{Braided Tannakian reconstruction}\label{OGDef}
We review here the construction of $\OO_G$ via a braided analogue of Tannakian reconstruction, defined by Majid \cite{MajidBraided}.
As a $\CC(\qqq)$-vector space, $\OO_G$ is defined as the \textit{coend} of $\CCC_\ZZ$:
\begin{equation}
\begin{aligned}
\OO_G&:=\left(\bigoplus_{V\in\CCC_\ZZ} V^*\otimes V\right)\bigg/
\left\langle
f^*(v^*)\otimes w-v^*\otimes f(w)\,\middle|\,
\begin{array}{l}
v^*\in V^*,\, w\in W,\\
f\in\mathrm{Hom}_\UU(W,V)
\end{array}
\right\rangle\\
&\cong \bigoplus_{\lambda\in P^+}V_\lambda^*\otimes V_\lambda
\end{aligned}
\label{Coend}
\end{equation}
$\OO_G$ is evidently a $\UU$-module.
Using the categorical structures of $\CCC_\ZZ$ reviewed in \ref{Representations}, we can endow $\OO_G$ with a $\UU$-equivariant Hopf algebra structure:
\begin{itemize}
\item \textit{Algebra structure:}
For $v^*\otimes v\in V^*\otimes V$ and $w^*\otimes w\in W^*\otimes W$,
\[
m_{\OO_G}(v^*\otimes v\otimes w^*\otimes w) = r_tr_sw^*\otimes {}_trv^*\otimes{}_srv\otimes w
\]
\[
\includegraphics{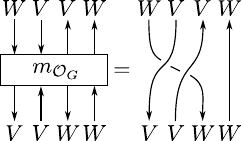}
\]
The inclusion $\mathbb{1}\rightarrow\mathbb{1}^*\otimes\mathbb{1}\in\OO_G$ provides the unit.
\item \textit{Coalgebra structure:}
For $v^*\otimes v\in V^*\otimes V$, we define the coproduct $\Delta_{\OO_G}(v^*\otimes v)$ as:
\[
\Delta_{\OO_G}(v^*\otimes v)=v^*\otimes \mathrm{coev}_V(1)\otimes v
\] 
\[
\includegraphics{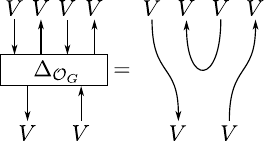}
\]
The evaluation map $\mathrm{ev}_V$ on $V^*\otimes V$ yields the counit $\epsilon_{\OO_G}$.
\item \textit{Antipode:} The antipode $\iota_{\OO_G}$ is given by:
\[
\iota_{\OO_G}(v^*\otimes v)= \nu r_s v\otimes {}_srv^*
\]
\[
\includegraphics{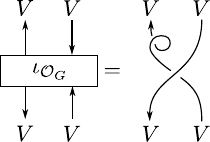}
\]
\end{itemize}

\subsubsection{Presentation}
$\OO_G$ is also a localization of the \textit{reflection equation algebra}.
To define the latter, recall the matrix units $\{E_{ij}\}$, the vector representation $\mathbb{V}$, and the specialized $R$-matrix $\RRR$ from \ref{RMatrix}.
Let $\{m_{ij}\}_{1\le i,j\le n}$ be a set of generators.
We place them in a matrix $\mathbf{M}$ as follows:
\[
\mathbf{M}=\sum_{i,j}E_{ij}\otimes m_{ij}
\]
For relations involving such matrices, we will only use subscripts to denote the tensorand wherein we place the matrix units---all remaining factors are assumed to be placed in the final tensorand.
\begin{thm}\cite{DonMudRET}
Let $\mathcal{R}\hbox{\it ef}$ be the $\CC(\qqq)$-algebra generated by the entries of $\mathbf{M}$ subject to the relations:
\[
\RRR_{21}\mathbf{M}_{1}\RRR\mathbf{M}_{2}=\mathbf{M}_{2}\RRR_{21}\mathbf{M}_{1}\RRR
\]
We have the following:
\begin{enumerate}
\item Let $\{e_i\}$ be a basis of $\mathbb{V}$ with dual basis $\{e^i\}$.
The map $\varphi:\mathcal{R}\hbox{\it ef}\rightarrow\OO_G$ induced by $m_{ij}\mapsto e^i\otimes e_j$ is an injective algebra homomorphism.
\item There exists a central element $\det_{\qqq}(\mathbf{M})\in\mathcal{R}\hbox{\it ef}$ such that 
\[\varphi(\textstyle\det_{\qqq}(\mathbf{M}))=\mathrm{qcoev}_{\mathbb{1}_\qqq}(1)\]
The map $\varphi$ becomes an isomorphism upon inverting $\det_{\qqq}(\mathbf{M})$.
\end{enumerate}
\end{thm}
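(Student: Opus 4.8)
The plan is to prove (1) in two moves --- first that $\varphi$ is a well-defined algebra homomorphism, then that it is injective --- and to deduce (2) from a description of $\mathrm{im}(\varphi)$. For the homomorphism property, assemble the images into $\mathbf{M}^{\OO}:=\sum_{i,j}E_{ij}\otimes(e^i\otimes e_j)\in\mathrm{End}(\mathbb{V})\otimes\OO_G$. Applying the product rule $m_{\OO_G}(v^*\otimes v\otimes w^*\otimes w)=r_tr_sw^*\otimes{}_trv^*\otimes{}_srv\otimes w$ to two copies of the $\mathbb{V}$-block, the expressions $\RRR_{21}\mathbf{M}^{\OO}_{1}\RRR\mathbf{M}^{\OO}_{2}$ and $\mathbf{M}^{\OO}_{2}\RRR_{21}\mathbf{M}^{\OO}_{1}\RRR$ unwind into morphisms $\mathbb{V}^{\otimes 4}\to\mathbb{V}^{\otimes 4}$ built from $\RRR,\RRR_{21}$ times the single block element in $\mathbb{V}^*\otimes\mathbb{V}$; drawn as $\CCC$-colored ribbon tangles, the desired equality reduces to the quantum Yang--Baxter equation (\ref{QYBE}), the Hecke condition (\ref{HeckeCond}), and the naturality (\ref{RCoprod}) of $\widetilde{\RRR}$. (This is the standard statement that the matrix coefficients of the braiding on the vector representation span a copy of the reflection equation algebra inside any coend.) Thus $m_{ij}\mapsto e^i\otimes e_j$ respects the defining relation, $\varphi$ is a homomorphism, and it is manifestly $\UU$-equivariant.

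For injectivity I would argue by a Hilbert-series comparison. Filtering $\mathcal{R}\hbox{\it ef}$ by degree in the generators, the reflection equation degenerates semiclassically to a quadratic Poisson structure on $\gl_n^*$, and a PBW theorem identifies $\mathcal{R}\hbox{\it ef}$ as a flat deformation of $\mathbb{C}[\mathrm{Mat}_n]=\Sym(\gl_n^*)$ --- this is the one genuinely nontrivial input, provable by a Bergman diamond-lemma argument (or imported from the identification of $\mathcal{R}\hbox{\it ef}$ with the locally finite part $F_l(\UU)$ via $\mathbf{M}\mapsto\mathbf{L}^+\iota_\UU(\mathbf{L}^-)$). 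On the other side, $\mathrm{im}(\varphi)$ is the subalgebra of $\OO_G=\bigoplus_{\lambda\in P^+}V_\lambda^*\otimes V_\lambda$ generated by the $\mathbb{V}=V_{\omega_1}$-block; tensor powers of $\mathbb{V}$ realize, by quantum Schur--Weyl, exactly the $V_\lambda$ with $\lambda$ a partition, so $\mathrm{im}(\varphi)=\bigoplus_{\lambda\text{ a partition}}V_\lambda^*\otimes V_\lambda$, the standard quantization of $\mathbb{C}[\mathrm{Mat}_n]$, which has the same Hilbert series as $\mathcal{R}\hbox{\it ef}$. A filtered surjection between algebras with equal (and finite per graded piece) Hilbert series is bijective, so $\varphi$ is injective.

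For part (2), take $\mathbb{1}_\qqq=V_{\omega_n}$, the one-dimensional representation realized as the top quantum exterior power $\Lambda^n_\qqq\mathbb{V}\subset\mathbb{V}^{\otimes n}$, cut out by the quantum antisymmetrizer $A_n\in\mathrm{End}(\mathbb{V}^{\otimes n})$; let $\det_{\qqq}(\mathbf{M})\in\mathcal{R}\hbox{\it ef}$ be the associated quantum minor, obtained by contracting the $\RRR$-dressed product $\mathbf{M}_{1}\cdots\mathbf{M}_{n}$ dictated by $m_{\OO_G}$ on $n$ copies of the $\mathbb{V}$-block against $A_n$. Tracking the tangle shows $\varphi(\det_{\qqq}(\mathbf{M}))$ is exactly the generator $\mathrm{qcoev}_{\mathbb{1}_\qqq}(1)$ of the block $\mathbb{1}_\qqq^*\otimes\mathbb{1}_\qqq$, the $\qqq^{-2\rho}$-twist distinguishing $\mathrm{qcoev}$ from $\mathrm{coev}$ being precisely what is needed to close $\mathbb{V}^{\otimes n}$ up on the correct side (note $\qqq^{-2\rho}$ acts trivially on $\mathbb{1}_\qqq$). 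Centrality of $\det_{\qqq}(\mathbf{M})$ in $\mathcal{R}\hbox{\it ef}$ is the usual quantum Cayley--Hamilton computation: sandwiching the reflection equation between antisymmetrizers allows $A_n$ to absorb the crossings via (\ref{HeckeCond}), collapsing the relation to $\det_{\qqq}(\mathbf{M})\mathbf{M}_{n+1}=\mathbf{M}_{n+1}\det_{\qqq}(\mathbf{M})$. Since the leading symbol of $\det_{\qqq}(\mathbf{M})$ is the classical determinant it is a non-zero-divisor, so $\varphi$ extends injectively to $\mathcal{R}\hbox{\it ef}[\det_{\qqq}(\mathbf{M})^{-1}]$; and in $\OO_G$ multiplication by $\varphi(\det_{\qqq}(\mathbf{M}))^{-1}$ carries $V_\lambda^*\otimes V_\lambda$ isomorphically onto $V_{\lambda-\omega_n}^*\otimes V_{\lambda-\omega_n}$, so every block $V_\mu^*\otimes V_\mu$ with $\mu\in P^+$ equals $\varphi(\det_{\qqq}(\mathbf{M}))^{-N}$ times a block indexed by a partition for $N\gg 0$. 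Hence the extension is surjective, therefore an isomorphism.

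The substantive point is the PBW/flatness of $\mathcal{R}\hbox{\it ef}$ feeding the injectivity argument --- that the reflection equation forces no collapse of the classical Hilbert series. The remaining steps (verifying the relation in the coend, identifying $\det_{\qqq}(\mathbf{M})$, and passing to the localization) are bookkeeping with $m_{\OO_G}$ and the ribbon calculus once the Hilbert-series match is in place.
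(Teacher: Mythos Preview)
The paper does not prove this theorem; it is stated with the citation \cite{DonMudRET} and no proof environment follows. So there is nothing in the paper to compare your argument against.

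That said, your outline is the standard route and is essentially correct. The homomorphism check really does reduce to naturality of the braiding and the coend multiplication rule; the injectivity via a Hilbert-series match against $\bigoplus_{\lambda\text{ partition}}V_\lambda^*\otimes V_\lambda$ is the right idea, and you correctly flag the PBW/flatness of $\mathcal{R}\hbox{\it ef}$ as the one substantive input (the paper later invokes exactly this, citing \cite{KlimSchm} and \cite{JordanMult} for the basis of standard monomials). For part (2), your description of $\det_\qqq(\mathbf{M})$ via the antisymmetrizer and the localization argument shifting $\lambda\mapsto\lambda-\omega_n$ to reach all of $P^+$ is the standard proof; the paper itself only uses the categorical characterization $\varphi(\det_\qqq(\mathbf{M}))=\mathrm{qcoev}_{\mathbb{1}_\qqq}(1)$ and defers the explicit formula to \cite{JorWhite}. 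One small point: your remark that $\qqq^{-2\rho}$ acts trivially on $\mathbb{1}_\qqq$ is not quite right (it acts by a scalar, not by $1$), but this is irrelevant since the block $\mathbb{1}_\qqq^*\otimes\mathbb{1}_\qqq$ is one-dimensional and any nonzero vector will do.
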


$\mathcal{R}\hbox{\it ef}$ is called the \textit{reflection equation algebra}.
We will abuse notation and view $\mathbf{M}$ as a matrix of elements in $\OO_G$.
By the antipode condition, we have that $\mathbf{M}^{-1}:=(1\otimes \iota_{\OO_G})(\mathbf{M})$ does indeed satisfy $\mathbf{M}\mathbf{M}^{-1}=\mathbf{M}^{-1}\mathbf{M}=\mathbf{I}$.
The element $\det_\qqq(\mathbf{M})$ is called the \textit{quantum determinant}.
An explicit formula for it is given in \cite{JorWhite}.
We will be content with its categorical interpretation in terms of $\mathrm{qcoev}_{\mathbb{1}_\qqq}$.
For instance, from this interpretation, it is easy to see that
\[
\textstyle\iota_{\OO_G}(\det_\qqq(\mathbf{M}))=\det_\qqq(\mathbf{M})^{-1}
\]
We will denote by $\det_\qqq(\mathbf{M}^{-1}):=\iota_{\OO_G}(\det_\qqq(\mathbf{M}))$.
Note that this is not necessarily the Jordan--White formula applied to $\mathbf{M}^{-1}$.

\subsubsection{Joseph--Letzter--Rosso homomorphism}
To $v^*\otimes v\in V^*\otimes V$, we can associate a functional on $\UU$ by
\[
(v^*\otimes v)(x)=v^*(xv)\hbox{ for }x\in\UU
\]
The following map can be viewed as assigning a dual element via a quantum Killing form:
\begin{thm}\cite{JosLetztLocal,RossoKilling}
The maps $\kappa,\bar{\kappa}:\OO_G\rightarrow \mathcal{U}$ given by
\begin{align*}
\kappa(v^*\otimes v)&=((v^*\otimes v)(-)\otimes 1)\widetilde{\mathbf{R}}_{21}\widetilde{\mathbf{R}}\\
\bar{\kappa}(v^*\otimes v)&=((v^*\otimes v)(-)\otimes 1)\widetilde{\mathbf{R}}^{-1}\widetilde{\mathbf{R}}^{-1}_{21}
\end{align*}
are $\mathcal{U}$-equivariant algebra embeddings (where $\mathcal{U}$ is endowed with the adjoint action).
Moreover, the center of $\UU$ is $\kappa(\OO_G^\UU)=\bar{\kappa}(\OO_G^\UU)$.
\end{thm}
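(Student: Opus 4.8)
The plan is to verify the three assertions in turn for $\kappa$; the statements for $\bar\kappa$ then follow verbatim by the mirror substitution $\widetilde{\RRR}\mapsto\widetilde{\RRR}^{-1}$, $\widetilde{\RRR}_{21}\mapsto\widetilde{\RRR}^{-1}_{21}$, since every structural identity of $\widetilde{\RRR}$ invoked below has such a mirror form. To begin, $\kappa$ is well defined on the coend (\ref{Coend}): for a $\UU$-map $f\colon W\to V$ and $x\in\UU$ one has $f^*(v^*)(xw)=v^*(xf(w))$, so the functionals $(f^*(v^*)\otimes w)(-)$ and $(v^*\otimes f(w))(-)$ coincide, and hence so do their images under $\kappa$; no property of $\widetilde{\RRR}$ is used here. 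For $\UU$-equivariance, recall that in the coend picture the adjoint-type action on $\OO_G$ is the diagonal $\UU$-action on each summand $V_\lambda^*\otimes V_\lambda\cong\mathrm{End}(V_\lambda)$. The crucial input is (\ref{RCoprod}): applied to each tensor leg it shows that $\widetilde{\RRR}_{21}\widetilde{\RRR}$ commutes with $\Delta_\UU(\UU)$; combining this with the Sweedler-notation formula $\ad_x(y)=x_{(1)}\,y\,\iota_\UU(x_{(2)})$ and the Hopf axioms then yields $\kappa(x\cdot\xi)=\ad_x(\kappa(\xi))$.

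For the algebra-homomorphism property it suffices, by linearity, to compare $\kappa\bigl(m_{\OO_G}(v^*\otimes v\otimes w^*\otimes w)\bigr)$ with $\kappa(v^*\otimes v)\,\kappa(w^*\otimes w)$. I would expand $(\Delta_\UU\otimes 1)\bigl(\widetilde{\RRR}_{21}\widetilde{\RRR}\bigr)$ using the hexagon identities (\ref{RMatrixCo1})--(\ref{RMatrixCo2}), reorder the resulting product of copies of $\widetilde{\RRR}$ with the quantum Yang--Baxter equation (\ref{QYBE}), and check that what remains is exactly the $\RRR$-matrix formula defining $m_{\OO_G}$ in \ref{OGDef}. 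Conceptually, this is nothing but the statement that the product on $\OO_G$ is the braided (``transmuted'') product of \cite{MajidBraided}, designed precisely so that the $L$-operator map $\kappa$---whose two halves are the $\ell^{\pm}$ homomorphisms governing the reflection equation algebra \cite{DonMudRET}---is a genuine algebra map. In particular $\kappa\circ\varphi$ sends the generating matrix $\mathbf{M}$ to a matrix $\mathbf L$ of elements of $\UU$ that again solves the reflection equation, a fact I will reuse below.

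The substance of the theorem is injectivity of $\kappa$, which I expect to be \textbf{the main obstacle}. Here I would put on $\OO_G=\bigoplus_\lambda V_\lambda^*\otimes V_\lambda$ and on $\UU$ compatible filtrations indexed by the dominance order on $P^+$, verify that $\kappa$ is filtered, and reduce to injectivity of the associated graded map. On the block $V_\lambda^*\otimes V_\lambda$ the leading term of $\kappa$ is controlled by the triangular factorization (\ref{RFactor}): writing $\widetilde{\RRR}=\qqq^{-\sum\epsilon_i\otimes\epsilon_i}(1\otimes 1+\widetilde{\RRR}^*)$ with $\widetilde{\RRR}^*$ in a completion of $\UU^+\otimes\UU^-$, the product $\widetilde{\RRR}_{21}\widetilde{\RRR}$ has a ``Cartan-heavy'' top part, and pairing it against an extremal matrix coefficient of $V_\lambda$ produces an element of $\UU$ whose leading PBW term is a product of an explicit group-like $\qqq^{h}$, depending injectively on $\lambda$, with extremal root vectors. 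Distinctness of these leading terms across $\lambda$, together with non-degeneracy of the matrix-coefficient pairing $\OO_G\otimes\UU\to\CC(\qqq)$ on each block (quantum Peter--Weyl), forces the associated graded map---hence $\kappa$ itself---to be injective. The delicate part is the bookkeeping tracking how the triangular decomposition of $\widetilde{\RRR}$ interacts with the weight gradings; the equivariance and algebra-map properties, by contrast, are formal consequences of the $R$-matrix axioms.

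Finally, the identification of the center. One checks first that for a Hopf algebra the $\ad$-invariants coincide with the centre, so $\UU^\UU=Z(\UU)$; by $\UU$-equivariance $\kappa$ then carries $\OO_G^\UU=\bigoplus_\lambda(V_\lambda^*\otimes V_\lambda)^\UU$ into $Z(\UU)$. For the reverse inclusion I would invoke the quantum Harish-Chandra description of $Z(U_\qqq(\gl_n))$: it is generated by the quantum trace invariants $\tr_\qqq(\mathbf L^k)$, $k\ge 1$, together with $\det_\qqq(\mathbf L)^{-1}$---the $\qqq$-analogue of the Gelfand invariants. Since $\mathbf{M}$ solves the reflection equation, the elements $\tr_\qqq(\mathbf{M}^k)$ and $\det_\qqq(\mathbf{M})^{-1}$ are $\ad$-invariant in $\OO_G$, and the algebra map $\kappa\circ\varphi$ sends them to the corresponding generators of $Z(\UU)$; hence $Z(\UU)\subseteq\kappa(\OO_G^\UU)$, and with injectivity we conclude $\kappa(\OO_G^\UU)=Z(\UU)$. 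The identical argument for $\bar\kappa$ gives $\bar\kappa(\OO_G^\UU)=Z(\UU)$ as well, so $\kappa(\OO_G^\UU)=\bar\kappa(\OO_G^\UU)$, completing the proof.
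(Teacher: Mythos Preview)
The paper does not supply a proof of this theorem: it is stated with citations to \cite{JosLetztLocal,RossoKilling} and then used as a black box. So there is no ``paper's own proof'' to compare against. Your proposal is a reasonable sketch of the standard argument, essentially tracing the route of Joseph--Letzter: the formal $R$-matrix identities give well-definedness, equivariance, and the algebra-map property, while injectivity is the substantive step, handled by a filtration argument that exploits the triangular decomposition (\ref{RFactor}) and PBW. This is indeed how \cite{JosLetztLocal} proceeds.

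A few remarks on points where your sketch is thin. For the algebra-map check, you would actually need to track how the braided product $m_{\OO_G}$ interacts with $(\Delta_\UU\otimes 1)(\widetilde{\RRR}_{21}\widetilde{\RRR})$; this is not quite automatic from (\ref{RMatrixCo1})--(\ref{RMatrixCo2}) and (\ref{QYBE}) alone, but also uses (\ref{RCoprod}) to reorder factors---the computation is the content of Majid's transmutation, as you note. For injectivity, the filtration you describe is by the height $\langle\lambda,2\rho\rangle$ of the dominant weight, and the key nondegeneracy input is that the quantum Killing form (or equivalently the Rosso form \cite{RossoKilling}) is nondegenerate on each filtered piece; your ``leading PBW term'' heuristic is correct but would need the explicit form of the quasi-$R$-matrix to be made rigorous. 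Finally, for the centre statement, your appeal to the quantum Harish-Chandra isomorphism is the right tool, though one can also argue more directly: since $\kappa$ is an equivariant embedding and $\OO_G$ is locally finite under the adjoint action, its image is exactly the locally finite part $F(\UU)$ of $\UU$ under $\ad$, and one then identifies $F(\UU)^\UU=Z(\UU)$ separately (this is the original Joseph--Letzter formulation).
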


The map $\kappa$ does not respect coproducts, but the discrepancy can be pinned down precisely:
\begin{prop}[\cite{VarVassRoot}]\label{Coideal}
Let $\left\{ v_i \right\}\subset V$ and $\left\{ v^i \right\}\subset V^*$ be dual bases.
For $v^*\otimes v\in V^*\otimes V$, we have
\begin{equation}
\label{KappaCo}
(\Delta_\UU\circ\kappa)(v^*\otimes v)= \kappa(v^*\otimes v_i) r_sr_t\otimes\kappa({}_srv^i\otimes{}_trv)
\end{equation}
In particular, $\kappa(\OO_G)$ is a left coideal sublagebra of $\mathcal{U}$, i.e. 
\begin{align*}
(\Delta_\UU\circ\kappa)(\OO_G)&\subset \mathcal{U}\otimes\kappa(\OO_G)
\end{align*}
\end{prop}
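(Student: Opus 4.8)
The goal is to prove Proposition \ref{Coideal}: the identity (\ref{KappaCo}) for the coproduct of $\kappa(v^*\otimes v)$, and the coideal consequence. The plan is to compute $(\Delta_\UU\circ\kappa)(v^*\otimes v)$ directly from the definition $\kappa(v^*\otimes v)=((v^*\otimes v)(-)\otimes 1)\widetilde{\RRR}_{21}\widetilde{\RRR}$, treating the functional slot as an auxiliary copy of $V^*\otimes V$ and pushing $\Delta_\UU$ through. First I would recall that $\Delta_\UU$ is an algebra map, so $\Delta_\UU$ applied to the product $\widetilde{\RRR}_{21}\widetilde{\RRR}$ (where the functional-carrying factor is $V$ and the output factor is being doubled) splits as $(\,\mathrm{id}_V\otimes\Delta_\UU)(\widetilde{\RRR}_{21})\cdot(\mathrm{id}_V\otimes\Delta_\UU)(\widetilde{\RRR})$. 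Then I apply the two coproduct identities for the universal $R$-matrix, (\ref{RMatrixCo1}) and (\ref{RMatrixCo2}): in the slot being doubled, $(1\otimes\Delta_\UU)\widetilde{\RRR}=\widetilde{\RRR}_{13}\widetilde{\RRR}_{12}$ and correspondingly for $\widetilde{\RRR}_{21}$ one gets $(\Delta_\UU\otimes 1)$ applied in the doubled slot, i.e. $\widetilde{\RRR}_{21}$ becomes a product $\widetilde{\RRR}_{31}\widetilde{\RRR}_{21}$ with labels arranged so the functional factor sees both tensor copies.

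Next I would reorganize the resulting four-fold product of $R$-matrices (two from $\widetilde{\RRR}_{21}$, two from $\widetilde{\RRR}$, now spread across three tensor slots: the functional slot plus the two output slots) and insert a resolution of the identity $v_i\otimes v^i$ in the functional slot $V$, so that the single evaluation $v^*(xv)$ factors through intermediate basis vectors. Matching up which $R$-matrix legs land in the "first output slot" versus the "second output slot" and recognizing each grouped pair of legs as another instance of $\kappa$ applied to an appropriate $V^*\otimes V$ element (namely $v^*\otimes v_i$ on the left and ${}_srv^i\otimes{}_trv$ on the right, with Einstein-summed $R$-matrix components ${}_sr, r_s, {}_tr, r_t$ bridging the two), should produce exactly the right-hand side of (\ref{KappaCo}). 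The cleanest way to bookkeep this is graphically: draw $\kappa(v^*\otimes v)$ as the standard double-braiding "clasp" tangle around a strand colored $V$, apply $\Delta_\UU$ by cutting that strand, and slide the braidings using naturality of the $R$-matrix; the coproduct identities (\ref{RMatrixCo1})--(\ref{RMatrixCo2}) are precisely the statements that let the clasp split into two clasps linked by a pair of $R$-matrix strands, which is the pictorial content of (\ref{KappaCo}). For the coideal claim, I observe that in (\ref{KappaCo}) the \emph{left} tensor factor $\kappa(v^*\otimes v_i)$ already lies in $\kappa(\OO_G)$, while the right factor is manifestly $\kappa$ of an element of $V^*\otimes V$, hence also in $\kappa(\OO_G)$; so $(\Delta_\UU\circ\kappa)(v^*\otimes v)\in\UU\otimes\kappa(\OO_G)$, which is the asserted inclusion (the extra $r_sr_t$ in the middle is absorbed into the $\UU$ factor). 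Since the $v^*\otimes v$ span $\OO_G$ over the decomposition (\ref{Coend}), linearity finishes it.

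The main obstacle I anticipate is purely organizational: getting the leg-labeling consistent when $\Delta_\UU$ is applied in one tensorand of a product of four $R$-matrices living a priori in $\UU^{\otimes 2}$ but now spread over $\UU^{\otimes 3}$, and making sure the $R$-matrix components ${}_sr\otimes r_s$ and ${}_tr\otimes r_t$ get attached to the correct pair of slots so that they reassemble into $\kappa$ on both sides rather than some twisted variant. The factorization (\ref{RFactor}) and the fact that evaluation against $\mathrm{coev}_V(1)=v_i\otimes v^i$ is well-defined on highest-weight modules guarantee all expressions converge in the relevant completions, so there is no analytic subtlety — only the combinatorics of the three-strand diagram. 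I would carry this out by fixing the graphical convention from \ref{Representations}, drawing the before-and-after pictures, and citing isotopy invariance (Theorem of Reshetikhin--Turaev, already quoted) to equate them; this sidesteps index-chasing entirely and makes the proof a short verification once the picture is drawn.
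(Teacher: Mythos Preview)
Your proposal is correct and matches the paper's own treatment: the paper does not give a full algebraic proof (it cites \cite{VarVassRoot}) but immediately after the proposition offers precisely the graphical explanation you describe---take the diagram (\ref{KappaPic}) for $\kappa$ acting on a module, replace the single output strand by two strands, and use the $R$-matrix coproduct identities (\ref{RMatrixCo1})--(\ref{RMatrixCo2}) to ``make bunny ears'' that reorganize the picture into two copies of $\kappa$ linked by an $R$-matrix. Your algebraic computation via $(\mathrm{id}\otimes\Delta_\UU)(\widetilde{\RRR}_{21}\widetilde{\RRR})=\widetilde{\RRR}_{21}\widetilde{\RRR}_{31}\widetilde{\RRR}_{13}\widetilde{\RRR}_{12}$ followed by inserting $v_i\otimes v^i$ is exactly the symbolic transcription of that picture, and your reading of the coideal consequence is correct.
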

\noindent We end by making a few observations:
\begin{enumerate}
\item Note that the map $V^*\otimes V\otimes W\rightarrow W$ given by
\begin{equation*}
v^*\otimes v\otimes w\mapsto \kappa(v^*\otimes v)(w)
\end{equation*}
is a $\UU$-morphism depicted by the following diagram:
\begin{equation}
\includegraphics{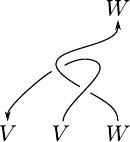}
\label{KappaPic}
\end{equation}
\item The $\Delta_\UU$ formula (\ref{KappaCo}) is the result of taking (\ref{KappaPic}) with two strands ($\UU$-modules) passing through and making bunny ears to force the appearance of $\Delta_{\OO_G}$:
\[
\includegraphics{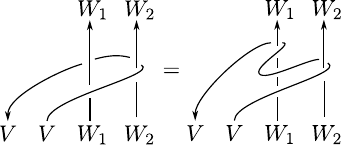}
\]
\item From (\ref{RFactor}), it is easy to deduce that
\[
\textstyle\kappa(\det_\qqq(\mathbf{M}))=\kappa(\mathrm{qcoev}_{\mathbb{1}_\qqq}(1))=\qqq^{-2\omega_n}
\]
\end{enumerate}


\section{Quantum differential operators}\label{QDO}
In this section, we review and study the ring of quantum differential operators on the matrix space corresponding to the following framed cyclic quiver:
\begin{equation}
\includegraphics{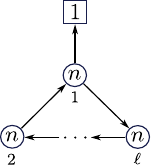}
\label{CyclicQ}
\end{equation}
Here, the numbers within the squares and circle denote the dimensions of the vector spaces we will assign to them, and the subscripts $1,\ldots,\ell$ are the labelings of the vertices, which we will view as elements of $\ZZ/\ell\ZZ$ for $\ell\ge 1$.
In the case of $\ell=1$, we will have a loop from vertex $1$ back to itself.
We will define this algebra piecemeal, addressing the round vertices first before the square one.
The latter corresponds to the \textit{quantum Weyl algebra}.
Unfortunately, we will need to split the former into the cases $\ell>1$ and $\ell=1$.

\subsection{The case $\mathbf{\ell>1}$}
Here, we will work with the algebras defined by Jordan \cite{JordanMult} for the subquiver of (\ref{CyclicQ}) consisting of round vertices.

\subsubsection{Braided coordinate ring $\OO_\ell$}\label{Oell}
The quantized function algebra $\OO_\ell$ has generators 
\[\left\{x_{ij}^{(a)}\, \middle| \, 
\begin{array}{l}
1\le i, j\le n\\
a\in\ZZ/\ell\ZZ
\end{array}
\right\}\]
We put the generators with superscript index $a$ into the matrix
\[
\XX^{(a)}:=\sum_{i,j} E_{ij}\otimes x_{ij}^{(a)}
\]
For $\ell\ge 3$, its relations are as follows (recall that subscripts only denote where we place the matrix units):
\begin{align}
\label{Xaa}
\RRR_{21}\XX_1^{(a)}\XX_2^{(a)}&= \XX_2^{(a)}\XX_1^{(a)}\RRR;\\
\label{XXad}
\XX_1^{(a)}\XX_2^{(a-1)}&=\XX_2^{(a-1)}\RRR_{21}\XX_1^{(a)};\\
\label{XXcom}
\XX_1^{(a)}\XX_2^{(b)}&= \XX_2^{(b)}\XX_1^{(a)}\hbox{ for }b\not= a, a\pm 1
\end{align}
For $\ell=2$, we replace (\ref{XXad}) with:
\begin{equation}
\XX_1^{(2)}\RRR\XX_2^{(1)}=\XX_2^{(1)}\RRR_{21}\XX_1^{(2)} 
\label{XXad2}
\end{equation}

\subsubsection{Algebra of quantum differential operators $\DD_\ell$}\label{Dell}
The algebra $\DD_\ell$ of quantum differential operators on $\OO_\ell$ is obtained by adjoining additional generators
\[
\left\{ 
\del_{ij}^{(a)}\,\middle|
\begin{array}{l}
1\le i,j\le n\\
a\in\ZZ/\ell\ZZ
\end{array}
 \right\}
\]
to $\OO_\ell$.
As before, we put them into the matrices
\[
\DDD^{(a)}:=\sum_{i,j}E_{ij}\otimes\del_{ij}^{(a)}
\]
For $\ell \ge 3$, these new generators satisfy the relations:
\begin{align}
\label{Daa}
\RRR_{21}\DDD_{1}^{(a)}\DDD_{2}^{(a)}&= \DDD_{2}^{(a)}\DDD_1^{(a)}\RRR;\\
\label{DDadj}
\DDD_1^{(a)}\DDD_2^{(a+1)}&= \DDD_2^{(a+1)}\RRR^{-1}\DDD_1^{(a)};\\
\nonumber
\DDD_1^{(a)}\DDD_2^{(b)}&= \DDD_2^{(b)}\DDD_1^{(a)}\hbox{ for }b\not= a, a\pm 1;\\
\label{DXa}
\DDD_2^{(a)}\RRR^{-1}\XX_1^{(a)}&= \XX_1^{(a)}\RRR\DDD_2^{(a)}+(\qqq-\qqq^{-1})\mathbf{\Omega};\\
\label{DXadj1}
\DDD_1^{(a)}\XX_2^{(a+1)}&=\RRR\XX_2^{(a+1)}\DDD_1^{(a)};\\
\label{DXadj2}
\DDD_1^{(a)}\XX_2^{(a-1)}&=\XX_2^{(a-1)}\DDD_1^{(a)}\RRR_{21}^{-1};\\
\nonumber
\DDD_1^{(a)}\XX_2^{(b)}&= \XX_2^{(b)}\DDD_1^{(a)}\hbox{ for }b\not=a,a\pm 1;
\end{align}
where in (\ref{DXa}),
\[
\mathbf{\Omega}=\sum_{ij} E_{ij}\otimes E_{ji}\otimes 1
\]
For $\ell=2$, we replace (\ref{DDadj}), (\ref{DXadj1}), and (\ref{DXadj2}) with
\begin{align}
\nonumber
\DDD_1^{(1)}\RRR_{21}^{-1}\DDD_2^{(2)}&= \DDD_2^{(2)}\RRR^{-1}\DDD_1^{(1)};\\
\nonumber
\DDD_1^{(a)}\XX_2^{(a+1)}&= \RRR_{21}^{-1}\XX_2^{(a+1)}\DDD_1^{(a)}\RRR_{21}^{-1};
\end{align}

\begin{rem}
In comparison with \cite{JordanMult}, we have scaled the $\del$-generators of \textit{loc. cit.} by $(\qqq-\qqq^{-1})$.
This will simplify the formulas for the moment map in \ref{MomentEll}.
\end{rem}

\subsubsection{Equivariance}\label{DellEq}
The algebras $\OO_\ell$ and $\DD_\ell$ are modules over $\UU^{\otimes \ell}$.
Here, we will describe these module structures in more detail.
For a fixed $a\in\ZZ/\ell\ZZ$, let $\UU^{(a)}$ denote the $a$th tensorand of $\UU^{\otimes\ell}$ (we index the tensorands using $\ZZ/\ell\ZZ$), and let $\mathbb{V}_{(a)}$ denote the vector representation of $\UU^{(a)}$ with basis $\{e_i^{(a)}\}$ and dual basis $\{e^i_{(a)}\}$.
One should view $\UU^{(a)}$ as the copy of $\UU$ acting on the round vertex in (\ref{CyclicQ}) labeled by $a$.
Finally, let
\begin{equation*}
\begin{aligned}
\OO_\ell^{(a)}&:=\left\langle x_{ij}^{(a)}\,\middle|\, 1\le i,j\le n\right\rangle,&
\DD_\ell^{(a)}&:=\left\langle x_{ij}^{(a)}, \del_{ij}^{(a)}\,\middle|\, 1\le i,j\le n\right\rangle
\end{aligned}
\end{equation*}

We will view $n\times n$ matrices as elements of the \textit{exterior} tensor product of vector representations $\mathbb{V}\boxtimes\mathbb{V}^*$.
The quantized ring of functions on matrices would then be a suitable quantization of the polynomial ring generated by $\mathbb{V}^*\boxtimes\mathbb{V}$, and likewise, functions on the dual space would be generated by $\mathbb{V}\boxtimes\mathbb{V}^*$.
In this way, we will coordinates of $\XX^{(a)}$ as functions on matrices from vertex $a+1$ to vertex $a$ and coordinates of $\DDD^{(a)}$ as analogous fuctions from vertex $a$ to vertex $a+1$.
To be precise, consider the following algebra homomorphisms out of the tensor algebras:
\begin{align*}
\Pi_\OO^{(a)}:\mathcal{T}\left(\mathbb{V}^*_{(a)}\boxtimes\mathbb{V}_{(a+1)}\right)&\rightarrow \OO_\ell^{(a)}&
\Pi_\DD^{(a)}:\mathcal{T}\left(\mathbb{V}^*_{(a)}\boxtimes\mathbb{V}_{(a+1)}\oplus\mathbb{V}_{(a)}\boxtimes\mathbb{V}_{(a+1)}^*\right)&\rightarrow \DD_\ell^{(a)}\\
e^i_{(a)}\boxtimes e_i^{(a+1)}&\mapsto x_{ij}^{(a)} & e^i_{(a)}\boxtimes e_i^{(a+1)}&\mapsto x_{ij}^{(a)}\\
&& e_i^{(a)}\boxtimes e^j_{(a+1)}&\mapsto \del_{ji}^{(a)}
\end{align*}
\begin{prop}[\cite{JordanMult}]
$\OO_\ell$ and $\DD_\ell$ are $\UU^{\otimes\ell}$-equivariant algebras.
\end{prop}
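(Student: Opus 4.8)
The plan is to use the standard mechanism by which an algebra presented by $R$-matrix relations acquires a module-algebra structure: realize it as a quotient of a free module algebra by a submodule-ideal. The spaces $\mathbb{V}^*_{(a)}\boxtimes\mathbb{V}_{(a+1)}$ and $\mathbb{V}_{(a)}\boxtimes\mathbb{V}^*_{(a+1)}$ generating the domains of $\Pi_\OO^{(a)}$ and $\Pi_\DD^{(a)}$ are finite-dimensional $\UU^{\otimes\ell}$-modules (only the tensorands labelled $a$ and $a+1$ acting nontrivially), so their direct sums $W_\OO$ and $W$ are $\UU^{\otimes\ell}$-modules, and the tensor algebras $\mathcal{T}(W_\OO)$, $\mathcal{T}(W)$ are tautologically $\UU^{\otimes\ell}$-module algebras. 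First I would identify $\OO_\ell$ and $\DD_\ell$ with quotients $\mathcal{T}(W_\OO)/I_\OO$ and $\mathcal{T}(W)/I_\DD$ via these maps. Since, in a module algebra, the two-sided ideal generated by a $\UU^{\otimes\ell}$-submodule is again a submodule, and the quotient of a module algebra by a submodule-ideal is a module algebra, everything reduces to one claim: the spans of the defining relations listed in \ref{Oell} and \ref{Dell} are $\UU^{\otimes\ell}$-submodules of the respective tensor algebras. The case of $\OO_\ell$ is then just the $x$-only part of the case of $\DD_\ell$.

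The key point I would establish is that, under these identifications, $\XX^{(a)}$ and $\DDD^{(a)}$ are the canonical invariant elements of $\mathrm{End}_{\CC(\qqq)}(\mathbb{V})_{\mathrm{aux}}\otimes\mathcal{T}(W)$ — the two tensor factors of the auxiliary $\mathrm{End}(\mathbb{V})=\mathbb{V}\otimes\mathbb{V}^*$ being coloured by the two copies of $\UU$ attached to the tail and head of the corresponding arrow in (\ref{CyclicQ}) — equivalently, each matrix entry lies in the weight space prescribed by its row and column indices. Granting this, any word in the $\XX^{(b)}$ and $\DDD^{(b)}$ whose matrix coefficients are $\UU$-endomorphisms of the auxiliary vector representations is again an invariant element of $\mathrm{End}(\mathbb{V}^{\otimes m})_{\mathrm{aux}}\otimes\mathcal{T}(W)$. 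Every coefficient occurring in the relations — $\RRR$, $\RRR_{21}$, $\RRR^{-1}$, $\RRR^{-1}_{21}$ — is such an endomorphism, because $\widetilde{\RRR}$ intertwines the two orderings of the coproduct by (\ref{RCoprod}), so its specialization $\RRR$ is a $\UU$-intertwiner $\mathbb{V}\otimes\mathbb{V}\to\mathbb{V}\otimes\mathbb{V}$; the inhomogeneous term $\mathbf{\Omega}$ in (\ref{DXa}) is the composite of an evaluation and a coevaluation, hence also a $\UU$-morphism. Thus each relation, read as an equality $A=B$ of two invariant elements of a common $\mathrm{End}(\mathbb{V}^{\otimes m})_{\mathrm{aux}}\otimes\mathcal{T}(W)$, has the feature that $\phi\mapsto(\phi\otimes\mathrm{id})(A-B)$ for $\phi\in\mathrm{End}(\mathbb{V}^{\otimes m})^{*}$ is a $\UU^{\otimes\ell}$-module map into $\mathcal{T}(W)$, whose image is exactly the span of that family of relations; so that span is a submodule, as required.

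The same reasoning covers the modified relations in the cases $\ell=2$ and $\ell=3$, since those again only involve the $R$-matrices and $\mathbf{\Omega}$. The part I expect to require the most care is the bookkeeping of which of the $\ell$ copies of $\UU$ colours which auxiliary vector representation in the relations mixing adjacent vertices — e.g. (\ref{XXad}), (\ref{DDadj}), (\ref{DXadj1})--(\ref{DXadj2}) — and checking that the $R$-matrix coefficient appearing there really is an intertwiner for that particular pair of auxiliary factors; once the colouring is fixed consistently according to the arrows of (\ref{CyclicQ}), each verification is routine. Alternatively — and this is presumably how it is done in \cite{JordanMult} — one can bypass the presentation entirely: $\OO_\ell$ and $\DD_\ell$ are, by construction, algebra objects internal to the braided category $\CCC^{\boxtimes\ell}$ of $\UU^{\otimes\ell}$-modules, assembled from reflection-equation algebras and their quantum-differential-operator extensions, so the module-algebra structure is automatic and the presentation merely records it in coordinates.
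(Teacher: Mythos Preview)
Your approach is correct and, for the ``diagonal'' relations (\ref{Xaa}), (\ref{Daa}), (\ref{DXa}), it coincides with the paper's: both realize $\DD_\ell^{(a)}$ as a quotient of a tensor algebra by the image of a $\UU_{(a)}\otimes\UU_{(a+1)}$-module map, exactly as you describe.

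Where the two diverge is in the treatment of the cross-index relations (\ref{XXad}), (\ref{XXcom}), (\ref{DDadj}), (\ref{DXadj1}), (\ref{DXadj2}) and their $\ell=2$ variants. You propose to treat these on the same footing: embed everything in one large tensor algebra $\mathcal{T}(W)$, colour the auxiliary spaces arrow-by-arrow, and verify that each relation is a $\UU^{\otimes\ell}$-invariant element of $\mathrm{End}(\mathbb{V}^{\otimes m})_{\mathrm{aux}}\otimes\mathcal{T}(W)$. This works, and your caveat about bookkeeping is exactly right: one must check that the single $R$-matrix appearing in, say, (\ref{XXad}) is an intertwiner for the one copy $\UU_{(a)}$ shared between $\XX^{(a)}$ and $\XX^{(a-1)}$, while the other factors act only on one side and commute through trivially. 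The paper instead bypasses this verification by recognizing the cross-relations \emph{structurally}: they are precisely the relations (\ref{BraidedEll}) (or (\ref{Braided2}) for $\ell=2$) defining a cyclically-ordered braided tensor product of the already-equivariant pieces $\DD_\ell^{(a)}$. Since the braided tensor product of $\UU$-module algebras is again a $\UU$-module algebra, equivariance of $\DD_\ell$ follows without further coordinate checks.

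Your uniform approach has the virtue of treating all relations at once and making the mechanism (invariant element $\Rightarrow$ submodule of relations) completely explicit; the paper's decomposition trades a small amount of extra structure (the braided product) for not having to track the colouring in the mixed relations. Your closing remark about the categorical construction in \cite{JordanMult} is also on point and is essentially what the paper's second step is invoking.
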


\begin{proof}
First consider the subalgebras $\OO_\ell^{(a)}$ and $\DD_\ell^{(a)}$.
The kernels of the algebra homomorphisms $\Pi_\OO^{(a)}$ and $\Pi_\DD^{(a)}$ correspond to the relations (\ref{Xaa}), (\ref{Daa}), and (\ref{DXa}).
These relations, in turn, can be expressed as images of $\UU_{(a)}\otimes\UU_{(a+1)}$-module homomorphisms into the respective tensor algebras (cf. 3.3.2 of \cite{JordanMult}).
It follows that $\Pi_\OO^{(a)}$ and $\Pi_\DD^{(a)}$ are in fact $\UU_{(a)}\otimes\UU_{(a+1)}$-module homomorphisms; thus, $\OO_\ell^{(a)}$ and $\DD_\ell^{(a)}$ are $\UU_{(a)}\otimes\UU_{(a+1)}$-equivariant. 

The remaining relations involve different $(a)$-indices, and they merely impose that $\OO_\ell$ and $\DD_\ell$ are \textit{cyclically-ordered} \textit{braided} tensor products of algebras $\{\OO_\ell^{(a)}\}$ and $\{\DD_\ell^{(a)}\}$, respectively.
Namely, in the case $\ell\ge 3$, an element $x^{(a-1)}$ with index $a-1$ and an element $y^{(a)}$ with index $a$ both share an action of $\UU^{(a)}$.
If we let $\widetilde{\RRR}^{(a)}={}_sr^{(a)}\otimes r^{(a)}_s$ denote the universal $R$-matrix of $\UU^{(a)}$, then we have imposed:
\begin{align}
\label{BraidedEll}
x^{(a-1)}y^{(a)}&= \left(r_s^{(a)}\cdot y^{(a)}\right)\left({_s}r^{(a)}\cdot x^{(a-1)}\right), & y^{(a)}x^{(a-1)}&= \left(\iota_\UU({_s}r^{(a)})\cdot x^{(a-1)}\right)\left(r_s^{(a)}\cdot y^{(a)}\right)
\end{align}
(the cyclicity of the order comes from viewing $a\in\ZZ/\ell\ZZ$).
For $\ell=2$, we have instead:
\begin{equation}
\begin{aligned}
x^{(1)}y^{(2)}&= \left(\big[\iota_{\UU}({}_tr^{(1)})\otimes r_s^{(2)}\big]\cdot y^{(2)}\right)\left( \big[r_{t}^{(1)}\otimes{}_sr^{(2)}]\cdot x^{(1)} \right)\\
y^{(2)}x^{(1)}&= \left(\big[r_s^{(1)}\otimes \iota_{\UU}({}_tr^{(2)})\big]\cdot x^{(1)}\right)\left( \big[r_{t}^{(1)}\otimes{}_sr^{(2)}]\cdot y^{(2)} \right)
\end{aligned}
\label{Braided2}
\end{equation}
Because these relations can be written in terms of the braiding and inverse braiding, respectively, $\OO_\ell$ and $\DD_\ell$ are indeed $\UU^{\otimes\ell}$-equivariant.
\end{proof}

\subsubsection{FRT quantum determinants}\label{FRTDet}
Each $\OO^{(a)}_\ell$ is isomorphic to the \textit{FRT algebra} (cf. Chapter 9 of \cite{KlimSchm}).
This algebra also has a notion of quantum determinant:
\begin{equation}
\begin{aligned}
\textstyle\det_\qqq(\XX^{(a)})&:=\sum_{w\in\Sigma_n}(-\qqq)^{l(w)}x_{w(1)1}^{(a)}\cdots x_{w(n)n}^{(a)}\\
&= \sum_{w\in\Sigma_n}(-\qqq)^{l(w)}x_{1w(1)}^{(a)}\cdots x_{nw(n)}^{(a)}
\end{aligned}
\label{FRTDetForm}
\end{equation}
Here, we will review the properties of $\det_\qqq(\XX^{(a)})$.
One of the distinguishing features of $\OO_\ell^{(a)}$ is that it possesses natural maps
\begin{align*}
\varphi_L&:=\mathrm{id}_{\mathbb{V}_{(a)}^*}\otimes \mathrm{coev}_{\mathbb{V}}: \mathbb{V}_{(a)}^*\rightarrow\OO_\ell^{(a)}\otimes\mathbb{V}_{(a)}^*\\
\varphi_R&:=\mathrm{coev}_{\mathbb{V}}\otimes\mathrm{id}_{\mathbb{V}_{(a+1)}}: \mathbb{V}_{(a+1)}\rightarrow\mathbb{V}_{(a+1)}\otimes\OO_\ell^{(a)}
\end{align*}
(these are in fact comodules with respect to a natural bialgebra structure on $\OO_\ell^{(a)}$, which we will not use).
Ignoring the distinction between $\UU_{(a)}$ and $\UU_{(a+1)}$, both maps are $\UU$-module homomorphism.
However, due to the trivial action on the image of $\mathrm{coev}_{\mathbb{V}}$, we have that $\varphi_L$ is a $\UU_{(a)}$-module homomorphism when we have $\UU_{(a)}$ act trivially on the target $\mathbb{V}_{(a)}^*$ tensorand.
Likewise, $\varphi_R$ is a $\UU_{(a+1)}$-morphism when we ignore the $\UU_{(a+1)}$-action on the target $\mathbb{V}_{(a+1)}$-tensorand.

Because $\RRR$ satisfies the Hecke relation (\ref{HeckeCond}), we can define the quantum exterior algebras $\wedge_\qqq\mathbb{V}$ and $\wedge_\qqq\mathbb{V}^*$ by performing Hecke antisymmetrizations; these are $\UU$-equivariant algebras.
The characters $\mathbb{1}_{\qqq}$ and $\mathbb{1}_{\qqq^{-1}}$ are their respective top degree summands.
A key feature of $\varphi_L$ and $\varphi_R$ is that they extend to $\UU_{(a)}$- and $\UU_{(a+1)}$-equivariant algebra homomorphisms 
\begin{align*}
\varphi_L^\wedge:&\wedge_\qqq\mathbb{V}_{(a)}^*\rightarrow\OO_\ell^{(a)}\otimes\wedge_\qqq\mathbb{V}_{(a)}\\
\varphi_R^\wedge:&\wedge_\qqq\mathbb{V}_{(a+1)}\rightarrow\wedge_\qqq\mathbb{V}_{(a+1)}\otimes\OO_\ell^{(a)}
\end{align*}
respectively.
Here, as with before, when discussing equivariance, we only consider the $\UU_{(a)}$- and $\UU_{(a+1)}$-actions on the $\OO_\ell^{(a)}$ tensorand in the target.
Let $\mathbb{1}_{\qqq^{-1}}^{(a)}$ and $\mathbb{1}_{\qqq}^{(a+1)}$ denote the characters viewed as $\UU_{(a)}$- and $\UU_{(a+1)}$-modules, respectively.

\begin{prop}\label{FRTDetProp}
The FRT quantum determinant $\det_\qqq(\XX^{(a)})$ transforms under the $\UU_{(a)}\otimes\UU_{(a+1)}$-action as a nonzero element of $\mathbb{1}_{\qqq^{-1}}^{(a)}\boxtimes\mathbb{1}_{\qqq}^{(a+1)}$.
\end{prop}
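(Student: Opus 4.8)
The plan is to realize $\det_\qqq(\XX^{(a)})$ as the image of a top exterior form under the two equivariant algebra homomorphisms $\varphi_L^\wedge$ and $\varphi_R^\wedge$ introduced in \ref{FRTDet}, and then to read off the transformation property from their equivariance. Write $\omega^*:=e^1_{(a)}\wedge\cdots\wedge e^n_{(a)}$ for a generator of the degree-$n$ (top) piece of $\wedge_\qqq\mathbb{V}^*_{(a)}$; as recalled in \ref{FRTDet} this piece is the character $\mathbb{1}_{\qqq^{-1}}^{(a)}$ as a $\UU_{(a)}$-module. Likewise let $\omega:=e_1^{(a+1)}\wedge\cdots\wedge e_n^{(a+1)}$ generate the top piece of $\wedge_\qqq\mathbb{V}_{(a+1)}$, which is $\mathbb{1}_{\qqq}^{(a+1)}$ as a $\UU_{(a+1)}$-module.

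The key computational step is that applying the algebra homomorphism $\varphi_L^\wedge$ to $\omega^*$ yields
\[
\varphi_L^\wedge(\omega^*)=c_L\cdot\det\nolimits_\qqq(\XX^{(a)})\otimes\big(e_1^{(a)}\wedge\cdots\wedge e_n^{(a)}\big)
\]
for a nonzero scalar $c_L$: one expands $\varphi_L^\wedge(\omega^*)$ as the ordered product of the $\varphi_L(e^i_{(a)})$, moves all exterior factors to the right using the Hecke relation (\ref{HeckeCond}) that defines $\wedge_\qqq\mathbb{V}_{(a)}$, and collects the coefficient of the one-dimensional top form, recovering precisely the permutation sum in (\ref{FRTDetForm}) --- this is the standard construction of the FRT quantum determinant, so I would cite Chapter~9 of \cite{KlimSchm}. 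The same computation with $\varphi_R^\wedge$ on the top form of $\wedge_\qqq\mathbb{V}_{(a+1)}$ gives $c_R\cdot(\text{top form})\otimes\det_\qqq(\XX^{(a)})$ with $c_R\neq 0$, now realizing the second (``column'') expansion of (\ref{FRTDetForm}); the two expansions coinciding is exactly the well-definedness of the FRT determinant. Finally $\det_\qqq(\XX^{(a)})\neq 0$: since $\OO^{(a)}_\ell$ is the FRT algebra it is a flat deformation of the polynomial ring in the $x^{(a)}_{ij}$, and $\det_\qqq(\XX^{(a)})$ degenerates at $\qqq=1$ to the ordinary determinant, which is nonzero; equivalently, it is the well-known nonzero central element of \cite{KlimSchm}.

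With these facts, equivariance finishes the argument. Since $\varphi_L^\wedge$ is a $\UU_{(a)}$-module homomorphism with $\UU_{(a)}$ acting on the target only through the $\OO^{(a)}_\ell$-tensorand, restricting it to the top piece of its source gives a nonzero $\UU_{(a)}$-morphism out of $\mathbb{1}_{\qqq^{-1}}^{(a)}$ with image $\CC(\qqq)\cdot\det_\qqq(\XX^{(a)})\otimes\big(e_1^{(a)}\wedge\cdots\wedge e_n^{(a)}\big)$; a character being irreducible, this image is isomorphic to $\mathbb{1}_{\qqq^{-1}}^{(a)}$, so $\det_\qqq(\XX^{(a)})$ spans a copy of $\mathbb{1}_{\qqq^{-1}}^{(a)}$ for the $\UU_{(a)}$-action. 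The identical argument for $\varphi_R^\wedge$ shows it spans a copy of $\mathbb{1}_{\qqq}^{(a+1)}$ for the commuting $\UU_{(a+1)}$-action, whence $\CC(\qqq)\cdot\det_\qqq(\XX^{(a)})\cong\mathbb{1}_{\qqq^{-1}}^{(a)}\boxtimes\mathbb{1}_{\qqq}^{(a+1)}$ as a $\UU_{(a)}\otimes\UU_{(a+1)}$-module, which is the assertion. The only real obstacle is the first displayed identity --- matching the Hecke antisymmetrization built into $\wedge_\qqq\mathbb{V}_{(a)}$ against the explicit permutation sum (\ref{FRTDetForm}), including the bookkeeping of the scalar $c_L$ --- but this is routine FRT algebra, and in the write-up I would simply invoke \cite{KlimSchm} rather than reproduce it.
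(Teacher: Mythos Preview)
Your proposal is correct and follows essentially the same approach as the paper: both arguments apply $\varphi_L^\wedge$ and $\varphi_R^\wedge$ to generators of the top exterior pieces, invoke \cite{KlimSchm} to identify the result with $\det_\qqq(\XX^{(a)})$ tensored with a top form, and then read off the character from the stated equivariance of these maps. Your write-up is slightly more detailed (you sketch the Hecke-antisymmetrization bookkeeping and give a separate nonvanishing argument via $\qqq\to 1$), whereas the paper simply cites Proposition~9.7 of \cite{KlimSchm} for the key identity and says ``the result follows from equivariance.''
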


\begin{proof}
By Proposition 9.7 of \cite{KlimSchm}, one can choose basis elements 
\begin{align*}
&y\in\mathbb{1}_{\qqq^{-1}}\subset\wedge_\qqq\mathbb{V}^*\\
&y'\in\mathbb{1}_\qqq\subset\wedge_\qqq\mathbb{V}
\end{align*}
such that
\begin{align*}
\varphi_L^\wedge(y)&=\textstyle\det_\qqq(\XX^{(a)})\otimes y\\
\varphi_R^\wedge(y')&= \textstyle y'\otimes\det_\qqq(\XX^{(a)})
\end{align*}
The result follows from equivariance.
\end{proof}

\begin{cor}\label{FRTOre}
The FRT quantum determinants $\{\det_\qqq(\XX^{(a)})\}_{a\in\ZZ/\ell\ZZ}$ generate an Ore set in $\OO_\ell$.
\end{cor}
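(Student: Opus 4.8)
The plan is to show that each $\det_\qqq(\XX^{(a)})$ is a normal element of $\OO_\ell$ — that is, $\det_\qqq(\XX^{(a)})\OO_\ell=\OO_\ell\det_\qqq(\XX^{(a)})$ — and that the various $\det_\qqq(\XX^{(a)})$ commute with one another up to scalars; normality plus the fact that $\OO_\ell$ has no zero-divisors on the relevant pieces gives the Ore conditions automatically. The key tool is Proposition \ref{FRTDetProp}: since $\det_\qqq(\XX^{(a)})$ spans the one-dimensional $\UU_{(a)}\otimes\UU_{(a+1)}$-submodule $\mathbb{1}_{\qqq^{-1}}^{(a)}\boxtimes\mathbb{1}_{\qqq}^{(a+1)}$, conjugation by it must act on any generator by a quantum-group-equivariant operator, and the weight bookkeeping forces this to be multiplication by a monomial in $\qqq$.

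Concretely, first I would treat the interaction of $\det_\qqq(\XX^{(a)})$ with the generators $\XX^{(b)}$ and $\DDD^{(b)}$ for each $b$. For $b\neq a,a\pm1$ the relevant matrices already commute, so $\det_\qqq(\XX^{(a)})$ commutes with those generators outright. For $b=a$, the FRT relation (\ref{Xaa}) is exactly the defining relation of the FRT algebra $\OO_\ell^{(a)}$, and it is classical (Chapter 9 of \cite{KlimSchm}) that $\det_\qqq(\XX^{(a)})$ is central in $\OO_\ell^{(a)}$; combined with relation (\ref{DXa}) one checks — using the standard fact that the quantum determinant behaves like a group-like/character under the coaction, i.e. using $\varphi_L^\wedge$ and $\varphi_R^\wedge$ from the proof of Proposition \ref{FRTDetProp} — that $\det_\qqq(\XX^{(a)})\,\del_{ij}^{(a)}=\qqq^{c}\,\del_{ij}^{(a)}\,\det_\qqq(\XX^{(a)})$ for a fixed integer power $c$ independent of $i,j$ (the shift coming from the scalar by which $\mathbb{1}_{\qqq^{-1}}\otimes\mathbb{1}_\qqq$ twists the pairing). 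For $b=a\pm1$, relations (\ref{XXad}), (\ref{XXcom}), (\ref{DXadj1}), (\ref{DXadj2}) (and their $\ell=2$ analogues) say that $\XX^{(a)}$ braids past $\XX^{(a\pm1)}$ and $\DDD^{(a\pm1)}$ via powers of $\RRR$; feeding these into the determinant formula (\ref{FRTDetForm}) and using the Hecke condition (\ref{HeckeCond}) to collapse the antisymmetrized product of $R$-matrices down to its one-dimensional action on $\wedge_\qqq^n\mathbb V\cong\mathbb 1_{\qqq}$ shows $\det_\qqq(\XX^{(a)})$ moves past each such generator up to a uniform power of $\qqq$. Taken together, this proves that $\det_\qqq(\XX^{(a)})$ is normal in $\OO_\ell$, with conjugation acting as a diagonalizable (indeed grading-shifting) automorphism.

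Once normality is in hand, the Ore conditions follow quickly. For a normal element $d$ in a ring, left and right Ore conditions with respect to the multiplicative set $\{d^k\}$ are equivalent to $d$ being a non-zero-divisor, and $\OO_\ell$ is a flat deformation of the (commutative, domain) coordinate ring of the quiver matrix space — or one can argue directly that $\OO_\ell$ is a connected $\NN$-graded algebra that is a domain, since it is an iterated braided tensor product of FRT-type domains $\OO_\ell^{(a)}$ and quantum determinants are well-known to be non-zero-divisors there. To see that the \emph{joint} set $\{\det_\qqq(\XX^{(a)})\}_{a}$ generates an Ore set (not just each one separately), I would note that normality of each $\det_\qqq(\XX^{(a)})$ is inherited by the localization at the others — conjugation by $\det_\qqq(\XX^{(a)})$ still acts by the grading-shift automorphism, which extends to any graded localization — so one may invert them one at a time, at each stage checking the Ore condition for the next normal element in the partially localized ring. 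The main obstacle I anticipate is the $b=a\pm1$ computation: verifying that the antisymmetrized string of $\RRR$- and $\RRR_{21}$-factors arising when $\det_\qqq(\XX^{(a)})$ is dragged past a whole matrix $\XX^{(a\pm1)}$ or $\DDD^{(a\pm1)}$ really does reduce to a single scalar, and pinning down that scalar correctly, especially handling the $\ell=2$ relations (\ref{XXad2}) and the modified $\DDD$-relations where an extra $\RRR$ or $\RRR_{21}$ appears. The cleanest route past this obstacle is the categorical one: rather than manipulating $R$-matrices by hand, use that $\det_\qqq(\XX^{(a)})$ generates $\mathbb 1_{\qqq^{-1}}^{(a)}\boxtimes\mathbb 1_{\qqq}^{(a+1)}$ and that braiding a one-dimensional object $\mathbb 1_{\qqq^{\pm1}}$ past any $V$ is multiplication by $\qqq^{\pm\langle\text{wt}\rangle}$ by (\ref{RFactor}) and (\ref{RibbonAct}), so equivariance alone forces the conjugation action to be a scalar.
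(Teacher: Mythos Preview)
Your proposal is correct and takes essentially the same approach as the paper: centrality of $\det_\qqq(\XX^{(a)})$ in $\OO_\ell^{(a)}$ via Klimyk--Schm\"udgen, and for the cross-relations the categorical observation that $\det_\qqq(\XX^{(a)})$ spans the character $\mathbb{1}_{\qqq^{-1}}^{(a)}\boxtimes\mathbb{1}_{\qqq}^{(a+1)}$ so that, by the form (\ref{RFactor}) of the $R$-matrix, braiding past it is multiplication by a power of $\qqq$. One minor slip: you spend effort on commutation with the generators $\DDD^{(b)}$ and cite relations (\ref{DXa}), (\ref{DXadj1}), (\ref{DXadj2}), but the corollary concerns only $\OO_\ell$, whose generators are the $x_{ij}^{(a)}$ alone; the $\del_{ij}^{(a)}$ belong to $\DD_\ell$, so that portion of your argument is superfluous here (it would be relevant for Lemma~\ref{DellDet} instead).
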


\begin{proof}
By Proposition 9.9 of \cite{KlimSchm}, $\det_\qqq(\XX^{(a)})$ is central in $\OO_\ell^{(a)}$.
Its commutation with other superscript indices is given by the $R$-matrix.
However, by Proposition \ref{FRTDetProp} and (\ref{RFactor}), it is easy to see that $\det_\qqq(\XX^{(a)})$ at worst $\qqq$-commutes.
\end{proof}

\subsubsection{PBW basis}\label{DellPBW}
Let $\DD_\ell^{\ZZ}\subset\DD_\ell$ denote $\CC[\qqq^{\pm1}]$-subalgebra generated by the entries of $\{\XX^{(a)},\DDD^{(a)}\}_{a\in\ZZ/\ell\ZZ}$.
We define a \textit{standard monomial} of $\DD_\ell$ and $\DD_\ell^{\ZZ}$ to be a product
\begin{equation}
x_{i_1j_1}^{(a_1)}\cdots x_{i_Mj_M}^{(a_M)}\del_{k_1l_1}^{(b_1)}\cdots\del_{k_Nl_N}^{(b_N)}
\label{StdMon}
\end{equation}
where:
\begin{itemize}
\item the sequence of triples of indices $(a_1, i_1, j_1),\ldots, (a_M,i_M,j_M)$ is in increasing lexicographic order;
\item the sequence of triples of indices $(b_1, k_1, l_1),\ldots, (b_N,k_N,l_N)$ is in increasing lexicographic order.
\end{itemize}
Here, we view the superscript indices $\{a_1,\ldots, a_M,b_1,\ldots, b_M\}$ as integers $1,\ldots,\ell$, and so we want these indices to be \textit{ascending}.

\begin{thm}[\cite{JordanMult}]\label{DellFlat}
The standard monomials form a basis of $\DD_\ell^\ZZ$.
Thus, $\DD_\ell$ is a flat deformation of the coordinate ring of $T^*(\mathrm{Mat}_{n\times n}^{\oplus\ell})$.
\end{thm}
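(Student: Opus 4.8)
The plan is to establish the PBW-type basis claim via the standard two-step strategy for quantizations: first show that the standard monomials \emph{span} $\DD_\ell^\ZZ$, which is purely a matter of rewriting arbitrary products into standard form using the defining relations, and second show they are \emph{linearly independent}, for which one produces a faithful action on a space with a manifest monomial basis (or, equivalently, a filtration argument reducing to the commutative/classical limit). The spanning step is a diamond-lemma style argument: the relations (\ref{Xaa}), (\ref{Daa}), (\ref{DXa}), together with the cross-index braiding relations (\ref{XXad}), (\ref{XXcom}), (\ref{DDadj}), (\ref{DXadj1}), (\ref{DXadj2}) (and their $\ell=2$ replacements), each allow one to move an $x$ past an $x$, a $\del$ past a $\del$, or a $\del$ past an $x$, in every case producing on the right-hand side terms that are either in the correct order or strictly shorter (the shortening happens precisely because of the $(\qqq-\qqq^{-1})\mathbf{\Omega}$ correction term in (\ref{DXa}), which drops the total degree by two). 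Since the $R$-matrix entries lie in $\CC[\qqq^{\pm1}]$, this rewriting stays within $\DD_\ell^\ZZ$, so the standard monomials span $\DD_\ell^\ZZ$ over $\CC[\qqq^{\pm 1}]$.

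For linear independence I would invoke the result of Jordan \cite{JordanMult} cited in the theorem statement, but if one wanted to reprove it, the cleanest route is to build the \emph{Fock-type representation}: let $\DD_\ell^\ZZ$ act on $\OO_\ell^\ZZ$ (the subalgebra generated by the $\XX^{(a)}$-entries) with the $x_{ij}^{(a)}$ acting by left multiplication and the $\del_{ij}^{(a)}$ acting as the $q$-analogue of partial derivatives determined by (\ref{DXa})--(\ref{DXadj2}); one checks this is a well-defined module by verifying the $\DDD$-relations hold on the $x$-generators, and then the ordered monomials act on the constant $1$ to produce distinct ordered $x$-monomials, whose independence in $\OO_\ell^\ZZ$ is the FRT/PBW statement for the braided coordinate ring. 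Combined with spanning, this gives that the standard monomials form a $\CC[\qqq^{\pm 1}]$-basis. The flatness conclusion is then immediate: $\DD_\ell^\ZZ$ is a free $\CC[\qqq^{\pm 1}]$-module with basis indexed exactly as the monomial basis of $\CC[T^*(\mathrm{Mat}_{n\times n}^{\oplus\ell})]$, and specializing $\qqq \to 1$ one reads off from the relations (the $R$-matrix becomes the identity, (\ref{DXa}) becomes the Heisenberg relation $\del x = x\del + \mathbf{\Omega}$, all braidings become commutations) that $\DD_\ell^\ZZ/(\qqq-1)$ is precisely that commutative coordinate ring; hence $\DD_\ell = \DD_\ell^\ZZ \otimes_{\CC[\qqq^{\pm1}]}\CC(\qqq)$ is a flat deformation.

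The main obstacle is the bookkeeping in the spanning step, not any conceptual difficulty: one must set up a termination order (e.g. order monomials first by total degree, then by the number of "inversions" relative to the target lexicographic order) and check that \emph{every} relation, including the modified $\ell=2$ ones, is reduction-decreasing in this order, and that the overlap ambiguities (the confluence conditions of Bergman's diamond lemma) are resolvable --- the latter is guaranteed abstractly by the known associativity and $\UU^{\otimes\ell}$-equivariance of $\DD_\ell$ established in the preceding subsection, so in practice one need only verify that the rewriting terminates. Because the paper is content to cite \cite{JordanMult} for the basis statement, I would keep this brief: state that spanning follows from the relations by the diamond lemma with the termination order just described, cite \cite{JordanMult} for independence over $\CC(\qqq)$, note the relations are defined over $\CC[\qqq^{\pm1}]$ so the basis descends to $\DD_\ell^\ZZ$, and conclude flatness by specialization at $\qqq=1$ as above.
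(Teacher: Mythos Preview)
The paper does not prove this theorem; it is stated with attribution to \cite{JordanMult} and no proof environment follows. Your sketch is a reasonable outline of the standard PBW argument (diamond-lemma spanning plus a Fock-type representation or filtration for independence), and is in the spirit of what Jordan does, so there is nothing in the paper itself to compare against.

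One small correction to your specialization step: at $\qqq=1$ the relation (\ref{DXa}) does \emph{not} become the Heisenberg relation $\del x = x\del + \mathbf{\Omega}$. The paper has rescaled Jordan's $\del$-generators by a factor of $(\qqq-\qqq^{-1})$ (see the Remark immediately following the list of relations for $\DD_\ell$), so the correction term $(\qqq-\qqq^{-1})\mathbf{\Omega}$ vanishes at $\qqq=1$ and the specialization $\DD_\ell^\ZZ/(\qqq-1)$ is the genuinely commutative coordinate ring of $T^*(\mathrm{Mat}_{n\times n}^{\oplus\ell})$, consistent with the flatness assertion. Your conclusion is correct, but the intermediate identification of the $\qqq=1$ limit as a Weyl/Heisenberg algebra is not.
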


\subsection{The case $\mathbf{\ell =0}$}
For the quiver (\ref{CyclicQ}) with $\ell=1$, the algebra constructed by Jordan \cite{JordanMult} is related to $\SHH_n^0(\qqq,\ttt)$.
We will thus call this the $\ell=0$ case.
The resulting algebra is Varagnolo--Vasserot's algebra of quantum differential operators on $GL_n$ \cite{VarVassRoot}, which had also appeared prior in work of Alekseev--Schomerus \cite{AlekScho} in their quantizations of character varieties.

\subsubsection{Definition of $\DD_0$}
The algebra $\DD_0^+$ has generators
\[
\{a_{ij}, b_{ij}\, |\, 1\le i,j\le n\}
\]
Placing them in matrices
\begin{equation*}
\begin{aligned}
A&:=\sum_{i,j}E_{ij}\otimes a_{ij}, &
B&:=\sum_{i,j}E_{ij}\otimes b_{ij}
\end{aligned}
\end{equation*}
the relations are:
\begin{align}
\label{DAA}
\RRR_{21}\mathbf{A}_{1}\RRR\mathbf{A}_{2}&= \mathbf{A}_{2}\RRR_{21}\mathbf{A}_{1}\RRR;\\
\label{DBB}
\RRR_{21}\mathbf{B}_{1}\RRR\mathbf{B}_{2}&= \mathbf{B}_{2}\RRR_{21}\mathbf{B}_{1}\RRR;\\
\label{DAB}
\RRR_{21}\mathbf{B}_{1}\RRR\mathbf{A}_{2}&= \mathbf{A}_{2}\RRR_{21}\mathbf{B}_{1}\RRR_{21}^{-1};
\end{align}
The first two relations yield surjections from the reflection equation algebra $\mathcal{R}\hbox{\it ef}$ onto each of the subalgebras generated by the entries of $\mathbf{A}$ and $\mathbf{B}$.
Thus, we can make sense of the quantum determinants $\det_\qqq(\mathbf{A})$ and $\det_\qqq(\mathbf{B})$.
By Proposition 1.8.3(b) of \cite{VarVassRoot}, the quantum determinants generate an Ore set of $\DD_0^+$.
\begin{defn}
$\DD_0$ is the localization of $\DD_0^+$ at the multiplicative set generated by $\{\det_\qqq(\mathbf{A}),\det_\qqq(\mathbf{B})\}$.
We define the \textit{fourth quadrant subalgebra} $\DD_0^{\mathrm{IV}}\subset\DD_0$ to be the subalgebra generated by the entries of $\mathbf{A}$ and $\mathbf{B}^{-1}$.
\end{defn}

\subsubsection{PBW and function representation}
Denote the entries of $\mathbf{B}^{-1}$ by $\{b'_{ij}\}$.
Let $\DD_0^{\mathrm{IV,\ZZ}}\subset\DD_0^{\mathrm{IV}}$ be the $\CC[\qqq^{\pm 1}]$-subalgebra generated by the generators $\{a_{ij},b'_{ij}\}_{i,j=1}^n$.
Similar to \ref{DellPBW}, standard monomial for $\DD_0^{\mathrm{IV},\ZZ}$ and $\DD_0^{\mathrm{IV}}$ is a product 
\[
a_{i_1j_1}\cdots a_{i_Mj_M}b_{k_1l_1}'\cdots b_{k_Nl_N}'
\]
where the
\begin{itemize}
\item the sequence of pairs of indices $(i_1,j_1),\ldots (i_M,j_M)$ is in lexicographic order;
\item the sequence of pairs of indices $(k_1,l_1),\ldots (k_N,l_N)$ is in \textit{reverse} lexicographic order.
\end{itemize}

\begin{prop}[\cite{JordanMult, VarVassRoot}]\label{D0Iso}
We have the following:
\begin{enumerate}
\item The standard monomials form a basis of $\DD_0^{\mathrm{IV},\ZZ}$.
Thus, $\DD_0^{\mathrm{IV}}$ is a flat deformation of the coordinate ring of $T^*(\mathrm{Mat}_{n\times n})$.
\item $\DD_0$ is isomorphic to $\OO_G\otimes\OO_G$ as a $\UU$-module.
Here, the first tensorand corresponds to the subalgebra generated by the entries of $\mathbf{A}$ while the second tensorand corresponds to that of $\mathbf{B}$.
\end{enumerate}
\end{prop}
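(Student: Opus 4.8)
This is a PBW/flatness statement, and I would prove it by exhibiting $\DD_0$ (together with its $\CC[\qqq^{\pm1}]$-form $\DD_0^{\mathrm{IV},\ZZ}$) as a \emph{twisted tensor product} of two reflection equation algebras---one in the $\mathbf{A}$-variables, one in the $\mathbf{B}^{-1}$-variables. The first step is to record the auxiliary relations. Conjugating (\ref{DBB}) by $\mathbf{B}^{-1}$ in both matrix slots and simplifying with the quantum Yang--Baxter equation (\ref{QYBE}) and the Hecke condition (\ref{HeckeCond}) shows that $\mathbf{B}^{-1}$ again satisfies a reflection equation, for an $R$-matrix obtained from $\RRR$ by inversion and transposition (which is of the same FRT type); hence the entries $\{b'_{ij}\}$ of $\mathbf{B}^{-1}$ generate a copy of $\mathcal{R}\hbox{\it ef}$, which by \cite{DonMudRET} and its integral form has the ordered-monomial PBW basis over $\CC[\qqq^{\pm1}]$, just like the $\mathbf{A}$-copy. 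Next, left- and right-multiplying the cross relation (\ref{DAB}) by $\mathbf{B}^{-1}_1$ rewrites it as a ``straightening rule'' $\mathbf{A}_2(\cdots)\mathbf{B}^{-1}_1=\mathbf{B}^{-1}_1(\cdots)\mathbf{A}_2$ with $R$-matrices interspersed, which moves each $a$-entry to the left of each $b'$-entry.

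I would then check that this straightening rule defines a genuine \emph{twisting map} $\chi$ between the $\mathbf{A}$- and $\mathbf{B}^{-1}$-copies of $\mathcal{R}\hbox{\it ef}$---that is, that it is compatible with the defining relations on each side, so the induced multiplication on the tensor product is associative. This compatibility is a Yang--Baxter/hexagon identity, and it holds because all the relations in play, (\ref{DAA}), (\ref{DBB}) and (\ref{DAB}), are images of $\UU$-module maps built from $\RRR$, so the identity reduces to (\ref{QYBE}). Granting this, $\DD_0^{\mathrm{IV},\ZZ}\cong\mathcal{R}\hbox{\it ef}^{\ZZ}\otimes_\chi\mathcal{R}\hbox{\it ef}^{\ZZ}$, and a twisted tensor product of free modules is free with basis the products of the two bases; taking the PBW monomials of the first factor in lexicographic order and of the second in reverse lexicographic order gives exactly the standard monomials of the statement. (Equivalently one can skip the twisted-tensor-product language and run a diamond-lemma argument directly on the three families of rewriting rules---the only overlap ambiguities are cubic, in three matrix slots, and they resolve by (\ref{QYBE}).) This proves part (1) over $\CC[\qqq^{\pm1}]$; specializing $\qqq\mapsto1$ collapses $\RRR$ to $\mathbf{I}$ and $\chi$ to the identity, so $\DD_0^{\mathrm{IV},\ZZ}/(\qqq-1)$ is the polynomial ring on $\{a_{ij}\}\cup\{b'_{ij}\}$, i.e.\ $\CC[T^*(\mathrm{Mat}_{n\times n})]$, which is the flat-deformation claim.

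Part (2) is then immediate. Inverting $\det_\qqq(\mathbf{A})$ and $\det_\qqq(\mathbf{B})$ turns the two reflection equation subalgebras of $\DD_0$ into $\OO_G$ by \cite{DonMudRET}(2); since $\chi$ is assembled from $\UU$-equivariant maps, the multiplication map $\OO_G\otimes\OO_G\to\DD_0$ (first factor onto the $\mathbf{A}$-subalgebra, second onto the $\mathbf{B}$-subalgebra) is a morphism of $\UU$-modules, and the twisted-tensor-product structure says precisely that products of basis vectors of the two copies form a basis of $\DD_0$, so it is an isomorphism.

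The main obstacle is the verification that the straightening rule really is a twisting map---equivalently, the confluence of the rewriting system; this is where one must do an honest (if routine) $R$-matrix computation, and one must also take care that passing to $\mathbf{B}^{-1}$ and localizing at the quantum determinants does not spoil the monomial order used for termination. Everything else---the $\UU$-equivariance, the PBW basis of a single reflection equation algebra, and the $\qqq\to1$ degeneration---is bookkeeping that is carried out in \cite{JordanMult, VarVassRoot}.
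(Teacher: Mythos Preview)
Your proposal is correct and follows essentially the same strategy as the cited references: the result is not proved in the present paper but quoted from \cite{JordanMult,VarVassRoot}, where $\DD_0$ is realized as a braided (twisted) tensor product of two copies of $\OO_G$ and the PBW statement for $\DD_0^{\mathrm{IV}}$ is obtained by combining the known PBW bases for the reflection equation algebras with the cross-relation straightening, whose compatibility is exactly the Yang--Baxter check you describe. One small point of precision: the subalgebra generated by the entries of $\mathbf{B}^{-1}$ is $\iota_{\OO_G}(\mathcal{R}\hbox{\it ef}\,)$ rather than $\mathcal{R}\hbox{\it ef}$ itself (cf.\ the discussion around (\ref{D1rel2})), which is why the reverse lexicographic order appears; this does not affect your argument.
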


\noindent We denote by $\OO_G^{\mathbf{A}},\OO_G^{\mathbf{B}}\subset\DD_0$ the subalgebras generated by the entries of $\mathbf{A}$ and $\mathbf{B}$, respectively, so 
\[\DD_0\cong\OO_G^{\mathbf{A}}\otimes\OO_G^{\mathbf{B}}\]
Consequently, we can define a function representation on
\[
\OO_G^{\mathbf{A}}\cong\DD_0\big/\DD_0\left( \ker\epsilon_{\OO_G^\mathbf{B}}  \right)
\]
Here, $\epsilon_{\OO_G^\mathbf{B}}$ is the counit of $\OO_G^\mathbf{B}$ (cf. \ref{OGDef}).

\begin{prop}[\cite{VarVassRoot}]\label{FuncFaith}
For $v^*\otimes v\in V^*\otimes V\subset\OO^\mathbf{A}_G$, viewed as an element of the function representation, and $b\in\OO_G^\mathbf{B}$,
\[
b\cdot (v^*\otimes v)=\left(\kappa(b)\bullet v^*\right)\otimes v
\]
Here, $\bullet$ denotes the $\UU$-action on $V^*$.
\end{prop}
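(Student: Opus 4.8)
The plan is to unwind both sides of the claimed identity directly from the definitions and compare. Recall that $\DD_0 \cong \OO_G^{\mathbf A}\otimes\OO_G^{\mathbf B}$ as $\UU$-modules (Proposition \ref{D0Iso}(2)), so the function representation is $\OO_G^{\mathbf A}\cong\DD_0/\DD_0(\ker\epsilon_{\OO_G^{\mathbf B}})$, and the action of $b\in\OO_G^{\mathbf B}$ on a coset is obtained by moving $b$ past the $\mathbf A$-generators using the cross relation (\ref{DAB}) until it meets the augmentation ideal, where we apply $\epsilon_{\OO_G^{\mathbf B}}$. The content of the proposition is that this straightening procedure is exactly computed by the Joseph--Letzter--Rosso map $\kappa$ acting on $V^*$. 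First I would rewrite (\ref{DAB}), $\RRR_{21}\mathbf{B}_1\RRR\mathbf{A}_2=\mathbf{A}_2\RRR_{21}\mathbf{B}_1\RRR_{21}^{-1}$, in the form $\mathbf{B}_1(\RRR\mathbf{A}_2\RRR_{21})=(\RRR_{21}^{-1}\mathbf{A}_2\RRR_{21})\mathbf{B}_1\,(\text{stuff})$ so that the combination $\widetilde\RRR_{21}\widetilde\RRR$ that appears in the definition of $\kappa$ is visible; this is the same bookkeeping that identifies $\OO_G^{\mathbf B}$ with the reflection equation algebra and $\kappa$ with the map $m_{ij}\mapsto$ (functional built from $\widetilde\RRR_{21}\widetilde\RRR$).

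The cleanest route is categorical. By Proposition \ref{D0Iso}(2) the $\mathbf A$-subalgebra is a copy of the coend $\OO_G=\bigoplus_\lambda V_\lambda^*\otimes V_\lambda$, so an element $v^*\otimes v\in V^*\otimes V$ sits inside $\OO_G^{\mathbf A}$. The relation between $\mathbf A$-matrix entries and $\mathbf B$-matrix entries is governed by a single $R$-matrix tensor leg, so for a general $V$ the action of an $\OO_G^{\mathbf B}$-generator $m_{kl}=e^k\otimes e_l$ on $v^*\otimes v$ is obtained by: (i) braiding the $\mathbb V$-strand of $\mathbf B$ past the $V$-strand and the $V^*$-strand of $v^*\otimes v$ — this produces precisely the double $R$-matrix factor $\widetilde\RRR\widetilde\RRR_{21}$ acting between the $\mathbb V$ and $V$ factors — and then (ii) capping off the $\mathbb V\otimes\mathbb V^*$ of $\mathbf B$ against the augmentation, i.e. applying $\epsilon_{\OO_G^{\mathbf B}}=\mathrm{ev}$. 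Tracking which side the $\RRR$'s land on, together with the defining formula $\kappa(v^*\otimes v)=((v^*\otimes v)(-)\otimes 1)\widetilde{\mathbf R}_{21}\widetilde{\mathbf R}$, shows that the net operator on the $V^*$ leg is exactly $\kappa(b)\bullet(-)$, where $\bullet$ is the contragredient action on $V^*$ (the $V$ leg is untouched, hence the $\otimes v$ on the right). So the identity $b\cdot(v^*\otimes v)=(\kappa(b)\bullet v^*)\otimes v$ reduces, diagrammatically, to the statement that the picture (\ref{KappaPic}) for $\kappa$ is what one gets after braiding-and-capping, which is essentially a restatement of how $\kappa$ was defined.

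To make this rigorous I would proceed in the following order. Step 1: reduce to generators — since both $b\mapsto b\cdot(v^*\otimes v)$ and $b\mapsto(\kappa(b)\bullet v^*)\otimes v$ are algebra maps out of $\OO_G^{\mathbf B}$ (the first because it is the module action, the second because $\kappa$ is an algebra homomorphism by the Joseph--Letzter--Rosso theorem and $\bullet$ is an action), it suffices to check the identity for $b=m_{kl}$, the matrix entries of $\mathbf B$. Step 2: compute $m_{kl}\cdot(v^*\otimes v)$ explicitly by applying relation (\ref{DAB}) to commute $\mathbf B$ past $\mathbf A$-words realizing $v^*\otimes v$, then hit with $\epsilon_{\OO_G^{\mathbf B}}$; keep careful track of $\RRR$ versus $\RRR_{21}$ and of which tensorand carries the matrix units. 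Step 3: compare with $\kappa(m_{kl})$ using its definition via $\widetilde{\mathbf R}_{21}\widetilde{\mathbf R}$ and the observation (\ref{KappaPic}) that $\kappa(v^*\otimes v)(w)$ is the braiding-cap diagram; verify the two coincide, taking care that the $\bullet$-action on $V^*$ is the one induced through $\qqq^{2\rho}$ identifications used to define the $\OO_G$ coalgebra structure. The main obstacle is Step 2/3: getting all the $R$-matrix conventions consistent — in particular that the \emph{transpose} $\RRR$ (not $\RRR'$) is used, that the cross relation (\ref{DAB}) has $\RRR_{21}^{-1}$ on the far right, and that $\epsilon_{\OO_G^{\mathbf B}}=\mathrm{ev}$ rather than $\mathrm{qev}$ — so that the residual operator is $\kappa(b)$ acting on $V^*$ and not, say, $\bar\kappa(b)$ or $\kappa(b)$ acting via the opposite coproduct. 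Once the bookkeeping is pinned down the identity falls out, and this is exactly the kind of $R$-matrix juggling already carried out for $\DD_0$ in \cite{VarVassRoot}, from which the statement is quoted.
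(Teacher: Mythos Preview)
The paper does not give its own proof of this proposition: it is stated with the attribution \cite{VarVassRoot} and no \texttt{proof} environment follows. So there is nothing in the paper to compare your argument against beyond the surrounding definitions.

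That said, your outline is the natural one and matches how such a statement is established in the Varagnolo--Vasserot framework. The reduction to generators in Step~1 is sound because $\kappa$ is an algebra map and the function representation is a module action. The heart of the matter is indeed that the cross relation (\ref{DAB}), rewritten as
\[
\mathbf{B}_1\bigl(\RRR\,\mathbf{A}_2\,\RRR_{21}\bigr)=\bigl(\RRR_{21}^{-1}\mathbf{A}_2\,\RRR_{21}\bigr)\mathbf{B}_1,
\]
exhibits the double $R$-matrix $\widetilde\RRR_{21}\widetilde\RRR$ acting on the $V^*$-leg once one passes to the coend description and applies $\epsilon_{\OO_G^{\mathbf B}}=\mathrm{ev}$. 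Your invocation of the diagram (\ref{KappaPic}) is exactly the right way to see that this coincides with $\kappa(b)\bullet v^*$. One small clarification: the $\bullet$-action on $V^*$ here is the ordinary contragredient $\UU$-action (no $\qqq^{2\rho}$ twist is needed at this step---that enters only for $\mathrm{qev}/\mathrm{qcoev}$, not for $\mathrm{ev}$). With that caveat, your bookkeeping concerns about $\RRR$ versus $\RRR_{21}$ and $\kappa$ versus $\bar\kappa$ are legitimate but resolve in the direction you anticipate.
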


\subsubsection{Homomorphism into $\DD_\ell$}
For a sequence of noncommuting elements $x_1,\ldots, x_M$, let
\[
\overset{\curvearrowright}{\prod_{a=1}^M}x_a:=x_1\cdots x_M
\]
For $\ell>1$, consider the following two matrices of elements of $\DD_\ell$:
\begin{equation*}
\begin{aligned}
\XX^\circ&:= \overset{\curvearrowright}{\prod_{a=1}^\ell}\XX^{(a)}=\XX^{(1)}\XX^{(2)}\cdots\XX^{(\ell)},&
{}^L\underline{\YY}^{(1)}&:=\mathbf{I}+\XX^{(1)}\DDD^{(1)}
\end{aligned}
\end{equation*}
where $\mathbf{I}:=\sum_i E_{ii}\otimes 1$ is the identity matrix.

\begin{lem}\label{D0DellLem}
The map
\begin{equation*}
\begin{aligned}
\mathbf{A}&\mapsto\XX^\circ,&
\mathbf{B}^{-1}&\mapsto\YLu^{(1)}
\end{aligned}
\end{equation*}
defines an algebra homomorphism $\Phi^\ell:\DD_0^{\mathrm{IV}}\rightarrow\DD_\ell$.
If we endow $\DD_0^{\mathrm{IV}}$ with the $\UU^{\otimes \ell}$ action whereby $\UU_{(1)}$ acts via the usual $\UU$-action and $\UU_{(a)}$ acts trivially for $a\not=1$, then $\Phi^\ell$ is $\UU^{\otimes\ell}$-equivariant.
\end{lem}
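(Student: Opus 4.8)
The plan is to produce $\Phi^\ell$ by checking that the matrices $\XX^\circ$ and $\YLu^{(1)}$ satisfy the defining relations of $\DD_0^{\mathrm{IV}}$, and then to read off equivariance from the $\UU^{\otimes\ell}$-weights of the building blocks $\XX^{(a)},\DDD^{(a)}$. First I would fix a presentation of $\DD_0^{\mathrm{IV}}$: by Proposition~\ref{D0Iso} the standard monomials in the entries of $\mathbf{A}$ and $\mathbf{B}^{-1}$ form a basis, and rewriting the reflection equation~(\ref{DBB}) and the mixed relation~(\ref{DAB}) with $\mathbf{B}^{-1}$ in place of $\mathbf{B}$ yields, together with this PBW statement (via the usual straightening argument), a presentation of $\DD_0^{\mathrm{IV}}$ by the reflection equation for $\mathbf{A}$, a reflection-type equation for $\mathbf{B}^{-1}$, and one mixed relation $(\mathrm{Mix})$ relating $\mathbf{A}$ and $\mathbf{B}^{-1}$. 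It then suffices to verify these three families for $\XX^\circ$ and $\YLu^{(1)}$ inside $\DD_\ell$.

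That $\XX^\circ=\XX^{(1)}\cdots\XX^{(\ell)}$ satisfies the reflection equation $\RRR_{21}\XX^\circ_1\RRR\XX^\circ_2=\XX^\circ_2\RRR_{21}\XX^\circ_1\RRR$ I would prove by induction on $\ell$, establishing the stronger statement that each partial product $\XX^{(1)}\cdots\XX^{(m)}$ is a reflection-equation matrix that still $\RRR$-commutes in a controlled way with $\XX^{(m+1)},\dots,\XX^{(\ell)}$; the base case is~(\ref{Xaa}), and the inductive step pushes $\RRR$-matrices through using~(\ref{XXad}) (or~(\ref{XXad2}) when $\ell=2$), the quantum Yang--Baxter equation~(\ref{QYBE}), and the Hecke relation~(\ref{HeckeCond}). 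This is the familiar fact that a cyclically ordered chain of FRT-type matrices multiplies to a reflection-equation matrix; the delicate points are the seam where index $1$ meets index $\ell\equiv 0$ and the degenerate case $\ell=2$. That $\YLu^{(1)}=\mathbf{I}+\XX^{(1)}\DDD^{(1)}$ satisfies the corresponding reflection-type equation is the statement that $\mathbf{I}+\XX\DDD$ is the multiplicative moment-map matrix of the quantum Weyl algebra $\DD_\ell^{(1)}$; it follows from~(\ref{Xaa}),~(\ref{Daa}) and the cross relation~(\ref{DXa}), the $(\qqq-\qqq^{-1})\mathbf{\Omega}$ term being exactly what closure requires, as in~\cite{JordanMult}. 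One then checks this is the form of the relation obtained by inverting~(\ref{DBB}).

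The bulk of the work is the mixed relation. Writing $\XX^\circ=\XX^{(1)}\cdot(\XX^{(2)}\cdots\XX^{(\ell-1)})\cdot\XX^{(\ell)}$, the middle factor involves only indices not adjacent to $1$ and hence commutes exactly with $\XX^{(1)}$ and $\DDD^{(1)}$, while $\XX^{(2)}$ and $\XX^{(\ell)}=\XX^{(0)}$ interact with $\XX^{(1)}$ and $\DDD^{(1)}$ only through $\RRR$-matrices via~(\ref{XXad}),~(\ref{DXadj1}) and~(\ref{DXadj2}). Substituting these into $(\mathrm{Mix})$ and collapsing the $\RRR$-matrices that appear with~(\ref{QYBE}) and~(\ref{HeckeCond}) should reproduce the identity exactly; as before the case $\ell=2$, where the two boundary indices coincide, requires its own parallel ledger using~(\ref{XXad2}) and the $\ell=2$ forms of~(\ref{DDadj}),~(\ref{DXadj1}),~(\ref{DXadj2}). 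This verification---tracking which $\RRR$-matrices are produced as the degree-one data $\XX^{(1)},\DDD^{(1)}$ is dragged around the two ends of the cyclic chain, and confirming that what emerges is the reflection-type mixed relation rather than a twisted version of it---is where I expect the real effort to lie.

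For equivariance, recall from the homomorphisms $\Pi_\OO^{(a)},\Pi_\DD^{(a)}$ of~\ref{DellEq} that $\XX^{(a)}$ has $\UU^{\otimes\ell}$-weight $\mathbb{V}_{(a)}^*\boxtimes\mathbb{V}_{(a+1)}$ and $\DDD^{(a)}$ has weight $\mathbb{V}_{(a)}\boxtimes\mathbb{V}_{(a+1)}^*$. In the matrix product $\XX^{(1)}\cdots\XX^{(\ell)}$ each intermediate pair $\mathbb{V}_{(a)}^*\otimes\mathbb{V}_{(a)}$ for $2\le a\le\ell$ is contracted by the matrix multiplication, so $\XX^\circ$ is $\UU_{(a)}$-invariant for $a\neq 1$ and carries $\UU_{(1)}$-weight $\mathbb{V}_{(1)}^*\boxtimes\mathbb{V}_{(1)}$; likewise $\XX^{(1)}\DDD^{(1)}$ has its $\mathbb{V}_{(2)}$-legs contracted, so $\YLu^{(1)}$ is $\UU_{(2)}$-invariant of the same $\UU_{(1)}$-weight. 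This matches how $\mathbf{A}$ and $\mathbf{B}^{-1}$ transform inside $\DD_0^{\mathrm{IV}}$ under the action in which only $\UU_{(1)}$ acts nontrivially. Since both algebras are $\UU^{\otimes\ell}$-equivariant and the relations defining $\DD_0^{\mathrm{IV}}$ are images of $\UU$-module maps, $\UU^{\otimes\ell}$-equivariance on the generators propagates to all of $\DD_0^{\mathrm{IV}}$, which gives the claim.
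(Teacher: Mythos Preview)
Your outline is sound and matches the paper's structure: verify the three families of relations for $\DD_0^{\mathrm{IV}}$ and then read off equivariance from weights. Two points are worth flagging.

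First, a small slip: in your decomposition $\XX^\circ=\XX^{(1)}\cdot(\XX^{(2)}\cdots\XX^{(\ell-1)})\cdot\XX^{(\ell)}$ you assert the middle factor commutes exactly with $\XX^{(1)}$ and $\DDD^{(1)}$, but $\XX^{(2)}$ is adjacent to index $1$ and does \emph{not} commute with either (it interacts via (\ref{XXad}) and (\ref{DXadj1})). You seem to catch this in the next clause, so presumably you meant the block $\XX^{(3)}\cdots\XX^{(\ell-1)}$ to be the inert middle; just be careful, since for small $\ell$ this block is empty and the boundary cases absorb everything.

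Second, and more substantively, the paper handles the mixed relation~(\ref{DAB}) differently and more efficiently. Rather than unpacking $\YLu^{(1)}=\mathbf{I}+\XX^{(1)}\DDD^{(1)}$ and tracking the separate interactions of $\XX^{(1)}$ and $\DDD^{(1)}$ with each $\XX^{(a)}$ (which is what your sketch proposes, and which would work but is laborious), the paper first establishes---independently, in \S\ref{MomentEll}---uniform commutation relations between the \emph{packaged} matrix $\YLu^{(1)}$ and each $\XX^{(a)}$: an $\RRR$-twisted relation for $a=1$ (from the moment-map identity~(\ref{YL1})), exact commutation for $2\le a\le\ell-1$ (from~(\ref{YL4})), and another $\RRR$-twisted relation for $a=\ell$ (from~(\ref{YL3})). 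With these in hand, verifying~(\ref{DAB}) becomes a five-line computation: one simply slides $\YLu^{(1)}_1$ past each factor $\XX^{(a)}_2$ of $\XX^\circ_2$ in turn. The advantage of this packaging is that the cancellation between the $\mathbf{I}$ and $\XX^{(1)}\DDD^{(1)}$ summands---which in your approach you would have to produce by hand via (\ref{HeckeCond}) and (\ref{DXa})---is already baked into the moment-map relations. Your approach trades a forward reference for a longer direct computation; both are valid, but the paper's is the one you want if you ever need to iterate this kind of argument.
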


\begin{proof}
We need to check that $\XX^\circ$ and ${}^L\underline{\YY}^{(1)}$ satisfy the relations defining $\DD_0^{\mathrm{IV}}$.
For (\ref{DAA}), let us first consider the case $\ell\ge 3$.
We begin by moving $\XX_2^{(1)}$ all the way to the left:
\begin{align}
\nonumber
\RRR_{21}\XX^\circ_1\RRR\XX^\circ_2&=
\RRR_{21}\XX_1^{(1)}\XX_1^{(2)}
\left(\overset{\curvearrowright}{\prod_{a=3}^{\ell-1}}\XX_1^{(a)}\right)
\underbrace{\XX_1^{(\ell)}\RRR\XX_2^{(1)}}_{(\ref{XXad})}
\left(\overset{\curvearrowright}{\prod_{a=2}^{\ell}}\XX_2^{(a)}\right)\\
\nonumber
&=\RRR_{21}\XX_1^{(1)}\XX_1^{(2)}
\underbrace{\left(\overset{\curvearrowright}{\prod_{a=3}^{\ell-1}}\XX_1^{(a)}\right)\XX_2^{(1)}}_{(\ref{XXcom})}
\XX_1^{(\ell)}\left(\overset{\curvearrowright}{\prod_{a=2}^{\ell}}\XX_2^{(a)}\right)\\
\nonumber
&=\RRR_{21}\XX_1^{(1)}
\underbrace{\XX_1^{(2)}\XX_2^{(1)}}_{(\ref{XXad})}
\left(\overset{\curvearrowright}{\prod_{a=3}^{\ell}}\XX_1^{(a)}\right)
\left(\overset{\curvearrowright}{\prod_{a=2}^{\ell}}\XX_2^{(a)}\right)\\
\nonumber
&= 
\underbrace{\RRR_{21}\XX_1^{(1)}\XX_2^{(1)}}_{(\ref{Xaa})}
\RRR_{21}
\left(\overset{\curvearrowright}{\prod_{a=2}^{\ell}}\XX_1^{(a)}\right)
\left(\overset{\curvearrowright}{\prod_{a=2}^{\ell}}\XX_2^{(a)}\right)\\
\label{PhiLem1}
&= \XX_2^{(1)}\XX_1^{(1)}\RRR\RRR_{21}
\left(\overset{\curvearrowright}{\prod_{a=2}^{\ell}}\XX_1^{(a)}\right)
\left(\overset{\curvearrowright}{\prod_{a=2}^{\ell}}\XX_2^{(a)}\right)
\end{align}
After this, moves are identical: iterate (\ref{XXcom}), then an application of (\ref{XXad}), (\ref{Xaa}), and (\ref{XXad}) again.
We illustrate this for the first step following (\ref{PhiLem1}):
\begin{align*}
&=\XX_2^{(1)}\XX_1^{(1)}\RRR\RRR_{21}\XX_1^{(2)}\XX_1^{(3)}
\underbrace{\left(\overset{\curvearrowright}{\prod_{a=4}^{\ell}}\XX_1^{(a)}\right)
\XX_2^{(2)}}_{(\ref{XXcom})}
\left(\overset{\curvearrowright}{\prod_{a=3}^{\ell}}\XX_2^{(a)}\right)\\
&=\XX_2^{(1)}\XX_1^{(1)}\RRR\RRR_{21}\XX_1^{(2)}
\underbrace{\XX_1^{(3)}\XX_2^{(2)}}_{(\ref{XXad})}
\left(\overset{\curvearrowright}{\prod_{a=4}^{\ell}}\XX_1^{(a)}\right)
\left(\overset{\curvearrowright}{\prod_{a=3}^{\ell}}\XX_2^{(a)}\right)\\
&= \XX_2^{(1)}\XX_1^{(1)}\RRR
\underbrace{\RRR_{21}\XX_1^{(2)}\XX_2^{(2)}}_{(\ref{Xaa})}
\RRR_{21}
\left(\overset{\curvearrowright}{\prod_{a=3}^{\ell}}\XX_1^{(a)}\right)
\left(\overset{\curvearrowright}{\prod_{a=3}^{\ell}}\XX_2^{(a)}\right)\\
&= \XX_2^{(1)}
\underbrace{\XX_1^{(1)}\RRR\XX_2^{(2)}}_{(\ref{XXad})}
\XX_1^{(2)}\RRR\RRR_{21}
\left(\overset{\curvearrowright}{\prod_{a=3}^{\ell}}\XX_1^{(a)}\right)
\left(\overset{\curvearrowright}{\prod_{a=3}^{\ell}}\XX_2^{(a)}\right)\\
&= \XX_2^{(1)}\XX_2^{(2)}\XX_1^{(1)}\XX_1^{(2)}\RRR\RRR_{21}
\left(\overset{\curvearrowright}{\prod_{a=3}^{\ell}}\XX_1^{(a)}\right)
\left(\overset{\curvearrowright}{\prod_{a=3}^{\ell}}\XX_2^{(a)}\right)
\end{align*}
The closing step entails commuting $\XX_2^{(\ell)}$ to the left:
\begin{align*}
&= \left( \overset{\curvearrowright}{\prod_{a=1}^{\ell-1}}\XX_2^{(a)} \right)\left( \overset{\curvearrowright}{\prod_{a=1}^{\ell-1}}\XX_1^{(a)} \right)\RRR
\underbrace{\RRR_{21}\XX_1^{(\ell)}\XX_2^{(\ell)}}_{(\ref{Xaa})}\\
&= \left( \overset{\curvearrowright}{\prod_{a=1}^{\ell-1}}\XX_2^{(a)} \right)\left( \overset{\curvearrowright}{\prod_{a=1}^{\ell-2}}\XX_1^{(a)} \right)
\underbrace{\XX_1^{(\ell-1)}\RRR\XX_2^{(\ell)}}_{(\ref{XXad})}
\XX_1^{(\ell)}\RRR\\
&= \left( \overset{\curvearrowright}{\prod_{a=1}^{\ell-1}}\XX_2^{(a)} \right)\XX_1^{(1)}
\underbrace{\left( \overset{\curvearrowright}{\prod_{a=2}^{\ell-2}}\XX_1^{(a)} \right)\XX_2^{(\ell)}}_{(\ref{XXcom})}
\XX_1^{(\ell-1)}\XX_1^{(\ell)}\RRR\\
&= \left( \overset{\curvearrowright}{\prod_{a=1}^{\ell-1}}\XX_2^{(a)} \right)
\underbrace{\XX_1^{(1)}\XX_2^{(\ell)}}_{(\ref{XXad})}
\left( \overset{\curvearrowright}{\prod_{a=2}^{\ell}}\XX_1^{(a)} \right)\RRR\\
&= \left( \overset{\curvearrowright}{\prod_{a=1}^{\ell}}\XX_2^{(a)} \right)\RRR_{21}\left( \overset{\curvearrowright}{\prod_{a=1}^{\ell}}\XX_1^{(a)} \right)\RRR=\XX_2^\circ\RRR_{21}\XX_1^\circ\RRR\\
\end{align*}
For $\ell=2$, we have:
\begin{align*}
\RRR_{21}\XX_{1}^\circ\RRR\XX_2^\circ&= \RRR_{21}\XX_1^{(1)}
\underbrace{\XX_1^{(2)}\RRR\XX_2^{(1)}}_{(\ref{XXad2})}
\XX_2^{(2)}\\
&= \underbrace{\RRR_{21}\XX_1^{(1)}\XX_2^{(1)}}_{(\ref{Xaa})}
\underbrace{\RRR_{21}\XX_{1}^{(2)}\XX_2^{(2)}}_{(\ref{Xaa})}\\
&= \XX_2^{(1)}
\underbrace{\XX_1^{(1)}\RRR\XX_{2}^{(2)}}_{(\ref{XXad2})}
\XX_1^{(2)}\RRR\\
&= \XX_2^{(1)}\XX_2^{(2)}\RRR_{21}\XX_1^{(1)}\XX_1^{(2)}\RRR
=\XX_2^\circ\RRR_{21}\XX_1^\circ\RRR
\end{align*}

The remaining relations rely on results presented in \ref{MomentEll}; these results do not rely on the present material.
Relation (\ref{DBB}) was proved by Jordan \cite{JordanMult} (cf. Proposition \ref{DellMuProp}(2) below). 
For (\ref{DAB}), we have:
\begin{align*}
{}^L\underline{\YY}^{(1)}_1\RRR_{21}^{-1}\XX^\circ_2\RRR_{21}
&=
\underbrace{{}^L\underline{\YY}^{(1)}_1\RRR_{21}^{-1}\XX^{(1)}_2}_{(\ref{YL1})}
\left(\overset{\curvearrowright}{\prod_{a=2}^{\ell-1}}\XX^{(a)}_2\right)\XX^{(\ell)}_2\RRR_{21}\\
&= 
\RRR\XX^{(1)}_2
{}^L\underbrace{\underline{\YY}^{(1)}_1\left(\overset{\curvearrowright}{\prod_{a=2}^{\ell-1}}\XX^{(a)}_2\right)}_{(\ref{YL2})}
\XX^{(\ell)}_2\RRR_{21}\\
&= 
\RRR\XX^{(1)}_2\left(\overset{\curvearrowright}{\prod_{a=2}^{\ell-1}}\XX^{(a)}_2\right)
\underbrace{{}^L\underline{\YY}^{(1)}_1\XX^{(\ell)}_2\RRR_{21}}_{(\ref{YL3})}\\
&= 
\RRR\XX^{(1)}_2\left(\overset{\curvearrowright}{\prod_{a=2}^{\ell-1}}\XX^{(a)}_2\right)
\XX^{(\ell)}_2\RRR_{21}{}^L\underline{\YY}^{(1)}_1\\
&=\RRR\XX^\circ_2\RRR_{21}{}^L\underline{\YY}^{(1)}_1\qedhere
\end{align*}
\end{proof}

\subsubsection{Localization in $\DD_\ell$}
By Lemma \ref{D0DellLem}, we can make sense of the element 
\[\textstyle\det_\qqq(\XX^\circ):=\Phi^\ell\left(\det_\qqq(\mathbf{A})\right)\in\DD_\ell\]

\begin{lem}\label{DellDet}
The quantum determinant $\det_\qqq(\XX^\circ)$ generates an Ore set in $\DD_\ell$.
\end{lem}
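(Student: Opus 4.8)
The plan is to reduce the statement to the already-established Ore property of the FRT quantum determinants $\det_\qqq(\XX^{(a)})$ from Corollary \ref{FRTOre}, by first identifying $\det_\qqq(\XX^\circ)$ with their product up to a unit. So I would first show that
\[
\det\nolimits_\qqq(\XX^\circ)=\qqq^{c}\,\det\nolimits_\qqq(\XX^{(1)})\det\nolimits_\qqq(\XX^{(2)})\cdots\det\nolimits_\qqq(\XX^{(\ell)})
\]
in $\OO_\ell\subseteq\DD_\ell$ for some integer $c$. By Lemma \ref{D0DellLem} the matrix $\XX^\circ$ satisfies the reflection equation, so $\det_\qqq(\XX^\circ)=\Phi^\ell(\det_\qqq(\mathbf{A}))$ is the reflection-equation quantum determinant of $\XX^\circ$, which, like the FRT determinants, is computed by the action on the top quantum exterior power $\mathbb{1}_{\qqq^{-1}}\subset\wedge_\qqq\mathbb{V}^*$ (cf. the realization of $\det_\qqq(\mathbf{M})$ via $\mathrm{qcoev}_{\mathbb{1}_\qqq}$ and of $\det_\qqq(\XX^{(a)})$ via $\varphi_L^\wedge$ in the proof of Proposition \ref{FRTDetProp}). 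Stacking these exterior-power realizations along the chain $\XX^{(1)},\ldots,\XX^{(\ell)}$ and using (\ref{Xaa})--(\ref{XXcom}) to reorder factors produces the displayed identity, with $\qqq^{c}$ accounting for the $R$-matrix reorderings. Alternatively, one can establish the identity by passing to the flat degeneration of Theorem \ref{DellFlat}, where both sides specialize to the classical identity $\det(X^{(1)}\cdots X^{(\ell)})=\prod_a\det(X^{(a)})$, once one knows that both sides quasi-commute with the generators in the same way so that comparing leading terms in the PBW filtration pins down the scalar.

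Next I would check that each $\det_\qqq(\XX^{(a)})$ is a non-zero-divisor of $\DD_\ell$ that $\qqq$-commutes with every algebra generator $x^{(b)}_{ij},\del^{(b)}_{ij}$. For the non-zero-divisor property, Theorem \ref{DellFlat} exhibits $\DD_\ell$ as a flat deformation of $\CC[T^*(\mathrm{Mat}_{n\times n}^{\oplus\ell})]$ in which $\det_\qqq(\XX^{(a)})$ degenerates to the ordinary determinant of the $a$-th block, a non-zero-divisor in the polynomial ring; lifting through the PBW filtration shows $\det_\qqq(\XX^{(a)})$ is a non-zero-divisor. For the commutation: $\det_\qqq(\XX^{(a)})$ is central in $\OO_\ell^{(a)}$ by Proposition 9.9 of \cite{KlimSchm}, hence commutes with $x^{(a)}_{ij}$; by Proposition \ref{FRTDetProp} it transforms under $\UU^{(a)}\otimes\UU^{(a+1)}$ as a one-dimensional character (and trivially under the remaining tensorands), so, via (\ref{RFactor}) and the braided product relations (\ref{BraidedEll}), its commutation with the weight-homogeneous generators $x^{(b)}_{ij},\del^{(b)}_{ij}$ for $b\neq a$ is multiplication by a power of $\qqq$; finally the commutation with $\del^{(a)}_{ij}$ is a direct computation from (\ref{DXa})---the standard fact that in a quantum Weyl algebra the quantum determinant of the coordinate matrix normalizes the matrix of derivatives up to a power of $\qqq$.

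With these in hand, the product $s:=\det_\qqq(\XX^{(1)})\cdots\det_\qqq(\XX^{(\ell)})$ is a non-zero-divisor (its image in the associated graded, a domain, is a product of non-zero-divisors) which $\qqq$-commutes with every generator of $\DD_\ell$; hence $s\,\DD_\ell=\DD_\ell\,s$ and $\{s^{m}\}_{m\geq 0}$ is a two-sided Ore set. Since $\det_\qqq(\XX^\circ)$ differs from $s$ only by the unit $\qqq^{c}$, the set $\{\det_\qqq(\XX^\circ)^{m}\}_{m\geq 0}$ is a two-sided Ore set as well, which is the claim. I expect the main obstacle to be the multiplicativity identity of the first paragraph: because the $\XX^{(a)}$ obey braided rather than commuting relations, one must track carefully the $R$-matrix contributions to the quantum determinant of the ordered product $\XX^\circ$; the quantum-Weyl-algebra commutation of $\det_\qqq(\XX^{(a)})$ with $\del^{(a)}_{ij}$ in the second paragraph is routine but also requires an honest computation.
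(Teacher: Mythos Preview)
Your approach has a genuine gap in the second paragraph: the claim that $\det_\qqq(\XX^{(a)})$ $\qqq$-commutes with $\del^{(a)}_{ij}$ is false.  The cross-relation (\ref{DXa}) contains the inhomogeneous term $(\qqq-\qqq^{-1})\mathbf{\Omega}$, so commuting $\del^{(a)}_{ij}$ past any monomial in the $x^{(a)}_{kl}$ produces lower-degree correction terms in $\OO_\ell$.  Already for $n=1$ the relation reads $\del x=\qqq^{2}x\del+(\qqq^{2}-1)$, and there is no ``standard fact'' to the contrary for the matrix version.  Consequently the equality $s\,\DD_\ell=\DD_\ell\,s$ that you want does not hold on the nose, and the argument as written does not establish the Ore condition.

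The paper's proof fixes exactly this by working with the filtration $\deg(x_{ij}^{(a)})=0$, $\deg(\del_{ij}^{(a)})=1$: in the associated graded the $\mathbf{\Omega}$ term drops, so the relevant commutations become genuine $\qqq$-commutations, and one then solves the Ore problem inductively on filtration degree (with cancellability coming from the domain property, Theorem~\ref{DellFlat}).  The paper also avoids your first paragraph entirely: it works directly with $\det_\qqq(\XX^\circ)$, writes the commutation relations (\ref{X1circ})--(\ref{Xellcirc}) and (\ref{DetDa2})--(\ref{DetDell}) in terms of the $\UU$-action and the maps $\kappa,\bar{\kappa}$, and then uses $\UU$-invariance of $\det_\qqq(\XX^\circ)$ to see the interactions are by scalars (exactly on $\OO_\ell$, and in the associated graded on the $\del$'s).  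Your route through the factorization $\det_\qqq(\XX^\circ)=\qqq^{c}\prod_a\det_\qqq(\XX^{(a)})$ could be made to work, but only after inserting the same filtration argument; at that point the multiplicativity identity becomes an extra step with no payoff, since the paper's direct argument already yields the needed graded $\qqq$-commutation for $\det_\qqq(\XX^\circ)$ without ever comparing it to the FRT determinants.
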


\begin{proof}
First, let us consider the commutation relations between $\XX^\circ$ and $\{\XX^{(a)}\}$.
Using (\ref{Xaa})-(\ref{XXcom}), it is easy to see that
\begin{align}
\label{X1circ}
\XX^{(1)}_1\XX_2^\circ&= \RRR_{21}^{-1}\XX_2^\circ\RRR_{21}\XX^{(1)}_1\\
\label{Xellcirc}
\XX_1^\circ\XX_2^{(\ell)}&= \XX_2^{(\ell)}\RRR_{21}\XX_1^\circ\RRR^{-1}_{21}\\
\nonumber
\XX_1^{(a)}\XX_2^\circ&=\XX^\circ_2\XX_1^{(a)}\hbox{ for }a\not=1,\ell
\end{align}
Equations (\ref{X1circ}) and (\ref{Xellcirc}) can be written as follows.
Let $\bullet$ denote the action of $\UU$ on $\mathbb{V}$ or $\mathbb{V}^*$, and let $\bowtie$ denote the action of $\UU$ on any subspace $V^*\otimes V\subset\OO_G$ via the coproduct $\Delta_\UU$.
We will recycle notation from \ref{DellEq}: for $e^i_{(1)}\boxtimes e_j^{(1)}=x_{ij}^{(1)}$, $e^i_{(\ell)}\boxtimes e_j^{(\ell)}=x_{ij}^{(\ell)}$, and $x^\circ_{kl}:=\Phi^\ell(a_{kl})$,
\begin{align*}
\left(e^i_{(1)}\boxtimes e_j^{(1)}\right)x^\circ_{kl}&=\left( \iota_\UU({}_sr)\bowtie x^\circ_{kl} \right)\left(r_s\bullet e^i_{(1)}\boxtimes e_j^{(1)} \right) \\
x^\circ_{kl}\left(e^i_{(\ell)}\boxtimes e_j^{(\ell)}\right)&=\left( e^i_{(\ell)}\boxtimes r_s\bullet e_j^{(\ell)} \right)\left( \iota_\UU(r_s)\bowtie x^\circ_{kl} \right)
\end{align*}
By (\ref{RCoprod}) and the fact that $\bowtie$ is an algebra action, we have that for any product $x^\circ$ in the entries of $\XX^\circ$,
\begin{align*}
\left(e^i_{(1)}\boxtimes e_j^{(1)}\right)x^\circ&=\left( \iota_\UU({}_sr)\bowtie x^\circ \right)\left(r_s\bullet e^i_{(1)}\boxtimes e_j^{(1)} \right) \\
x^\circ\left(e^i_{(\ell)}\boxtimes e_j^{(\ell)}\right)&=\left( e^i_{(\ell)}\boxtimes r_s\bullet e_j^{(\ell)} \right)\left( \iota_\UU(r_s)\bowtie x^\circ \right)
\end{align*}
Because $\det_\qqq(\XX^\circ)$ is $\UU$-invariant, we conclude that it commutes with $\OO_\ell$.

For $\{\DDD^{(a)}\}$, we can show similar relations.
Let us consider the easiest case of $a\not=1,\ell$:
\begin{align}
\label{DetDa}
\DDD_1^{(a)}\XX_2^\circ&=\XX_2^{(1)}\cdots\XX_2^{(a-1)}\left(\XX_2^{(a)}\right)_{*j}\RRR_{21}\RRR\left( \XX_2^{(a+1)} \right)_{j*}\XX_2^{(a+2)}\cdots\XX_2^{(\ell)}\DDD^{(a)}_1+\hbox{ terms in }\OO_\ell
\end{align}
Here, the subscripts $*j$ and $j*$ emphasize that we are considering the matrix units $E_{*j}$ and $E_{j*}$, where $*$ is whatever makes the matrix products work.
Moreover, we use an Einstein notation and thus there is an implicit summation over $j$.
Thus, all we are saying is that the $j$-indices must match.
We can rewrite (\ref{DetDa}) using $\bar{\kappa}$:
\begin{equation}
\DDD_1^{(a)}\XX_2^\circ=\bar{\kappa}(\mathrm{coev}_{\mathbb{V}}(1))\big|_{\mathbb{V}^*}\XX^\circ_2\DDD^{(a)}_1+\hbox{ terms in }\OO_\ell
\label{DetDa2}
\end{equation}
Because $\bar{\kappa}(\mathrm{coev}_{\mathbb{V}}(1))\in\UU$ is central, it acts on $\mathbb{V}^*$ by a constant (which is easy to see is nonzero).
Similarly, we have
\begin{align}
\label{DetD1}
\XX^{\circ}_1\DDD_2^{(1)}&= \kappa(\mathrm{coev}_{\mathbb{V}}(1))\big|_{\mathbb{V}}\DDD_2^{(1)}\RRR^{-1}\XX_1^{\circ}\RRR +\hbox{ terms in }\OO_\ell\\
\label{DetDell}
\DDD_1^{(\ell)}\XX^\circ_2&= \RRR\XX^\circ_2\RRR_{21}\DDD_1^{(\ell)}+\hbox{ terms in }\OO_\ell
\end{align}
 
We can endow $\DD_\ell$ with the structure of a filtered algebra by setting
\begin{equation*}
\begin{aligned}
\deg(x_{ij})&=0,&\deg(\del_{ij})=1
\end{aligned}
\end{equation*}
Through similar reasoning as in the first paragraph, we have that in the associated graded, $\det_\qqq(\XX^\circ)$ commutes up to scalar with $\del_{ij}^{(a)}$. 
We have cancelability because $\DD_\ell$ is an integral domain, and for the Ore conditions, we can solve the problem at each step in the filtration.
At filtration degree 0, the problem is trivial because $\det_\qqq(\XX^\circ)$ is central in $\OO_\ell$.
\end{proof}

\begin{cor}\label{XDetCor}
The matrices $\XX^{(a)}$ are invertible in $\DD_\ell\left[ \det_\qqq(\XX^\circ)^{-1} \right]$.
\end{cor}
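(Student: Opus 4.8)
The plan is to deduce the invertibility of each $\XX^{(a)}$ from that of the single matrix $\XX^\circ$, using the homomorphism $\Phi^\ell$ together with a standard fact about matrices over Noetherian domains, thus avoiding any explicit use of quantum adjugates.

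First I would transport the invertibility of $\mathbf{A}$ into $\DD_\ell$. By Lemma~\ref{D0DellLem}, $\Phi^\ell$ sends $\mathbf{A}$ to $\XX^\circ$ and hence $\det_\qqq(\mathbf{A})$ to $\det_\qqq(\XX^\circ)$, which is Ore in $\DD_\ell$ by Lemma~\ref{DellDet}; so $\Phi^\ell$ extends to a homomorphism $\DD_0^{\mathrm{IV}}[\det_\qqq(\mathbf{A})^{-1}]\to\DD_\ell[\det_\qqq(\XX^\circ)^{-1}]$. Inside the source, the subalgebra generated by the entries of $\mathbf{A}$ and by $\det_\qqq(\mathbf{A})^{-1}$ is isomorphic to $\OO_G$: by the flatness statement of Proposition~\ref{D0Iso}(1) the entries of $\mathbf{A}$ satisfy precisely the reflection equation relations, and inverting the quantum determinant of the reflection equation algebra produces $\OO_G$. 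In $\OO_G$ the matrix $\mathbf{A}$ is invertible, with inverse $(1\otimes\iota_{\OO_G})(\mathbf{A})$. Applying $\Phi^\ell$ shows that $\XX^\circ=\XX^{(1)}\cdots\XX^{(\ell)}$ is invertible in $\DD_\ell[\det_\qqq(\XX^\circ)^{-1}]$.

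Next I would observe that $\DD_\ell[\det_\qqq(\XX^\circ)^{-1}]$ is a Noetherian domain: by Theorem~\ref{DellFlat}, $\DD_\ell$ carries a filtration whose associated graded is a polynomial ring in finitely many variables over $\CC(\qqq)$, and Ore localization preserves being a Noetherian domain. Over any Noetherian domain $R$, a square matrix possessing a one-sided inverse is automatically invertible: passing to the classical division ring of fractions $Q(R)$ supplied by Goldie's theorem, a one-sided inverse of a square matrix over $M_n(Q(R))$ is forced to be two-sided, and it already has entries in $R$. Applying this to $\XX^{(1)}\cdots\XX^{(\ell)}=\XX^\circ$: the factor $\XX^{(1)}$ has the right inverse $\XX^{(2)}\cdots\XX^{(\ell)}(\XX^\circ)^{-1}$, hence is invertible; stripping it off, $\XX^{(2)}\cdots\XX^{(\ell)}$ is invertible, and iterating gives the invertibility of every $\XX^{(a)}$.

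I expect the only delicate point to be the identification, in the first step, of the $\mathbf{A}$-subalgebra of $\DD_0^{\mathrm{IV}}[\det_\qqq(\mathbf{A})^{-1}]$ with $\OO_G$, so that $\mathbf{A}^{-1}$ genuinely lives there; this is exactly what Proposition~\ref{D0Iso}(1) is for. One could instead invert each FRT determinant $\det_\qqq(\XX^{(a)})$ separately and use the FRT quantum adjugate in $\OO_\ell^{(a)}$, but that would require comparing $\det_\qqq(\XX^\circ)$ with the product $\det_\qqq(\XX^{(1)})\cdots\det_\qqq(\XX^{(\ell)})$, which is awkward since $\det_\qqq(\XX^\circ)$ is a reflection equation determinant and not an FRT one; the Noetherian-domain argument circumvents this comparison.
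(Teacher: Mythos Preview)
Your overall strategy matches the paper's: both construct the right inverse $\XX^{(2)}\cdots\XX^{(\ell)}(\XX^\circ)^{-1}$ of $\XX^{(1)}$ and promote it to a two-sided inverse by passing to a larger ring where two-sidedness is forced, then iterate. The difference is only in which larger ring is used.

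There is, however, a genuine gap in your justification of Noetherianity. Theorem~\ref{DellFlat} asserts that $\DD_\ell^\ZZ$ is free over $\CC[\qqq^{\pm1}]$ with $\DD_\ell^\ZZ/(\qqq-1)$ a polynomial ring; this is a flat $\qqq$-deformation statement, not a filtration of the $\CC(\qqq)$-algebra $\DD_\ell$ with commutative associated graded. If you filter $\DD_\ell$ by total degree in the generators, the relations (\ref{Xaa})--(\ref{XXcom}) and (\ref{Daa}) are homogeneous and survive intact in $\mathrm{gr}$, so the associated graded is still noncommutative (only the $\mathbf{\Omega}$-term in (\ref{DXa}) drops). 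Your appeal to Goldie's theorem therefore rests on an unproved claim. A clean repair: the PBW basis does give $\DD_\ell$ finite GK dimension, and a domain of finite GK dimension is automatically Ore, hence has a division ring of fractions; your argument then goes through without ever mentioning Noetherianity.

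The paper avoids this issue entirely. Instead of the abstract division ring of fractions, it uses the concrete overring
\[
\textstyle\OO_\ell^\star:=\OO_\ell\left[\det_\qqq(\XX^\circ)^{-1},\det_\qqq(\XX^{(1)})^{-1},\ldots,\det_\qqq(\XX^{(\ell)})^{-1}\right],
\]
in which each $\XX^{(a)}$ is already known to be two-sidedly invertible by the FRT adjugate formula (Proposition~9.8 of \cite{KlimSchm}, with the FRT determinants Ore by Corollary~\ref{FRTOre}). Since $\OO_\ell$ is a domain (a consequence of Theorem~\ref{DellFlat}), $\OO_\ell[\det_\qqq(\XX^\circ)^{-1}]$ embeds in $\OO_\ell^\star$; the right inverse you wrote down therefore coincides in $\OO_\ell^\star$ with the two-sided inverse, hence is itself two-sided. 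Your concern in the last paragraph about comparing the reflection-equation determinant $\det_\qqq(\XX^\circ)$ with the product of FRT determinants is unfounded: no such comparison is needed, only the domain property of $\OO_\ell$ to make the localizations compatible.
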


\begin{proof}
Recall the FRT quantum determinants $\{\det_{\qqq}(\XX^{(a)})\}\in\OO_\ell$ from \ref{FRTDet}. 
By Corollary \ref{FRTOre}, they generate an Ore set in $\OO_\ell$.
Proposition 9.8 of \cite{KlimSchm}) implies that $\XX^{(a)}$ has a left and right inverse in $\OO_\ell\left[ \det_\qqq(\XX^{(a)})^{-1} \right]$.
Let
\[
\textstyle\OO_\ell^\star:=\OO_\ell\left[ \det_{\qqq}(\XX^\circ)^{-1},\det_\qqq(\XX^{(1)})^{-1},\ldots,\det_\qqq(\XX^{(\ell)})^{-1} \right]
\]
A consequence of Theorem \ref{DellFlat} is that $\OO_\ell$ is an integral domain.
Thus, $\OO_\ell[\det_\qqq(\XX^\circ)^{-1}]$ includes into $\OO_\ell^\star$.
We first have that
\[
\XX^{(2)}\cdots\XX^{(\ell)}\left(\XX^\circ\right)^{-1}
\]
is a \textit{right} inverse of $\XX^{(1)}$.
Since $\XX^{(1)}$ has a left-right inverse over $\OO_\ell^\star$, it follows that the expression above is also a left inverse.
Similarly, 
\[
\left( \XX^\circ \right)^{-1}\XX^{(1)}
\]
is a left and hence left-right inverse to $\XX^{(2)}\cdots\XX^{(\ell)}$.
Proceeding by induction, if we have an inverse to $\XX^{(a)}\cdots\XX^{(\ell)}$, then 
\[
\XX^{(a+1)}\cdots\XX^{(\ell)}\left( \XX^{(a)}\cdots\XX^{(\ell)} \right)^{-1}
\]
is a right inverse of $\XX^{(a)}$.
By similar arguments, this must be a left-right inverse.
\end{proof}

\subsection{The case $\mathbf{\ell=1}$}
We will introduce a new algebra of quantum differential operators that is closer in spirit to the construction of Crawley-Boevey--Shaw \cite{CBShaw} for the quiver (\ref{CyclicQ}) for $\ell=1$.

\subsubsection{Definition of $\DD_1$}\label{D1Def}
The algebra $\DD_1$ has generators
\[
\left\{ x_{ij},\del_{ij}\, \middle|\, 1\le i,j\le n \right\}
\]
which we place into the matrices
\begin{equation*}
\begin{aligned}
\XX&:=\sum_{i,j}E_{ij}\otimes x_{ij},&
\DDD&:=\sum_{i,j}E_{ij}\otimes \del_{ij}
\end{aligned}
\end{equation*}
The relations are:
\begin{align}
\label{D1rel1}
\RRR_{21}\XX_1\RRR\XX_2&= \XX_2\RRR_{21}\XX_1\RRR;\\
\label{D1rel2}
\RRR^{-1}\DDD_1\RRR_{21}^{-1}\DDD_2&=\DDD_2\RRR^{-1}\DDD_1\RRR^{-1}_{21};\\
\label{D1rel3}
\DDD_1\RRR_{21}^{-1}\XX_2\RRR_{21}&= \RRR\XX_2\RRR_{21}\DDD_1 + (\qqq-\qqq^{-1})\RRR\mathbf{\Omega}
\end{align}
This presentation describes $\DD_1$ as a quotient of the tensor algebra of $\mathbb{V^*}\otimes\mathbb{V}\oplus\mathbb{V}\otimes\mathbb{V}^*$.
Let us denote the first summand by $\mathbb{V}^*_{\mathbf{X}}\otimes\mathbb{V}_\mathbf{X}$ and the second by $\mathbb{V}_{\DDD}\otimes\mathbb{V}_\DDD^*$ and use similar notation for the bases, e.g. $\{e^i_\XX\}\subset\mathbb{V}^*_\XX$.
The quotient map is then:
\begin{align*}
\mathcal{T}\left(\mathbb{V}^*_\XX\otimes\mathbb{V}_\XX \oplus \mathbb{V}_\DDD\otimes\mathbb{V}_\DDD^* \right)&\rightarrow\DD_1\\
e^i_\XX\otimes e_j^\XX&\mapsto x_{ij}\\
\iota_{\OO_G}(e^i_\DDD\otimes e_j^\DDD)&\mapsto \del_{ij}
\end{align*}
Note that we are identifying $\mathbb{V}_\DDD^*\otimes\mathbb{V}_\DDD\cong\mathbb{V}_\DDD\otimes\mathbb{V}^*_\DDD$ using $\iota_{\OO_G}$.

\begin{prop}\label{D1Equiv}
$\DD_1$ is a $\UU$-equivariant algebra.
\end{prop}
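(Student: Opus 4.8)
The plan is to realise $\DD_1$ as a quotient of a tensor algebra by a $\UU$-stable ideal, so that the module-algebra structure descends automatically. By the presentation in \ref{D1Def}, $\DD_1 = \mathcal{T}(M)/J$, where $M = (\mathbb{V}^*_\XX\otimes\mathbb{V}_\XX)\oplus(\mathbb{V}_\DDD\otimes\mathbb{V}^*_\DDD)$ is an object of $\CCC$ and $J$ is the two-sided ideal generated by the relations (\ref{D1rel1})--(\ref{D1rel3}). Since $\UU$ is a Hopf algebra, the tensor algebra $\mathcal{T}(M)$ is a $\UU$-equivariant algebra whose multiplication is a morphism in $\CCC$; hence it suffices to show that $J$ is a $\UU$-submodule, and for that it is enough to exhibit, for each defining relation, a morphism in $\CCC$ from some object into $\mathcal{T}(M)$ whose image is the span of that relation. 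Granting this, $J$ is the sum of the two-sided ideals generated by these images, hence $\UU$-stable because multiplication is a morphism, and $\DD_1 = \mathcal{T}(M)/J$ becomes $\UU$-equivariant. This is the same mechanism already used in \ref{DellEq} to prove that $\OO_\ell$ and $\DD_\ell$ are $\UU^{\otimes\ell}$-equivariant.

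First I would dispatch the two relations not mixing $\XX$ and $\DDD$. Relation (\ref{D1rel1}) is verbatim the reflection equation relation for the matrix $\mathbf{M}$ of \ref{OGDef}: the matrices $\RRR$ and $\RRR_{21}$ are (inverse) braidings, so reading (\ref{D1rel1}) off entrywise in the two $\mathrm{End}(\mathbb{V})$-tensorands presents its two sides as the components of a single morphism $\mathbb{V}^*\otimes\mathbb{V}\otimes\mathbb{V}^*\otimes\mathbb{V}\to\mathcal{T}^2(M)$ in $\CCC$; equivalently, its span is the degree-two part of the (automatically $\UU$-stable) relation ideal of the reflection equation algebra, which is $\UU$-equivariant by \cite{DonMudRET} together with the equivariance of $\OO_G$ from \ref{OGDef}. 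For relation (\ref{D1rel2}) I would use the identification $\del_{ij}\leftrightarrow\iota_{\OO_G}(e^i_\DDD\otimes e_j^\DDD)$ of \ref{D1Def}: as $\iota_{\OO_G}$ is both a morphism in $\CCC$ and an algebra anti-isomorphism, the $\DDD$-block of $\DD_1$ is, $\UU$-equivariantly, a copy of the opposite of the reflection equation algebra, and (\ref{D1rel2}) is precisely the $\iota_{\OO_G}$-image of the reflection equation relation after rearrangement with the antipode identities for $\RRR$ (as in \cite{JordanMult}); its span is therefore again a $\UU$-submodule.

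The step I expect to require the most care is the cross-relation (\ref{D1rel3}), because of the inhomogeneous term $(\qqq-\qqq^{-1})\RRR\mathbf{\Omega}$. Here the key observation is that $\mathbf{\Omega}=\sum_{ij}E_{ij}\otimes E_{ji}$, viewed as an operator on $\mathbb{V}\otimes\mathbb{V}$, is --- under the canonical identifications --- a composite of $\mathrm{coev}_{\mathbb{V}}$ and $\mathrm{ev}_{\mathbb{V}}$, hence commutes with the $\UU$-action; so $\RRR\mathbf{\Omega}$ contributes a $\UU$-invariant element of the degree-zero part $\mathcal{T}^0(M)=\CC(\qqq)$, while the two quadratic terms $\DDD_1\RRR_{21}^{-1}\XX_2\RRR_{21}$ and $\RRR\XX_2\RRR_{21}\DDD_1$ --- again built only from braidings --- contribute components of a $\CCC$-morphism into $\mathcal{T}^2(M)$. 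Reading (\ref{D1rel3}) entrywise then presents the whole relation as the image of a single morphism in $\CCC$ from a tensor product of copies of $\mathbb{V}$ and $\mathbb{V}^*$ into $\mathcal{T}^{\le 2}(M)$; the only genuinely nontrivial point is that the particular interleaving of $\RRR$ and $\RRR_{21}$ is compatible with the $\UU$-action, which one checks from (\ref{RCoprod}) exactly as in the $\ell\ge 3$ verification of (\ref{DXa}) in the proof of the equivariance of $\DD_\ell$. Once the three relation-families are handled this way, $J$ is $\UU$-stable and the proposition follows.
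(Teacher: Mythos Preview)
Your overall strategy is exactly the paper's: realise $\DD_1$ as $\mathcal{T}(M)/J$ and show $J$ is generated by images of morphisms in $\CCC$. For (\ref{D1rel1}) and (\ref{D1rel2}) your appeal to the known equivariance of $\mathcal{R}\hbox{\it ef}$ and of $\iota_{\OO_G}(\mathcal{R}\hbox{\it ef}\,)$ is equivalent to the paper's explicit braiding formulas $\beta_{\mathbb{V}_\XX,\mathbb{V}^*_\XX}^{-1}(\beta_{\mathbb{V}^*_\XX,\mathbb{V}^*_\XX}-\beta_{\mathbb{V}_\XX,\mathbb{V}_\XX})$ and $\beta_{\mathbb{V}^*_\DDD,\mathbb{V}_\DDD}(\beta_{\mathbb{V}^*_\DDD,\mathbb{V}^*_\DDD}^{-1}-\beta_{\mathbb{V}_\DDD,\mathbb{V}_\DDD}^{-1})$.

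There is, however, a genuine slip in your treatment of the inhomogeneous term in (\ref{D1rel3}). You write that $\mathbf{\Omega}=\sum_{ij}E_{ij}\otimes E_{ji}$, ``viewed as an operator on $\mathbb{V}\otimes\mathbb{V}$, is\ldots\ a composite of $\mathrm{coev}_\mathbb{V}$ and $\mathrm{ev}_\mathbb{V}$, hence commutes with the $\UU$-action''. But as an operator on $\mathbb{V}\otimes\mathbb{V}$, $\mathbf{\Omega}$ is precisely the tensor flip $\tau$, and $\tau$ is \emph{not} a $\UU$-intertwiner---that is the whole content of $\CCC$ being genuinely braided rather than symmetric. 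One cannot build $\tau$ from the rigidity morphisms alone without inserting a braiding, so this justification does not go through, and consequently your claim that ``$\RRR\mathbf{\Omega}$ contributes a $\UU$-invariant element'' is not supported.

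The paper repairs this by viewing the constant piece not as an operator but as a morphism out of the common domain $\mathbb{V}^*_\DDD\otimes\mathbb{V}^*_\XX\otimes\mathbb{V}_\XX\otimes\mathbb{V}_\DDD$ into $\mathbb{1}$: it is $(\qqq-\qqq^{-1})\,\mathrm{ev}_{\mathbb{V}\otimes\mathbb{V}}\circ\beta_{\mathbb{V}^*_\DDD,\mathbb{V}^*_\XX}$, an evaluation preceded by a braiding, which is manifestly a morphism in $\CCC$. The two quadratic pieces are likewise written as $\beta_{\mathbb{V}^*_\XX,\mathbb{V}_\DDD}\beta_{\mathbb{V}_\DDD,\mathbb{V}_\XX}^{-1}$ and $\beta_{\mathbb{V}_\XX,\mathbb{V}^*_\DDD}^{-1}\beta_{\mathbb{V}^*_\DDD,\mathbb{V}^*_\XX}$, all followed by $\iota_{\OO_G}$ on the $\mathbb{V}^*_\DDD\otimes\mathbb{V}_\DDD$ factors. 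With this correction your sketch becomes the paper's proof.
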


\begin{proof}
Relations (\ref{D1rel1})-(\ref{D1rel3}) are, respectively, the images of the maps
\begin{align*}
\beta_{\mathbb{V}_\XX,\mathbb{V}^*_\XX}^{-1}\left( \beta_{\mathbb{V}^*_\XX,\mathbb{V}^*_\XX}-\beta_{\mathbb{V}_\XX,\mathbb{V}_\XX} \right): 
\mathbb{V}^*_\XX\otimes\mathbb{V}^*_\XX\otimes\mathbb{V}_\XX\otimes\mathbb{V}_\XX
&\rightarrow \mathbb{V}^*_\XX\otimes\mathbb{V}_\XX\otimes\mathbb{V}^*_\XX\otimes\mathbb{V}_\XX\\
\beta_{\mathbb{V}^*_\DDD,\mathbb{V}_\DDD}\left(\beta_{\mathbb{V}^*_\DDD,\mathbb{V}^*_\DDD}^{-1}- \beta_{\mathbb{V}_\DDD,\mathbb{V}_\DDD}^{-1} \right): 
\mathbb{V}^*_\DDD\otimes\mathbb{V}^*_\DDD\otimes\mathbb{V}_\DDD\otimes\mathbb{V}_\DDD
&\rightarrow \mathbb{V}_\DDD^*\otimes\mathbb{V}_\DDD\otimes\mathbb{V}_\DDD^*\otimes\mathbb{V}_\DDD\\
\left(
\begin{array}{l}
\beta_{\mathbb{V}_\XX^*,\mathbb{V}_\DDD}\beta_{\mathbb{V}_\DDD,\mathbb{V}_\XX}^{-1}\vspace{0.05in}
-\beta_{\mathbb{V}_\XX,\mathbb{V}^*_\DDD}^{-1}\beta_{\mathbb{V}^*_\DDD,\mathbb{V}^*_\XX}\\
-(\qqq-\qqq^{-1})
\mathrm{ev}_{\mathbb{V}\otimes\mathbb{V}}\beta_{\mathbb{V}^*_\DDD,\mathbb{V}^*_\XX}
\end{array}\right)
:
\mathbb{V}^*_\DDD\otimes\mathbb{V}_\XX^*\otimes\mathbb{V}_\XX\otimes\otimes\mathbb{V}_\DDD
&\rightarrow
\left(
\begin{array}{l}
\left(\mathbb{V}_\DDD^*\otimes\mathbb{V}_\DDD\otimes\mathbb{V}^*_\XX\otimes\mathbb{V}_\XX\right)\vspace{.05in}\\
\oplus
\left(\mathbb{V}^*_\XX\otimes\mathbb{V}_\XX\otimes\mathbb{V}_\DDD^*\otimes\mathbb{V}_\DDD\right)\oplus\mathbb{1}
\end{array}\right)
\end{align*}
followed by $\iota_{\OO_G}$ for any $\mathbb{V}^*_\DDD\otimes\mathbb{V}_\DDD$ tensorand.
Thus, the relations describe a $\UU$-submodule of the tensor algebra and $\DD_1$ is $\UU$-equivariant.
\end{proof}

\subsubsection{PBW for $\DD_1$}
The notion of standard monomial for $\DD_1$ will be slightly different from that of $\DD_\ell$.
Namely, we define a standard monomial to be 
\[
\del_{k_1l_1}\cdots\del_{k_Nl_N}
x_{i_1j_1}\cdots x_{i_Mj_M}
\]
where:
\begin{itemize}
\item the sequence of pairs $(i_1, j_1),\ldots, (i_M,j_M)$ are in increasing lexicographic order
\item and the sequence of pairs $(k_1, l_1),\ldots, (k_N,l_N)$ are in \textit{decreasing} lexicographic order.
\end{itemize}
Let $\DD_1^\ZZ$ be the $\CC[\qqq^{\pm 1}]$-subalgebra generated by $\{x_{ij},\del_{ij}\}$.
\begin{prop}\label{D1PBW}
The standard monomials are a basis of $\DD_1^\ZZ$.
Consequently, $\DD_1$ is a flat deformation of the coordinate ring of $T^*(\mathrm{Mat}_{n\times n})$.
\end{prop}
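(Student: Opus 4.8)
The plan is to prove the standard-monomial basis claim by Bergman's Diamond Lemma applied to the presentation of $\DD_1^\ZZ$ over the ground ring $\CC[\qqq^{\pm 1}]$, which yields the basis directly with the ``consequently'' about flat deformations following by specializing $\qqq\to 1$; this parallels Theorem \ref{DellFlat}. First I would expand the matrix relations (\ref{D1rel1})--(\ref{D1rel3}) into components and solve each for a leading monomial, obtaining three families of rewriting rules: from (\ref{D1rel1}) a rule sorting any out-of-order product $x_{ij}x_{kl}$ into a $\CC[\qqq^{\pm1}]$-combination of products $x\,x$; from (\ref{D1rel2}) the analogous rule sorting the $\del$'s (note that (\ref{D1rel2}) is the reflection equation for $\DDD$ under the substitution $\RRR\mapsto\RRR_{21}^{-1}$, so the subalgebras generated by the entries of $\XX$ and of $\DDD$ are each copies of the reflection equation algebra); and from (\ref{D1rel3}) a rule rewriting any product $x_{ij}\del_{kl}$ as a combination of products $\del\,x$ plus a scalar coming from the $\mathbf{\Omega}$-term. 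The monomials irreducible under these rules are precisely the standard monomials of Proposition \ref{D1PBW}.

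For termination I would well-order monomials lexicographically by the triple: (i) the total number of generators; (ii) the number of pairs consisting of an $x$-letter strictly to the left of a $\del$-letter; (iii) the number of inversions inside the $x$-block plus inside the $\del$-block, relative to the lexicographic order on index pairs chosen so that the reflection equation algebra's own reduction system is decreasing. Applying (\ref{D1rel3}) either lowers (i), on the scalar term, or preserves (i) and lowers (ii); applying (\ref{D1rel1}) or (\ref{D1rel2}) preserves (i) and (ii) and lowers (iii). Hence every chain of reductions terminates, and in particular the standard monomials span $\DD_1^\ZZ$.

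The main obstacle is resolving the overlap ambiguities. There are four types: $x_{ij}x_{kl}x_{mn}$ and $\del_{ij}\del_{kl}\del_{mn}$, which reduce to the confluence of the reflection equation algebra's reduction system, i.e.\ to its PBW property, which is classical (and whose overlap check is itself a quantum Yang--Baxter computation using (\ref{QYBE}) and (\ref{HeckeCond})); and the two mixed ambiguities $x_{ij}x_{kl}\del_{mn}$ and $x_{ij}\del_{kl}\del_{mn}$. The mixed ones carry the real content: one must check that reducing the $x\del$ pair before sorting the neighbouring $xx$ (resp.\ $\del\del$) pair yields the same element as sorting first, an identity that unwinds via (\ref{QYBE}) and (\ref{HeckeCond}) into the assertion that $\DDD$ acts on the reflection equation algebra generated by $\XX$ as a braided matrix of derivations, with the $\mathbf{\Omega}$-corrections matching up. These are exactly the consistency checks Jordan performs for the algebras $\DD_\ell$ in \cite{JordanMult} and that sit behind the Crawley-Boevey--Shaw construction \cite{CBShaw}, and the relations (\ref{D1rel1})--(\ref{D1rel3}) have been arranged so that they hold; I expect no surprises here, only a somewhat lengthy $\RRR$-matrix calculation. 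Granting this, the Diamond Lemma gives that the standard monomials form a $\CC[\qqq^{\pm1}]$-basis of $\DD_1^\ZZ$, hence (after base change) a $\CC(\qqq)$-basis of $\DD_1$.

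For the final assertion: being free on the standard monomials, $\DD_1^\ZZ$ is flat over $\CC[\qqq^{\pm1}]$, and at $\qqq=1$ both $\RRR$ and $\RRR_{21}$ become the identity and $\qqq-\qqq^{-1}$ vanishes, so (\ref{D1rel1})--(\ref{D1rel3}) degenerate to commutativity and $\DD_1^\ZZ/(\qqq-1)\cong\CC[x_{ij},\del_{ij}]=\CC[T^*(\mathrm{Mat}_{n\times n})]$; thus $\DD_1$ is a flat deformation of the coordinate ring of $T^*(\mathrm{Mat}_{n\times n})$. Should the mixed-ambiguity computation prove unwieldy, an alternative route to linear independence is to build a ``Fock'' representation of $\DD_1$ on the reflection equation algebra generated by the $\XX$-entries, with $x_{ij}$ acting by left multiplication and $\del_{ij}$ by the braided derivations dictated by (\ref{D1rel3}), and to check, using the filtration by $x$-degree, that distinct standard monomials act by linearly independent operators; the spanning half is supplied in any case by the termination argument above.
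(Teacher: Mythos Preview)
Your overall strategy is close to the paper's---the paper also proves spanning by a straightening/termination argument and then establishes linear independence by showing $\DD_1\cong\iota_{\OO_G}(\mathcal{R}\hbox{\it ef}\,)\otimes\mathcal{R}\hbox{\it ef}$ via the ideal containments $\mathcal{T}_\XX I_\DDD\subset I_\DDD\mathcal{T}_\XX+I_{\DDD\XX}$ and $I_\XX\mathcal{T}_\DDD\subset\mathcal{T}_\DDD I_\XX+I_{\DDD\XX}$, which is exactly the content of your mixed overlap ambiguities. So on the independence side you and the paper are doing the same $R$-matrix calculation, just packaged differently (Diamond Lemma versus Jordan's factorization criterion from \cite{JordanMult}).

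However, your termination argument has a genuine gap. You assert that the rule coming from (\ref{D1rel3}) rewrites $x_{ij}\del_{kl}$ as ``a combination of products $\del\,x$ plus a scalar,'' and that applying it lowers your statistic (ii). This is not what (\ref{D1rel3}) does: because $R$-matrices sit on \emph{both} sides of the relation, solving for a single $x_{kl}\del_{ij}$ produces, in addition to $\del\,x$ terms and a constant, further $x\,\del$ terms with shifted indices. The paper makes this explicit in equation (\ref{D1DX}): the output contains terms of the form $x_{il}\del_{kj}$, $\sum_{m>i}x_{km}\del_{mj}$, and $\sum_{m>k}x_{im}\del_{mj}$, each of which still has one $x$ to the left of one $\del$, so your invariant (ii) does not decrease. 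The paper then runs a careful case analysis on how these residual $x\,\del$ terms feed back into one another and shows the process terminates for index reasons (e.g.\ a type-(b) term can spawn at most one type-(a) term, which then spawns nothing). Your ordering (i)--(iii) is too coarse to see this; you would need to refine (ii) by an index-based secondary statistic to make a Diamond Lemma argument go through.
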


\begin{proof}
Observe that the subalgebra generated by the coordinates of $\XX$ is isomorphic to $\mathcal{R}\hbox{\it ef}$ and the subalgebra generated by the coordinates of $\DDD$ is isomorphic to $\iota_{\OO_G}(\mathcal{R}\hbox{\it ef}\,)\subset\OO_G$.
It is known that the analogous standard monomials are linearly independent in $\mathcal{R}\hbox{\it ef}$ \cite{KlimSchm}, and the relations show that any monomial in $\mathcal{R}\hbox{\it ef}$ can be straightened into a $\CC[\qqq^{\pm 1}]$-linear combination of the standard ones (cf. \cite{JordanMult} Example 4.10).
Thus, we get the desired statement for monomials in $\{x_{ij}\}$, and the analogous statement for monomials in the $\{\del_{ij}\}$ holds after applying the algebra anti-automorphism $\iota_{\OO_G}$.
To upgrade this to $\DD_1$, it suffices to show that:
\begin{enumerate}
\item $x_{kl}\del_{ij}$ lies in the $\CC[\qqq^{\pm 1}]$-span of standard monomials;
\item $\DD_1\cong\iota_{\OO_G}(\mathcal{R}\hbox{\it ef}\,)\otimes\mathcal{R}\hbox{\it ef}\,$.
\end{enumerate}

For the first statement, let us unpack (\ref{D1rel3}):
\begin{align}
\nonumber
&\quad \qqq^{\delta_{i,k}-\delta_{i,l}}x_{kl}\del_{ij}\\
\nonumber
&= \qqq^{-\delta_{j,k}+\delta_{j,l}}\del_{ij}x_{kl}
 - (\qqq-\qqq^{-1})\bigg(\qqq^{\delta_{i,j}}\delta_{i,l}\delta_{j,k}+(\qqq-\qqq^{-1})\theta(j-l)\delta_{i,j}\delta_{k,l}\bigg)\\
\label{D1DX}
&\quad-(\qqq-\qqq^{-1})\bigg(
\underbrace{\theta(k-i)x_{il}\del_{kj}}_{(\mathrm{a})}
+ 
\underbrace{\delta_{i,l}\sum_{m>i}x_{km}\del_{mj}}_{(\mathrm{b})}
\bigg)
-(\qqq-\qqq^{-1})^2
\underbrace{\theta(k-i)\delta_{k,l}\sum_{m>k}x_{im}\del_{mj}}_{(\mathrm{c})}
\\
\nonumber
&\quad+(\qqq-\qqq^{-1})\bigg(
\theta(l-j)\del_{il}x_{kj}
-
\delta_{j,k}\sum_{m<j}\del_{im}x_{ml}
\bigg)
-(\qqq-\qqq^{-1})^2
\theta(l-j)\delta_{k,l}\sum_{m<l}\del_{im}x_{mj}
\end{align}
Here, $\theta(x):\RR\rightarrow\RR$ is the Heaviside theta function:
\[
\theta(x)=
\begin{cases}
1 & x>0\\
0 & x\le 0
\end{cases}
\]
Only the terms in the second line (\ref{D1DX}) are not standard monomials, and our goal is to show that these terms disappear after multiple iterations of the relation.
First consider a term from (b).
Clearly, it cannot simultaneously satisfy the conditions for (c).
If it simultaneously satisfies the conditions for (a), then the result of (a) will be of the form $x_{ii}\del_{kj}$ with $k>i$, which does not satisfy any of the conditions for (a), (b), or (c).
This shows that iterating (a) will terminate once $i=n$.
Next, note that a term from (a) can yield at most a term in (b) after applying the relation, which terminates upon iteration.
Finally, a term from (c) will yield a term in (b) and thus will terminate as well.

For the second statement, the proof is similar to that of Theorem 5.3 from \cite{JordanMult}.
Let us recycle notation from \ref{D1Def}.
Namely, we view:
\begin{itemize}
\item $\mathcal{R}\hbox{\it ef}$ as a quotient of the tensor algebra $\mathcal{T}_\XX:=\mathcal{T}(\mathbb{V}^*_\XX\otimes\mathbb{V}_\XX)$ by the ideal $I_\XX$ generated by the relation (\ref{D1rel1});
\item $\iota_{\OO_G}(\mathcal{R}\hbox{\it ef}\,)$ as a quotient of the tensor algebra $\mathcal{T}_\DDD:=\mathcal{T}(\mathbb{V}^*_\DDD\otimes\mathbb{V}_\DDD)$ by the ideal $I_\DDD$ generated by the relation $(\ref{D1rel2})$;
\item and $\DD_1$ as a quotient of the tensor algebra $\mathcal{T}_{\DDD\XX}:=\mathcal{T}(\mathbb{V}^*_\DDD\otimes\mathbb{V}_\DDD\oplus\mathbb{V}^*_\XX\otimes\mathbb{V}_\XX)$ by the ideal generated by $I_\XX$, $I_\DDD$, and the ideal $I_{\DDD\XX}$ generated by (\ref{D1rel3}).
\end{itemize}
As in \textit{loc. cit.}, it suffices to show that:
\begin{equation*}
\begin{aligned}
\mathcal{T}_\XX I_\DDD&\subset I_\DDD\mathcal{T}_\XX  + I_{\DDD\XX}, &
I_\XX\mathcal{T}_\DDD&\subset \mathcal{T}_\DDD I_\XX + I_{\DDD\XX}
\end{aligned}
\end{equation*}
We will only show the second containment.
Just like in \textit{loc. cit.}, this is equivalent to showing that the entries of
\[
\left(\RRR_{21}\XX_1\RRR_{12}\XX_2-\XX_2\RRR_{21}\XX_1\RRR_{12}\right)\RRR_{13}\RRR_{23}\DDD_3
\]
lie in $I_\XX\mathcal{T}_\DDD + I_{\DDD\XX}$ without using (\ref{D1rel1}) or (\ref{D1rel2}).
To that end:
\begin{align}
\nonumber
&\quad
	\RRR_{21}\XX_1\RRR_{12}
	\underbrace{\XX_{2}\RRR_{13}}_{\mathrm{commute}}
	\RRR_{23}\DDD_3
	-\XX_2\RRR_{21}\XX_1
	\underbrace{\RRR_{12}\RRR_{13}\RRR_{23}}_{(\ref{QYBE})}
	\DDD_3
	\\
\nonumber
&=
	\RRR_{21}\XX_1\RRR_{12}\RRR_{13}
	\underbrace{\XX_{2}\RRR_{23}\DDD_3}_{(\ref{D1rel3})}
	-\XX_2\RRR_{21}
	\underbrace{\XX_1\RRR_{23}\RRR_{13}\RRR_{12}\DDD_3}_{\mathrm{commute}}
	\\
\nonumber
&=
	\RRR_{21}\XX_1
	\underbrace{\RRR_{12}\RRR_{13}\RRR_{32}^{-1}}_{(\ref{QYBE})}
	\DDD_3\RRR_{23}^{-1}\XX_2\RRR_{23}
	-\XX_2\RRR_{21}\RRR_{23}
	\underbrace{\XX_1\RRR_{13}\DDD_3}_{(\ref{D1rel3})}
	\RRR_{12}
	\\
	\nonumber
	&\quad
	-(\qqq-\qqq^{-1})\RRR_{21}\XX_1\RRR_{12}\RRR_{13}\mathbf{\Omega}_{23}\\
\nonumber
&=
	\RRR_{21}
	\underbrace{\XX_1\RRR_{32}^{-1}\RRR_{13}\RRR_{12}\DDD_3}_{\mathrm{commute}}
	\RRR_{23}^{-1}\XX_2\RRR_{23}
	-\XX_2
	\underbrace{\RRR_{21}\RRR_{23}\RRR_{31}^{-1}}_{(\ref{QYBE})}
	\DDD_3\RRR_{13}^{-1}\XX_1\RRR_{13}
	\RRR_{12}
	\\
	\nonumber
	&\quad
	+(\qqq-\qqq^{-1})\bigg(
	\XX_2\RRR_{21}\RRR_{23}\mathbf{\Omega}_{13}
	\underbrace{\RRR_{12}}_{(\ref{HeckeCond})}
	-
	\underbrace{\RRR_{21}}_{(\ref{HeckeCond})}
	\XX_1\RRR_{12}\RRR_{13}\mathbf{\Omega}_{23}
	\bigg)
	\\
\nonumber
&=
	\RRR_{21}\RRR_{32}^{-1}
	\underbrace{\XX_1\RRR_{13}\DDD_3}_{(\ref{D1rel3})}
	\RRR_{12}\RRR_{23}^{-1}\XX_2\RRR_{23}
	-
	\underbrace{\XX_2\RRR_{31}^{-1}\RRR_{23}\RRR_{21}\DDD_3}_{\mathrm{commute}}
	\RRR_{13}^{-1}\XX_1\RRR_{13}\RRR_{12}
	\\
	\nonumber
	&\quad
	+(\qqq-\qqq^{-1})\bigg(
	\XX_2\RRR_{21}\RRR_{23}
	\underbrace{\mathbf{\Omega}_{13}\RRR_{21}^{-1}}_{\mathrm{commute}}
	-
	\RRR_{12}^{-1}\XX_1\RRR_{12}\RRR_{13}\mathbf{\Omega}_{23}
	\bigg)
	\\
	\nonumber
	&\qquad +
	(\qqq-\qqq^{-1})^2\bigg(
	\underbrace{\XX_2\RRR_{21}\RRR_{23}\mathbf{\Omega}_{13}\mathbf{\Omega}_{12}}_{\mathrm{commute}}
	-
	\mathbf{\Omega}_{12}\XX_1\RRR_{12}\RRR_{13}\mathbf{\Omega}_{23}
	\bigg)	
	\\
\nonumber
&=
	\underbrace{\RRR_{21}\RRR_{32}^{-1}\RRR_{31}^{-1}}_{(\ref{QYBE})}
	\DDD_3\RRR_{13}^{-1}\XX_1\RRR_{13}
	\RRR_{12}\RRR_{23}^{-1}\XX_2\RRR_{23}
	-\RRR_{31}^{-1}
	\underbrace{\XX_2\RRR_{23}\DDD_3}_{(\ref{D1rel3})}
	\RRR_{21}\RRR_{13}^{-1}\XX_1\RRR_{13}\RRR_{12}
	\\
	\nonumber
	&\quad
	+(\qqq-\qqq^{-1})\bigg(
	\XX_2\RRR_{21}\mathbf{\Omega}_{13}
	-
	\RRR_{12}^{-1}\XX_1\RRR_{12}\RRR_{13}\mathbf{\Omega}_{23}
	-\RRR_{21}\RRR_{32}^{-1}
	\underbrace{\mathbf{\Omega}_{13}\RRR_{12}\RRR_{23}^{-1}\XX_2\RRR_{23}}_{\mathrm{commute}}
	\bigg)
	\\
\nonumber
&=
	\RRR_{31}^{-1}\RRR_{32}^{-1}
	\underbrace{\RRR_{21}\DDD_3}_{\mathrm{commute}}
	\RRR_{13}^{-1}\XX_1\RRR_{13}
	\RRR_{12}\RRR_{23}^{-1}\XX_2\RRR_{23}
	-\RRR_{31}^{-1}
	\RRR_{32}^{-1}\DDD_3\RRR_{23}^{-1}\XX_2\RRR_{23}
	\RRR_{21}\RRR_{13}^{-1}\XX_1\RRR_{13}\RRR_{12}
	\\
	\nonumber
	&\quad
	+(\qqq-\qqq^{-1})\bigg(	
	\RRR_{31}^{-1}
	\underbrace{\mathbf{\Omega}_{23}\RRR_{21}\RRR_{13}^{-1}\XX_1\RRR_{13}\RRR_{12}}_{\mathrm{commute}}
	-
	\RRR_{12}^{-1}\XX_1\RRR_{12}\RRR_{13}\mathbf{\Omega}_{23}
	\bigg)
	\\
\label{D1PBWLast}
&=
	\RRR_{31}^{-1}\RRR_{32}^{-1}
	\DDD_3
	\bigg(
	\RRR_{21}\RRR_{13}^{-1}\XX_1\RRR_{13}
	\RRR_{12}\RRR_{23}^{-1}\XX_2\RRR_{23}
	-\RRR_{23}^{-1}\XX_2\RRR_{23}
	\RRR_{21}\RRR_{13}^{-1}\XX_1\RRR_{13}\RRR_{12}\bigg)
\end{align}
The expression in the parentheses in the final line (\ref{D1PBWLast}) is the original relation (\ref{D1rel1}) with the generating matrices $\{\XX_1,\XX_2\}$ conjugated by the $R$-matrix.
Its entries are thus in $I_\XX$, and therefore the entries of the entire line (\ref{D1PBWLast}) lie in $I_\XX\mathcal{T}_\DDD$.
\end{proof}

\subsubsection{Interpretation of the cross-relation}\label{D1Cross}
We can rewrite (\ref{D1rel3}) in two different ways:
\begin{align}
\label{D1Cross1}
\DDD_1\RRR_{21}^{-1}\XX_2&= \RRR\XX_2\RRR_{21}\DDD_1\RRR_{21}^{-1}+(\qqq-\qqq^{-1})\mathbf{\Omega}\\
\label{D1Cross2}
\XX_1\RRR\DDD_2&= \RRR^{-1}_{21}\DDD_2\RRR^{-1}\XX_1\RRR-(\qqq-\qqq^{-1})\mathbf{\Omega}
\end{align}
Let us view the matrix $\XX$ as a map $\mathbb{V}^*\otimes\mathbb{V}\rightarrow\DD_1$, and likewise, let us view $\DDD$ as a map $\iota_{\OO_G}(\mathbb{V}^*\otimes\mathbb{V})\rightarrow\DD_1$.
If we let $m_{\DD_1}$ denote the multiplication in $\DD_1$, then upon moving the $R$-matrices from the left to the right, equations (\ref{D1Cross1}) and (\ref{D1Cross2}) can be drawn as follows:
\begin{align*}
\includegraphics{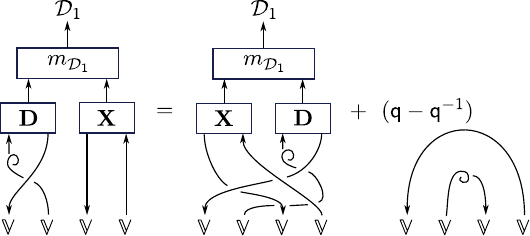}\\
\includegraphics{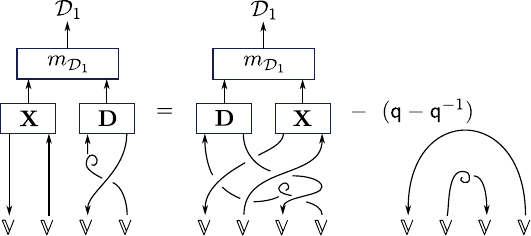}
\end{align*}

We will translate this back to algebra. 
Let $\bullet$ denote the action of $\UU$ on $\mathbb{V}$ or $\mathbb{V}^*$ and let $\bowtie$ denote the action of $\UU$ on $\OO_G\supset\mathcal{R}\hbox{\it ef}\,,\iota_{\OO_G}(\mathcal{R}\hbox{\it ef}\,)$.
For the rest of this paragraph, we will conflate $\OO_G$ with $\kappa(\OO_G)$ and Sweedler notation will be for the coproduct $\Delta_\UU$.
For $v^*_\XX\otimes v_\XX\in\mathbb{V}^*\otimes\mathbb{V}=\mathrm{im}(\XX)$, $v_\DDD\otimes v^*_\DDD\in\mathbb{V}\otimes\mathbb{V}^*=\mathrm{im}(\DDD)$, $m=m_{kl}\in\mathcal{R}\hbox{\it ef}$, and $\underline{m}=\iota_{\OO_G}(m_{kl})\in\iota_{\OO_G}(\mathcal{R}\hbox{\it ef}\,)$.
\begin{align*}
\DDD(\underline{m})(v^*_\XX\otimes v_\XX)&= \left[\underline{m}_{(1)}\bullet v^*_\XX\otimes \iota_{\UU}({}_sr)\bullet v_\XX\right] \DDD(r_{s}\bowtie\underline{m}_{(2)})+\mathrm{constant}\\
\XX(m)(v_\DDD\otimes v^*_\DDD)&= \left[ r_s\bullet v_\DDD\otimes ({}_sr\bowtie m )_{(1)}\bullet v^*_\DDD\right]\XX\left(({}_sr\bowtie m)_{(2)}\right)+\mathrm{constant}
\end{align*}
Now let us extend the notation $\XX:\mathcal{R}\hbox{\it ef}\,\rightarrow\DD_1$ and $\DDD:\iota_{\OO_G}(\mathcal{R}\hbox{\it ef}\,)\rightarrow\DD_1$.
Because of (\ref{RMatrixCo1}) and (\ref{RMatrixCo2}), the formulas above interact well with the product in the respective domains of $\XX$ and $\DDD$: for $m'=m_{k'l'}$ and $\underline{m}'=\iota_{\OO_G}(m_{k'l'})$, we have
\begin{align*}
\DDD(\underline{m}'\underline{m})(v^*_\XX\otimes v_\XX)&= \left[\underline{m}'_{(1)}\underline{m}_{(1)}\bullet v^*_\XX\otimes \iota_{\UU}({}_tr)\iota_{\UU}({}_sr)\bullet v_\XX\right]\DDD\left( (r_t\bowtie \underline{m}'_{(2)})( r_{s}\bowtie \underline{m}_{(2)})\right)\\
&\qquad+\hbox{ terms in }\{\del_{ij}\}\\
&= \left[(\underline{m}'\underline{m})_{(1)}\bullet v^*_\XX\otimes \iota_{\UU}({}_sr)\bullet v_\XX\right]\DDD\left(r_s\bowtie (\underline{m}'\underline{m})_{(2)}\right)+\hbox{ terms in }\{\del_{ij}\}\\
\XX(m'm)(v_\DDD\otimes v^*_\DDD)&= \left[ r_tr_s\bullet v_\DDD\otimes ({}_tr\bowtie m')_{(1)}({}_sr\bowtie m)_{(1)}\bullet v^*_\DDD\right]\XX\left(({}_tr\bowtie m')_{(2)}({}_sr\bowtie m)_{(2)}\right)\\
&\qquad+\hbox{ terms in }\{x_{ij}\}\\
&= \left[ r_s\bullet v_\DDD\otimes  ({}_sr\bowtie m'm)_{(1)}\bullet v^*_\DDD\right]\XX\left( ({}_sr\bowtie m'm)_{(2)}\right)
+\hbox{ terms in }\{x_{ij}\}
\end{align*}
and likewise for iterated products.
Because $\kappa(\det_\qqq(\mathbf{M}))=\qqq^{-2\omega_n}$ is grouplike, we can conclude:
\begin{align}
\nonumber
\textstyle\det_\qqq(\DDD)x_{ij}&= \qqq^{2}x_{ij}+\hbox{ terms in }\{\del_{kl}\}\\
\label{D1Det}
\textstyle\det_\qqq(\XX)\del_{ij}&= \qqq^{-2}\del_{ij}+\hbox{ terms in }\{x_{kl}\}
\end{align}

\begin{prop}\label{DetOre}
The elements $\{\det_\qqq(\DDD),\det_\qqq(\XX)\}$ generate an Ore set in $\DD_1$.
\end{prop}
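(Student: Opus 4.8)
The strategy follows that of Lemma~\ref{DellDet}. Write $d_\XX:=\det_\qqq(\XX)$, $d_\DDD:=\det_\qqq(\DDD)$, and let $S$ be the multiplicative set they generate. By Proposition~\ref{D1PBW}, $\DD_1$ is a flat deformation of the coordinate ring of $T^*(\mathrm{Mat}_{n\times n})$ and in particular an integral domain; thus cancelability is automatic, and it remains only to verify the left and right Ore conditions for $S$.

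The heart of the matter is that both determinants become \emph{normal} elements after passing to a suitable associated graded. Equip $\DD_1$ with the $\ZZ_{\ge 0}^2$-filtration defined by $\deg(x_{ij})=(1,0)$ and $\deg(\del_{ij})=(0,1)$; this is compatible with the relations~(\ref{D1rel1})--(\ref{D1rel3}) because the $\mathbf{\Omega}$-term of~(\ref{D1rel3}) has strictly smaller bidegree than the other terms, and by Proposition~\ref{D1PBW} each bigraded piece has finite rank. The associated graded $\mathrm{gr}\,\DD_1$ is the $\mathbf{\Omega}$-free ``braided tensor product'' of the subalgebra generated by the $x_{ij}$ (isomorphic to $\mathcal{R}\hbox{\it ef}$) and that generated by the $\del_{ij}$ (isomorphic to $\iota_{\OO_G}(\mathcal{R}\hbox{\it ef}\,)$). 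Now $d_\XX$ is the central quantum determinant of $\mathcal{R}\hbox{\it ef}$ (cf.~\cite{DonMudRET}), and $d_\DDD$ is likewise central in the $\del$-subalgebra via the anti-automorphism $\iota_{\OO_G}$; moreover each of $d_\XX$, $d_\DDD$ spans a one-dimensional $\UU$-submodule (recall $\kappa(d_\XX)=\qqq^{-2\omega_n}$ is grouplike), so that crossing it past the remaining generators in the $R$-matrix cross-relations produces only the scalar already visible in~(\ref{D1Det}). Hence $\overline{d_\XX}$ and $\overline{d_\DDD}$ are normal in $\mathrm{gr}\,\DD_1$, and for the same reason they quasi-commute with one another; consequently every element of $S$ maps to a normal element of $\mathrm{gr}\,\DD_1$.

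It then remains to lift the Ore conditions from $\mathrm{gr}\,\DD_1$ to $\DD_1$, which I would do by induction on the bidegree exactly as in the proof of Lemma~\ref{DellDet}. Given $s\in S$ and $a\in\DD_1$ of bidegree $\le\mathbf{m}$, normality of $\overline{s}$ lets one move $\overline{a}$ across $\overline{s}$ up to a unit scalar; the discrepancy in $\DD_1$ has strictly smaller bidegree, so the induction (the filtration being exhaustive, bounded below, with finite-rank pieces) terminates and yields a relation $s a'=a s'$ together with its right-handed analogue, with $a',s'\in\DD_1$ and $s'\in S$. Therefore $S$ is an Ore set.

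The step I expect to be most delicate is the normality claim: one must verify that the ``lower-order'' corrections that appear when $d_\XX$ (resp.\ $d_\DDD$) is commuted past a generator --- as recorded in~(\ref{D1Det}) --- genuinely drop in bidegree for the chosen bifiltration, so that no degree-preserving obstruction survives in $\mathrm{gr}\,\DD_1$. The rest is standard filtered-algebra bookkeeping.
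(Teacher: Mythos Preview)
Your proposal is correct and follows essentially the same approach as the paper. The paper's proof is terser: it treats only $\det_\qqq(\XX)$, uses a single $\ZZ_{\ge 0}$-filtration with $\deg(x_{ij})=0$ and $\deg(\del_{ij})=1$ (rather than your bifiltration), invokes~(\ref{D1Det}) to conclude that $\det_\qqq(\XX)$ $\qqq^{2}$-commutes with each $\del_{ij}$ in the associated graded, and then defers to the induction argument of Lemma~\ref{DellDet}---exactly the mechanism you describe.
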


\begin{proof}
We will only prove the case of $\det_\qqq(\XX)$.
A consequence of Proposition \ref{D1PBW} is that $\DD_1$ is an integral domain, so cancellability is trivial.
We can endow $\DD_1$ with a filtration such that
\begin{equation*}
\begin{aligned}
\deg(x_{ij})&=0,
&\deg(\del_{ij})&=1
\end{aligned}
\end{equation*}
Equation (\ref{D1Det}) then states that $\det_\qqq(\XX)$ $\qqq^2$-commutes with elements of $\{\del_{ij}\}$ in the associated graded.
The proof proceeds as in that of Lemma \ref{DellDet}.
\end{proof}

\subsubsection{Homomorphism into $\DD_0$}
We discovered the relations (\ref{D1rel1})-(\ref{D1rel3}) by emulating the moves from 3.1 of \cite{BEF} and proving the following:
\begin{lem}\label{D1D0Hom}
For $\ZZZ\in\CC^\times$, the map
\begin{equation*}
\begin{aligned}
\XX&\mapsto \mathbf{A},&
\DDD&\mapsto \mathbf{A}^{-1}\left(\ZZZ^{-1}\mathbf{B}^{-1}-\mathbf{I} \right)
\end{aligned}
\end{equation*}
defines a $\UU$-equivariant algebra homomorphism $\Psi_\ZZZ^1:\DD_1\rightarrow\DD_0$.
\end{lem}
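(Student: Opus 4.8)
The plan is to verify that the images $\mathbf{A}$ (of $\XX$) and $\mathbf{D}:=\mathbf{A}^{-1}(\ZZZ^{-1}\mathbf{B}^{-1}-\mathbf{I})=\ZZZ^{-1}\mathbf{A}^{-1}\mathbf{B}^{-1}-\mathbf{A}^{-1}\in\DD_0$ (of $\DDD$) satisfy the defining relations $(\ref{D1rel1})$--$(\ref{D1rel3})$ of $\DD_1$, and then that the induced map intertwines the $\UU$-actions; note that $\mathbf{A}^{-1},\mathbf{B}^{-1}$ are available in $\DD_0$, so the assignment is well defined. Relation $(\ref{D1rel1})$ is immediate: under the map it reads $\RRR_{21}\mathbf{A}_1\RRR\mathbf{A}_2=\mathbf{A}_2\RRR_{21}\mathbf{A}_1\RRR$, which is $(\ref{DAA})$.

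The content lies in $(\ref{D1rel2})$ and $(\ref{D1rel3})$. First I would record the auxiliary relations coming from $(\ref{DAA})$--$(\ref{DAB})$: applying the anti-homomorphism $\iota_{\OO_G}$ in the relevant matrix slot and using invertibility of $\RRR$ shows that $\mathbf{B}^{-1}$, and likewise $\mathbf{A}^{-1}$, satisfies the same ``inverse reflection equation'' $\RRR^{-1}\mathbf{N}_1\RRR_{21}^{-1}\mathbf{N}_2=\mathbf{N}_2\RRR^{-1}\mathbf{N}_1\RRR_{21}^{-1}$ that $\DDD$ must satisfy in $(\ref{D1rel2})$---this is precisely the fact, used in the proof of Proposition \ref{D1PBW}, that the entries of $\mathbf{B}^{-1}$ span a copy of the reflection equation algebra transported by $\iota_{\OO_G}$---while $(\ref{DAB})$ yields the commutation of $\mathbf{A}$, resp.\ $\mathbf{A}^{-1}$, past $\mathbf{B}^{-1}$. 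Substituting $\mathbf{D}$ into $(\ref{D1rel2})$ and expanding, the part quadratic in $\mathbf{A}^{-1}\mathbf{B}^{-1}$ is handled because, given the $\mathbf{A}^{-1}$--$\mathbf{B}^{-1}$ cross-relation, the ordered product $\mathbf{A}^{-1}\mathbf{B}^{-1}$ again solves the inverse reflection equation (the same mechanism by which $\YLu^{(1)}=\mathbf{I}+\XX^{(1)}\DDD^{(1)}$ satisfies the $\mathbf{B}$-relations in Lemma \ref{D0DellLem}), and the terms linear in $\mathbf{A}^{-1}$ coming from the $-\mathbf{I}$ summand of $\mathbf{D}$ recombine using the $\mathbf{A}^{-1}$-relation and the $\mathbf{A}^{-1}$--$\mathbf{B}^{-1}$ commutation; one finds $(\ref{D1rel2})$ holds with no inhomogeneous term. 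For $(\ref{D1rel3})$ one forms $\mathbf{D}_1\RRR_{21}^{-1}\mathbf{A}_2\RRR_{21}$ and compares it with $\RRR\mathbf{A}_2\RRR_{21}\mathbf{D}_1+(\qqq-\qqq^{-1})\RRR\mathbf{\Omega}$: the $\ZZZ^{-1}\mathbf{A}^{-1}\mathbf{B}^{-1}$ contributions are matched via the $\mathbf{A}$--$\mathbf{B}^{-1}$ and $\mathbf{A}^{-1}$--$\mathbf{A}$ relations, while the $-\mathbf{A}^{-1}$ contribution meets $\mathbf{A}$ and, using $\mathbf{A}\mathbf{A}^{-1}=\mathbf{A}^{-1}\mathbf{A}=\mathbf{I}$ together with the Hecke condition $(\ref{HeckeCond})$ for $\RRR$, produces exactly the inhomogeneous term $(\qqq-\qqq^{-1})\RRR\mathbf{\Omega}$. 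Getting this $\mathbf{\Omega}$-term to appear with the correct placement and coefficient---the $R$-matrix bookkeeping---is the main obstacle; this step is the matrix incarnation of the manipulations in Section~3.1 of \cite{BEF}, which is how the relations $(\ref{D1rel1})$--$(\ref{D1rel3})$ were found.

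For $\UU$-equivariance, both $\DD_0$ and $\DD_1$ are $\UU$-equivariant algebras (Proposition \ref{D0Iso}(2), Proposition \ref{D1Equiv}), and, as the proof of Proposition \ref{D1Equiv} shows, the $\UU$-module structure on $\DD_1$ is the unique module-algebra structure for which the entries of $\XX$ span a submodule isomorphic to $\mathbb{V}^*\otimes\mathbb{V}$ and those of $\DDD$ a copy of $\iota_{\OO_G}(\mathbb{V}^*\otimes\mathbb{V})$. It thus suffices to check equivariance on the generators $x_{ij}$ and $\del_{ij}$. For $x_{ij}\mapsto a_{ij}$ this is clear, the entries of $\mathbf{A}$ spanning such a submodule of $\OO_G^{\mathbf{A}}\subset\DD_0$. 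For $\del_{ij}\mapsto\mathbf{D}_{ij}$, one uses that $\mathbf{A}^{-1}$ and $\mathbf{B}^{-1}$ arise from the equivariant matrices $\mathbf{A},\mathbf{B}$ by applying $\iota_{\OO_G}$, that $\mathbf{I}$ is $\ad$-invariant, and that multiplication in $\DD_0$ is a $\UU$-morphism, so the entries of $\mathbf{D}$ transform exactly as those of $\DDD$. Hence $\Psi_\ZZZ^1$ is $\UU$-equivariant.
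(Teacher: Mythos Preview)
Your proposal is correct and follows essentially the same route as the paper: verify the three defining relations of $\DD_1$ on the images, with $(\ref{D1rel1})$ being immediate from $(\ref{DAA})$, and then deduce $\UU$-equivariance from the fact that the matrix assignments are $\UU$-morphisms and both algebras are $\UU$-module algebras. The paper's proof differs only in being fully explicit: for $(\ref{D1rel2})$ it expands both sides as a polynomial in $\ZZZ^{-1}$ and matches the $\ZZZ^{-2}$, $\ZZZ^{-1}$, and $\ZZZ^{0}$ parts separately (your ``quadratic'', ``cross'', and ``$\mathbf{A}^{-1}$-only'' pieces), using in particular the Hecke condition $(\ref{HeckeCond})$ to handle the cross terms; and for $(\ref{D1rel3})$ it writes out the chain $(\ref{DAB})\to(\ref{DAA})\to(\ref{HeckeCond})$ that produces the $(\qqq-\qqq^{-1})\RRR\mathbf{\Omega}$ term, exactly as you outline.
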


\begin{proof}
The $\UU$-equivariance statement follows from the homomorphism statement because the matrix maps are $\UU$-equivariant and both $\DD_1$ and $\DD_0$ are $\UU$-module algebras.
To show that $\Psi_\ZZZ^1$ is an algebra homomorphism, we only need to check (\ref{D1rel2}) and (\ref{D1rel3}).
For (\ref{D1rel2}), let us first unwind both sides of the relation:
\begin{align*}
\RRR^{-1}\Psi_\ZZZ^1(\DDD_1)\RRR_{21}^{-1}\Psi_\ZZZ^1(\DDD_2)
&=
\ZZZ^{-2}\underbrace{\RRR^{-1}\mathbf{A}_1^{-1}\mathbf{B}_1^{-1}\RRR_{21}^{-1}\mathbf{A}^{-1}_2\mathbf{B}_2^{-1}}_{(1)}
+
\underbrace{\RRR^{-1}\mathbf{A}_1^{-1}\RRR_{21}^{-1}\mathbf{A}_2^{-1}}_{(2)}\\
&\quad-
\ZZZ^{-1}\bigg(\underbrace{\RRR^{-1}\mathbf{A}^{-1}_1\RRR^{-1}_{21}\mathbf{A}^{-1}_2\mathbf{B}_2^{-1}
+
\RRR^{-1}\mathbf{A}_1^{-1}\mathbf{B}_1^{-1}\RRR^{-1}_{21}\mathbf{A}_2^{-1}}_{(3)}\bigg)\\
\Psi^1(\DDD_2)\RRR^{-1}\Psi^1(\DDD_1)\RRR_{21}^{-1}
&= 
\ZZZ^{-2}\underbrace{\mathbf{A}_2^{-1}\mathbf{B}_2^{-1}\RRR^{-1}\mathbf{A}_1^{-1}\mathbf{B}_1^{-1}\RRR_{21}^{-1}}_{(1)}
+
\underbrace{\mathbf{A}_2^{-1}\RRR^{-1}\mathbf{A}_1^{-1}\RRR^{-1}_{21}}_{(2)}\\
&\quad-
\ZZZ^{-1}\bigg(\underbrace{\mathbf{A}_2^{-1}\mathbf{B}_2^{-1}\RRR^{-1}\mathbf{A}_1^{-1}\RRR^{-1}_{21}
+
\mathbf{A}_2^{-1}\RRR^{-1}\mathbf{A}_1^{-1}\mathbf{B}_1^{-1}\RRR^{-1}_{21}}_{(3)}\bigg)
\end{align*}
We will show that parts labeled (1)-(4) in both expressions are equal.
\begin{enumerate}
\item This one is quite direct:
\begin{align*}
\RRR^{-1}\mathbf{A}_1^{-1}
\underbrace{\mathbf{B}_1^{-1}\RRR_{21}^{-1}\mathbf{A}^{-1}_2}_{(\ref{DAB})}
\mathbf{B}_2^{-1}
&= 
\underbrace{\RRR^{-1}\mathbf{A}_1^{-1}\RRR_{21}^{-1}\mathbf{A}_2^{-1}}_{(\ref{DAA})}
\underbrace{\RRR^{-1}\mathbf{B}_1^{-1}\RRR_{21}^{-1}\mathbf{B}_2^{-1}}_{(\ref{DBB})}\\
&= \mathbf{A}_2^{-1}
\underbrace{\RRR^{-1}\mathbf{A}_1^{-1}\RRR_{21}^{-1}\mathbf{B}_2^{-1}\RRR^{-1}}_{(\ref{DAB})}
\mathbf{B}_1^{-1}\RRR_{21}^{-1}\\
&= \mathbf{A}_2^{-1}\mathbf{B}_2^{-1}\RRR^{-1}\mathbf{A}_1^{-1}\mathbf{B}_1^{-1}\RRR^{-1}_{21}
\end{align*}
\item This follows from (\ref{DAA}).
\item We use the Hecke relations (\ref{HeckeCond}) to adjust some of the $R$-matrices at the ends:
\begin{align*}
\RRR^{-1}\mathbf{A}^{-1}_1\RRR^{-1}_{21}\mathbf{A}^{-1}_2\mathbf{B}_2^{-1}+
\underbrace{\RRR^{-1}}_{(\ref{HeckeCond})}
\mathbf{A}_1^{-1}\mathbf{B}_1^{-1}\RRR^{-1}_{21}\mathbf{A}_2^{-1}
&\overset{?}{=}
\mathbf{A}_2^{-1}\mathbf{B}_2^{-1}\RRR^{-1}\mathbf{A}_1^{-1}
\underbrace{\RRR^{-1}_{21}}_{(\ref{HeckeCond})}
+
\mathbf{A}_2^{-1}\RRR^{-1}\mathbf{A}_1^{-1}\mathbf{B}_1^{-1}\RRR_{21}^{-1}
\\
\RRR^{-1}\mathbf{A}^{-1}_1\RRR^{-1}_{21}\mathbf{A}^{-1}_2\mathbf{B}_2^{-1}
+\RRR_{21}\mathbf{A}_1^{-1}\mathbf{B}_1^{-1}\RRR^{-1}_{21}\mathbf{A}_2^{-1}
&\overset{?}{=}
\mathbf{A}_2^{-1}\mathbf{B}_2^{-1}\RRR^{-1}\mathbf{A}_1^{-1}\RRR
+
\mathbf{A}_2^{-1}\RRR^{-1}\mathbf{A}_1^{-1}\mathbf{B}_1^{-1}\RRR_{21}^{-1}\\
-(\qqq-\qqq^{-1})
\underbrace{\mathbf{\Omega}\mathbf{A}_1^{-1}\mathbf{B}_1^{-1}\RRR_{21}^{-1}\mathbf{A}_2^{-1}}_{\mathrm{commute}}&
\quad
-(\qqq-\qqq^{-1})\mathbf{A}_2^{-1}\mathbf{B}_2^{-1}\RRR^{-1}\mathbf{A}_1^{-1}\mathbf{\Omega}
\\
\RRR^{-1}\mathbf{A}^{-1}_1\RRR^{-1}_{21}\mathbf{A}^{-1}_2\mathbf{B}_2^{-1}
+\RRR_{21}\mathbf{A}_1^{-1}\mathbf{B}_1^{-1}\RRR^{-1}_{21}\mathbf{A}_2^{-1}
&\overset{?}{=}
\mathbf{A}_2^{-1}\mathbf{B}_2^{-1}\RRR^{-1}\mathbf{A}_1^{-1}\RRR
+
\mathbf{A}_2^{-1}\RRR^{-1}\mathbf{A}_1^{-1}\mathbf{B}_1^{-1}\RRR_{21}^{-1}
\end{align*}
We then have:
\begin{align*}
&\quad\underbrace{\RRR^{-1}\mathbf{A}^{-1}_1\RRR^{-1}_{21}\mathbf{A}^{-1}_2}_{(\ref{DAA})}
\mathbf{B}_2^{-1}
+\RRR_{21}\mathbf{A}_1^{-1}
\underbrace{\mathbf{B}_1^{-1}\RRR^{-1}_{21}\mathbf{A}_2^{-1}}_{(\ref{DAB})}\\
&=\mathbf{A}_2^{-1}
\underbrace{\RRR^{-1}\mathbf{A}_1^{-1}\RRR^{-1}_{21}\mathbf{B}_2^{-1}}_{(\ref{DAB})}
+
\underbrace{\RRR_{21}\mathbf{A}_1^{-1}\RRR_{21}^{-1}\mathbf{A}_2^{-1}\RRR^{-1}}_{(\ref{DAA})}
\mathbf{B}_1^{-1}\RRR_{21}^{-1}
\\
&= \mathbf{A}_2^{-1}\mathbf{B}_2^{-1}\RRR^{-1}\mathbf{A}_1^{-1}\RRR
+\mathbf{A}_2^{-1}\RRR^{-1}\mathbf{A}_1^{-1}\mathbf{B}_1^{-1}\RRR^{-1}_{21}
\end{align*}
\end{enumerate}

Finally, for (\ref{D1rel3}), we have:
\begin{align*}
\Psi_\ZZZ^1(\DDD_1)\RRR_{21}^{-1}\Psi_\ZZZ^1(\XX_2)\RRR_{21}
&=\ZZZ^{-1}\mathbf{A}_1^{-1}
\underbrace{\mathbf{B}_1^{-1}\RRR_{21}^{-1}\mathbf{A}_2\RRR_{21}}_{(\ref{DAB})}
-
\underbrace{\mathbf{A}_1^{-1}\RRR_{21}^{-1}\mathbf{A}_2\RRR_{21}}_{(\ref{DAA})}
\\
&=
\ZZZ^{-1}\underbrace{\mathbf{A}_1^{-1}\RRR\mathbf{A}_2\RRR_{21}}_{(\ref{DAA})}
\mathbf{B}_1^{-1}
-\RRR\mathbf{A}_2
\underbrace{\RRR^{-1}}_{(\ref{HeckeCond})}
\mathbf{A}_1^{-1}\\
&= \ZZZ^{-1}\RRR\mathbf{A}_2\RRR_{21}\mathbf{A}_1^{-1}\mathbf{B}_1^{-1}
-\left( \RRR\mathbf{A}_2\RRR_{21}\mathbf{A}_1^{-1} - (\qqq-\qqq^{-1})\RRR\mathbf{A}_2\mathbf{\Omega}\mathbf{A}_1^{-1}\right)\\
&= \RRR\Psi_\ZZZ^1(\mathbf{X}_2)\RRR_{21}\Psi_\ZZZ^1(\DDD_1)+(\qqq-\qqq^{-1})\RRR\mathbf{\Omega}\qedhere
\end{align*}
\end{proof}

\subsection{Quantum Weyl algebra}
Finally, we address the arrow connecting to the square vertex in (\ref{CyclicQ}).
The algebra assigned to this arrow is the \textit{quantum Weyl algebra}, first defined in \cite{GiaZhaQW}.

\subsubsection{Definition of $\WW$ and PBW}
The rank $n$ quantum Weyl algebra $\WW$ has generators
\[
\{x_i,\del_i\, |\, 1\le i\le n\}
\] 
and relations
\begin{align}
\begin{split}
x_ix_j&= \qqq x_jx_i\hbox{ for }i>j\\
\del_i\del_j&=\qqq^{-1}\del_j\del_i\hbox{ for }i>j\\
\del_ix_j&=\qqq x_j\del_i\hbox{ for }i\not=j\\
\del_ix_i&=1+\qqq^2x_i\del_i+(\qqq^2-1)\sum_{j<i}x_j\del_j
\end{split}\label{WeylPres}
\end{align}
We can put these generators into vectors:
\begin{equation*}
\begin{aligned}
\mathbf{x}&:= 
\begin{pmatrix}
x_1 & \cdots & x_n
\end{pmatrix}
= \sum_{i=1}^n e^i\otimes x_i,
&
\mathbf{d}&:= 
\begin{pmatrix}
\del_1\\
\vdots\\
\del_n
\end{pmatrix}
= \sum_{i=1}^n e_i\otimes \del_i
\\
\end{aligned}
\end{equation*}
With this notation, the relations (\ref{WeylPres}) can be expressed using the $R$-matrix:
\begin{equation}
\begin{aligned}
\qqq \mathbf{x}_1\mathbf{x}_2&= \mathbf{x}_2\mathbf{x}_1\RRR\\
\qqq \mathbf{d}_1\mathbf{d}_2&= \RRR\mathbf{d}_2\mathbf{d}_1\\
\qqq^{-1}\mathbf{d}_2\mathbf{x}_1&= \mathbf{x}_1\RRR\mathbf{d}_2+\qqq^{-1}\sum_{i=1}^ne^i\otimes e_i\otimes 1
\end{aligned}
\label{WeylR}
\end{equation}

Let
\[
\widetilde{\del}_i:=(\qqq-\qqq^{-1})\del_i
\]
and let $\WW^\ZZ$ to be the $\CC[\qqq^{\pm 1}]$-subalgebra generated by $\{x_i,\widetilde{\del}_i\}_{i=1}^n$.
Define a standard monomial for $\WW^\ZZ$ and $\WW$ to be
\[
x_{i_1}\cdots x_{i_M}\widetilde{\del}_{j_1}\cdots\widetilde{\del}_{j_N}
\]
such that the sequences $(i_1,\ldots, i_M)$ and $(j_1,\ldots, j_N)$ are in nondecreasing order.
\begin{prop}[\cite{GiaZhaQW,JordanMult}]
The standard monomials form a $\CC[\qqq^{\pm 1}]$-basis of $\WW^\ZZ$.
Thus, $\WW$ is a flat deformation of $\OO(\CC^{n}\times\CC^n)$.
\end{prop}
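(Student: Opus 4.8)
The plan is to prove the claim as a PBW-type theorem: exhibit the standard monomials as a spanning set for $\WW^\ZZ$ over $\CC[\qqq^{\pm 1}]$, then as a linearly independent set, and finally read off flatness by specializing $\qqq=1$. The cleanest way to get both halves at once is to present $\WW^\ZZ$ as an iterated Ore extension of $\CC[\qqq^{\pm 1}]$. I would fix the total order $x_1<\dots<x_n<\widetilde{\del}_1<\dots<\widetilde{\del}_n$ on the generators (recall $\widetilde{\del}_i=(\qqq-\qqq^{-1})\del_i$, which rescales the last relation of (\ref{WeylPres}) to $\widetilde{\del}_i x_i=(\qqq-\qqq^{-1})+\qqq^2 x_i\widetilde{\del}_i+(\qqq^2-1)\sum_{j<i}x_j\widetilde{\del}_j$ and keeps all structure constants in $\CC[\qqq^{\pm 1}]$), and adjoin the generators one at a time in this order. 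Each relation of $\WW^\ZZ$ relating a newly adjoined generator $g$ to a previously adjoined one $h$ has the triangular form $g\,h=\sigma_g(h)\,g+\delta_g(h)$ with $\sigma_g(h),\delta_g(h)$ in the subalgebra built so far; for $g=\widetilde{\del}_i$ one reads off $\sigma_{\widetilde{\del}_i}(x_i)=\qqq^2 x_i$, $\sigma_{\widetilde{\del}_i}(x_j)=\qqq x_j$ for $j\neq i$, $\sigma_{\widetilde{\del}_i}(\widetilde{\del}_j)=\qqq^{-1}\widetilde{\del}_j$ for $j<i$, and $\delta_{\widetilde{\del}_i}(x_i)=(\qqq-\qqq^{-1})+(\qqq^2-1)\sum_{j<i}x_j\widetilde{\del}_j$, with all remaining values zero.

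So the first genuine task is to check, at each stage, that $\sigma_g$ extends to a $\CC[\qqq^{\pm 1}]$-algebra automorphism of the previously built algebra and that $\delta_g$ extends to a $\sigma_g$-derivation of it; this amounts to verifying these compatibilities against the finitely many defining relations of that algebra, exactly the bookkeeping carried out for $\DD_1$ in the proof of Proposition \ref{D1PBW}. Granting this, the standard theory of Ore extensions gives that $\WW^\ZZ$ is a \emph{free} $\CC[\qqq^{\pm 1}]$-module on the monomials $x_1^{a_1}\cdots x_n^{a_n}\widetilde{\del}_1^{b_1}\cdots\widetilde{\del}_n^{b_n}$, which are precisely the standard monomials; base-changing to $\CC(\qqq)$ yields the analogous statement for $\WW$. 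I expect this Ore-data verification to be the only nontrivial point — routine, but to be done with care, since it is exactly where the precise shape of the cross-relation matters (in particular the restriction $j<i$ in the correction sum, which is what makes the chosen order genuinely triangular).

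For the flatness statement: a free module is flat, so $\WW^\ZZ$ is flat over $\CC[\qqq^{\pm 1}]$; and setting $\qqq=1$ in the $\widetilde{\del}$-presentation makes all generators commute, so $\WW^\ZZ/(\qqq-1)\WW^\ZZ\cong\CC[x_1,\dots,x_n,\widetilde{\del}_1,\dots,\widetilde{\del}_n]=\OO(\CC^n\times\CC^n)$, with the standard monomials specializing to the monomial basis; hence $\WW$ is a flat deformation of $\OO(\CC^n\times\CC^n)$. As a cross-check, and as an alternative to the Ore-extension route, one could instead prove spanning by a terminating straightening procedure — push every $\widetilde{\del}$ past every $x$ and sort within each block, with termination witnessed by the lexicographic statistic (total length, number of $\widetilde{\del}$-before-$x$ inversions, number of intra-block inversions) — and then get linear independence either from Bergman's diamond lemma (checking that the overlap ambiguities $x_ix_jx_k$, $\widetilde{\del}_i\widetilde{\del}_j\widetilde{\del}_k$, $\widetilde{\del}_i\widetilde{\del}_jx_k$, $\widetilde{\del}_ix_jx_k$ all resolve) or from the Fock representation on the quantum polynomial ring $\CC(\qqq)\langle x_i\mid x_ix_j=\qqq x_jx_i\ (i>j)\rangle$, where $x_i$ acts by left multiplication and $\del_i$ by the $\qqq$-derivation $D_i(x^c)=\qqq^{c_1+\cdots+c_{i-1}}\qqq^{c_i-1}[c_i]_{\qqq^2}x^{c-\epsilon_i}$, so that a standard monomial $x^a\widetilde{\del}^b$ sends $x^c$ to a nonzero scalar times $x^{c-b+a}$ whenever $c\ge b$ componentwise, and vanishes otherwise.
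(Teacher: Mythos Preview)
Your proposal is correct. The iterated Ore extension argument works with the order you chose; the $\sigma$-derivation check for $\widetilde{\del}_i$ against the relation $x_ix_j=\qqq x_jx_i$ ($j<i$) does go through, but only because the correction term $(\qqq^2-1)\sum_{m<i}x_m\widetilde{\del}_m$ in $\delta(x_i)$ picks up an extra contribution from $\widetilde{\del}_j x_j$ that exactly compensates --- so your warning that this step ``is to be done with care'' is apt. Your two alternative routes (diamond lemma, Fock representation) are also standard and valid.

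As for comparison: the paper does not give its own proof of this proposition. It is stated with attribution to \cite{GiaZhaQW,JordanMult} and no argument is supplied in the text. Your Ore-extension approach is essentially the one used in those references (Jordan in particular treats these algebras via iterated skew-polynomial extensions), so you have reconstructed the cited proof rather than diverged from it.
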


\subsubsection{Quantum symmetric algebra}\label{QSym}
Because $\RRR$ satisfies the Hecke relation (\ref{HeckeCond}), we can construct a $\qqq$-symmetrizer and apply it to $\mathbb{V}^{\otimes m}$ nd $(\mathbb{V}^*)^{\otimes m}$ to obtain the \textit{$m$th quantum symmetric powers} $S_\qqq^m\mathbb{V}$ and $S_\qqq^m\mathbb{V}^*$, respectively.
Moreover, we obtain \textit{quantum symmetric algebras}:
\begin{equation*}
\begin{aligned}
S_\qqq&\mathbb{V}:=\bigoplus_{m=0}^\infty S_\qqq^m\mathbb{V},&
S_\qqq&\mathbb{V}^*:=\bigoplus_{m=0}^\infty S_\qqq^m\mathbb{V}^*
\end{aligned}
\end{equation*}
These algebras are $\UU$-equivariant.

In what follows, let $\WW^{\mathbf{x}},\WW^{\mathbf{d}}\subset\WW$ be the subalgebras generated by $\{x_i\}$ and $\{\del_i\}$, respectively.
\begin{thm}[\cite{GiaZhaQW}]
We have the following:
\begin{enumerate}
\item The map $e_i\mapsto x_i$ induces an isomorphism of algebras $S_\qqq\mathbb{V}\cong \WW^{\mathbf{x}}$.
\item The map $e^i\mapsto \del_i$ induces an isomorphism of algebras $S_\qqq\mathbb{V}^*\cong \WW^{\mathbf{d}}$.
\item $\WW$ has a structure of a $\UU$-equivariant algebra that resticts to those of $\WW^{\mathbf{x}}$ and $\WW^{\mathbf{d}}$ obtained from (1) and (2).
As a $\UU$-module, $\WW\cong\WW^{\mathbf{x}}\otimes\WW^{\mathbf{d}}\cong S_\qqq\mathbb{V}\otimes S_\qqq\mathbb{V}^*$.
\end{enumerate}
\end{thm}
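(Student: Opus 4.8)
The plan is to handle (1) and (2) as a single statement about quantum symmetric algebras proved by comparing PBW bases, and then to bootstrap (3) from the $R$-matrix presentation (\ref{WeylR}) together with the PBW theorem just recalled, in the style of Proposition \ref{D1Equiv}.

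For (1): by definition $S_\qqq\mathbb{V}$ is the quotient of the tensor algebra $\mathcal{T}(\mathbb{V})$ by the two-sided ideal generated by the kernel of the degree-two $\qqq$-symmetrizer; the Hecke condition (\ref{HeckeCond}) splits $\mathbb{V}\otimes\mathbb{V}$ into this kernel and its complement, and writing the kernel out in the basis $\{e_i\otimes e_j\}$ using the explicit formula for $\RRR$ from \ref{RMatrix} identifies it with the span of the elements $e_i\otimes e_j-\qqq\, e_j\otimes e_i$ for $i>j$. Equivalently, in matrix notation this ideal is generated by the entries of $\qqq\mathbf{x}_1\mathbf{x}_2-\mathbf{x}_2\mathbf{x}_1\RRR$, the first relation of (\ref{WeylR}). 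Thus $e_i\mapsto x_i$ induces a surjection $S_\qqq\mathbb{V}\to\WW^{\mathbf{x}}$. Both sides are spanned by ordered monomials (nondecreasing index sequences): on the $S_\qqq\mathbb{V}$ side by the usual straightening of a quantum affine space, and on the $\WW^{\mathbf{x}}$ side by the relations (\ref{WeylPres}); and the PBW theorem for $\WW^\ZZ$ just recalled shows the ordered monomials in the $x_i$ are linearly independent. Hence the surjection is an isomorphism. Statement (2) is the identical argument with $\mathbb{V}$ replaced by $\mathbb{V}^*$, $e_i$ by $e^i$, and the first relation of (\ref{WeylR}) by the second.

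For (3), I would first produce the $\UU$-equivariant algebra structure exactly as in the proof of Proposition \ref{D1Equiv}: view $\WW$ as a quotient of $\mathcal{T}(\mathbb{V}\oplus\mathbb{V}^*)$ with $\mathrm{span}\{x_i\}\cong\mathbb{V}$ and $\mathrm{span}\{\del_i\}\cong\mathbb{V}^*$, and check that the three families of relations in (\ref{WeylR}) are images of $\UU$-module homomorphisms: a difference of braidings $\beta$ on $\mathbb{V}\otimes\mathbb{V}$, a difference of braidings on $\mathbb{V}^*\otimes\mathbb{V}^*$, and a $\UU$-morphism $\mathbb{V}^*\otimes\mathbb{V}\to(\mathbb{V}\otimes\mathbb{V}^*)\oplus\mathbb{1}$ whose $\mathbb{1}$-component is built from an evaluation map (possibly precomposed with a braiding), just as the $\mathbf{\Omega}$-term was handled there. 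Therefore the defining ideal is $\UU$-stable, $\WW$ inherits a $\UU$-module algebra structure, and restricting along the two subalgebra inclusions recovers the structures on $\WW^{\mathbf{x}}\cong S_\qqq\mathbb{V}$ and $\WW^{\mathbf{d}}\cong S_\qqq\mathbb{V}^*$ from (1)--(2), since the degree-one part of each identification is the $\UU$-isomorphism $\mathbb{V}\cong\mathrm{span}\{x_i\}$, resp.\ $\mathbb{V}^*\cong\mathrm{span}\{\del_i\}$. Finally, the cross-relation lets one move every $x_i$ to the left of every $\del_j$, so multiplication gives a surjection $\WW^{\mathbf{x}}\otimes\WW^{\mathbf{d}}\to\WW$; it is injective by the PBW theorem and $\UU$-equivariant because multiplication in $\WW$ is, which yields $\WW\cong\WW^{\mathbf{x}}\otimes\WW^{\mathbf{d}}\cong S_\qqq\mathbb{V}\otimes S_\qqq\mathbb{V}^*$ as $\UU$-modules.

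The step I expect to require the most care is checking that the cross-relation is genuinely the image of a $\UU$-module map: one must track the $\qqq^{\pm1}$ normalizations so that the inhomogeneous term $\qqq^{-1}\sum_i e^i\otimes e_i\otimes 1$ is exactly an evaluation map up to scalar, and verify that the remaining braiding terms assemble into a morphism landing in $(\mathbb{V}\otimes\mathbb{V}^*)\oplus\mathbb{1}$ rather than a larger module. Everything else is routine PBW bookkeeping; indeed all of this is contained in \cite{GiaZhaQW}, and the point of recording it here is to fix the identifications $\WW^{\mathbf{x}}\cong S_\qqq\mathbb{V}$ and $\WW^{\mathbf{d}}\cong S_\qqq\mathbb{V}^*$ used in the sequel.
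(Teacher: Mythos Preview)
The paper does not give its own proof of this theorem: it is stated with the citation \cite{GiaZhaQW} and no argument is supplied, the result being used only as input to what follows. So there is nothing in the paper to compare your proposal against.

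That said, your sketch is a correct and standard reconstruction of the argument. The identification of the degree-two antisymmetric part of $\mathbb{V}^{\otimes 2}$ (the $-\qqq^{-1}$-eigenspace of $\tau\RRR$) with the relations $x_ix_j=\qqq x_jx_i$ for $i>j$ is right, and PBW forces the resulting surjection to be an isomorphism; likewise for $\mathbb{V}^*$. For (3), checking that the three relations in (\ref{WeylR}) are images of $\UU$-morphisms is exactly the right strategy, parallel to Proposition~\ref{D1Equiv}. Your caution about the cross-relation is well placed: the inhomogeneous term $\qqq^{-1}\sum_i e^i\otimes e_i\otimes 1$ is (up to scalar) the coevaluation $\mathrm{coev}_{\mathbb{V}}$, which is a $\UU$-morphism $\mathbb{1}\to\mathbb{V}\otimes\mathbb{V}^*$, and after precomposing with $\mathrm{ev}_{\mathbb{V}}$ on $\mathbb{V}^*\otimes\mathbb{V}$ one sees the constant term is $\UU$-equivariant; the remaining braiding terms assemble into a map $\mathbb{V}^*\otimes\mathbb{V}\to\mathbb{V}\otimes\mathbb{V}^*$ as you say. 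The final $\UU$-module decomposition via PBW is routine.
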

\noindent Part (3) allows us to define a $\UU$-equivariant action of $\WW$ on $S_\qqq\mathbb{V}$.
Namely, let $I^{\mathbf{d}}\subset\WW$ be the left ideal
\[
I^{\mathbf{d}}:=\sum_{i=1}^n\WW\del_i
\]
We then have that the $\UU$-equivariant $\WW$-module $\WW/I^{\mathbf{d}}$ is isomorphic to $S_\qqq\mathbb{V}$ as a $\UU$-module.

Finally, let us go into more detail on the $\WW$-action on $S_\qqq\mathbb{V}$.
The ordered monomials
\[
x_1^{k_1}\cdots x_n^{k_n}
\]
give a basis of $S_\qqq\mathbb{V}$, and $\WW^{\mathbf{x}}$ acts by multiplication followed by reordering.
For $\WW^{\mathbf{d}}$, we have (cf. \cite{KlimSchm}):
\[
\del_i\cdot x_1^{k_1}\cdots x_n^{k_n}= 
(\qqq x)^{k_1}\cdots (\qqq x_{i-1})^{k_{i-1}}
[k_i]_{\qqq^2} x_i^{k_i-1}x_{i+1}^{k_{i+1}}\cdots x_n^{k_n}
\]

\subsubsection{Difference operators}\label{WeylDiff}
We will reinterpret the $\UU$- and $\WW$-module structures on $S_\qqq\mathbb{V}$ in terms of $\qqq$-difference operators acting on polynomials.
Namely, we translate over these structures over to the polynomial ring $\CC(\qqq)[\mathbf{z}_n]:=\CC(\qqq)[z_1,\ldots, z_n]$ via the map
\[
x_1^{k_1}\cdots x_n^{k_n}\mapsto z_1^{k_1}\cdots z_n^{k_n}
\]
Let $\tau_{z_i,\qqq}$ be the $\qqq$-shift operator
\[
\tau_{z_i,\qqq}z_j=\qqq^{\delta_{i,j}}z_j
\]
and for $\lambda=(\lambda_1,\ldots, \lambda_n)\in\ZZ^n$, we use monomial notation:
\begin{equation*}
\begin{aligned}
z^\lambda&:= z_1^{\lambda_1}\cdots z_n^{\lambda_n},&
\tau_{\qqq}^\lambda&:=\tau_{z_1,\qqq}^{\lambda_1}\cdots\tau_{z_n,\qqq}^{\lambda_n}
\end{aligned}
\end{equation*}

Consider the following rings of $\qqq$-difference operators:
\begin{align*}
\DD \hbox{\it iff}\,_\qqq(\mathbf{z}_n)&:=
\left\{ \sum_{\lambda,\mu\in\ZZ^n}a_{\lambda,\mu}z^\lambda\tau_{\qqq}^\mu\,\middle|\,
\begin{array}{l}
a_{\lambda,\mu}\in\CC(\qqq)\\
\hbox{and only finitely many }a_{\lambda,\mu}\not=0
\end{array}
\right\}\\
\DD \hbox{\it iff}\,_\qqq^+(\mathbf{z}_n)&:=
\left\{ D\in\DD\hbox{\it iff}\,_\qqq(\mathbf{z}_n)\,\middle|\,
\begin{array}{l}
Df\in\CC(\qqq)[\mathbf{z}_n]\\
\hbox{for all }f\in\CC(\qqq)[\mathbf{z}_n]
\end{array}
\right\}
\end{align*}
The actions of $\UU$ and $\WW$ on $\CC(\qqq)[\mathbf{z}_n]$ come from a homomorphism $\mathfrak{qdiff}:\WW\rtimes\UU\rightarrow\DD\hbox{\it iff}\,_\qqq^+(\mathbf{z}_n)$ given by
\begin{align*}
\mathfrak{qdiff}(\qqq^{\epsilon_i})&= \tau_{z_i,\qqq} & \mathfrak{qdiff}(x_i)&= z_i\tau_{z_1,\qqq}\cdots \tau_{z_{i-1},\qqq}\\
\mathfrak{qdiff}(E_i)&= \frac{z_i}{z_{i+1}}\left( \frac{\tau_{z_{i+1},\qqq}-\tau_{z_{i+1},\qqq}^{-1}}{\qqq-\qqq^{-1}} \right) & \mathfrak{qdiff}(\del_i)&= z_i^{-1}\tau_{z_1,\qqq}\cdots\tau_{z_{i-1},\qqq}\left( \frac{\tau_{z_i,\qqq}^2-1}{\qqq^2-1} \right)\\
\mathfrak{qdiff}(F_i)&= \frac{z_{i+1}}{z_i}\left( \frac{\tau_{z_i,\qqq}-\tau_{z_i,\qqq}^{-1}}{\qqq-\qqq^{-1}} \right)
\end{align*}

\section{Harish-Chandra isomorphism}

\subsection{Moment maps}\label{Moment}
Let $H$ be a Hopf algebra, let $A$ be an $H$-module algebra, and let $\bullet$ denote the $H$ action on $A$.
A \textit{(quantum) moment map} (in the sense of \cite{VarVassRoot}) is a ring homomorphism $\mu: H\rightarrow A$ such that for $h\in H$ and $a\in A$:
\begin{equation}
\mu(h)a=\left(h_{(1)}\bullet a\right)\mu(h_{(2)})
\label{MomentDef}
\end{equation}
More generally, this notion makes sense if $H'\subset H$ is a left coideal subalgebra and $\mu$ is only defined on $H'$.
For what follows, we will be interested in the case $H=\UU$, $H'=\kappa(\OO_G)$, and $A$ is one of the algebras of quantum differential operators from Section \ref{QDO} above.
When discussing the domain of a moment map, we will abuse notation and conflate $\OO_G$ with its image in $\UU$ via $\kappa$.

For what will appear below, $A$ will always be a locally-finite $\UU$-module algebra and $\mu:\OO_G\rightarrow A$ will always be $\UU$-equivariant.
In this case, the moment map equation (\ref{MomentDef}) on $V^*\otimes V\subset\OO_G$ looks like (cf. \cite{GanJorSaf}):
\begin{equation}
\includegraphics{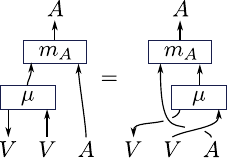}
\label{MomentPic}
\end{equation}
where $m_A$ is the multiplication in $A$.

\subsubsection{Moment maps for $\DD_\ell$ for $\ell>1$}\label{MomentEll}
For $\ell>1$ and $a\in\ZZ/\ell\ZZ$, consider the matrices of elements:
\begin{equation*}
\begin{aligned}
{}^L\underline{\YY}^{(a)}&:= \mathbf{I}+\XX^{(a)}\DDD^{(a)},&
{}^R\YY^{(a)}&:= \mathbf{I}+\DDD^{(a)}\XX^{(a)}
\end{aligned}
\end{equation*}
Let $\DD_\ell^{(a)}$ be the subalgebra generated by the coordinates of $\XX^{(a)}$ and $\DDD^{(a)}$; note that the coordinates of ${}^L\underline{\YY}^{(a)}$ and ${}^R\YY^{(a)}$ lie in $\DD_\ell^{(a)}$.
Next, recall the notation we introduced in \ref{DellEq}.
Using this notation, $\DD_\ell^{(a)}$ is a $\UU_{(a)}\otimes\UU_{(a+1)}$-module, where
\begin{equation*}
\begin{aligned}
x_{ij}&= e^i_{(a)}\boxtimes e_j^{(a+1)}\in\mathbb{V}^*_{(a)}\boxtimes\mathbb{V}_{(a+1)},&
\del_{ij}&= e_j^{(a)}\boxtimes e^j_{(a+1)}\in\mathbb{V}_{(a)}\boxtimes\mathbb{V}^*_{(a+1)}
\end{aligned}
\end{equation*}

\begin{prop}[\cite{JordanMult}]\label{DellMuProp}
${}^R\YY^{(a)}$ and ${}^L\underline{\YY}^{(a)}$ satisfy the following:
\begin{enumerate}
\item The map $\mathbf{M}\mapsto{}^R\YY^{(a)}$ defines a $\UU$-equivariant ring homomorphism $\mu_R^{(a)}:\mathcal{R}\hbox{\it ef}\,\rightarrow \DD_\ell^{(a)}$.
\item The map $\mathbf{M}^{-1}\mapsto{}^L\underline{\YY}^{(a)}$ defines a $\UU$-equivariant ring homomorphism $\mu_L^{(a)}:\iota_G(\mathcal{R}\hbox{\it ef}\,)\rightarrow \DD_\ell^{(a)}$.
\item The elements $\mu_R^{(a)}(\det_\qqq(\mathbf{M}))$ and $\mu_L^{(a)}(\det_\qqq(\mathbf{M})^{-1})$ generate Ore sets of $\DD_\ell$.
Let $\DD_\ell^{(a),\star}$ denote the localization of $\DD_\ell^{(a)}$.
The extension $\mu_R^{(a)}:\OO_G\rightarrow \DD_\ell^{(a),\star}$ is a moment map for the $\UU_{(a+1)}$-action, and the extension $\mu_L^{(a)}:\OO_G\rightarrow \DD_\ell^{(a),\star}$ is a moment map for the $\UU_{(a)}$-action.
\end{enumerate}
\end{prop}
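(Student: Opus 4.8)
The plan is to follow Jordan's arguments in \cite{JordanMult}, reorganising the statement into three $R$-matrix computations: the algebra-homomorphism statements (1)--(2), the Ore conditions, and the moment-map identity.

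For (1) I would substitute $\mathbf{M} = \YR^{(a)} = \mathbf{I} + \DDD^{(a)}\XX^{(a)}$ into the reflection equation $\RRR_{21}\mathbf{M}_1\RRR\mathbf{M}_2 = \mathbf{M}_2\RRR_{21}\mathbf{M}_1\RRR$ and expand in powers of $\DDD^{(a)}\XX^{(a)}$. The constant term is the tautology $\RRR_{21}\RRR=\RRR_{21}\RRR$; in the terms linear in $\DDD^{(a)}\XX^{(a)}$ I would commute $\DDD^{(a)}$ past $\XX^{(a)}$ using the cross relation (\ref{DXa}) and then apply (\ref{Xaa}) and (\ref{Daa}), checking that the $(\qqq-\qqq^{-1})\mathbf{\Omega}$ corrections cancel by means of the Hecke condition (\ref{HeckeCond}); the quadratic terms require the full relation set together with the quantum Yang--Baxter equation (\ref{QYBE}). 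Part (2) is the same computation with $\XX^{(a)}$ and $\DDD^{(a)}$ interchanged, once one recalls $\iota_{\OO_G}(\mathbf{M})=\mathbf{M}^{-1}$ so that $\mathbf{M}^{-1}\mapsto\YLu^{(a)}$ really does define a homomorphism out of $\iota_{\OO_G}(\mathcal{R}\hbox{\it ef}\,)$. For $\UU$-equivariance I would argue that the entries of $\XX^{(a)}$ and $\DDD^{(a)}$ are $\UU^{(a)}\otimes\UU^{(a+1)}$-equivariant by \ref{DellEq}, so the entries of $\DDD^{(a)}\XX^{(a)}$ transform covariantly under $\UU^{(a+1)}$ with the $\UU^{(a)}$-legs contracted out; hence the entries of $\YR^{(a)}$ span a $\UU^{(a+1)}$-submodule isomorphic to the one spanned by the entries of $\mathbf{M}$ inside $\mathcal{R}\hbox{\it ef}\,$, and since the latter generate $\mathcal{R}\hbox{\it ef}\,$ and the map is an algebra homomorphism, it is automatically $\UU^{(a+1)}$-equivariant --- and likewise $\UU^{(a)}$-equivariant for $\mu_L^{(a)}$.

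For part (3) I would first dispatch the Ore conditions exactly as in Lemmas \ref{DellDet} and \ref{DetOre}: equip $\DD_\ell$ with the filtration $\deg(x_{ij}^{(a)})=0$, $\deg(\del_{ij}^{(a)})=1$; since $\det_\qqq(\mathbf{M})$ is $\UU$-invariant, its image $\mu_R^{(a)}(\det_\qqq(\mathbf{M}))$ is $\UU^{(a+1)}$-invariant, hence commutes with $\mu_R^{(a)}(\OO_G)$ and, by (\ref{RFactor}), at worst $\qqq$-commutes with the remaining generators, and the same holds in the associated graded; as $\DD_\ell$ is a domain by Theorem \ref{DellFlat}, cancellability is automatic and the Ore conditions are solved one filtered degree at a time (and similarly for $\mu_L^{(a)}(\det_\qqq(\mathbf{M})^{-1})$). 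For the moment-map property I would reduce to generators: $\OO_G$ is generated by the entries of $\mathbf{M}$ together with $\det_\qqq(\mathbf{M})^{-1}$; the equation (\ref{MomentDef}) is multiplicative in $h$ once $\mu$ is a $\UU$-equivariant algebra map (parts (1)--(2)), and it extends across the Ore localization because $\det_\qqq(\mathbf{M})$ maps to a $\qqq$-central element; so it suffices to verify (\ref{MomentDef}) for $h=\mathbf{M}$ and $z$ one of the matrices $\XX^{(a)}$, $\DDD^{(a)}$. Unwinding (\ref{MomentPic}) with $\Delta_{\OO_G}(\mathbf{M})$ the matrix comultiplication turns these into $R$-matrix identities relating $\YR^{(a)}$ to $\XX^{(a)}$ and to $\DDD^{(a)}$, which after inserting $\YR^{(a)} = \mathbf{I} + \DDD^{(a)}\XX^{(a)}$ follow from (\ref{Xaa}), (\ref{Daa}), (\ref{DXa}), (\ref{DXadj1})--(\ref{DXadj2}) and (\ref{HeckeCond}); the $\YLu^{(a)}$-analogues give the moment-map property of $\mu_L^{(a)}$. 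That $\mu_R^{(a)}$ is a moment map for the $\UU^{(a+1)}$-action rather than the $\UU^{(a)}$-action (and conversely for $\mu_L^{(a)}$) is forced by equivariance, since $\YR^{(a)}$ is $\UU^{(a)}$-invariant but $\UU^{(a+1)}$-covariant.

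I expect the obstacles to be computational rather than conceptual: the cancellation of the $\mathbf{\Omega}$-terms in the linear part of the reflection-equation checks for (1)--(2), and, in (3), making precise the reduction of the moment-map equation to the generating matrix $\mathbf{M}$, which relies on the equivariance from (1)--(2) and on checking that (\ref{MomentDef}) is preserved under inverting $\det_\qqq(\mathbf{M})$.
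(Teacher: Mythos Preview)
The paper does not supply its own proof of this proposition: it is stated with the attribution \cite{JordanMult} and no argument follows. So there is nothing in the paper to compare your proposal against line by line; the paper simply quotes the result and moves on to the consequences (\ref{YL1})--(\ref{YR2}).

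That said, your sketch is a faithful outline of how Jordan's argument runs, and is internally consistent with the tools this paper develops. A few remarks. For parts (1)--(2), the expansion strategy is right, but in practice it is cleaner to verify (\ref{YR1})--(\ref{YR2}) directly (these are equivalent to the moment-map identity via (\ref{MomentPic})) and then observe that the reflection equation for $\YR^{(a)}$ is a formal consequence; this is essentially what the paper records immediately after the proposition. For the equivariance claim, your ``$\UU^{(a)}$-legs contracted out'' heuristic is correct in spirit but relies on the multiplication in $\DD_\ell^{(a)}$ being $\UU^{(a)}\otimes\UU^{(a+1)}$-equivariant (Proposition in \ref{DellEq}); you should invoke that explicitly rather than a naive index contraction, since in the braided setting the naive pairing $\sum_k e_k\otimes e^k$ is not the invariant one. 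For the Ore argument in (3), note that $\mu_R^{(a)}(\det_\qqq(\mathbf{M}))$ is $\UU^{(a+1)}$-invariant but is \emph{not} obviously $\UU^{(a)}$-invariant, so its commutation with the other generators $\XX^{(b)},\DDD^{(b)}$ for $b\neq a$ needs the cross relations (\ref{XXad})--(\ref{DXadj2}) and not just invariance; your filtration argument still goes through, but the ``$\qqq$-commutes by (\ref{RFactor})'' step has to be checked against those relations rather than derived from invariance alone.
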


Let ${}^L\YY^{(a)}:=\big({}^L\underline{\YY}^{(a)}\big)^{-1}$ and let $\DD_\ell^\star$ denote the localization of $\DD_\ell$ at the determinants from part (3) above.
Translating (\ref{MomentPic}) shows that part (3) is equivalent to the identities:
\begin{align}
\label{YL1}
{}^L\YY^{(a)}_1\RRR\XX_2^{(a)}&= \RRR_{21}^{-1}\XX^{(a)}_2{}^L\YY^{(a)}_1\\
\label{YL2}
{}^L\YY^{(a)}_1\DDD^{(a)}_2&= \DDD^{(a)}_2\RRR_{21}{}^L\YY^{(a)}_1\RRR\\
\label{YR1}
{}^R\YY^{(a)}_1\XX^{(a)}_2&= \XX^{(a)}_2\RRR_{21}{}^R\YY^{(a)}_1\RRR\\
\label{YR2}
{}^R\YY^{(a)}_1\RRR\DDD^{(a)}_2&= \RRR_{21}^{-1}\DDD^{(a)}_2{}^R\YY^{(a)}_1
\end{align}
Using the relations for $\DD_\ell$, we can deduce that ${}^L\YY^{(a)}$ satisfies:
\begin{align}
\label{YL3}
{}^L\YY^{(a)}_1\XX_2^{(a-1)}&= \XX_2^{(a-1)}\RRR_{21}{}^L\YY^{(a)}_1\RRR_{21}^{-1}\\
\label{YL4}
{}^L\YY^{(a)}_1\XX_2^{(b)}&= \XX_2^{(b)}{}^L\YY^{(a)}_1\hbox{ for }b\not=a,a-1\\
\label{YL5}
\RRR_{21}{}^L\YY^{(a)}_1\RRR_{21}^{-1}\DDD_2^{(a-1)}&= \DDD_2^{(a-1)}{}^L\YY^{(a)}_1\\
\label{YL6}
{}^L\YY^{(a)}_1\DDD_2^{(b)}&= \DDD_2^{(b)}{}^L\YY^{(a)}_1\hbox{ for }b\not=a,a-1
\end{align}
On the other hand, ${}^R\YY^{(a)}$ satisfies:
\begin{align}
\label{YR3}
\RRR^{-1}{}^R\YY^{(a)}_1\RRR\XX_2^{(a+1)}&= \XX_2^{(a+1)}{}^R\YY^{(a)}_1\\
\label{YR4}
{}^R\YY^{(a)}_1\XX_2^{(b)}&= \XX_2^{(b)}{}^R\YY^{(a)}_1\hbox{ for }b\not=a,a+1\\
\label{YR5}
{}^R\YY^{(a)}_1\DDD_2^{(a+1)}&= \DDD_2^{(a+1)}\RRR^{-1}{}^R\YY^{(a)}_1\RRR\\
\label{YR6}
{}^R\YY^{(a)}_1\DDD_2^{(b)}&= \DDD_2^{(b)}{}^R\YY^{(a)}_1\hbox{ for }b\not=a,a+1
\end{align}

\begin{prop}[\cite{JordanMult,GanJorSaf}]
The map $\mu_{\DD_\ell}^{(a)}:\OO_G\rightarrow\DD_\ell^\star$ given by
\begin{align*}
\mu_{\DD_\ell}^{(a)}(\mathbf{M})&={}^L\YY^{(a)}{}^R\YY^{(a-1)}\\
&=\left( \mathbf{I}+\XX^{(a)}\DDD^{(a)} \right)^{-1}\left( \mathbf{I}+\DDD^{(a-1)}\XX^{(a-1)} \right)
\end{align*}
is a moment map for the $\UU_{(a)}$-action.
Consequently,
\[
\mu_{\DD_\ell}(\mathbf{M}_1\otimes\cdots\otimes\mathbf{M}_\ell)=\mu_{\DD_\ell}^{(1)}(\mathbf{M}_1)\cdots\mu_{\DD_\ell}^{(\ell)}(\mathbf{M}_\ell)
\]
is a moment map for the $\UU^{\otimes\ell}$-action.
\end{prop}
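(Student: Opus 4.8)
The plan is to reduce everything to the single-vertex statement—that $\mu_{\DD_\ell}^{(a)}$ is a moment map for the $\UU_{(a)}$-action—and then assemble the $\UU^{\otimes\ell}$-statement by a commutation argument. For the single-vertex step, the key observation is that $\mu_{\DD_\ell}^{(a)}(\mathbf{M}) = {}^L\YY^{(a)}{}^R\YY^{(a-1)}$ is the honest product in $\DD_\ell^\star$ of the two maps supplied by Proposition \ref{DellMuProp}: the map $\mu_L^{(a)}$, sending $\mathbf{M}\mapsto {}^L\YY^{(a)} = ({}^L\underline{\YY}^{(a)})^{-1}$ with image in $\DD_\ell^{(a),\star}$, and the map $\mu_R^{(a-1)}$, sending $\mathbf{M}\mapsto {}^R\YY^{(a-1)}$ with image in $\DD_\ell^{(a-1),\star}$. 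By part (3) of that proposition both $\mu_L^{(a)}$ and $\mu_R^{(a-1)}$ are moment maps for one and the same group, namely $\UU_{(a)}$ (for $\mu_R^{(a-1)}$ this is the statement that $\mathbf{I}+\DDD^{(a-1)}\XX^{(a-1)}$ is the generator attached to the target vertex $a$ of the arrow labelled $a-1$). Moreover the two target subalgebras $\DD_\ell^{(a-1),\star}$ and $\DD_\ell^{(a),\star}$ sit inside $\DD_\ell^\star$ as a braided tensor product with respect to their common $\UU_{(a)}$-action: this is precisely the content of the relations of $\DD_\ell$ involving two adjacent superscript indices—cf. formula (\ref{BraidedEll}), or (\ref{Braided2}) when $\ell=2$, in the proof of the equivariance proposition.

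Given this, the single-vertex statement follows from the fusion principle for quantum moment maps (following \cite{GanJorSaf}, cf. also \cite{JordanMult}): since the defining matrix $\mathbf{M}$ of $\OO_G$ is comultiplicative and the universal $R$-matrix satisfies (\ref{RCoprod}) (equivalently (\ref{RMatrixCo1})--(\ref{RMatrixCo2})), the maps $\mu_L^{(a)}$ and $\mu_R^{(a-1)}$, being $\UU_{(a)}$-equivariant, intertwine the braidings, so $\mu_L^{(a)}\otimes\mu_R^{(a-1)}$ descends to an algebra map out of the braided tensor product; precomposing with the coproduct $\Delta_{\OO_G}$ yields an algebra homomorphism $\OO_G\to\DD_\ell^\star$, namely $\mathbf{M}\mapsto {}^L\YY^{(a)}{}^R\YY^{(a-1)}$ (the order is forced by requiring a homomorphism), and conjugation by this product reproduces the $\Delta_\UU$-twisted action of $\UU_{(a)}$, which is the moment-map property. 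If one prefers to avoid invoking a general fusion theorem, the same conclusion can be obtained by hand: unwinding the moment-map equation (\ref{MomentDef})/(\ref{MomentPic}) for the generating matrix $\mathbf{M}$ in the reflection-equation presentation of $\OO_G$ (\cite{DonMudRET}) reduces it to $R$-matrix identities asserting that $\mu_{\DD_\ell}^{(a)}(\mathbf{M})$ acts by $R$-matrix-twisted conjugation on each of the four generating matrices $\XX^{(a-1)}, \XX^{(a)}, \DDD^{(a-1)}, \DDD^{(a)}$ on which $\UU_{(a)}$ acts nontrivially; these follow by interleaving (\ref{YL1}), (\ref{YL2}), (\ref{YL3}), (\ref{YL5}) for the ${}^L\YY^{(a)}$-factor with (\ref{YR1}), (\ref{YR2}), (\ref{YR3}), (\ref{YR5}) for the ${}^R\YY^{(a-1)}$-factor. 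That $\mu_{\DD_\ell}^{(a)}(\mathbf{M})$ commutes with the remaining generators $\XX^{(b)}, \DDD^{(b)}$ for $b\neq a-1, a$—as it must, since $\UU_{(a)}$ acts trivially there—is immediate from (\ref{YL4}), (\ref{YL6}), (\ref{YR4}), (\ref{YR6}). For $\ell=2$ one substitutes the $\ell=2$ variants of the relevant relations and keeps in mind that $a-1\equiv a+1$; the braided-tensor-product structure still holds and the argument is unchanged.

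For the final assertion, observe that each $\mu_{\DD_\ell}^{(c)}(\mathbf{M}) = {}^L\YY^{(c)}{}^R\YY^{(c-1)}$ is invariant under $\UU_{(c')}$ for every $c'\neq c$: its factor ${}^L\YY^{(c)} = (\mathbf{I}+\XX^{(c)}\DDD^{(c)})^{-1}$ lies in $\DD_\ell^{(c)}$ and, because the two $\UU_{(c+1)}$-indices of $\XX^{(c)}$ and $\DDD^{(c)}$ are contracted in the matrix product, is $\UU_{(c+1)}$-invariant (cf. \ref{MomentEll}); symmetrically ${}^R\YY^{(c-1)} = \mathbf{I}+\DDD^{(c-1)}\XX^{(c-1)}$ lies in $\DD_\ell^{(c-1)}$ and is $\UU_{(c-1)}$-invariant. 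Thus each $\mu_{\DD_\ell}^{(c)}$ is a moment map for $\UU_{(c)}$ whose image consists of $\UU_{(c')}$-invariants for all $c'\neq c$. Since the image of any moment map commutes with the invariants of its own group—an immediate consequence of (\ref{MomentDef}) and the counit axiom—the elements $\mu_{\DD_\ell}^{(1)}(\mathbf{M}_1), \ldots, \mu_{\DD_\ell}^{(\ell)}(\mathbf{M}_\ell)$ pairwise commute; hence $\mu_{\DD_\ell}(\mathbf{M}_1\otimes\cdots\otimes\mathbf{M}_\ell)$ is well-defined independently of order, and checking the moment-map equation (\ref{MomentDef}) for a homogeneous element $h_1\otimes\cdots\otimes h_\ell\in\UU^{\otimes\ell}$ (acting through $\OO_G^{\otimes\ell}$) reduces, one tensor factor at a time, to the single-vertex statements already proved, the other factors passing through harmlessly because they lie in the invariants of the group in question.

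I expect the main obstacle to be the single-vertex step, and within it the $R$-matrix bookkeeping guaranteeing that conjugation by the \emph{product} ${}^L\YY^{(a)}{}^R\YY^{(a-1)}$ reproduces the full coproduct $\UU_{(a)}$-action rather than a one-sided action—this is exactly what the fusion principle encodes, with comultiplicativity of $\mathbf{M}$ and property (\ref{RCoprod}) doing the work. The low-rank overlaps of superscript indices when $\ell=2$ (and, to a lesser extent, $\ell=3$) are a genuine but routine nuisance that must be tracked separately.
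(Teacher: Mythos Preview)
Your proposal is correct and follows precisely the approach of the cited references. The paper itself gives no proof of this proposition—it is attributed wholesale to \cite{JordanMult,GanJorSaf}—so there is nothing in the paper to compare against beyond those citations; your argument reproduces the fusion-of-moment-maps mechanism from \cite{GanJorSaf} (stated later in the paper as Proposition~\ref{MomentProd}) applied to the braided-tensor decomposition of $\DD_\ell$ along adjacent vertices, which is exactly what those references do, and your commutation argument for the $\UU^{\otimes\ell}$-statement via mutual invariance is the standard bootstrap.
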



\subsubsection{Moment maps for $\DD_0$ and $\WW$}
Recall that in the $\UU$-module structure for $\DD_0$, we view the entries of the generating matrices $\mathbf{A}$ and $\mathbf{B}$ as elements of $\mathbb{V}^*\otimes\mathbb{V}$.

\begin{prop}[\cite{JordanMult,VarVassRoot}]
The assignment
\[
\mathbf{M}\mapsto\mathbf{B}\mathbf{A}^{-1}\mathbf{B}^{-1}\mathbf{A}
\]
defines a moment map $\mu_{\DD_0}:\OO_G\rightarrow \DD_0$.
\end{prop}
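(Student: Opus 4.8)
The plan is to check the two clauses in the definition of a moment map from Section \ref{Moment}: first that $\mathbf{M}\mapsto\mathbf{L}:=\mathbf{B}\mathbf{A}^{-1}\mathbf{B}^{-1}\mathbf{A}$ extends to a $\UU$-equivariant algebra homomorphism $\mu_{\DD_0}\colon\OO_G\rightarrow\DD_0$, and then that $\mu_{\DD_0}$ satisfies the moment map equation (\ref{MomentDef}), equivalently the graphical identity (\ref{MomentPic}). Equivariance will be automatic once the homomorphism is constructed, since $\mathbf{M}\mapsto\mathbf{A}$ and $\mathbf{M}\mapsto\mathbf{B}$ are $\UU$-equivariant and $\DD_0$ is a $\UU$-module algebra; so the whole content lies in the reflection equation and the equation (\ref{MomentPic}).

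\emph{Step 1 (the homomorphism).} Since $\OO_G$ is the localization of $\mathcal{R}\hbox{\it ef}$ at $\det_\qqq(\mathbf{M})$, and $\mathbf{L}$ is already an invertible matrix over $\DD_0$ (a word in the invertible matrices $\mathbf{A}^{\pm1},\mathbf{B}^{\pm1}$), it suffices to show that the entries of $\mathbf{L}$ satisfy $\RRR_{21}\mathbf{L}_1\RRR\mathbf{L}_2=\mathbf{L}_2\RRR_{21}\mathbf{L}_1\RRR$; the quantum determinant $\det_\qqq(\mathbf{L})$ is then a unit in $\DD_0$ (which can be seen from its action on the faithful function representation), so the resulting map $\mathcal{R}\hbox{\it ef}\rightarrow\DD_0$ extends to $\OO_G$. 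The reflection equation for $\mathbf{L}$ is proved by the same $R$-matrix bookkeeping as in the proof of Lemma \ref{D0DellLem} and in \cite{JordanMult} for the $\DD_\ell$ moment maps: starting from $\RRR_{21}\mathbf{L}_1\RRR\mathbf{L}_2$, one commutes the tensor-slot-$2$ matrices $\mathbf{B}_2,\mathbf{A}_2^{-1},\mathbf{B}_2^{-1},\mathbf{A}_2$ leftward past the slot-$1$ matrices using (\ref{DAA}), (\ref{DBB}), (\ref{DAB}) and their inverted forms, then clears the accumulated $R$-matrices with the Hecke condition (\ref{HeckeCond}) and the quantum Yang--Baxter equation (\ref{QYBE}) until $\mathbf{L}_2\RRR_{21}\mathbf{L}_1\RRR$ appears. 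The work is reduced by treating $\mathbf{B}\mathbf{A}^{-1}$ and $\mathbf{B}^{-1}\mathbf{A}$ as two solutions of the reflection equation that braided-commute, so $\mathbf{L}$ is their fusion; this is the quantum shadow of the classical fact that the group commutator is the moment map for the conjugation action on the double $G\times G$. As a consistency check, one notes that under $\Psi_\ZZZ^1$ of Lemma \ref{D1D0Hom} one has $\mathbf{I}+\XX\DDD\mapsto\ZZZ^{-1}\mathbf{B}^{-1}$ and $\mathbf{I}+\DDD\XX\mapsto\ZZZ^{-1}\mathbf{A}^{-1}\mathbf{B}^{-1}\mathbf{A}$, hence $\mathbf{L}=\Psi_\ZZZ^1\big((\mathbf{I}+\XX\DDD)^{-1}(\mathbf{I}+\DDD\XX)\big)$, matching the shape of the $\DD_\ell$ moment maps.

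\emph{Step 2 (the moment map identity).} Using Propositions \ref{D0Iso} and \ref{FuncFaith}, an element of $\DD_0$ is determined by its action on the faithful function representation on $\OO_G^{\mathbf{A}}$, where $\mathbf{A}$-type operators act by left multiplication and an $\mathbf{B}$-type operator $b$ acts by $b\cdot(v^*\otimes v)=(\kappa(b)\bullet v^*)\otimes v$. So it is enough to verify (\ref{MomentPic}) on $V^*\otimes V\subset\OO_G$ as an operator identity on $\OO_G^{\mathbf{A}}$. Expanding the left-hand side, applying $\mathbf{L}$ and then multiplying by a fixed $w^*\otimes w\in\OO_G^{\mathbf{A}}$, one rewrites $\kappa$-images of the slot factors of $\mathbf{L}$ via the coideal formula (\ref{KappaCo}) and the algebra structure (\ref{Coend}) of $\OO_G$, and uses the braided tensor product description $\DD_0\cong\OO_G^{\mathbf{A}}\,\underline{\otimes}\,\OO_G^{\mathbf{B}}$ recorded by (\ref{DAB}) to move the $\OO_G^{\mathbf{A}}$-factor past the $\OO_G^{\mathbf{B}}$-factor; the result collapses to the right-hand side of (\ref{MomentPic}), namely the $\UU$-action on $w^*\otimes w$ followed by postcomposition with $\mathbf{L}$. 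Faithfulness then promotes this to an identity in $\DD_0$. (This is essentially the same verification as for the $\DD_\ell$ moment maps in \cite{JordanMult,GanJorSaf}.)

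\emph{Main obstacle.} The substantive difficulty is entirely Step 1: organizing the half-dozen-odd $R$-matrices that accumulate when a degree-four word in $\mathbf{A}^{\pm1},\mathbf{B}^{\pm1}$ is commuted past a second copy of it, and identifying the exact point at which they cancel down to the reflection equation for $\mathbf{L}$. Once the homomorphism is in hand, the moment map equation is a formal consequence of the coideal property of $\kappa$ (Proposition \ref{Coideal}) and the braided-tensor-product structure of $\DD_0$.
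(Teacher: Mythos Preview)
The paper does not give a proof of this proposition: it is stated as a citation to \cite{JordanMult,VarVassRoot} and left without argument, so there is nothing in the paper to compare against. Your outline is along the lines of what one finds in those references, and the overall strategy---verify the reflection equation for $\mathbf{L}=\mathbf{B}\mathbf{A}^{-1}\mathbf{B}^{-1}\mathbf{A}$ by $R$-matrix bookkeeping, then check (\ref{MomentPic}) via the function representation and Proposition \ref{Coideal}---is sound.

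Two remarks on the sketch. First, the ``fusion'' shortcut in Step~1 is not quite as clean as you suggest: neither $\mathbf{B}\mathbf{A}^{-1}$ nor $\mathbf{B}^{-1}\mathbf{A}$ satisfies the \emph{same} reflection equation as $\mathbf{M}$ (the cross-relation (\ref{DAB}) has $\RRR_{21}^{-1}$ rather than $\RRR$ on the right), so the standard fusion lemma does not apply verbatim; one really does have to push the four matrices through individually using (\ref{DAA})--(\ref{DAB}) and their inverted forms, or else appeal to the braided-Heisenberg-double description of $\DD_0$ as in \cite{VarVassRoot}, where the moment map arises structurally. Second, your ``consistency check'' via $\Psi_\ZZZ^1$ is circular here: in \ref{D1Moment} the paper \emph{uses} the present proposition to deduce that $\mu_{\DD_1}$ is a moment map, so you cannot invoke that direction to support this one. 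Neither point is a genuine gap---the direct computation you describe does go through---but the shortcuts should be dropped in a clean write-up.
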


\begin{prop}[\cite{JordanMult}]
The assignment
\[
\mathbf{M}\mapsto \qqq^{-2}\left(\mathbf{I}+(\qqq^2-1)\mathbf{d}\mathbf{x}\right)
\]
defines a ring homomorphism $\mu_\WW:\mathcal{R}\hbox{\it ef}\,\rightarrow\WW$.
The element $\mu_\WW(\det_\qqq(\mathbf{M}))$ generates an Ore set in $\WW$.
Let $\WW^\star$ be the localization.
The extension $\mu_\WW:\OO_G\rightarrow\WW^\star$ is a moment map.
\end{prop}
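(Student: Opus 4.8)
The plan is to split the proposition into three parts. \textbf{(a)} The matrix $\qqq^{-2}\mathbf{N}$, with $\mathbf{N}:=\mathbf{I}+(\qqq^2-1)\mathbf{d}\mathbf{x}$, satisfies the reflection equation $\RRR_{21}\mathbf{M}_1\RRR\mathbf{M}_2=\mathbf{M}_2\RRR_{21}\mathbf{M}_1\RRR$, so that $\mathbf{M}\mapsto\qqq^{-2}\mathbf{N}$ extends to a ring homomorphism $\mu_\WW:\mathcal{R}\hbox{\it ef}\to\WW$. \textbf{(b)} Already as a map out of $\mathcal{R}\hbox{\it ef}$, $\mu_\WW$ satisfies the moment map identity (\ref{MomentDef}) for the $\UU$-action on $\WW$. \textbf{(c)} The Ore property of $\mu_\WW(\det_\qqq(\mathbf{M}))$ and the moment map property of the extension to $\OO_G$ follow formally from (b). Parts (a) and (b) are in essence due to Jordan \cite{JordanMult} (up to the normalization of the $\del$-generators, which is immaterial here), and I would present them as finite computations with the relations (\ref{WeylR}).

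For (a): the reflection equation is homogeneous of degree two in $\mathbf{M}$, so the scalar $\qqq^{-2}$ drops out and it suffices to verify it for $\mathbf{N}$. Expanding both sides in powers of $\qqq^2-1$, the constant term is the identity $\RRR_{21}\RRR=\RRR_{21}\RRR$; the term linear in $\qqq^2-1$ collapses, after using the cross relation of (\ref{WeylR}) to slide the lone $\mathbf{d}\mathbf{x}$-block past the surrounding $R$-matrices, to the quantum Yang--Baxter equation (\ref{QYBE}) and the Hecke relation (\ref{HeckeCond}); the quadratic term is dispatched by commuting the two $\mathbf{d}\mathbf{x}$-blocks past one another using the $xx$-, $\del\del$-, and cross relations of (\ref{WeylR}) together with (\ref{QYBE}). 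For (b): I would first note that $\kappa(\mathcal{R}\hbox{\it ef})$ is a left coideal subalgebra of $\UU$, since by the explicit coproduct formula (\ref{KappaCo}) one has $\Delta_\UU\kappa(\mathbb{V}^*\otimes\mathbb{V})\subset\UU\otimes\kappa(\mathbb{V}^*\otimes\mathbb{V})$ and $\mathcal{R}\hbox{\it ef}$ is generated as an algebra by $\mathbb{V}^*\otimes\mathbb{V}$. A routine Sweedler-calculus argument shows that (\ref{MomentDef}) propagates: once it holds for a set of algebra generators of the coideal subalgebra acting on a set of algebra generators of the module algebra, it holds for all $h$ and all $a$. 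Hence it is enough to verify the graphical identity (\ref{MomentPic}) with the $\OO_G$-strand colored by $\mathbb{V}^*\otimes\mathbb{V}$ (the entries of $\kappa(\mathbf{M})$) and the $\WW$-strand colored by $\mathbb{V}$ (acting on the $x_i$) or by $\mathbb{V}^*$ (acting on the $\del_i$); in matrix form these amount to two identities of the same shape as (\ref{YR1})--(\ref{YR2}), with $\mathbf{N},\mathbf{x},\mathbf{d}$ in place of ${}^R\YY^{(a)},\XX^{(a)},\DDD^{(a)}$, and again come directly out of (\ref{WeylR}).

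Granting (b), part (c) is formal. Since $\det_\qqq(\mathbf{M})\in\mathcal{R}\hbox{\it ef}$ and $\kappa(\det_\qqq(\mathbf{M}))=\qqq^{-2\omega_n}$ is grouplike, specializing (\ref{MomentDef}) to $h=\qqq^{-2\omega_n}$ gives $\mu_\WW(\det_\qqq(\mathbf{M}))\,a=(\qqq^{-2\omega_n}\bullet a)\,\mu_\WW(\det_\qqq(\mathbf{M}))$ for all $a\in\WW$; as $\qqq^{-2\omega_n}$ rescales each generator $x_i,\del_i$ by a power of $\qqq$, this exhibits $\mu_\WW(\det_\qqq(\mathbf{M}))$ as a normal element of $\WW$, and it is a non-zero-divisor because $\WW$ is a domain (by its PBW theorem). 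A normal non-zero-divisor generates an Ore set, so $\WW^\star$ exists, and $\OO_G\cong\mathcal{R}\hbox{\it ef}[\det_\qqq(\mathbf{M})^{-1}]$ forces $\mu_\WW$ to extend uniquely to $\OO_G\to\WW^\star$; the moment identity for $h=\qqq^{2\omega_n}=\kappa(\det_\qqq(\mathbf{M}))^{-1}$ is the inverse of the relation just displayed, and the propagation argument then delivers (\ref{MomentDef}) on all of $\kappa(\OO_G)$. The one genuine piece of work is the $R$-matrix bookkeeping in (a) and (b); that is where I expect whatever difficulty there is to reside, and it is purely computational.
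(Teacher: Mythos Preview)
The paper does not supply its own proof of this proposition; it is stated with a citation to \cite{JordanMult} and the text moves on immediately. So there is nothing to compare against directly.

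Your sketch is sound and is organized in the natural way. The propagation argument in (b) is correct: if (\ref{MomentDef}) holds for generators $h$ of the coideal subalgebra and generators $a$ of the module algebra, then multiplicativity of the coproduct and the module-algebra axiom push it to all of $h$ and $a$. Your handling of (c) is clean: once (b) is known on $\mathcal{R}\hbox{\it ef}$, specializing to $\det_\qqq(\mathbf{M})$ and using $\kappa(\det_\qqq(\mathbf{M}))=\qqq^{-2\omega_n}$ grouplike gives $\mu_\WW(\det_\qqq(\mathbf{M}))\,a=(\qqq^{-2\omega_n}\bullet a)\,\mu_\WW(\det_\qqq(\mathbf{M}))$, exhibiting normality; together with $\WW$ being a domain this yields the Ore property without a separate computation. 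This is arguably tidier than proving Ore first and the moment map afterward, which is the order the proposition statement suggests. The only labor, as you say, is the $R$-matrix bookkeeping in (a) and the two generator-level instances of (\ref{MomentPic}) in (b), and those are finite checks with (\ref{WeylR}), (\ref{QYBE}), and (\ref{HeckeCond}).
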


\subsubsection{Moment map for $\DD_1$}\label{D1Moment}
As elements of a $\UU$-module, the entries of $\XX$ are elements of $\mathbb{V}^*\otimes\mathbb{V}$ and the entries of $\DDD$ are elements of $\iota_{\OO_G}(\mathbb{V}^*\otimes\mathbb{V})\cong\mathbb{V}\otimes\mathbb{V}^*$.
Like in the case $\ell >1$, we will start with approximations to a moment map for the $\UU$-action on each of the tensorands.
Let:
\begin{equation*}
\begin{aligned}
{}^L\underline{\YY}&:=\mathbf{I}+\XX\DDD,&
{}^R\YY&:=\mathbf{I}+\DDD\XX 
\end{aligned}
\end{equation*}
\begin{lem}
The matrices ${}^L\underline{\YY}$ and ${}^R\YY$ satisfy:
\begin{align}
\label{D1YL1}
\RRR^{-1}\YLu_1\RRR_{21}^{-1}\YLu_2&= \YLu_2\RRR^{-1}\YLu_1\RRR^{-1}_{21}\\
\label{D1YR1}
\RRR_{21}\YR_1\RRR\YR_2&= \YR_2\RRR_{21}\YR_1\RRR\\
\label{D1YL2}
\YLu_1\RRR_{21}^{-1}\XX_2&= \RRR\XX_2\RRR_{21}\YLu_1\RRR_{21}^{-1}\\
\label{D1YL3}
\YLu_1\RRR_{21}^{-1}\DDD_2&= \RRR^{-1}_{21}\DDD_2\RRR^{-1}\YLu_1\RRR_{21}^{-1}\\
\label{D1YR2}
\YR_1\RRR\XX_2&= \RRR\XX_2\RRR_{21}\YR_1\RRR\\
\label{D1YR3}
\YR_1\RRR\DDD_2&= \RRR^{-1}_{21}\DDD_2\RRR^{-1}\YR_1\RRR
\end{align}
\end{lem}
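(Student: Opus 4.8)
The plan is to verify all six identities by direct $R$-matrix manipulation from the defining relations (\ref{D1rel1})--(\ref{D1rel3}) of $\DD_1$, in exactly the style of the computations already carried out in the proofs of Proposition \ref{D1PBW} and Lemma \ref{D1D0Hom}. The only auxiliary facts needed are the quantum Yang--Baxter equation (\ref{QYBE}), the Hecke condition (\ref{HeckeCond}) --- which I would use in the rearranged form $\RRR = \RRR_{21}^{-1} + (\qqq-\qqq^{-1})\mathbf{\Omega}$ --- and the elementary identities $\mathbf{\Omega}_{12}\XX_1 = \XX_2\mathbf{\Omega}_{12}$, $\mathbf{\Omega}_{12}\DDD_1 = \DDD_2\mathbf{\Omega}_{12}$, and $\mathbf{\Omega}_{12}\RRR_{12}\mathbf{\Omega}_{12} = \RRR_{21}$, all immediate from the fact that $\mathbf{\Omega}$ acts as the tensor swap. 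Substituting $\YLu = \mathbf{I} + \XX\DDD$ and $\YR = \mathbf{I} + \DDD\XX$, each of (\ref{D1YL1})--(\ref{D1YR3}) expands into a sum of terms which I would sort by total degree in the pair $(\XX,\DDD)$: the degree-$0$ part is a pure $R$-matrix identity following from (\ref{QYBE}); the degree-$(1,1)$ part follows from a single application of the cross-relation (\ref{D1rel3}) after moving $R$-matrices around; and the degree-$(2,2)$ part carries the bulk of the work.

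For the top-degree contribution the routine is the same in every case: transport the $\DDD$ in tensor leg $1$ past the $\XX$ in leg $2$ (or vice versa) using (\ref{D1rel3}), which produces an $\mathbf{\Omega}$-correction, then reorder the two $\XX$'s using the reflection equation (\ref{D1rel1}) and the two $\DDD$'s using (\ref{D1rel2}), carrying the intervening $R$-matrices through by (\ref{QYBE}), and finally regroup into the right-hand side. The $\mathbf{\Omega}$-terms produced this way must combine with those coming from the degree-$(1,1)$ part, and this is precisely where the identity $\RRR = \RRR_{21}^{-1} + (\qqq-\qqq^{-1})\mathbf{\Omega}$ enters, collapsing a stray $R$-matrix next to an $\mathbf{\Omega}$ back into the clean form appearing on the right-hand side. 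I would also use the two tautologies $\XX\,\YR = \YLu\,\XX$ and $\YR\,\DDD = \DDD\,\YLu$, which hold as matrix identities over $\DD_1$ simply by associativity of matrix multiplication (no $R$-matrices involved), to shorten the derivations of the mixed relations (\ref{D1YL2})--(\ref{D1YR3}) once the reflection-type relations (\ref{D1YL1}) and (\ref{D1YR1}) are in hand; the graphical reformulation of the cross-relation from \ref{D1Cross} is a convenient way to keep track of these manipulations.

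As an independent consistency check --- not, by itself, a proof, since injectivity of $\Psi_\ZZZ^1$ is not assumed --- one can push the six identities through the homomorphism $\Psi_\ZZZ^1:\DD_1\to\DD_0$ of Lemma \ref{D1D0Hom}, under which $\YLu\mapsto \ZZZ^{-1}\mathbf{B}^{-1}$ and $\YR\mapsto \ZZZ^{-1}\mathbf{A}^{-1}\mathbf{B}^{-1}\mathbf{A}$; each of (\ref{D1YL1})--(\ref{D1YR3}) then becomes a relation that is a consequence of (\ref{DAA})--(\ref{DAB}) (and so holds in $\DD_0$). I expect the main obstacle to be purely organizational: controlling the length of the $R$-matrix words in the degree-$(2,2)$ computations and verifying the exact cancellation of the $\mathbf{\Omega}$-terms, rather than any conceptual difficulty beyond the bookkeeping already present in the proofs of Proposition \ref{D1PBW} and Lemma \ref{D1D0Hom}.
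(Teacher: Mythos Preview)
Your proposal is correct and follows essentially the same approach as the paper: expand $\YLu=\mathbf{I}+\XX\DDD$ (resp.\ $\YR=\mathbf{I}+\DDD\XX$), separate by degree, and reduce using the Hecke condition (\ref{HeckeCond}) together with the defining relations (\ref{D1rel1})--(\ref{D1rel3}), tracking the $\mathbf{\Omega}$-cancellations. The paper only writes out (\ref{D1YL1}) and (\ref{D1YL2}) explicitly (and does not actually invoke (\ref{QYBE}) for these two --- the degree-$0$ pieces match trivially rather than by Yang--Baxter), leaving the remaining four to the reader; your use of the tautologies $\XX\,\YR=\YLu\,\XX$ and $\YR\,\DDD=\DDD\,\YLu$ to transfer between the $\YLu$- and $\YR$-relations is a mild shortcut the paper does not spell out.
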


\begin{proof}
We will only prove (\ref{D1YL1}) and (\ref{D1YL2}).
For (\ref{D1YL1}), we have:
\begin{align*}
\RRR^{-1}\YLu_1\RRR_{21}^{-1}\YLu_2
&=\RRR^{-1}\RRR_{21}^{-1}+\RRR^{-1}\XX_1\DDD_1\RRR_{21}^{-1}+\RRR^{-1}\RRR_{21}^{-1}\XX_2\DDD_2 +\RRR^{-1}\XX_1\DDD_1\RRR_{21}^{-1}\XX_2\DDD_2\\
\YLu_2\RRR^{-1}\YLu_1\RRR^{-1}_{21}
&=\RRR^{-1}\RRR_{21}^{-1}+\XX_2\DDD_2\RRR^{-1}\RRR_{21}^{-1}+\RRR^{-1}\XX_1\DDD_1\RRR^{-1}_{21}+\XX_2\DDD_2\RRR^{-1}\XX_1\DDD_1\RRR_{21}^{-1}
\end{align*}
All we need to show is:
\begin{align*}
\RRR^{-1}\RRR_{21}^{-1}\XX_2\DDD_2 
+
\underbrace{\RRR^{-1}}_{(\ref{HeckeCond})}
\XX_1\DDD_1\RRR_{21}^{-1}\XX_2\DDD_2
&\overset{?}{=}
\XX_2\DDD_2\RRR^{-1}\RRR_{21}^{-1}
+\XX_2\DDD_2\RRR^{-1}\XX_1\DDD_1
\underbrace{\RRR_{21}^{-1}}_{(\ref{HeckeCond})}\\
\RRR^{-1}\RRR_{21}^{-1}\XX_2\DDD_2 
+
\RRR_{21}
\XX_1\DDD_1\RRR_{21}^{-1}\XX_2\DDD_2
&\overset{?}{=}
\XX_2\DDD_2\RRR^{-1}\RRR_{21}^{-1}
+\XX_2\DDD_2\RRR^{-1}\XX_1\DDD_1
\RRR
\end{align*}
We start from the left:
\begin{align*}
&\quad
	\RRR^{-1}
	\underbrace{\RRR_{21}^{-1}}_{(\ref{HeckeCond})}
	\XX_2\DDD_2 
	+\RRR_{21}\XX_1
	\underbrace{\DDD_1\RRR_{21}^{-1}\XX_2}_{(\ref{D1rel3})}
	\DDD_2\\
&= 		
	\underbrace{\RRR_{21}\XX_1\RRR\XX_2}_{(\ref{D1rel1})}
	\underbrace{\RRR_{21}\DDD_1\RRR_{21}^{-1}\DDD_2}_{(\ref{D1rel2})}
	+\XX_2\DDD_2
	+(\qqq-\qqq^{-1})\bigg(
	\RRR_{21}
	\underbrace{\XX_1\RRR\mathbf{\Omega}}_{\mathrm{commute}}
	\RRR_{21}^{-1}\DDD_2
	-\RRR^{-1}\mathbf{\Omega}\XX_2\DDD_2 
	\bigg)\\
&= 
	\XX_2
	\underbrace{\RRR_{21}\XX_1\RRR\DDD_2\RRR^{-1}}_{(\ref{D1rel3})}
	\DDD_1\RRR
	+\XX_2\DDD_2
	+(\qqq-\qqq^{-1})\bigg(
	\underbrace{\RRR_{21}-\RRR^{-1}}_{(\ref{HeckeCond})}
	\bigg)\mathbf{\Omega}\XX_2\DDD_2
	\\
&=
	\XX_2
	\DDD_2\RRR^{-1}\XX_1
	\DDD_1\RRR 
	+\XX_2\DDD_2
	-(\qqq-\qqq^{-1})
	\XX_2\RRR_{21}
	\underbrace{\mathbf{\Omega}\RRR^{-1}\DDD_1}_{\mathrm{commute}}
	\RRR
	+(\qqq-\qqq^{-1})^2\XX_2\DDD_2\\
&= 	
	\XX_2
	\DDD_2\RRR^{-1}\XX_1
	\DDD_1\RRR 
	+\XX_2\DDD_2
	-(\qqq-\qqq^{-1})
	\XX_2\DDD_2\mathbf{\Omega}
	\underbrace{\RRR}_{(\ref{HeckeCond})}
	+(\qqq-\qqq^{-1})^2\XX_2\DDD_2\\
&= 	
	\XX_2
	\DDD_2\RRR^{-1}\XX_1
	\DDD_1\RRR 
	+\XX_2\DDD_2
	-(\qqq-\qqq^{-1})
	\XX_2\DDD_2
	\underbrace{\mathbf{\Omega}}_{(\ref{HeckeCond})}
	\RRR_{21}^{-1}\\
&= 	\XX_2
	\DDD_2\RRR^{-1}\XX_1
	\DDD_1\RRR 
	+\XX_2\DDD_2\RRR^{-1}\RRR^{-1}_{21}
\end{align*}
For (\ref{D1YL2}), we start from the right:
\begin{align*}
\RRR\XX_2\RRR_{21}\YLu_1\RRR_{21}^{-1}
&= 	
	\underbrace{\RRR}_{(\ref{HeckeCond})}
	\XX_2
	+
	\underbrace{\RRR\XX_2\RRR_{21}\XX_1}_{(\ref{D1rel1})}
	\DDD_1\RRR_{21}^{-1}\\
&= 
	\RRR_{21}^{-1}\XX_2
	+
	\XX_1
	\underbrace{\RRR\XX_2\RRR_{21}\DDD_1\RRR_{21}^{-1}}_{(\ref{D1rel3})}
	+(\qqq-\qqq^{-1})\mathbf{\Omega}\XX_2
	\\
&= 
	\RRR_{21}^{-1}\XX_2
	+
	\XX_1
	\DDD_1\RRR_{21}^{-1}\XX_2\\
&= \YLu_1\RRR_{21}^{-1}\XX_2\qedhere
\end{align*}
\end{proof}

\begin{cor}
The homomorphism $\Psi^1:\DD_1\rightarrow\DD_0$ is injective.
\end{cor}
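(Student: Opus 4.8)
The plan is to localize so that both algebras become deformations of the same (Laurent‑)polynomial ring, and then run a leading‑term argument on associated gradeds. The computation that makes this possible is that, under the map $\Psi^1=\Psi^1_\ZZZ$ of Lemma~\ref{D1D0Hom}, the matrix ${}^L\underline{\YY}=\mathbf{I}+\XX\DDD$ of the preceding lemma is carried to
\[
\Psi^1({}^L\underline{\YY})=\mathbf{I}+\mathbf{A}\,\mathbf{A}^{-1}\!\left(\ZZZ^{-1}\mathbf{B}^{-1}-\mathbf{I}\right)=\ZZZ^{-1}\mathbf{B}^{-1},
\]
a rescaling of the second generating matrix of $\DD_0^{\mathrm{IV}}$. (This also explains why the relations for ${}^L\underline{\YY}$ proved in the preceding lemma appear: under $\Psi^1$ they become the defining relations of $\DD_0^{\mathrm{IV}}$ for the pair $(\mathbf{A},\mathbf{B}^{-1})$.)

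First I would pass to a localization. Since $\Psi^1(\det_\qqq(\XX))=\det_\qqq(\mathbf{A})$ is invertible in $\DD_0$, while $\det_\qqq(\XX)$ generates an Ore set in the domain $\DD_1$ (Propositions~\ref{D1PBW} and~\ref{DetOre}), $\Psi^1$ extends uniquely to $\widetilde{\Psi}^1\colon\DD_1[\det_\qqq(\XX)^{-1}]\to\DD_0$, and $\DD_1$ embeds into $\DD_1[\det_\qqq(\XX)^{-1}]$; so it suffices to show $\widetilde{\Psi}^1$ is injective. In the localized source we have $\DDD=\XX^{-1}({}^L\underline{\YY}-\mathbf{I})$, so $\DD_1[\det_\qqq(\XX)^{-1}]$ is generated by $\XX^{\pm1}$ and the entries of ${}^L\underline{\YY}$; by the identity above its image is the subalgebra $\mathcal{E}\subseteq\DD_0$ generated by $\mathbf{A}^{\pm1}$ and $\mathbf{B}^{-1}$, namely $\DD_0^{\mathrm{IV}}$ with the central element $\det_\qqq(\mathbf{A})$ of its $\mathbf{A}$-subalgebra inverted.

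Next I would filter. On $\DD_1$ put $\deg(x_{ij})=0$, $\deg(\del_{ij})=1$; by Proposition~\ref{D1PBW} the associated graded is the polynomial ring $\CC(\qqq)[\overline{x_{ij}},\overline{\del_{ij}}]$, and inverting the degree‑$0$ element $\det_\qqq(\XX)$ yields $\CC(\qqq)[\overline{x_{ij}}^{\,\pm1},\overline{\del_{ij}}]$; in it the symbol of the $(i,j)$ entry $\delta_{ij}+\sum_k x_{ik}\del_{kj}$ of ${}^L\underline{\YY}$ is its degree‑one part $(\overline{\XX}\,\overline{\DDD})_{ij}$. On $\DD_0^{\mathrm{IV}}$ put $\deg(a_{ij})=0$, $\deg(b'_{ij})=1$ for the entries $b'_{ij}$ of $\mathbf{B}^{-1}$; by Proposition~\ref{D0Iso}(1) the associated graded is $\CC(\qqq)[\overline{a_{ij}},\overline{b'_{ij}}]$, and inverting $\det_\qqq(\mathbf{A})$ yields $\CC(\qqq)[\overline{a_{ij}}^{\,\pm1},\overline{b'_{ij}}]$ as the associated graded of $\mathcal{E}$. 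The homomorphism $\widetilde{\Psi}^1$ is filtered, sending $\XX^{\pm1}\mapsto\mathbf{A}^{\pm1}$ in degree $0$ and ${}^L\underline{\YY}\mapsto\ZZZ^{-1}\mathbf{B}^{-1}$ in degree $1$, so $\mathrm{gr}\,\widetilde{\Psi}^1$ sends $\overline{x_{ij}}\mapsto\overline{a_{ij}}$ and $(\overline{\XX}\,\overline{\DDD})_{ij}\mapsto\ZZZ^{-1}\overline{b'_{ij}}$.

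The last ingredient is the classical fact that $(X,\partial)\mapsto(X,X\partial)$ is an isomorphism over the locus $\det X\neq0$: it says exactly that $\bigl(\overline{x_{ij}}^{\,\pm1},(\overline{\XX}\,\overline{\DDD})_{ij}\bigr)$ is an alternative set of (Laurent‑)polynomial coordinates on $\mathrm{gr}\,\DD_1[\det_\qqq(\XX)^{-1}]$, and in these coordinates $\mathrm{gr}\,\widetilde{\Psi}^1$ is visibly the identification with $\CC(\qqq)[\overline{a_{ij}}^{\,\pm1},\overline{b'_{ij}}]$ that rescales the second block by $\ZZZ^{-1}$ --- hence an isomorphism. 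A filtered homomorphism with exhaustive filtrations and injective associated graded is injective, so $\widetilde{\Psi}^1$, and therefore $\Psi^1$, is injective (indeed $\widetilde{\Psi}^1$ is an isomorphism). I do not anticipate a deep obstacle; the one point needing care is purely organizational --- that localizing these filtered algebras at the degree‑$0$ non‑zero‑divisors $\det_\qqq(\XX)$, $\det_\qqq(\mathbf{A})$ does not disturb the associated graded, and that straightening the entries of ${}^L\underline{\YY}$ into standard monomials introduces no leading‑order cancellation --- and both are immediate from the flatness assertions in Propositions~\ref{D1PBW} and~\ref{D0Iso}.
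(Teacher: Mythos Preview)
Your overall architecture---localize at $\det_\qqq(\XX)$, compare with $\DD_0^{\mathrm{IV}}[\det_\qqq(\mathbf{A})^{-1}]$, and exploit $\Psi^1(\YLu)=\ZZZ^{-1}\mathbf{B}^{-1}$---is exactly right, but the filtration step contains a genuine error. Proposition~\ref{D1PBW} asserts that standard monomials form a basis, i.e.\ that $\DD_1$ is a flat deformation over $\CC[\qqq^{\pm1}]$ of a commutative polynomial ring. It does \emph{not} say that the associated graded for your $\partial$-degree filtration is commutative: the relations (\ref{D1rel1}) and (\ref{D1rel2}) among the $x$'s and among the $\partial$'s are homogeneous of degrees $0$ and $2$, so they survive unchanged in $\mathrm{gr}$, and only the constant term in (\ref{D1rel3}) is killed. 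The same remark applies to $\mathrm{gr}\,\DD_0^{\mathrm{IV}}$. Consequently your appeal to the ``classical fact that $(X,\partial)\mapsto(X,X\partial)$ is an isomorphism'' does not go through as written: in a noncommutative associated graded, knowing that $(\overline{\XX},\overline{\XX}\,\overline{\DDD})$ generate is not the same as knowing they are free polynomial coordinates, and injectivity of $\mathrm{gr}\,\widetilde{\Psi}^1$ is not yet established.

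The irony is that your parenthetical remark already contains the fix, and it is precisely the paper's argument. The relations (\ref{D1YL1}) and (\ref{D1YL2}) of the preceding lemma say exactly that the pair $(\XX,\YLu)$ satisfies the defining relations of $\DD_0^{\mathrm{IV}}$ for $(\mathbf{A},\mathbf{B}^{-1})$. Hence there is a well-defined homomorphism
\[
\Pi:\DD_0^{\mathrm{IV}}[\textstyle\det_\qqq(\mathbf{A})^{-1}]\longrightarrow \DD_1[\textstyle\det_\qqq(\XX)^{-1}],\qquad \mathbf{A}\mapsto\XX,\ \mathbf{B}^{-1}\mapsto\YLu,
\]
and one checks on generators that $\Pi\circ\widetilde{\Psi}^1=\mathrm{id}$ (up to the harmless scalar $\ZZZ$). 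Thus $\widetilde{\Psi}^1$ is a section of $\Pi$, hence injective; since $\DD_1$ is a domain it embeds in its Ore localization, and $\Psi^1$ is injective. No filtrations are needed. If you wished to persist with associated gradeds, the repair would be to construct the inverse of $\mathrm{gr}\,\widetilde{\Psi}^1$ by the \emph{same} assignment $\overline{\mathbf{A}}\mapsto\overline{\XX}$, $\overline{\mathbf{B}^{-1}}\mapsto\overline{\XX}\,\overline{\DDD}$, checking well-definedness via the graded versions of (\ref{D1YL1})--(\ref{D1YL2}); but this is just $\mathrm{gr}\,\Pi$, so you may as well work upstairs.
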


\begin{proof}
By Proposition \ref{DetOre}, $\det_\qqq(\mathbf{X})$ generates an Ore set in $\DD_1$ and thus there is an extension:
\[\textstyle\Psi_{\mathrm{loc}}^1:\DD_1[\det_\qqq(\XX)^{-1}]\rightarrow\DD_0\]
The image of $\Psi_{\mathrm{loc}}^{1}$ lies in $\DD_0^{\mathrm{IV}}[\det_\qqq(\mathbf{A})^{-1}]$.
By (\ref{D1YL1}) and (\ref{D1YL2}), the assignment 
\begin{equation*}
\begin{aligned}
\mathbf{A}&\mapsto \XX,&
\mathbf{B}^{-1}&\mapsto\YLu
\end{aligned}
\end{equation*}
induces a map 
\[\textstyle\Pi: \DD_0^{\mathrm{IV}}[\det_\qqq(\mathbf{A})^{-1}]\rightarrow \DD_1[\det_\qqq(\mathbf{\XX})^{-1}]\]
Observe that $\Psi_{\mathrm{loc}}^1$ is a section of $\Pi$ and is hence injective.
Since $\DD_1$ is an intgral domain, it includes into its localization.
\end{proof}

Equation (\ref{D1YL1}) implies that the entries of $\YLu$ generate a copy of $\iota_{\OO_G}(\mathcal{R}\hbox{\it ef}\,)$.
Hence, we can make sense of $\det_\qqq(\YLu)$.

\begin{cor}
The element $\det_\qqq(\YLu)$ generates an Ore set of $\DD_1$.
Let $\DD_1^\star$ denote the localization at $\det_\qqq(\YLu)$ and let $\YL:=\YLu^{-1}$.
The assigment
\[
\mathbf{M}\mapsto\YL\YR=\left( \mathbf{I}+\XX\DDD \right)^{-1}\left( \mathbf{I}+\DDD\XX \right)
\]
defines a moment map $\mu_{\DD_1}:\OO_G\rightarrow\DD_1^\star$.
\end{cor}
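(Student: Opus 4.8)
The proof splits into two independent pieces: the Ore property of $\det_\qqq(\YLu)$, and the verification that $\mathbf{M}\mapsto\YL\YR$ is a moment map once $\YL$ is available. For the first, I would argue exactly as in Lemma \ref{DellDet} and Proposition \ref{DetOre}. Endow $\DD_1$ with the filtration $\deg(x_{ij})=0$, $\deg(\del_{ij})=1$ of Proposition \ref{DetOre}; since $\DD_1$ is a domain (Proposition \ref{D1PBW}) cancellability is automatic, and it remains to solve the left and right Ore conditions for the powers of $\det_\qqq(\YLu)$. The exact identities (\ref{D1YL2}) and (\ref{D1YL3}) say that $\XX$ and $\DDD$ each move past $\YLu$ with $R$-matrix corrections on both sides; since $\det_\qqq(\YLu)$ is the image, under the embedding of $\iota_{\OO_G}(\mathcal{R}\hbox{\it ef}\,)$ into $\OO_G$ carried by $\YLu$, of the $\UU$-invariant element $\det_\qqq(\mathbf{M})$, the bookkeeping of \ref{FRTDet} and (\ref{D1Det}) shows that $\det_\qqq(\YLu)$ $\qqq$-commutes with each $x_{ij}$ and $\del_{ij}$ modulo terms of strictly smaller filtration degree. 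One then solves the Ore conditions degree by degree in the filtration, verbatim as in Lemma \ref{DellDet}. Inverting $\det_\qqq(\YLu)$ renders the reflection-equation matrix $\YLu$ invertible (cf.\ Proposition 9.8 of \cite{KlimSchm}), so $\YL=\YLu^{-1}$ is a well-defined matrix over $\DD_1^\star$.

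Next, $\YL\YR$ satisfies the reflection-equation relation: $\YR$ does by (\ref{D1YR1}), $\YL$ does by inverting (\ref{D1YL1}), and the mixed relations between $\YLu$ (equivalently $\YL$) and $\YR$ needed to combine the two factors are deduced from (\ref{D1rel1})--(\ref{D1rel3}) and the preceding lemma, in the same spirit as the proofs of the moment-map statements for $\DD_0$ and for $\DD_\ell^{(a)}$. Hence $\mathbf{M}\mapsto\YL\YR$ is a ring homomorphism $\mathcal{R}\hbox{\it ef}\,\rightarrow\DD_1^\star$. To extend it to $\OO_G=\mathcal{R}\hbox{\it ef}\,[\det_\qqq(\mathbf{M})^{-1}]$ one needs $\det_\qqq(\YL\YR)$ to be a unit in $\DD_1^\star$; using the identity $\YLu=\XX\YR\XX^{-1}$ valid over $\DD_1[\det_\qqq(\XX)^{-1}]$ together with the $\qqq$-analogue of Sylvester's identity $\det(\mathbf{I}+\DDD\XX)=\det(\mathbf{I}+\XX\DDD)$ (verified directly from (\ref{D1rel3})), one gets $\det_\qqq(\YR)=\det_\qqq(\YLu)$ and therefore $\det_\qqq(\YL\YR)=1$.

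It then remains to verify the moment-map equation (\ref{MomentPic}) for $\mu_{\DD_1}$, for which the quickest route is to push the identity into $\DD_0$. By the corollary above, $\Psi^1$ is an injective morphism of $\UU$-module algebras; since $\Psi^1(\YLu)=\ZZZ^{-1}\mathbf{B}^{-1}$ has invertible quantum determinant in $\DD_0$, it extends to an injective $\UU$-equivariant map $\Psi^1:\DD_1^\star\rightarrow\DD_0$. Substituting the definitions gives $\Psi^1(\YL)=\ZZZ\mathbf{B}$ and $\Psi^1(\YR)=\ZZZ^{-1}\mathbf{A}^{-1}\mathbf{B}^{-1}\mathbf{A}$, so $\Psi^1(\YL\YR)=\mathbf{B}\mathbf{A}^{-1}\mathbf{B}^{-1}\mathbf{A}=\mu_{\DD_0}(\mathbf{M})$. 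Applying $\Psi^1$ to both sides of the moment-map equation for $\mu_{\DD_1}$ and using its $\UU$-equivariance reduces the equation to the moment-map equation for $\mu_{\DD_0}$, which holds; since $\Psi^1$ is injective, the equation holds already in $\DD_1^\star$.

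The one genuinely new computation is the reflection-equation relation for $\YL\YR$: one must derive and assemble the mixed commutation relations between the two halves $\YLu$ and $\YR$, which --- unlike in the case $\ell>1$, where the corresponding factors ${}^L\YY^{(a)}$ and ${}^R\YY^{(a-1)}$ involve disjoint sets of generators --- are built from the same pair $\XX,\DDD$, so the cross-relations (\ref{D1rel3}) intervene nontrivially. Everything else is either a transcription of an earlier argument (the Ore property, following Lemma \ref{DellDet}) or, via the embedding $\Psi^1$, a reduction to the already-established moment-map statement for $\DD_0$.
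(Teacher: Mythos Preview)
Your argument is correct, but the paper's is considerably more economical: it uses the injectivity of $\Psi^1$ (established in the immediately preceding corollary) to transfer \emph{all three} claims from $\DD_0$ in one stroke. Since $\Psi^1(\YLu)=\ZZZ^{-1}\mathbf{B}^{-1}$ and $\det_\qqq(\mathbf{B}^{-1})$ generates an Ore set in $\DD_0^{\mathrm{IV}}[\det_\qqq(\mathbf{A})^{-1}]$, the Ore property for $\det_\qqq(\YLu)$ follows; and since $\Psi^1(\YL\YR)=\mathbf{B}\mathbf{A}^{-1}\mathbf{B}^{-1}\mathbf{A}=\mu_{\DD_0}(\mathbf{M})$ (exactly the computation in your third paragraph), both the reflection-equation relation for $\YL\YR$ and the moment-map equation are identities in $\DD_1^\star$ that become known identities in $\DD_0$ under the injective $\UU$-equivariant map $\Psi^1$, and hence already hold in $\DD_1^\star$. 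Your third paragraph is precisely this reduction, which makes your second paragraph redundant: once you invoke $\Psi^1$ you need neither a direct verification of the reflection equation for $\YL\YR$ nor the $\qqq$-Sylvester identity $\det_\qqq(\YR)=\det_\qqq(\YLu)$ (which you assert without proof and which would require genuine work). Your direct filtration argument for the Ore property, parallel to Lemma~\ref{DellDet} and Proposition~\ref{DetOre}, is a sound alternative---though note that the exact relations (\ref{D1YL2})--(\ref{D1YL3}) do not have the same one-sided ``moment-map'' shape as (\ref{YL1})--(\ref{YL2}), so the bookkeeping is slightly different from what you sketch---but the paper bypasses it entirely via $\Psi^1$.
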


\begin{proof}
We have $\Psi^1(\YLu)=\mathbf{B}^{-1}$, which generates an Ore set of $\DD_0^{\mathrm{IV}}[\det_\qqq(\mathbf{A})^{-1}]$.
Since $\Psi^1$ is injective, we have that $\det_\qqq(\YLu)$ generates an Ore set in $\DD_1$.
We will abuse notation and denote the extension $\Psi^1:\DD_1^\star\rightarrow\DD_0$
Next, observe that
\[
\Psi^1\left( \YL\YR \right)=\mathbf{B}\mathbf{A}^{-1}\mathbf{B}^{-1}\mathbf{A}
\]
Again, by the injectivity of $\Psi^1$, we can conclude that $\mu_{\DD_1}:\OO_G\rightarrow\DD_1^\star$ is a ring homomorphism and a moment map.
\end{proof}

%

\subsection{Quantum Hamiltonian reduction}
Fix a ground ring $R$ and recall the setup at the start of \ref{Moment}: we have a Hopf algebra $H$, a left coideal subalgebra $H'\subset H$, an $H$-module algebra $A$, and a moment map $\mu:H'\rightarrow A$.
If $H'$ is closed under the adjoint action of $H$, then we can define a character of $H'$ to be an $R$-algebra homomorphism $\chi:H'\rightarrow R$ that is invariant with respect to this adjoint action.
For such a character $\chi$, let 
\[I_\chi:=A\big(\mu(\ker \chi)\big)\]
We can perform \textit{quantum Hamiltonian reduction}:
\[
A\sslash_\chi H':= \left[A\big/I_\chi\right]^H
\]

\begin{prop}[\cite{GanJorSaf,VarVassRoot}]
The quantum Hamiltonian reduction $A\sslash_\chi H'$ is an $R$-algebra.
\end{prop}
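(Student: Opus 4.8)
The plan is to realize $A\sslash_\chi H'$ as a unital subalgebra of an honest quotient \emph{algebra} of $A$. The point to bear in mind throughout is that $I_\chi=A\,\mu(\ker\chi)$ is only a \emph{left} ideal, so $A/I_\chi$ carries no ring structure a priori; the resolution is that $H$-invariance forces a representative into the idealizer of $I_\chi$. As a preliminary one records that the $H$-action descends to $A/I_\chi$: since $\chi$ is $\ad$-invariant and $\mu$ is $H$-equivariant for the adjoint action on $H'$, one has $\ad_h(\ker\chi)\subseteq\ker\chi$ and hence $h\bullet I_\chi\subseteq I_\chi$ for every $h\in H$. Then I would introduce the idealizer
\[
N(I_\chi):=\{\,a\in A \;:\; I_\chi\,a\subseteq I_\chi\,\},
\]
which is visibly an $R$-subalgebra of $A$ in which $I_\chi$ is a two-sided ideal, so $N(I_\chi)/I_\chi$ is an $R$-algebra. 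The proof then reduces to two claims: first, that every $H$-invariant class of $A/I_\chi$ has a representative in $N(I_\chi)$, so that $(A/I_\chi)^H\subseteq N(I_\chi)/I_\chi$; and second, that this subset is closed under the induced multiplication and contains the class of $1$ — whence it is an $R$-subalgebra of $N(I_\chi)/I_\chi$, hence an $R$-algebra.

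The heart of the matter is the first claim, which I expect to be the main obstacle: because $I_\chi$ is one-sided there is no formal reason for $I_\chi\,b$ to lie in $I_\chi$, and obtaining it requires using all of the available structure at once. I would fix $b\in A$ representing an $H$-invariant class and $x\in\ker\chi$ and show $\mu(x)\,b\in I_\chi$. Expanding with the moment map identity $(\ref{MomentDef})$ gives $\mu(x)\,b=(x_{(1)}\bullet b)\,\mu(x_{(2)})$, and since $H'$ is a left coideal subalgebra we have $x_{(1)}\in H$ and $x_{(2)}\in H'$. Writing $\mu(x_{(2)})=\big(\mu(x_{(2)})-\chi(x_{(2)})\big)+\chi(x_{(2)})$ — the first summand lying in $\mu(\ker\chi)$ because $\chi$ is an algebra map killing $1$ — splits $\mu(x)\,b$ into a term in $A\,\mu(\ker\chi)=I_\chi$ plus the term $\big((\mathrm{id}\otimes\chi)\Delta(x)\big)\bullet b$. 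One then observes that $\epsilon_H\big((\mathrm{id}\otimes\chi)\Delta(x)\big)=\chi(x)=0$ by the counit axiom, so $(\mathrm{id}\otimes\chi)\Delta(x)\in\ker\epsilon_H$, and $H$-invariance of the class of $b$ puts this last term in $I_\chi$ as well. Hence $\mu(x)\,b\in I_\chi$, so $I_\chi\,b=A\,\mu(\ker\chi)\,b\subseteq I_\chi$, i.e.\ $b\in N(I_\chi)$.

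The second claim is then routine bookkeeping. The class of $1$ is $H$-invariant because $A$ is an $H$-module algebra ($h\bullet 1=\epsilon(h)1$). For closure under multiplication, take invariant classes represented by $a$ and $b$; for $h\in H$ I would expand $h\bullet(ab)=(h_{(1)}\bullet a)(h_{(2)}\bullet b)$, use invariance of the class of $b$ to replace $h_{(2)}\bullet b$ by $\epsilon(h_{(2)})b$ modulo $A\,I_\chi\subseteq I_\chi$, obtaining $h\bullet(ab)\equiv(h\bullet a)\,b\pmod{I_\chi}$, and then use invariance of the class of $a$ to replace $h\bullet a$ by $\epsilon(h)a$ modulo $I_\chi\,b$, which lies in $I_\chi$ by the first claim; thus $\overline{ab}$ is again $H$-invariant, and since $b\in N(I_\chi)$ while $I_\chi$ is a left ideal the product is independent of the chosen representatives. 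Associativity and the $R$-module structure are inherited from $A$, so $A\sslash_\chi H'=(A/I_\chi)^H$ is a unital $R$-subalgebra of $N(I_\chi)/I_\chi$, and in particular an $R$-algebra.
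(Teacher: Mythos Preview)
The paper does not supply its own proof of this proposition; it simply attributes the result to \cite{GanJorSaf,VarVassRoot}. Your argument is correct and is essentially the standard idealizer argument: one shows that every representative $b$ of an $H$-invariant class in $A/I_\chi$ already lies in the idealizer $N(I_\chi)$, so that $(A/I_\chi)^H$ sits inside the honest quotient ring $N(I_\chi)/I_\chi$ and inherits a well-defined multiplication from it. The key computation---splitting $\mu(x)b=(x_{(1)}\bullet b)\mu(x_{(2)})$ via $\mu(x_{(2)})=(\mu(x_{(2)})-\chi(x_{(2)}))+\chi(x_{(2)})$ and then using $\epsilon\big((\mathrm{id}\otimes\chi)\Delta(x)\big)=\chi(x)=0$ together with invariance of $\overline{b}$---is exactly the right mechanism.

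One small remark: your preliminary step, that the $H$-action descends to $A/I_\chi$, invokes $H$-equivariance of $\mu$ with respect to the adjoint action on $H'$. This is not listed among the general hypotheses preceding the proposition, but it is explicitly imposed in the sentence immediately following it (``$\mu:\OO_G\rightarrow A$ will always be $\UU$-equivariant'') and holds in every case the paper considers. When $H'=H$ this equivariance is automatic from the moment-map identity, but for a proper left coideal subalgebra it is an additional assumption; you may want to flag it as a standing hypothesis in your write-up.
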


In our setup of $H=\UU$ and $H'=\kappa(\OO_G)\cong\OO_G$, we do indeed have that $\OO_G$ is closed under the adjoint action of $\UU$.
Recall from \ref{Representations} that for $\mathsf{C}\in\CC(\qqq)^\times$, we have a character $\chi_\mathsf{C}$.
From (\ref{RFactor}), it is easy to see that the restriction of $\chi_\mathsf{C}$ to $\OO_G$ is given by
\[
\chi_\mathsf{C}(\mathbf{M})=\mathsf{C}^{-2}\mathbf{I}
\]
Later on, we will base change to $\LL_\qqq:=\CC(\qqq)[\ttt^{\pm 1}]$, and the same holds for $\mathsf{C}\in\LL_\qqq^\times$.

\subsubsection{Quantized multiplicative quiver varieties}
For two locally finite $\UU$-module algebras $A$ and $B$, we define their \textit{braided tensor product} $A\,\widetilde{\otimes}\,B$ to be the $\UU$-module $A\otimes B$ with the product
\[
(a_1\otimes b_1)(a_2\otimes b_2)=a_1(r_s\cdot a_2)\otimes ({}_sr\cdot b_1)b_2
\]
The extra braiding makes $A\,\widetilde{\otimes}\,B$ into a $\UU$-module algebra.
The following was proved in \cite{GanJorSaf}:
\begin{prop}\label{MomentProd}
Let $A$ and $B$ be two locally finite $\UU$-module algebras with moment maps $\mu_A:\OO_G\rightarrow A$ and $\mu_B:\OO_G\rightarrow B$.
The matrix product
\begin{align*}
\mu_{A\,\widetilde{\otimes}\, B}&:=(\mu_A\otimes\mu_B)\circ\Delta_{\OO_G}\\
\mu_{A\,\widetilde{\otimes}\,B}(m_{ij})&=\sum_{k}\mu_A(m_{ik})\mu_B(m_{kj})
\end{align*}
defines a moment map $\mu_{A\,\widetilde{\otimes}\,B}:\OO_G\rightarrow A\,\widetilde{\otimes}\, B$.
\end{prop}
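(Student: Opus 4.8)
The plan is to split the verification into two parts: first that $\mu_{A\,\widetilde{\otimes}\,B}$ is an algebra homomorphism, and then that it satisfies the moment map equation (\ref{MomentDef}). For the first part the key point is that $\OO_G$ is a Hopf algebra \emph{object} in the braided category $\CCC$ (see \ref{OGDef}), so that its coproduct $\Delta_{\OO_G}$ is an algebra homomorphism into the \emph{braided} tensor product $\OO_G\,\widetilde{\otimes}\,\OO_G$, not the ordinary one. Next, since $\mu_A$ and $\mu_B$ are $\UU$-equivariant algebra homomorphisms of locally finite $\UU$-module algebras, the assignment $x\otimes y\mapsto\mu_A(x)\otimes\mu_B(y)$ is an algebra homomorphism $\OO_G\,\widetilde{\otimes}\,\OO_G\to A\,\widetilde{\otimes}\,B$: expanding a product on the source side produces a factor ${}_sr\otimes r_s$ acting on $\OO_G\otimes\OO_G$, and $\UU$-equivariance moves this factor across $\mu_A\otimes\mu_B$ onto the corresponding braiding factor in the target. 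Composing the two homomorphisms shows that $\mu_{A\,\widetilde{\otimes}\,B}=(\mu_A\otimes\mu_B)\circ\Delta_{\OO_G}$ is an algebra homomorphism, and the displayed matrix-product formula is just $\Delta_{\OO_G}(v^*\otimes v)=v^*\otimes\mathrm{coev}_V(1)\otimes v$ read off in the vector representation $\mathbb{V}$.

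For the moment map equation, I would first reduce to checking (\ref{MomentDef}) for $h$ ranging over the generating subspace $\mathbb{V}^*\otimes\mathbb{V}\subset\OO_G$ and for the second argument lying in one of the generating subalgebras $A\otimes 1$ or $1\otimes B$ of $A\,\widetilde{\otimes}\,B$. This reduction is legitimate because (\ref{MomentDef}) is multiplicative in $h$ (as $\Delta_\UU$ is an algebra map) and in the module element (as $A\,\widetilde{\otimes}\,B$ is a $\UU$-module algebra), and because $\OO_G$ is generated by $\mathbb{V}^*\otimes\mathbb{V}$ together with the inverse quantum determinant, over which the equation extends formally once it holds for the determinant itself. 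For the remaining cases the cleanest argument is graphical: the moment map property of $\mu_A$ and of $\mu_B$ is the $\CCC$-colored ribbon identity (\ref{MomentPic}), while $\Delta_{\OO_G}$ inserts a $\mathrm{coev}_V$-cup that splits the strand colored by $V^*\otimes V$. Stacking the two moment-map tangles along this splitting and applying an isotopy --- valid by the Reshetikhin--Turaev theorem \cite{ReshTurRibbon}, using only naturality of the braiding to slide the $A$- and $B$-colored strands past the $V$-colored one --- produces precisely the moment-map tangle for $\mu_{A\,\widetilde{\otimes}\,B}$. The purely algebraic version of this is the computation in which one expands the product in $A\,\widetilde{\otimes}\,B$, introducing a single copy of $\widetilde{\RRR}$, pushes it through using (\ref{RCoprod}) together with the coideal identity (\ref{KappaCo}) of Proposition \ref{Coideal} --- which is exactly the statement governing how $\Delta_\UU\circ\kappa$ interacts with $\Delta_{\OO_G}$ --- and then applies the single-factor moment map equations for $\mu_A$ and $\mu_B$ to conclude; this is the fusion of quantum moment maps of \cite{GanJorSaf}.

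The main obstacle is entirely one of bookkeeping: keeping track of the several copies of $\widetilde{\RRR}$ that enter --- one from the braided product on $A\,\widetilde{\otimes}\,B$, one implicit in $\Delta_{\OO_G}$, and the ones hidden inside $\kappa$ through the formulas of \ref{OGDef} --- and confirming that they combine to give \emph{exactly} $h_{(1)}\bullet(-)\otimes h_{(2)}$ with the coproduct taken on $\OO_G$, rather than some twisted variant. I expect the most economical presentation is to run the second step entirely through the graphical calculus, where the identity collapses to an isotopy of $\CCC$-colored ribbon tangles, and then to note that the algebraic identities (\ref{RCoprod}) and (\ref{KappaCo}) are merely its term-by-term translation.
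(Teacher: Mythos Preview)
Your outline is sound, but there is nothing in the paper to compare it against: the paper does not prove Proposition~\ref{MomentProd} at all. The sentence immediately preceding the statement reads ``The following was proved in \cite{GanJorSaf}:'' and no proof or proof sketch is supplied. Your proposal already identifies this same reference as the source of the fusion-of-moment-maps argument, so you are in agreement with the paper on attribution; you have simply gone further and sketched how the argument runs.
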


We now set
\begin{equation*}
\begin{aligned}
\MM_\ell&:=\DD_\ell\,\widetilde{\otimes}\,\WW,&
\MM_\ell^\star&:=\DD_\ell^\star\,\widetilde{\otimes}\, \WW^\star\hbox{ for }\ell\ge 1\\
\MM_0&:=\DD_0\,\widetilde{\otimes}\, \WW,&
\MM_0^\star&:=\DD_0\,\widetilde{\otimes}\, \WW^\star
\end{aligned}
\end{equation*}
We will view $\WW$ as a $\UU^{\otimes\ell}$-module where only $\UU_{(1)}$ acts nontrivially.
For $\ell\ge 1$, we have by Proposition \ref{MomentProd} that the maps
\begin{align}
\label{MuEll}
\mu_{\MM_\ell}^{(a)}(\mathbf{M})&:=
\begin{cases}
\left(\mathbf{I}+\XX^{(a)}\DDD^{(a)} \right)^{-1}\left(\mathbf{I}+\DDD^{(a-1)}\XX^{(a-1)}\right) & a\not=1\\
\qqq^{-2}\left(\mathbf{I}+\XX^{(1)}\DDD^{(1)} \right)^{-1}\left(\mathbf{I}+\DDD^{(\ell)}\XX^{(\ell)}\right)\left(\mathbf{I}+(\qqq^2-1)\mathbf{d}\mathbf{x}\right) & a=1
\end{cases}
\end{align}
are moment maps for the $\UU_{(a)}$-action on $\MM_\ell^\star$.
These can be combined into a moment map for the $\UU^{\otimes\ell}$-action:
\[
\mu_{\MM_\ell}(\mathbf{M}_1\otimes\cdots\otimes\mathbf{M}_\ell):=\mu_{\MM_\ell}^{(1)}(\mathbf{M}_1)\cdots\mu_{\MM_\ell}^{(\ell)}(\mathbf{M}_\ell)
\]
For $\mathsf{C}=(\mathsf{C}_1,\ldots, \mathsf{C}_\ell)\in (\CC(\qqq)^\times)^\ell$, we have the following character of $\UU^{\otimes\ell}$:
\[
\chi_\mathsf{C}:=\chi_{\mathsf{C}_1}\otimes\cdots\otimes\chi_{\mathsf{C}_\ell}
\]
Finally, for $\ell=0$, the moment map for $\MM_0^\star$ is:
\[
\mu_{\MM_0}(\mathbf{M}):=\qqq^{-2}\mathbf{B}\mathbf{A}^{-1}\mathbf{B}^{-1}\mathbf{A}\left( \mathbf{I}+(\qqq^2-1)\mathbf{d}\mathbf{x} \right)
\]

Now, we will incorporate the parameters $\ZZZ=(\ZZZ_1,\ldots,\ZZZ_\ell)\in(\CC^\times)^{\ZZ/\ell\ZZ}$ from the cyclotomic DAHA.
Namely, after picking a choice of square root for each $\ZZZ_a$, we consider character parameters of the form
\begin{equation}
\mathsf{C}_a=\left(\frac{\ZZZ_{a-1}}{\ZZZ_a}\right)^{\frac{1}{2}}\mathsf{K}_a
\label{ZX}
\end{equation}
for some $\mathsf{K}_a\in\CC(\qqq)^\times$ or later $\CC(\qqq)[\ttt^{\pm1}]^\times$.
Let $\mathsf{K}=(\mathsf{K}_1,\ldots, \mathsf{K}_\ell)$ and denote by $\ZZZ\mathsf{K}$ the parameter vector $\mathsf{C}$ given by (\ref{ZX}).

\begin{defn}
For $\ell\ge 1$, $\ZZZ\in(\CC^\times)^\ell$, and $\mathsf{K}\in(\CC(\qqq)^\times)^\ell$, let us abbreviate $I_{\mathsf{\ZZZ\mathsf{K}}}:=I_{\chi_{\mathsf{\ZZZ\mathsf{K}}}}\subset\MM_\ell^\star$. 
The \textit{quantized multiplicative quiver variety} is the quantum Hamiltonian reduction
\[
\AAA_\ell(\ZZZ,\mathsf{K}):=\left[ \MM_\ell^\star\big/I_{\ZZZ\mathsf{K}} \right]^{\UU^{\otimes\ell}}
\]
Similarly, for $\ell=0$ and $\mathsf{K}\in\CC(\qqq)^\times$, we abbreviate $I_\mathsf{K}:=I_{\chi_{\mathsf{K}}}\subset\MM_0^\star$ and define
\[
\AAA_0(\mathsf{K}):=\left[ \MM_0^\star\big/I_{\mathsf{K}} \right]^\UU
\]
\end{defn}

\begin{prop}\label{DGen}
$\AAA_\ell(\ZZZ,\mathsf{K})$ is generated by the image of the natural map $\pi:(\DD_\ell^\star)^{\UU^{\otimes\ell}}\rightarrow\AAA_\ell(\ZZZ,\mathsf{K})$.
\end{prop}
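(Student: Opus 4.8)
The plan is to make the $\UU^{\otimes\ell}$-invariants of $\MM_\ell^\star$ explicit and to show that, modulo $I_{\ZZZ\mathsf{K}}$, each of them lies in the subalgebra generated by $\pi\big((\DD_\ell^\star)^{\UU^{\otimes\ell}}\big)$. First I would record the standard reduction: since $\MM_\ell^\star=\DD_\ell^\star\,\widetilde\otimes\,\WW^\star$ is a locally finite $\UU^{\otimes\ell}$-module algebra and $\Rep(\UU^{\otimes\ell})$ is semisimple for generic $\qqq$, averaging over $\UU^{\otimes\ell}$ produces a surjection $(\MM_\ell^\star)^{\UU^{\otimes\ell}}\twoheadrightarrow(\MM_\ell^\star/I_{\ZZZ\mathsf{K}})^{\UU^{\otimes\ell}}=\AAA_\ell(\ZZZ,\mathsf{K})$. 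Thus it suffices to show that the image of every element of $(\MM_\ell^\star)^{\UU^{\otimes\ell}}$ lies in the subalgebra generated by $\pi\big((\DD_\ell^\star)^{\UU^{\otimes\ell}}\big)$.

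Next I would pin down the generators of $(\MM_\ell^\star)^{\UU^{\otimes\ell}}$. The framing vertex of (\ref{CyclicQ}) contributes only the vector $\mathbf{x}\in\mathbb{V}^*_{(1)}$ and the covector $\mathbf{d}\in\mathbb{V}_{(1)}$ coming from $\WW^\star$, so by a quantum first fundamental theorem for $\UU$ acting on matrices together with a vector and a covector, $(\MM_\ell^\star)^{\UU^{\otimes\ell}}$ is generated by (i) $\qqq$-traces of closed matrix words in $\XX^{(a)},(\XX^{(a)})^{-1},\DDD^{(a)}$ (invertibility of $\XX^{(a)}$ over $\DD_\ell^\star$ is Corollary \ref{XDetCor}) possibly together with the matrix $\mathbf{d}\mathbf{x}$, and (ii) ``framed'' invariants $\mathbf{x}\,W\,\mathbf{d}$, where $W$ runs over matrix words representing closed loops based at the framing vertex, again possibly involving $\mathbf{d}\mathbf{x}$. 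I would prove this quantum FFT by passing to the associated graded for the filtration with $\deg x_{ij}^{(a)}=\deg x_i=0$ and $\deg\del_{ij}^{(a)}=\deg\del_i=1$: by the PBW theorems (Theorem \ref{DellFlat} and the PBW theorem for $\WW$) this associated graded is a localization of the coordinate ring of a cotangent-type space carrying a $GL_n^\ell$-action, where the assertion is the classical FFT, and one lifts using that the filtered pieces have finite rank.

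The key step is the moment-map identity at the framing vertex. Evaluating $\mu^{(1)}_{\MM_\ell}(\mathbf{M})\equiv\chi_{\mathsf{C}_1}(\mathbf{M})=\mathsf{C}_1^{-2}\mathbf{I}$ modulo $I_{\ZZZ\mathsf{K}}$ and using (\ref{MuEll}) for $a=1$ gives
\[
\mathbf{I}+(\qqq^2-1)\mathbf{d}\mathbf{x}\;\equiv\;\qqq^2\mathsf{C}_1^{-2}\left(\mathbf{I}+\DDD^{(\ell)}\XX^{(\ell)}\right)^{-1}\left(\mathbf{I}+\XX^{(1)}\DDD^{(1)}\right)=:N,
\]
and $N$ has entries in $\DD_\ell^\star$ because $\mathbf{I}+\DDD^{(\ell)}\XX^{(\ell)}={}^R\YY^{(\ell)}$ is invertible there (Proposition \ref{DellMuProp}(3)); moreover $N$ transforms purely under $\UU_{(1)}$, since ${}^R\YY^{(\ell)}$ and ${}^L\YY^{(1)}=({}^L\underline{\YY}^{(1)})^{-1}$ are images of $\OO_G$ under the $\UU_{(1)}$-moment maps of Proposition \ref{DellMuProp}. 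Hence $\mathbf{d}\mathbf{x}\equiv(\qqq^2-1)^{-1}(N-\mathbf{I})$ modulo $I_{\ZZZ\mathsf{K}}$. Substituting this into the generators above, every occurrence of $\mathbf{d}\mathbf{x}$ becomes a matrix over $\DD_\ell^\star$, so modulo $I_{\ZZZ\mathsf{K}}$ the invariants of type (i) become $\qqq$-traces of matrix words over $\DD_\ell^\star$, which lie in $(\DD_\ell^\star)^{\UU^{\otimes\ell}}$, while those of type (ii) become $\mathbf{x}\,W\,\mathbf{d}$ with $W$ a matrix word over $\DD_\ell^\star$.

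It then remains to trade these framed invariants for traces. Using the quantum Weyl relations (\ref{WeylPres}) together with cyclicity of the $\qqq$-trace in the braided tensor product, one has $\tr_{\qqq}\!\big((\mathbf{d}\mathbf{x})\,W\big)=\mathbf{x}\,\widetilde W\,\mathbf{d}+(\text{terms of strictly smaller framing word length, or purely in }\DD_\ell^\star)$, where $\widetilde W$ is $W$ conjugated by the diagonal $\qqq$-weight matrix and hence recovers $W$ after an invertible substitution; replacing $\mathbf{d}\mathbf{x}$ by $N$ expresses $\mathbf{x}\,\widetilde W\,\mathbf{d}$ modulo $I_{\ZZZ\mathsf{K}}$ as $\tr_{\qqq}(NW)\in(\DD_\ell^\star)^{\UU^{\otimes\ell}}$ plus shorter framed invariants (the base case $W=\mathbf{I}$ reading $\mathbf{x}\mathbf{d}\equiv c_1\tr_{\qqq}(N)+c_0$). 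Induction on word length then places every framed invariant in the subalgebra generated by $\pi\big((\DD_\ell^\star)^{\UU^{\otimes\ell}}\big)$, which completes the proof. I expect the main obstacle to be the second step — identifying the generators of $(\MM_\ell^\star)^{\UU^{\otimes\ell}}$, i.e. the quantum FFT — where one must handle the inverted determinants and the braided tensor product carefully in reducing to the classical FFT on the associated graded.
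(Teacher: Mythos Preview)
Your overall strategy is correct and is essentially what the paper has in mind (it defers to Lemma~5.7 of \cite{QHarish}, which carries out the same moment-map elimination of the framing variables). Two remarks are worth making.

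First, the obstacle you flag---the quantum first fundamental theorem identifying generators of $(\MM_\ell^\star)^{\UU^{\otimes\ell}}$---is not where the difficulty lies: the paper already proves (in the subsection on quantum trace elements) that quantum traces of cyclic words generate $\MM_\ell^{\UU^{\otimes\ell}}$, by degenerating to $\qqq=1$ and invoking Le~Bruyn--Procesi. You can cite that directly; the localization at the determinants causes no new trouble.

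Second, there is a genuine subtlety in your step~4 that you have glossed over and in fact written backwards. The ideal $I_{\ZZZ\mathsf{K}}$ is only a \emph{left} ideal, so the congruence $\mathbf{d}\mathbf{x}\equiv(\qqq^2-1)^{-1}(N-\mathbf{I})$ can only be used to substitute for the \emph{rightmost} occurrence of $\mathbf{d}\mathbf{x}$ in a product. You write $\tr_\qqq\big((\mathbf{d}\mathbf{x})\,W\big)$ and then replace $\mathbf{d}\mathbf{x}$ by $N$ to obtain $\tr_\qqq(NW)$; but with $\mathbf{d}\mathbf{x}$ on the left and $W\in\DD_\ell^\star$ on the right, the difference $\tr_\qqq\big((\mathbf{d}\mathbf{x}-\tfrac{N-\mathbf{I}}{\qqq^2-1})\,W\big)$ is a sum of terms of the form $(\text{element of }I_{\ZZZ\mathsf{K}})\cdot W_{ji}$, and there is no reason for these to lie in $I_{\ZZZ\mathsf{K}}$. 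The fix is simple: work instead with $\tr_\qqq(W\,\mathbf{d}\mathbf{x})$, where the substitution is legitimate since $I_{\ZZZ\mathsf{K}}$ absorbs left multiplication by the entries of $W$. One then relates $\tr_\qqq(W\,\mathbf{d}\mathbf{x})$ to the framed invariant $\mathbf{x}\,W'\,\mathbf{d}$ (for a suitable $W'$) using the quantum Weyl relations, with correction terms of strictly lower $\WW$-degree, and the induction proceeds as you outlined. The paper is attentive to exactly this left-ideal issue elsewhere (see the corn-on-the-cob lemmas, where care is taken that manipulations happen on the correct side), so it is worth getting right here too.
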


\begin{proof}
The proof is similar to that of Lemma 5.7 in \cite{QHarish}.
\end{proof}

\subsubsection{Classical degeneration and filtrations}\label{Class}
Recall that for $\ell\ge 1$, we have defined notations of standard monomials for $\DD_\ell$, and we also have such a notion for $\WW$.
Let us define a standard monomial for $\MM_\ell$ to be a product
\[
M_\DD M_\WW
\]
where $M_\DD$ is a standard monomial for $\DD_\ell$ and $M_\WW$ is a standard monomial for $\WW$.
We let $\MM_\ell^\ZZ\subset\MM_\ell$ to be the $\CC[\qqq^{\pm1}]$-subalgebra generated by
\[
\left\{ x_{ij},\del_{ij} \right\}_{i,j=1}^n\cup\left\{ x_i,\widetilde{\del_i} \right\}_{i=1}^n
\]

\begin{prop}[\cite{JordanMult}]
The standard monomials form a basis of $\MM_\ell^\ZZ$.
Thus, $\MM_\ell$ is a flat deformation of the coordinate ring of $T^*(\mathrm{Mat}_{n\times n}^{\oplus\ell}\oplus\CC^n)$.
\end{prop}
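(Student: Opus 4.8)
The plan is to bootstrap from the two PBW results already available---the standard monomial basis of $\DD_\ell^\ZZ$ (Theorem \ref{DellFlat}) and the analogous standard monomial basis of $\WW^\ZZ$---by exploiting that $\MM_\ell=\DD_\ell\,\widetilde\otimes\,\WW$ is built from $\DD_\ell$ and $\WW$ via the braided tensor product. Recall that as a $\UU^{\otimes\ell}$-module $\MM_\ell$ is simply $\DD_\ell\otimes_{\CC(\qqq)}\WW$, that $d\mapsto d\otimes1$ and $w\mapsto1\otimes w$ embed $\DD_\ell$ and $\WW$ as subalgebras, and that $(d\otimes1)(1\otimes w)=d\otimes w$ while $(1\otimes w)(d\otimes1)=(r_s\cdot d)\otimes({}_sr\cdot w)$; since $\WW$ carries only a $\UU_{(1)}$-action, the braiding here is governed entirely by $\widetilde\RRR^{(1)}$.

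First I would prove spanning. Any monomial in the generators $\{x_{ij},\del_{ij}\}\cup\{x_i,\widetilde\del_i\}$ of $\MM_\ell^\ZZ$ is brought to a standard monomial $M_\DD M_\WW$ in three moves: (i) apply the cross-relation $(1\otimes w)(d\otimes1)=(r_s\cdot d)\otimes({}_sr\cdot w)$ repeatedly to push every $\WW$-generator to the right of every $\DD_\ell$-generator; (ii) straighten the $\DD_\ell$-part into standard monomials of $\DD_\ell$; (iii) straighten the $\WW$-part into standard monomials of $\WW$. The content here is integrality: the braiding is computed from $\RRR^{\pm1}$ acting on tensor products of copies of $\mathbb{V}$ and $\mathbb{V}^*$, whose matrix entries lie in $\CC[\qqq^{\pm1}]$, and the $\UU$-action appearing through $r_s\cdot d$ and ${}_sr\cdot w$ preserves $\DD_\ell^\ZZ$ and $\WW^\ZZ$ because $E_i,F_i,\qqq^h$ act by integral matrices on such tensor powers; the straightening relations inside $\DD_\ell^\ZZ$ and $\WW^\ZZ$ are themselves defined over $\CC[\qqq^{\pm1}]$, being part of the two PBW statements. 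Consequently $\MM_\ell^\ZZ=\DD_\ell^\ZZ\otimes_{\CC[\qqq^{\pm1}]}\WW^\ZZ$ as $\CC[\qqq^{\pm1}]$-modules, the identification sending a standard monomial $M_\DD M_\WW$ to the pure tensor $M_\DD\otimes M_\WW$.

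Linear independence is then automatic: under this identification $\{M_\DD M_\WW\}$ corresponds to $\{M_\DD\otimes M_\WW\}$, which is $\CC[\qqq^{\pm1}]$-linearly independent since $\{M_\DD\}$ and $\{M_\WW\}$ are bases of the two factors. This settles the first claim. For the flat-deformation consequence, specialize at $\qqq=1$: the defining relations of $\DD_\ell$ and $\WW$ collapse to commutativity (for $\WW$ this uses precisely the rescaled generators $\widetilde\del_i$), and the braiding $(r_s\cdot d)\otimes({}_sr\cdot w)$ degenerates to $d\otimes w$, so $\MM_\ell^\ZZ\otimes_{\CC[\qqq^{\pm1}]}\CC$ becomes the ordinary tensor product of the commutative rings $\DD_\ell^\ZZ|_{\qqq=1}\cong\OO\big(T^*\mathrm{Mat}_{n\times n}^{\oplus\ell}\big)$ and $\WW^\ZZ|_{\qqq=1}\cong\OO(T^*\CC^n)$, i.e.\ $\OO\big(T^*(\mathrm{Mat}_{n\times n}^{\oplus\ell}\oplus\CC^n)\big)$, a polynomial ring in $2\ell n^2+2n$ variables whose monomial basis is exactly the image of the standard monomials. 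Thus $\MM_\ell^\ZZ$ is $\CC[\qqq^{\pm1}]$-free with $\qqq=1$ fiber this classical coordinate ring.

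I expect the only real work to be in steps (i)--(ii): verifying that iterating the cross-relation terminates and stays within $\CC[\qqq^{\pm1}]$, which is the same style of bookkeeping as in Jordan's proof of Theorem \ref{DellFlat} and in the straightening argument of Proposition \ref{D1PBW}, now carried out in the presence of the extra $\WW$-tensorand; everything after that is formal.
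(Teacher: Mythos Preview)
The paper does not supply its own proof of this proposition; it records the statement and attributes it to \cite{JordanMult}. Your sketch is correct and is essentially the argument one finds there: the cross-relation in the braided tensor product is governed by the specialized $R$-matrix acting on tensor products of copies of $\mathbb{V}$ and $\mathbb{V}^*$, whose entries lie in $\CC[\qqq^{\pm1}]$, so $\MM_\ell^\ZZ=\DD_\ell^\ZZ\otimes_{\CC[\qqq^{\pm1}]}\WW^\ZZ$ as $\CC[\qqq^{\pm1}]$-modules, and the two factor PBW results (Theorem \ref{DellFlat} and the $\WW^\ZZ$ proposition) finish the job.
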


For any subspace $V\subset\MM_\ell$, we set
\begin{equation*}
\begin{aligned}
V_\ZZ&:= V\cap \MM_\ell^\ZZ,&
V\big|_{\qqq=1}&:=V_\ZZ\bigg/(\qqq-1)
\end{aligned}
\end{equation*}
Because monomials span $\MM_\ell$, we evidently have
\[
V=\CC(\qqq)\otimes V_\ZZ
\]
Moreover, for finite-dimensional $V$, because $\MM_\ell^\ZZ$ is a free $\CC[\qqq^{\pm 1}]$-module and $\CC[\qqq^{\pm1}]$ is a PID, we have
\begin{equation*}
\dim_{\CC(\qqq)} V= \mathrm{rank}_{\CC[\qqq^{\pm 1}]}V_\ZZ=\dim_\CC V\big|_{\qqq=1}
\end{equation*}
For a subquotient $W/V$ of $\MM_\ell$ with $W,V\subset\MM_\ell$, we define 
\[W/V\big|_{\qqq=1}:=W\big|_{\qqq=1}\bigg/V\big|_{\qqq=1}\]
Note that for finite dimensional $W$ and $V$,
\begin{equation}
\mathrm{dim}_{\CC(\qqq)}W/V= \dim_{\CC}W/V\big|_{\qqq=1}
\label{DimEq}
\end{equation}

Next, let us consider invariants.
We will use $\gl_n^{(a)}$ to denote the $a$th summand of $\gl_n^{\oplus \ell}$.
For 
\[(\mathbf{Z}^{(1)},\ldots, \mathbf{Z}^{(\ell)}, \mathbf{z}, \mathbf{W}^{(1)},\ldots,\mathbf{W}^{(\ell)},\mathbf{w})\in \left(\mathrm{Mat}_{n\times n}^{\oplus\ell}\oplus\CC^n\right)^{\oplus 2}\cong T^*\left(\mathrm{Mat}_{n\times n}^{\oplus\ell}\oplus\CC^n\right)\]
and $g\in\gl_n^{(a)}$, we set
\begin{equation}
g\cdot(\mathbf{Z}^{(1)},\ldots, \mathbf{Z}^{(\ell)}, \mathbf{z})=
\begin{cases}
(\mathbf{Z}^{(1)},\ldots,\mathbf{Z}^{(a-2)},-\mathbf{Z}^{(a-1)}g, g\mathbf{Z}^{(a)},\mathbf{Z}^{(a+1)},\ldots,\mathbf{Z}^{(\ell)},\mathbf{z}) & a\not=1\\
(g\mathbf{Z}^{(1)},\mathbf{Z}^{(2)}\ldots,\mathbf{Z}^{(\ell-1)},-\mathbf{Z}^{(\ell)}g, -g^T\mathbf{z}) & a=1
\end{cases}
\label{NakaAc}
\end{equation}
and take the induced action on $T^*(\mathrm{Mat}_{n\times n}^{\oplus \ell}\oplus \CC^n)$.
It is evident that $\MM_\ell^\ZZ$ is closed under the $\UU^{\otimes\ell}$-action.
If we identify
\begin{align*}
\XX^{(a)}\big|_{\qqq=1}&= \hbox{ coordinate to }\mathbf{Z}^{(a)}\\
\DDD^{(a)}\big|_{\qqq=1}&= \hbox{ coordinate to }\mathbf{W}^{(a)}\\
\mathbf{x}\big|_{\qqq=1}&=\hbox{ coordinate to }\mathbf{z}\\ 
\mathbf{d}\big|_{\qqq=1}&=\hbox{ coordinate to }\mathbf{w}
\end{align*}
then the actions of $E_i,F_i\in\UU_{(a)}$ become those of the matrix units $E_{i, i+1},E_{i+1,i}\in\gl_n^{(a)}$ on the coordinate ring upon taking $\qqq=1$.
Thus, for a $\UU^{\otimes\ell}$-submodule $V\subset\MM_\ell$, $V\big|_{\qqq=1}$ is a $\gl_n^{\oplus\ell}$-module and we have
\[
V^{\UU^{\otimes\ell}}\big|_{\qqq=1}\subset \left(V\big|_{\qqq=1}\right)^{\mathfrak{gl}_n^{\oplus\ell}}
\]
We will prove the opposite containment in \ref{QTrace} below.
For $\ell=0$, we can define similar notions using the integral form $\DD_0^\ZZ$.

Finally, we can define a filtration $F_k^\DD$ of $\MM_\ell$ wherein
\begin{equation*}
\begin{aligned}
\deg(x_{ij})=\deg(\del_{ij})&=1,& \deg(x_i)=\deg(\del_i)&= 0
\end{aligned}
\end{equation*}
We endow all subspaces and quotients of $\MM_\ell$ with the induced filtration.
Let
\[I^+_{\mathsf{C}}:=I_{\mathsf{C}}\cap\MM_\ell\]
Observe that in $\mathrm{gr}_{F^\DD}\left(\MM_\ell\big/ I^+_{\mathsf{C}}\right)$, we have the relations
\begin{equation}
\ZZZ_a\XX^{(a)}\DDD^{(a)}=
\begin{cases}
\mathsf{K}_a^{2}\ZZZ_{a-1}\DDD^{(a-1)}\XX^{(a-1)} & a\not=1\\
\mathsf{K}_1^{2}\ZZZ_{\ell}\left(\DDD^{(\ell)}\XX^{(\ell)}+\DDD^{(\ell)}\XX^{(\ell)}\mathbf{\widetilde{d}}\mathbf{x}\right)& a=1
\end{cases}
\label{GrRel}
\end{equation}
where $\mathbf{\widetilde{d}}=(\qqq-\qqq^{-1})\mathbf{d}$.
We can also define a filtration $F_l^{\WW}$ of $\MM_\ell$ wherein
\begin{equation*}
\begin{aligned}
\deg(x_{ij})=\deg(\del_{ij})&=0,&
\deg(x_i)=\deg(\del_i)=1
\end{aligned}
\end{equation*}
Let us set $F_{k,l}:=F_k^\DD\cap F_l^\WW$.
Then each $F_{k,l}$ is a finite-dimensional $\UU$-submodule of $\MM_{\ell}$.

\subsubsection{Quantum trace elements}\label{QTrace}
Recall that if we view $\mathbb{V}_{(a)}$ as a copy of $\mathbb{V}$ placed at the $a$th round vertex in the quiver (\ref{CyclicQ}), then we view $\XX^{(a)}$ as the coordinates of a matrix $\mathbf{Z}^{(a)}:\mathbb{V}_{(a+1)}\rightarrow\mathbb{V}_{(a)}$. 
Likewise, we view $\DDD^{(a)}$ as the coordinates of a matrix $\mathbf{W}^{(a)}:\mathbb{V}_{(a)}\rightarrow\mathbb{V}_{(a+1)}$, $\mathbf{x}$ as the coordinates of a covector $\mathbf{z}:\mathbb{V}_{(1)}\rightarrow \CC(\qqq)$, and $\mathbf{d}$ as the coordinates of a vector $\mathbf{w}:\CC(\qqq)\rightarrow\mathbb{V}_{(1)}$.
We call a matrix product $\mathbf{\Pi}$ of $\{\XX^{(a)},\DDD^{(a)},\mathbf{x},\mathbf{d}\}_{a\in\ZZ/\ell\ZZ}$ a \textit{cyclic word} it describes a composition of maps that starts and ends at the same round vertex in (\ref{CyclicQ}).
For example, the following elements are cyclic words:
\begin{equation}
\begin{aligned}
\XX^\circ&:=\XX^{(1)}\cdots\XX^{(\ell)},&
\DDD^\circ&:=\DDD^{(\ell)}\cdots\DDD^{(1)}
\end{aligned}
\label{XDCircDef}
\end{equation}

For a cyclic word $\mathbf{\Pi}$, we define its \textit{quantum trace} to be
\[
\tr_\qqq(\mathbf{\Pi}):=\sum_{i=1}^n \qqq^{-n+2i}\mathbf{\Pi}_{ii}
\]

\begin{prop}
Quantum traces over cyclic words are $\UU^{\otimes\ell}$-invariant elements in $\MM_\ell^\ZZ$ that generate $\MM_\ell^{\UU^{\otimes\ell}}$.
\end{prop}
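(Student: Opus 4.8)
The plan is to prove $\UU^{\otimes\ell}$-invariance by recognizing $\tr_\qqq$ as a categorical trace, and then to deduce generation by degenerating at $\qqq=1$ to the classical first fundamental theorem for the framed cyclic quiver. For invariance, fix a cyclic word $\mathbf{\Pi}$ beginning and ending at a round vertex $c$. By the $\UU^{\otimes\ell}$-equivariance of $\MM_\ell=\DD_\ell\,\widetilde\otimes\,\WW$ recalled in Section \ref{QDO}, together with the transformation rules for the entries of $\XX^{(a)},\DDD^{(a)},\mathbf{x},\mathbf{d}$ recorded in \ref{DellEq} (and the analogous statements for $\DD_1$ and $\WW$), each generating matrix appearing in $\mathbf{\Pi}$ is a morphism of $\UU^{\otimes\ell}$-modules valued in $\MM_\ell$, and matrix multiplication in $\MM_\ell$ amounts to contracting a $\mathbb{V}^*\otimes\mathbb{V}$ pair with $\mathrm{ev}$, which is $\UU^{\otimes\ell}$-equivariant and functorial for the braiding in the braided tensor product. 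Hence $\mathbf{\Pi}$ represents, for every vertex $b$, a $\UU_{(b)}$-module endomorphism of $\mathbb{V}_{(c)}$ with coefficients in $\MM_\ell$ (for $b\neq c$ the two $\UU_{(b)}$-actions meeting at vertex $b$ have already been contracted away by the adjacent matrix product). The quantum trace $\tr_\qqq(\mathbf{\Pi})=\sum_i\qqq^{-n+2i}\mathbf{\Pi}_{ii}$ is, up to an overall power of $\qqq$, the categorical trace of this endomorphism obtained by closing the strand with the (quantum) evaluation and coevaluation maps of \ref{Representations}: the coefficients $\qqq^{-n+2i}$ are exactly the matrix entries of the twist $\qqq^{-2\rho}$ inserted by that closure, again up to a global scalar. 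Being the image of a composite of $\UU^{\otimes\ell}$-morphisms landing in the trivial summand, $\tr_\qqq(\mathbf{\Pi})\in\MM_\ell^{\UU^{\otimes\ell}}$; and since the generators and the scalars $\qqq^{-n+2i}$ lie in $\CC[\qqq^{\pm1}]$, also $\tr_\qqq(\mathbf{\Pi})\in\MM_\ell^\ZZ$. I would carry this out one vertex at a time, invoking the graphical calculus of \ref{Representations}.

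For generation I would pass to the classical limit. By the flatness statements of \ref{Class}, setting $\qqq=1$ identifies $\MM_\ell\big|_{\qqq=1}$ with the coordinate ring of $T^*(\mathrm{Mat}_{n\times n}^{\oplus\ell}\oplus\CC^n)$, i.e.\ the representation space of the doubled framed cyclic quiver, and the $\UU^{\otimes\ell}$-action degenerates to the $\gl_n^{\oplus\ell}$-action (\ref{NakaAc}), which integrates to an action of $GL_n^{\ell}$. The first fundamental theorem of invariant theory for quiver representations then says that $\big(\MM_\ell\big|_{\qqq=1}\big)^{\gl_n^{\oplus\ell}}$ is generated by traces along oriented closed paths, i.e.\ by the classical traces of cyclic words. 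Since $\qqq^{-n+2i}\to1$ and products degenerate to ordinary matrix products as $\qqq\to1$, we have $\tr_\qqq(\mathbf{\Pi})\big|_{\qqq=1}=\tr\big(\mathbf{\Pi}\big|_{\qqq=1}\big)$.

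Finally I would assemble the pieces. Let $\Sss\subseteq\MM_\ell^\ZZ$ be the $\CC[\qqq^{\pm1}]$-subalgebra generated by the quantum traces; by the first step $\Sss\otimes\CC(\qqq)\subseteq\MM_\ell^{\UU^{\otimes\ell}}$. Reducing mod $(\qqq-1)$, the subalgebra $\Sss\big|_{\qqq=1}$ contains all classical traces of cyclic words, hence equals $\big(\MM_\ell\big|_{\qqq=1}\big)^{\gl_n^{\oplus\ell}}$; combined with the containment $\MM_\ell^{\UU^{\otimes\ell}}\big|_{\qqq=1}\subseteq\big(\MM_\ell\big|_{\qqq=1}\big)^{\gl_n^{\oplus\ell}}$ from \ref{Class}, this gives $\Sss\big|_{\qqq=1}=\MM_\ell^{\UU^{\otimes\ell}}\big|_{\qqq=1}=\big(\MM_\ell\big|_{\qqq=1}\big)^{\gl_n^{\oplus\ell}}$, which in particular supplies the opposite containment promised in \ref{Class}. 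Now intersect everything with a finite-dimensional filtered piece $F_{k,l}$. Applying (\ref{DimEq}) to the $\CC(\qqq)$-subspaces $\Sss\otimes\CC(\qqq)\cap F_{k,l}\subseteq\MM_\ell^{\UU^{\otimes\ell}}\cap F_{k,l}$ and to their classical limits, the dimensions $\dim_{\CC(\qqq)}(\MM_\ell^{\UU^{\otimes\ell}}\cap F_{k,l})$, $\dim_\CC(\MM_\ell^{\UU^{\otimes\ell}}\cap F_{k,l})\big|_{\qqq=1}$, $\dim_\CC(\Sss\cap F_{k,l})\big|_{\qqq=1}$ and $\dim_{\CC(\qqq)}(\Sss\otimes\CC(\qqq)\cap F_{k,l})$ all coincide, forcing $\Sss\otimes\CC(\qqq)\cap F_{k,l}=\MM_\ell^{\UU^{\otimes\ell}}\cap F_{k,l}$ for every $k,l$; since the $F_{k,l}$ exhaust $\MM_\ell$, we conclude $\Sss\otimes\CC(\qqq)=\MM_\ell^{\UU^{\otimes\ell}}$.

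The step I expect to be the main obstacle is this last one: one must ensure that the relevant subspaces and subquotients of $\MM_\ell^\ZZ$ are pure (saturated) $\CC[\qqq^{\pm1}]$-submodules, so that the specialization identities (\ref{DimEq}) genuinely apply to $\MM_\ell^{\UU^{\otimes\ell}}\cap F_{k,l}$ and $\Sss\cap F_{k,l}$, and one must pin down precisely the twist that makes $\tr_\qqq$ a categorical trace; granting these, the classical quiver FFT does the rest.
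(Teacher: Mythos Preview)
Your proposal is correct and follows essentially the same route as the paper. The paper's proof of invariance likewise appeals to the categorical/graphical interpretation of $\tr_\qqq$ (citing the analogous argument in \cite{QHarish}), and for generation it degenerates at $\qqq=1$, invokes the first fundamental theorem for quiver representations (citing it as Le~Bruyn--Procesi \cite{LBProc}), and then lifts on each finite-dimensional piece $F_{k,l}$; the only cosmetic difference is that the paper phrases the lifting step as an application of Nakayama's lemma rather than the dimension count via (\ref{DimEq}) that you spell out, and your saturation worry is exactly what Nakayama handles cleanly.
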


\begin{proof}
The proof of invariance is similar to that of Proposition 5.4 in \cite{QHarish}.
Clearly, $\tr_\qqq(\mathbf{\Pi})\in\MM_\ell^\ZZ$ and becomes the usual trace at $\qqq=1$.
To prove generation, it suffices to consider each finite-dimensional piece $F_{k,l}\MM_\ell$.
By a theorem of Le Bruyn--Procesi \cite{LBProc}, $F_{k,l}\MM_\ell\big|_{\qqq=1}$ is generated by classical traces of cyclic words.
We then get the desired result for $F_{k,l}\MM_\ell$ by applying Nakayama's lemma.
\end{proof}

\begin{cor}\label{DegInv}
For any $\UU$-submodule $V\subset\MM_\ell$,
\[
V^{\UU^{\otimes\ell}}\big|_{\qqq=1}=\left( V\big|_{\qqq=1} \right)^{\gl_n^{\oplus\ell}}
\]
\end{cor}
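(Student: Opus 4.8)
The inclusion $V^{\UU^{\otimes\ell}}\big|_{\qqq=1}\subseteq\bigl(V\big|_{\qqq=1}\bigr)^{\gl_n^{\oplus\ell}}$ is the one already recorded in \ref{Class}, so the plan is to establish the reverse inclusion. Since taking invariants, forming the integral lattice $(-)_\ZZ$, and reducing modulo $(\qqq-1)$ all commute with the filtered colimit $V=\bigcup_{k,l}F_{k,l}V$, it suffices to treat each finite-dimensional $\UU^{\otimes\ell}$-submodule $F:=F_{k,l}V$; and since I will have the known inclusion $F^{\UU^{\otimes\ell}}\big|_{\qqq=1}\subseteq\bigl(F\big|_{\qqq=1}\bigr)^{\gl_n^{\oplus\ell}}$ between finite-dimensional spaces, it then suffices to prove the numerical equality $\dim_{\CC(\qqq)}F^{\UU^{\otimes\ell}}=\dim_\CC\bigl(F\big|_{\qqq=1}\bigr)^{\gl_n^{\oplus\ell}}$.

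The first ingredient is a lattice bound valid for any $\UU^{\otimes\ell}$-submodule $W\subseteq\MM_\ell$: clearing denominators shows that $W^{\UU^{\otimes\ell}}\cap\MM_\ell^\ZZ$ is a full $\CC[\qqq^{\pm1}]$-lattice in $W^{\UU^{\otimes\ell}}$ which, being cut out by a scalar condition, is pure in $\MM_\ell^\ZZ$; as $\CC[\qqq^{\pm1}]$ is a PID and $\MM_\ell^\ZZ$ is free by the PBW theorem, this lattice is free of rank $\dim_{\CC(\qqq)}W^{\UU^{\otimes\ell}}$ and its reduction modulo $(\qqq-1)$ embeds into $W\big|_{\qqq=1}$, landing inside the $\gl_n^{\oplus\ell}$-invariants by the $E_i,F_i$-specialization argument of \ref{Class}. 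Hence $\dim_{\CC(\qqq)}W^{\UU^{\otimes\ell}}=\dim_\CC W^{\UU^{\otimes\ell}}\big|_{\qqq=1}\le\dim_\CC\bigl(W\big|_{\qqq=1}\bigr)^{\gl_n^{\oplus\ell}}$; the same holds, with the induced lattice, for a quotient of a filtered piece of $\MM_\ell$ by a $\UU^{\otimes\ell}$-submodule.

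Next I would pin down the ambient case $W=F_{k,l}\MM_\ell$, where I claim equality. By Le Bruyn--Procesi \cite{LBProc}, $\bigl(\MM_\ell\big|_{\qqq=1}\bigr)^{\gl_n^{\oplus\ell}}$ is generated as an algebra by classical traces of cyclic words; combining their degree bounds with the multiplicativity of the bifiltration (and the standard fact that a finitely generated filtered algebra is spanned in each degree by products of its generators of bounded total degree), $F_{k,l}\bigl(\MM_\ell\big|_{\qqq=1}\bigr)^{\gl_n^{\oplus\ell}}$ is spanned by products of such traces of total bidegree $\le(k,l)$. Each such product lifts to the corresponding product of quantum traces $\tr_\qqq(\mathbf{\Pi})$, which by the preceding Proposition is $\UU^{\otimes\ell}$-invariant, lies in $\MM_\ell^\ZZ$, has the same bidegree, and reduces modulo $(\qqq-1)$ to the given classical product; thus $(F_{k,l}\MM_\ell)^{\UU^{\otimes\ell}}\cap\MM_\ell^\ZZ$ surjects modulo $(\qqq-1)$ onto $\bigl(F_{k,l}\MM_\ell\big|_{\qqq=1}\bigr)^{\gl_n^{\oplus\ell}}$, which with the lattice bound yields $\dim_{\CC(\qqq)}(F_{k,l}\MM_\ell)^{\UU^{\otimes\ell}}=\dim_\CC\bigl(F_{k,l}\MM_\ell\big|_{\qqq=1}\bigr)^{\gl_n^{\oplus\ell}}$. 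To descend to $V$, set $Q:=F_{k,l}\MM_\ell/F_{k,l}V$: since $\qqq$ is transcendental the relevant category of finite-dimensional $\UU^{\otimes\ell}$-modules is semisimple, so $\dim_{\CC(\qqq)}(F_{k,l}\MM_\ell)^{\UU^{\otimes\ell}}=\dim_{\CC(\qqq)}(F_{k,l}V)^{\UU^{\otimes\ell}}+\dim_{\CC(\qqq)}Q^{\UU^{\otimes\ell}}$; at $\qqq=1$, reductivity of $GL_n^{\ell}$ makes $0\to\bigl(F_{k,l}V\big|_{\qqq=1}\bigr)^{\gl_n^{\oplus\ell}}\to\bigl(F_{k,l}\MM_\ell\big|_{\qqq=1}\bigr)^{\gl_n^{\oplus\ell}}\to\bigl(Q\big|_{\qqq=1}\bigr)^{\gl_n^{\oplus\ell}}\to0$ exact (purity of $(F_{k,l}V)_\ZZ$ identifies $Q\big|_{\qqq=1}$ with the quotient), so the analogous additivity holds classically. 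By the lattice bound each $\CC(\qqq)$-summand is $\le$ its classical counterpart while the two totals coincide, forcing both inequalities to be equalities; in particular $\dim_{\CC(\qqq)}(F_{k,l}V)^{\UU^{\otimes\ell}}=\dim_\CC\bigl(F_{k,l}V\big|_{\qqq=1}\bigr)^{\gl_n^{\oplus\ell}}$, which completes the argument. The case $\ell=0$ is identical with $\DD_0^\ZZ$ in place of $\MM_\ell^\ZZ$.

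I expect the crux to be the ambient case: one needs not merely that quantum traces span the invariants over $\CC(\qqq)$ but that there are \emph{enough integral invariants in each bifiltered piece to fill out the classical invariants at} $\qqq=1$, i.e.\ that the dimension of invariants does not jump under $\qqq\to1$. The explicit quantum-trace preimages of Le Bruyn--Procesi's generators are exactly what prevents such a jump, and the one genuinely fiddly point is matching the bidegrees of those preimages with the way the classical invariant ring is built from its generators; everything else is bookkeeping with lattices over the PID $\CC[\qqq^{\pm1}]$.
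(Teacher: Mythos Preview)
Your argument is correct and is essentially the one the paper has in mind: the paper defers to Lemma~5.5 of \cite{QHarish}, whose proof uses the same ingredients you assemble here---the lattice bound over the PID $\CC[\qqq^{\pm1}]$, the lifting of Le~Bruyn--Procesi's classical trace generators to quantum traces on each finite-dimensional piece $F_{k,l}\MM_\ell$, and a dimension count exploiting semisimplicity on both the quantum and classical sides to pass from the ambient module to an arbitrary submodule. Your write-up is in fact more detailed than what the paper records.
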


\begin{proof}
Proved similarly to Lemma 5.5 of \cite{QHarish}.
\end{proof}

\subsubsection{Nakajima quiver varieties}
For special values of the parameter $\mathsf{C}$, we can relate the $\qqq=1$ degeneration to the Nakajima quiver variety for (\ref{CyclicQ}) \cite{NakaKM}.
We begin by recalling its definition.
Let $(\mathbf{G}^{(1)},\ldots,\mathbf{G}^{(\ell)},\mathbf{j},\mathbf{H}^{(1)},\ldots,\mathbf{H}^{(\ell)},\mathbf{i})$ be the coordinates of another copy of $\mathbb{M}_\ell:=T^*\left( \mathrm{Mat}^{\oplus\ell}_{n\times n}\oplus\CC^n \right)$, equipped with the same $\gl_n^{\oplus\ell}$-action as (\ref{NakaAc}).
Let $I_\ell\subset\CC[\mathbb{M}_\ell]$ be the ideal of relations
\begin{equation}
\mathbf{G}^{(a)}\mathbf{H}^{(a)}=
\begin{cases}
\mathbf{H}^{(a-1)}\mathbf{G}^{(a-1)} & a\not=1\\
\mathbf{H}^{(\ell)}\mathbf{G}^{(\ell)} + \mathbf{i}\mathbf{j}^T& a=1
\end{cases}
\label{NakRel}
\end{equation}
The \textit{Nakajima quiver variety} is then
\[
\mathfrak{M}_\ell:=\mathrm{Spec}\left( \CC[\mathbb{M}_\ell]\big/I_\ell \right)^{\gl_n^{\oplus\ell}}
\]
We can also define a version without the square vertex: let $\mathbb{M}'_\ell:=T^*\left( \mathrm{Mat}_{n\times n}^{\oplus\ell} \right)$ and let $I'_\ell\subset\CC[\mathbb{M}_\ell']$ be the ideal of relations
\[
\mathbf{G}^{(a)}\mathbf{H}^{(a)}=\mathbf{H}^{(a-1)}\mathbf{G}^{(a-1)}\hbox{ for all }a
\]
Define
\[
\mathfrak{M}_\ell':=\mathrm{Spec}\left( \CC[\mathbb{M}']\big/ I_\ell' \right)^{\gl_n^{\oplus\ell}}
\]
There is an obvious map $p:\mathfrak{M}_\ell\rightarrow\mathfrak{M}_\ell'$ given by projecting away the $(\mathbf{j},\mathbf{i})$ coordinates.
Finally, recall the ring $\mathcal{R}_\ell$ from (\ref{RellDef}):
\begin{align*}
\mathcal{R}_\ell^{\Sigma_n}&=\CC[z_1,\ldots, z_n,w_1,\ldots, w_n]^{\Sigma_n\wr\ZZ/\ell\ZZ}
\end{align*}
\begin{prop}\label{GanProp}
We have
\[
\CC[\mathfrak{M}_\ell]\cong\CC[\mathfrak{M}_\ell']\cong\mathcal{R}_\ell^{\Sigma_n}
\]
The first isomorphism is given by $p^*$ and the second is given by restricting to the locus
\begin{equation*}
\begin{aligned}
\mathbf{G}^{(a)}&=\mathbf{S},
&\mathbf{H}^{(a)}&= \mathbf{T}\hbox{ {\rm for all} }a
\end{aligned}
\end{equation*}
where $\mathbf{T}$ and $\mathbf{S}$ are diagonal matrices with coordinates $(z_1,\ldots, z_n)$ and $(w_1,\ldots, w_n)$, respectively. 
\end{prop}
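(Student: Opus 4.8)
The plan is to reduce the statement to the invariant theory of the doubled (framed) cyclic quiver together with Weyl's first fundamental theorem for $\Sigma_n\wr\ZZ/\ell\ZZ$ recalled in the proof of Lemma~\ref{CycGen}. I would first establish $\CC[\mathfrak{M}_\ell']\cong\mathcal{R}_\ell^{\Sigma_n}$ and then deduce $\CC[\mathfrak{M}_\ell]\cong\CC[\mathfrak{M}_\ell']$.

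\emph{The isomorphism $\CC[\mathfrak{M}_\ell']\cong\mathcal{R}_\ell^{\Sigma_n}$.} Let $\mathcal{L}\subset\mathbb{M}_\ell'$ be the locus $\{\mathbf{G}^{(a)}=\mathbf{S},\ \mathbf{H}^{(a)}=\mathbf{T}\ \forall a\}$ with $\mathbf{S}=\mathrm{diag}(z_1,\dots,z_n)$, $\mathbf{T}=\mathrm{diag}(w_1,\dots,w_n)$. Since diagonal matrices commute, $\mathcal{L}$ lies in the zero locus of $I_\ell'$, so restriction of functions gives a ring map $\rho\colon\CC[\mathfrak{M}_\ell']\to\CC[z_1,\dots,z_n,w_1,\dots,w_n]$. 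The subgroup of $GL_n^{\oplus\ell}$ stabilizing $\mathcal{L}$ consists of tuples $(PD_1,\dots,PD_\ell)$ with $P$ a single permutation matrix and $D_a$ diagonal such that $D_aD_{a+1}^{-1}$ is independent of $a$; cyclicity forces $(D_aD_{a+1}^{-1})^\ell=1$, and such a tuple acts on $\CC[\mathcal{L}]$ by the $\Sigma_n\wr\ZZ/\ell\ZZ$-action of (\ref{RellDef}) (a permutation together with scalings $z_i\mapsto\zeta_iz_i$, $w_i\mapsto\zeta_i^{-1}w_i$ by $\ell$-th roots of unity $\zeta_i$). Hence $\mathrm{im}(\rho)\subseteq\mathcal{R}_\ell^{\Sigma_n}$. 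For surjectivity, by Le Bruyn--Procesi \cite{LBProc} $\CC[\mathfrak{M}_\ell']$ is generated by traces of cyclic words in the $\mathbf{G}^{(a)},\mathbf{H}^{(a)}$; a cyclic word with $g$ factors $\mathbf{G}$ and $h$ factors $\mathbf{H}$ has $g\equiv h\pmod\ell$ (the walk must close up on the $\ell$-cycle), restricts on $\mathcal{L}$ to $\sum_i z_i^g w_i^h$, and choosing such words realizes exactly the Weyl generators $p_{a,k,b}$ of (\ref{CycPow}). Injectivity amounts to showing that $GL_n^{\oplus\ell}\cdot\mathcal{L}$ is dense in $\mathfrak{M}_\ell'$, i.e. that $\mathcal{L}$ meets the generic closed orbit. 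Writing $\theta_a:=\mathbf{G}^{(a)}\mathbf{H}^{(a)}=\mathbf{H}^{(a-1)}\mathbf{G}^{(a-1)}$, the relations give $\mathbf{G}^{(a)}\theta_{a+1}=\theta_a\mathbf{G}^{(a)}$ and $\theta_{a+1}\mathbf{H}^{(a)}=\mathbf{H}^{(a)}\theta_a$; telescoping with these identities shows that $\mathbf{Z}:=\mathbf{G}^{(1)}\cdots\mathbf{G}^{(\ell)}$ and $\mathbf{W}:=\mathbf{H}^{(\ell)}\cdots\mathbf{H}^{(1)}$ on $V_1$ commute (both being expressible through the $\theta_a$). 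On the dense open locus where $\mathbf{Z}$ has distinct eigenvalues, simultaneously diagonalize $\mathbf{Z},\mathbf{W}$; the representation then splits as a direct sum of $n$ subrepresentations of dimension $1$ at each round vertex, and inside each summand the residual $(\CC^\times)^\ell$ gauge freedom brings all $\mathbf{G}^{(a)}$ to a common scalar and, because $\mathbf{G}^{(a)}\mathbf{H}^{(a)}$ is then forced to be independent of $a$, all $\mathbf{H}^{(a)}$ to a common scalar as well. Thus the generic closed orbit meets $\mathcal{L}$ and $\rho$ is injective.

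\emph{The isomorphism $\CC[\mathfrak{M}_\ell]\cong\CC[\mathfrak{M}_\ell']$.} The key point is that at moment level $0$ the framing vertex contributes nothing. Taking the ordinary trace of the cyclic sum of the relations (\ref{NakRel}) yields $\mathbf{j}^T\mathbf{i}=0$; more strongly, every semisimple representation in the zero locus of $I_\ell$ (dimension $n$ at each round vertex, $1$ at the framing vertex $\infty$) must contain the one-dimensional summand supported at $\infty$ with $\mathbf{i}=\mathbf{j}=0$. Indeed a simple summand with nonzero $\mathbf{i}$ or $\mathbf{j}$ and nonzero support at a round vertex would have a dimension vector $\gamma$ with $\gamma_\infty=1$ and $\gamma\neq e_\infty$, but no such $\gamma$ is the dimension vector of a simple module of the preprojective algebra at level $0$: by the numerical criterion for such dimension vectors (the function $p$ being subadditive under decompositions into positive roots, with equality only in the excluded cases), the only possibilities are $e_1,\dots,e_\ell$, $e_\infty$, and the minimal imaginary root $(1,\dots,1;0)$, and a summand realizing $\mathbf{i},\mathbf{j}\neq0$ would require a larger one. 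Consequently $\mathbf{i}\mathbf{j}^T$ vanishes on every closed $GL_n^{\oplus\ell}$-orbit in the zero locus of $I_\ell$, so these orbits all lie in the $\mathbf{i}=\mathbf{j}=0$ section, which is the zero locus of $I_\ell'$; hence the closed embedding $\mathfrak{M}_\ell'\hookrightarrow\mathfrak{M}_\ell$ induced by this section is also surjective, with inverse the projection $p$, and on coordinate rings this gives $p^*\colon\CC[\mathfrak{M}_\ell']\xrightarrow{\ \sim\ }\CC[\mathfrak{M}_\ell]$. Equivalently, $\CC[\mathfrak{M}_\ell]$ is generated by traces of cyclic words in the $\mathbf{G}^{(a)},\mathbf{H}^{(a)}$ alone, since any cyclic word through $\infty$ equals $\mathrm{tr}(\mathbf{w}\,\mathbf{i}\mathbf{j}^T)$ and $\mathbf{i}\mathbf{j}^T=\mathbf{G}^{(1)}\mathbf{H}^{(1)}-\mathbf{H}^{(\ell)}\mathbf{G}^{(\ell)}$ by (\ref{NakRel}); injectivity of $p^*$ can alternatively be obtained from the dimension count $\dim\mathfrak{M}_\ell=\dim\mathfrak{M}_\ell'=2n$ once irreducibility of $\mathfrak{M}_\ell$ is known.

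\emph{Main obstacle.} The hard part is the injectivity in both isomorphisms: the density of $GL_n^{\oplus\ell}\cdot\mathcal{L}$ in $\mathfrak{M}_\ell'$ and the vanishing of the framing on closed orbits both come down to the structure of semisimple representations of the (framed) preprojective algebra of $\widetilde A_{\ell-1}$ at moment level $0$. The simultaneous-diagonalization argument handles the generic stratum explicitly, but the cleanest justification of the ``no larger simple at level $0$'' claim is via Crawley--Boevey's description of dimension vectors of simple modules of deformed preprojective algebras; pinning that down carefully is the step I expect to require the most attention, and is presumably why one cites the quiver-variety literature (e.g. \cite{NakaKM}, \cite{CBShaw}) here rather than reproving it.
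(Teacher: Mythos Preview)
Your proposal is correct in its overall structure, but it is worth noting that the paper's own proof of this proposition consists entirely of two citations: the isomorphism $\CC[\mathfrak{M}_\ell]\cong\CC[\mathfrak{M}_\ell']$ is Lemma~2.3 of \cite{CBDec} (Crawley-Boevey), and the isomorphism $\CC[\mathfrak{M}_\ell']\cong\mathcal{R}_\ell^{\Sigma_n}$ is the main result of \cite{GanChev} (Gan). What you have written is, in effect, a sketch of the proofs of those two cited theorems rather than an alternative route.

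Your argument for the second isomorphism follows the standard Chevalley-restriction strategy that underlies Gan's theorem: Le~Bruyn--Procesi for surjectivity and density of the diagonal orbit for injectivity. The one step you should tighten is the commutation of $\mathbf{Z}=\mathbf{G}^{(1)}\cdots\mathbf{G}^{(\ell)}$ and $\mathbf{W}=\mathbf{H}^{(\ell)}\cdots\mathbf{H}^{(1)}$: your telescoping shows each commutes with $\theta_1$, but the direct claim $[\mathbf{Z},\mathbf{W}]=0$ needs a separate (still elementary) telescoping computation. Also, density of the distinct-eigenvalue locus inside the full zero set of $I_\ell'$ is not automatic; what you actually need (and what Gan proves) is that every \emph{closed} $GL_n^{\oplus\ell}$-orbit meets $\mathcal{L}$, which is weaker. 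Your argument for the first isomorphism is exactly the mechanism behind Crawley-Boevey's lemma, and you correctly flag that the classification of simples of the level-$0$ preprojective algebra is the substantive input.

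In short: the paper treats this proposition as a black box by citing \cite{CBDec} and \cite{GanChev}; you have unpacked those black boxes. Both are legitimate, but for the purposes of this paper the citation is the intended proof, and your longer argument would be better placed as an appendix or omitted with the same references.
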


\begin{proof}
The isomorphism $\CC[\mathfrak{M}_\ell]\cong\CC[\mathfrak{M}'_\ell]$ is Lemma 2.3 of \cite{CBDec}. 
The second isomorphism $\CC[\mathfrak{M}'_\ell]\cong\mathcal{R}_\ell^{\Sigma_n}$ is the main result of \cite{GanChev}.
\end{proof}

$\mathcal{R}_\ell^{\Sigma_n}$ inherits a grading where
\[
\deg(z_i)=\deg(w_i)=1
\]
For any graded vector space $V$, we denote by $\dim^k V$ the dimension of its $k$th graded piece.
Finally, recall the natural map $\pi:(\DD_\ell^\star)^{\UU^{\otimes\ell}}\rightarrow\AAA_\ell(\ZZZ,\mathsf{K})$ from Proposition \ref{DGen}.

\begin{lem}\label{QTraceLem}
Let $\mathsf{K}$ be such that $\mathsf{K}_a\big|_{\qqq=1}=1$ for all $a$.
\begin{enumerate}
\item We have an isomorphism of rings:
\[
\mathrm{gr}_{F^\DD}\left( \left(\MM_\ell\big/I_{\ZZZ\mathsf{K}}^+\right)^{\UU^{\otimes\ell}}\bigg|_{\qqq=1}\right)\cong\CC[\mathfrak{M}_\ell]
\]
\item For any $k\in\ZZ_{\ge 0}$, we have 
\begin{equation*}
\dim^k_{\CC(\qqq)} \mathrm{gr}_{F^\DD}\left( \left(\MM_\ell\big/I_{\ZZZ\mathsf{K}}^+\right)^{\UU^{\otimes\ell}}\right)\le \dim^k_{\CC}\mathcal{R}_\ell^{\Sigma_n}
\end{equation*}
\item $\AAA_\ell(\ZZZ,\mathsf{K})$ is generated by the quantum traces
\begin{equation}
\tr_\qqq\left( (\XX^\circ)^{k_1}(\YLu^{(1)})^{k_2}(\DDD^\circ)^{k_3} \right)
\label{TrXYD}
\end{equation}
where $\XX^\circ$ and $\DDD^\circ$ are defined in (\ref{XDCircDef}).
\end{enumerate}
\end{lem}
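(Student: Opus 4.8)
The plan is to handle the three parts in order, using the $\qqq=1$ degeneration of \ref{Class}--\ref{QTrace} together with the classical identification of $\mathfrak{M}_\ell$ in Proposition \ref{GanProp}; throughout I would follow the template of the corresponding statements for $\ell=0$ in \cite{QHarish}.

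For part (1), the relations (\ref{GrRel}) exhibit $\mathrm{gr}_{F^\DD}(\MM_\ell/I^+_{\ZZZ\mathsf{K}})$ as a quotient of $\MM_\ell$ by the ideal those relations generate. Since $F^\DD$ is a filtration by $\UU^{\otimes\ell}$-submodules and $(-)^{\UU^{\otimes\ell}}$ is exact on locally finite module algebras, taking invariants commutes with $\mathrm{gr}_{F^\DD}$; since $\MM_\ell^\ZZ$ is $\CC[\qqq^{\pm1}]$-free, reduction modulo $\qqq-1$ also commutes with $\mathrm{gr}_{F^\DD}$; and Corollary \ref{DegInv} gives $\mathrm{gr}_{F^\DD}\big((\MM_\ell/I^+_{\ZZZ\mathsf{K}})^{\UU^{\otimes\ell}}\big)|_{\qqq=1}=\big(\mathrm{gr}_{F^\DD}(\MM_\ell/I^+_{\ZZZ\mathsf{K}})|_{\qqq=1}\big)^{\gl_n^{\oplus\ell}}$. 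I would then identify the right-hand side with $\CC[\mathfrak{M}_\ell]$: the hypothesis $\mathsf{K}_a|_{\qqq=1}=1$ places us at the level $(\ZZZ_2/\ZZZ_1,\dots,\ZZZ_1/\ZZZ_\ell)$, whose product around the cycle is $1$; after rescaling the arrows by the $\ZZZ_a$'s, the relations (\ref{GrRel}) at $\qqq=1$ become the Crawley-Boevey--Shaw multiplicative preprojective relations for the framed quiver (\ref{CyclicQ}); these degenerate flatly onto the additive (Nakajima) relations (\ref{NakRel}) \cite{CBShaw}, and taking $\gl_n^{\oplus\ell}$-invariants then yields $\CC[\mathfrak{M}_\ell]$ by (the proof of) Proposition \ref{GanProp}, the framing being inessential at the level of invariants.

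Part (2) is then formal: the identification of part (1) gives $\dim^k_{\CC}\mathrm{gr}_{F^\DD}\big((\MM_\ell/I^+_{\ZZZ\mathsf{K}})^{\UU^{\otimes\ell}}\big)|_{\qqq=1}\le\dim^k_{\CC}\CC[\mathfrak{M}_\ell]=\dim^k_{\CC}\mathcal{R}_\ell^{\Sigma_n}$ by Proposition \ref{GanProp}, and by (\ref{DimEq}) the left-hand side equals $\dim^k_{\CC(\qqq)}\mathrm{gr}_{F^\DD}\big((\MM_\ell/I^+_{\ZZZ\mathsf{K}})^{\UU^{\otimes\ell}}\big)$. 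For part (3), Proposition \ref{DGen} reduces the claim to showing that the traces (\ref{TrXYD}) generate the image $\pi\big((\DD_\ell^\star)^{\UU^{\otimes\ell}}\big)$, and the quantum Le Bruyn--Procesi argument of \ref{QTrace} (classically \cite{LBProc}), applied to the subalgebra $\DD_\ell^\star$, tells us that $(\DD_\ell^\star)^{\UU^{\otimes\ell}}$ is generated by the quantum traces $\tr_\qqq(\mathbf{\Pi})$ of cyclic words $\mathbf{\Pi}$ in the matrices $\{\XX^{(a)},\DDD^{(a)}\}$. It then remains to straighten an arbitrary cyclic word based at the round vertex $1$ modulo $I_{\ZZZ\mathsf{K}}$. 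The main move is the vertex-$a$ moment-map relation $\mathbf{I}+\DDD^{(a-1)}\XX^{(a-1)}=\mathsf{C}_a^{-2}\big(\mathbf{I}+\XX^{(a)}\DDD^{(a)}\big)$ for $a\ne 1$, together with its $a=1$ analogue (which in addition expresses $\mathbf{d}\mathbf{x}$ through $\YLu^{(1)}$ and $\DDD^{(\ell)}\XX^{(\ell)}$): these let one transport any backtracking loop around the cycle and collapse it into a power of $\YLu^{(1)}=\mathbf{I}+\XX^{(1)}\DDD^{(1)}$, while the arrow directions of (\ref{CyclicQ}) force the only complete windings of the cycle to be copies of $\XX^\circ$ or $\DDD^\circ$. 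Thus every cyclic word is congruent modulo $I_{\ZZZ\mathsf{K}}$ to a word in $\XX^\circ$, $\YLu^{(1)}$ and $\DDD^\circ$; since $\XX^\circ=\Phi^\ell(\mathbf{A})$ and $\YLu^{(1)}=\Phi^\ell(\mathbf{B}^{-1})$ by Lemma \ref{D0DellLem}, one concludes as in \cite{QHarish}, using the braided cyclicity of $\tr_\qqq$ and the commutation relations among $\XX^\circ,\YLu^{(1)},\DDD^\circ$ to bring the factors into the order $(\XX^\circ)^{k_1}(\YLu^{(1)})^{k_2}(\DDD^\circ)^{k_3}$.

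The step I expect to be the main obstacle is the flat degeneration input in part (1): one must know that at the non-generic unit level $\prod_a \ZZZ_a/\ZZZ_{a-1}=1$ the multiplicative moment-map relations specialize without collapse onto the additive ones, so that the two invariant rings really do have equal Hilbert series. In practice the straightening argument of part (3) is the most laborious step, since the cyclicity of the quantum trace holds only up to $R$-matrix corrections and these must be carried through carefully, but it runs parallel to the corresponding computation in \cite{QHarish}.
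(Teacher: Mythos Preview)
Your part (2) is fine and matches the paper. Parts (1) and (3), however, diverge from the paper's argument, and in (1) there is a genuine gap.

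\textbf{Part (1).} You mischaracterize the relations (\ref{GrRel}). These are the relations holding in the \emph{associated graded} for $F^\DD$, where $\deg(x_{ij})=\deg(\del_{ij})=1$; the identity terms in $(\mathbf I+\XX^{(a)}\DDD^{(a)})^{-1}(\mathbf I+\DDD^{(a-1)}\XX^{(a-1)})$ have already been killed. So at $\qqq=1$ (hence $\mathsf{K}_a=1$) the relations (\ref{GrRel}) are \emph{already} additive Nakajima-type relations, not multiplicative preprojective relations; the detour through \cite{CBShaw} and a multiplicative-to-additive flat degeneration is both unnecessary and incorrect. More seriously, you assume without justification that (\ref{GrRel}) generate the full ideal of relations in $\mathrm{gr}_{F^\DD}(\MM_\ell/I^+_{\ZZZ\mathsf{K}})$: the associated graded of an ideal need not be generated by the leading terms of its generators. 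The paper sidesteps this entirely. It writes down an explicit map $f:\CC[\mathfrak{M}_\ell]\to\mathrm{gr}_{F^\DD}\big((\MM_\ell/I^+_{\ZZZ\mathsf{K}})^{\UU^{\otimes\ell}}|_{\qqq=1}\big)$ via $\mathbf{G}^{(a)}\mapsto\XX^{(a)}$, $\mathbf{H}^{(a)}\mapsto\ZZZ_a\DDD^{(a)}$, $\mathbf{j}\mapsto\mathbf{x}$, $\mathbf{i}\mapsto\DDD^{(\ell)}\XX^{(\ell)}\widetilde{\mathbf{d}}$, checks surjectivity via Proposition~\ref{DGen}, and for injectivity constructs a one-sided inverse $g$ after localizing at $\{\det\mathbf{G}^{(\ell)},\det\mathbf{H}^{(\ell)}\}$ (using that $\mathfrak{M}_\ell$ is an integral domain by Proposition~\ref{GanProp}). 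Your worry at the end about ``collapse at the unit level'' is thus a phantom: it evaporates once you realize the filtration has already made things additive.

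\textbf{Part (3).} Here you take a genuinely different route: you want to straighten an arbitrary cyclic word directly modulo $I_{\ZZZ\mathsf{K}}$, pushing backtracking loops around the cycle using the vertex moment-map relations. The paper instead proves part (3) as a corollary of part (1): under the isomorphisms of Proposition~\ref{GanProp} and part (1), the Weyl generator $p_{k_1,k_2,k_3}$ of $\mathcal{R}_\ell^{\Sigma_n}$ corresponds to the $\qqq=1$ associated graded of the quantum trace (\ref{TrXYD}); Nakayama's lemma on each finite-dimensional $F_k^\DD$ then lifts generation to generic $\qqq$, and Proposition~\ref{DGen} finishes. Your approach could in principle work, but the quantum straightening with $R$-matrix corrections (as you acknowledge) is substantially more laborious than the paper's degeneration-plus-Nakayama argument, and you would still need the finite-dimensionality of the filtered pieces to make it rigorous.
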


\begin{proof}
Comparing (\ref{GrRel}) and (\ref{NakRel}), we have that
\begin{equation}
\begin{aligned}
\mathbf{G}^{(a)}&\mapsto\XX^{(a)},&
\mathbf{H}^{(a)}&\mapsto\ZZZ_a\DDD^{(a)},&
\mathbf{j}&\mapsto \mathbf{x},&
\mathbf{i}&\mapsto\DDD^{(\ell)}\XX^{(\ell)}\tilde{\mathbf{d}}
\end{aligned}
\label{NakQuant}
\end{equation}
defines a map
\begin{equation*}
f:\CC[\mathfrak{M}_\ell]\rightarrow \mathrm{gr}_{F^\DD}\left(\MM_\ell^{\UU^{\otimes\ell}}\big/(I_{\mathsf{C}}^+)^{\UU^{\otimes\ell}}\big|_{\qqq=1}\right)
\end{equation*}
(here, we are implicitly using Corollary \ref{DegInv}).
Its image contains $\mathrm{gr}_{F^\DD}\left( \pi\left( \DD_\ell^{\UU^{\otimes\ell}} \right)\big|_{\qqq=1} \right)$, and thus by Proposition \ref{DGen}, it is surjective.
Part (2) follows from
Proposition \ref{GanProp}.

For injectivity in part (1), we note that Proposition \ref{GanProp} implies that $\mathfrak{M}_\ell$ is reduced and irreducible.
Thus, its coordinate ring is an integral domain and includes into its localization at $\{\det(\mathbf{G}^{(\ell)}),\det(\mathbf{H}^{(\ell)})\}$.
We can then define a map
\[
g:\mathrm{gr}_{F^\DD}\left(\MM_\ell^{\UU^{\otimes\ell}}\big/(I_{\mathsf{C}}^+)^{\UU^{\otimes\ell}}\big|_{\qqq=1}\right)
\rightarrow \CC[\mathfrak{M}_\ell][\det(\mathbf{G}^{(\ell)})^{-1},\det(\mathbf{H}^{(\ell)})^{-1}]
\]
by
\begin{equation*}
\begin{aligned}
\XX^{(a)}&\mapsto\mathbf{G}^{(a)},&
\DDD^{(a)}&\mapsto \ZZZ_a^{-1}\mathbf{H}^{(a)},&
\mathbf{x}&\mapsto \mathbf{j},&
\widetilde{\mathbf{d}}&\mapsto \ZZZ_a\left(\mathbf{H}^{(\ell)}\mathbf{G}^{(\ell)}\right)^{-1}\mathbf{i}
\end{aligned}
\end{equation*}
Clearly, $f$ is a section of $g$ and thus $f$ is injective.

For part (3), recall the generator $p_{a,k,b}$ (\ref{CycPow}) of $\mathcal{R}_\ell^{\Sigma_n}$.
It is easy to see that composition of the isomorphisms from Proposition \ref{GanProp} together with (\ref{NakQuant}) sends $p_{k_1,k_2,k_3}$ to the associated graded of the quantum trace (\ref{TrXYD}) at $\qqq=1$.
By applying Nakayama's lemma to each $F_{k}^{\DD}\pi\left( \DD_\ell^{\UU^{\otimes\ell}} \right)$, which is finite-dimensional, we conclude that the given quantum traces generate $\pi\left( \DD_\ell^{\UU^{\otimes\ell}} \right)$.
Finally, we apply Proposition \ref{DGen} to carry the generation statement over to $\AAA_\ell(\ZZZ,\mathsf{K})$.
\end{proof}

Now consider $\ell=0$.
Let
\begin{align*}
\MM_0^{\mathrm{IV}}&:=\DD_0^{\mathrm{IV}}\,\widetilde{\otimes}\, \WW\\
I_{\mathsf{K}}^{\mathrm{IV}}&:=I_{\mathsf{K}}\cap\MM_0^{\mathrm{IV}}
\end{align*}
The following is proved similarly to Lemma \ref{QTraceLem}(1):

\begin{prop}\label{QTraceLem0}
For $\mathsf{K}$ with $\mathsf{K}\big|_{\qqq=1}=1$, the map
\begin{equation*}
\begin{aligned}
\mathbf{G}^{(1)}&\mapsto\mathbf{A},&
\mathbf{H}^{(1)}&\mapsto \mathbf{B}^{-1},&
\mathbf{j}&\mapsto \mathbf{x},&
\mathbf{i}&\mapsto \mathbf{B}^{-1}\mathbf{A}\tilde{\mathbf{d}}
\end{aligned}
\end{equation*}
gives an isomorphism
\[
\left( \MM_0^{\mathrm{IV}}\big/I_{\mathsf{K}}^{\mathrm{IV}} \right)^\UU\bigg|_{\qqq=1}\cong\CC[\mathfrak{M}_1]
\]
If we endow $\MM_0^{\mathrm{IV}}$ with the grading where
\begin{equation*}
\begin{aligned}
\deg(a_{ij})&= \deg(b_{ij})=1,&
\deg(x_i)&= \deg(\del_i)=0
\end{aligned}
\end{equation*}
then composing this isomorphism with that of Proposition \ref{GanProp} gives a graded isomorphism with $\mathcal{R}_1^{\Sigma_n}$.
\end{prop}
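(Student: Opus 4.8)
The plan is to follow the proof of Lemma \ref{QTraceLem}(1) almost verbatim; the one simplification in the $\ell=0$ case is that the relevant moment-map constraint can be rewritten so as to lie already inside $\MM_0^{\mathrm{IV}}$ and be homogeneous for the stated grading, so one works directly with $\bigl(\MM_0^{\mathrm{IV}}/I_\mathsf{K}^{\mathrm{IV}}\bigr)^\UU\big|_{\qqq=1}$ rather than with an associated graded. I would first set up the homomorphism. The defining relations of $\DD_0^{\mathrm{IV}}$---the conjugated reflection equations for $\mathbf{A}$ and $\mathbf{B}^{-1}$ and their cross-relation---and those of $\WW$ are quadratic, hence homogeneous for $\deg(a_{ij})=\deg(b'_{ij})=1$, $\deg(x_i)=\deg(\del_i)=0$; since the $\UU$-action and the braiding respect these degrees, $\MM_0^{\mathrm{IV}}$ is graded. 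Rewriting $\mu_{\MM_0}(\mathbf{M})=\qqq^{-2}\mathbf{B}\mathbf{A}^{-1}\mathbf{B}^{-1}\mathbf{A}(\mathbf{I}+(\qqq^2-1)\mathbf{d}\mathbf{x})=\mathsf{K}^{-2}\mathbf{I}$ and using $(\qqq^2-1)\mathbf{d}=\qqq\widetilde{\mathbf{d}}$, one sees that $I_\mathsf{K}^{\mathrm{IV}}=I_\mathsf{K}\cap\MM_0^{\mathrm{IV}}$ contains the entries of
\[
\mathbf{B}^{-1}\mathbf{A}\bigl(\mathbf{I}+\qqq\,\widetilde{\mathbf{d}}\mathbf{x}\bigr)-\qqq^{2}\mathsf{K}^{-2}\mathbf{A}\mathbf{B}^{-1},
\]
which already lie in $\MM_0^{\mathrm{IV}}$ and are homogeneous of degree $2$. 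Setting $\qqq=1$ and using $\mathsf{K}\big|_{\qqq=1}=1$ turns this into $\mathbf{B}^{-1}\mathbf{A}+\mathbf{B}^{-1}\mathbf{A}\,\widetilde{\mathbf{d}}\mathbf{x}=\mathbf{A}\mathbf{B}^{-1}$, which is exactly relation (\ref{NakRel}) for $\ell=1$ under $\mathbf{G}^{(1)}\mapsto\mathbf{A}$, $\mathbf{H}^{(1)}\mapsto\mathbf{B}^{-1}$, $\mathbf{j}\mapsto\mathbf{x}$, $\mathbf{i}\mapsto\mathbf{B}^{-1}\mathbf{A}\widetilde{\mathbf{d}}$. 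Since these images are $\UU$-invariant and carry the degrees of $z_i,w_i$ under Proposition \ref{GanProp}, this defines a graded ring homomorphism $F\colon\CC[\mathfrak{M}_1]\to\bigl(\MM_0^{\mathrm{IV}}/I_\mathsf{K}^{\mathrm{IV}}\bigr)^\UU\big|_{\qqq=1}$.

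To see $F$ is surjective I would first establish the $\ell=0$ analogue of Proposition \ref{DGen} for the fourth-quadrant algebra: the natural map $\bigl(\DD_0^{\mathrm{IV}}\bigr)^\UU\to\bigl(\MM_0^{\mathrm{IV}}/I_\mathsf{K}^{\mathrm{IV}}\bigr)^\UU$ is surjective, by rerunning the argument of Lemma 5.7 of \cite{QHarish} and using the rewritten moment-map relation to express the quantum Weyl generators, modulo $I_\mathsf{K}^{\mathrm{IV}}$, through the entries of $\mathbf{A}$ and $\mathbf{B}^{-1}$. Then at $\qqq=1$, the Le Bruyn--Procesi theorem \cite{LBProc} together with Nakayama's lemma on each graded piece show that $\bigl(\DD_0^{\mathrm{IV}}\bigr)^\UU\big|_{\qqq=1}$ is generated by traces of cyclic words in $\mathbf{A}$ and $\mathbf{B}^{-1}$; each such trace is the $F$-image of the trace of the corresponding word in $\mathbf{G}^{(1)},\mathbf{H}^{(1)}$, and those traces generate $\CC[\mathfrak{M}_1]\cong\mathcal{R}_1^{\Sigma_n}$ by Proposition \ref{GanProp}. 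Hence $F$ is onto.

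For injectivity I would repeat the section argument of Lemma \ref{QTraceLem}(1). By Proposition \ref{GanProp}, $\CC[\mathfrak{M}_1]\cong\mathcal{R}_1^{\Sigma_n}=\CC[z_1,\dots,z_n,w_1,\dots,w_n]^{\Sigma_n}$ is a domain, so $\mathfrak{M}_1$ is reduced and irreducible and $\CC[\mathfrak{M}_1]$ embeds into its localization at $\det(\mathbf{G}^{(1)})$ and $\det(\mathbf{H}^{(1)})$. I would then build a ring homomorphism from $\MM_0^{\mathrm{IV}}\big|_{\qqq=1}\cong\CC[\mathbb{M}_1]$ (Proposition \ref{D0Iso}) to $\CC[\mathfrak{M}_1]\bigl[\det(\mathbf{G}^{(1)})^{-1},\det(\mathbf{H}^{(1)})^{-1}\bigr]$ by $\mathbf{A}\mapsto\mathbf{G}^{(1)}$, $\mathbf{B}^{-1}\mapsto\mathbf{H}^{(1)}$, $\mathbf{x}\mapsto\mathbf{j}$, $\widetilde{\mathbf{d}}\mapsto(\mathbf{G}^{(1)})^{-1}(\mathbf{H}^{(1)})^{-1}\mathbf{i}$; a direct check (as in Lemma \ref{QTraceLem}(1)) shows it is $\gl_n$-equivariant and annihilates the entries exhibited above by (\ref{NakRel}), so---granting that those entries generate $I_\mathsf{K}^{\mathrm{IV}}$, see below---it descends to a ring homomorphism $G$ on $\bigl(\MM_0^{\mathrm{IV}}/I_\mathsf{K}^{\mathrm{IV}}\bigr)^\UU\big|_{\qqq=1}$. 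Since $G\circ F=\mathrm{id}$ on the generators of $\CC[\mathfrak{M}_1]$, $F$ is injective, hence an isomorphism; and all maps are graded---the grading on $\CC[\mathfrak{M}_1]$ with $\deg\mathbf{G}^{(1)}=\deg\mathbf{H}^{(1)}=1$ matches $\deg z_i=\deg w_i=1$ under Proposition \ref{GanProp}---so composing $F^{-1}$ with the isomorphism of Proposition \ref{GanProp} yields the asserted graded isomorphism onto $\mathcal{R}_1^{\Sigma_n}$.

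The step I expect to be the main obstacle is pinning down $I_\mathsf{K}^{\mathrm{IV}}=I_\mathsf{K}\cap\MM_0^{\mathrm{IV}}$ precisely as the homogeneous ideal generated by the entries displayed above---i.e.\ checking that intersecting the ambient moment-map ideal of $\MM_0^\star$ with the fourth-quadrant subalgebra introduces no new relations---since the descent of the section $G$, and hence the injectivity argument, rests on it. I would address it via the PBW/straightening theory of $\MM_0^{\mathrm{IV}}$ (Proposition \ref{D0Iso}) together with a dimension count: exactly as in Lemma \ref{QTraceLem}(1), flatness and the comparison with $\CC[\mathfrak{M}_1]\cong\mathcal{R}_1^{\Sigma_n}$ force the degenerate quotient to have the expected size in each graded degree.
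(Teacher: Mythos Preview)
Your proposal is correct and matches the paper's approach exactly---the paper simply says ``proved similarly to Lemma~\ref{QTraceLem}(1)''. Your final concern about $I_{\mathsf{K}}^{\mathrm{IV}}$ is not actually an obstacle: since your section $G$ extends naturally to the localization $\MM_0\big|_{\qqq=1}$ (where it manifestly kills the full moment-map ideal $I_{\mathsf{K}}\big|_{\qqq=1}$, generated there by the displayed entries), it automatically annihilates the intersection $I_{\mathsf{K}}^{\mathrm{IV}}\big|_{\qqq=1}\subset I_{\mathsf{K}}\big|_{\qqq=1}$ without any need to identify its generators explicitly.
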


\subsection{From $\ell$ to 0}
Recall that $I_{\ZZZ\mathsf{K}}^+:=I_{\ZZZ\mathsf{K}}\cap\MM_\ell$.
We begin by proving some identities in $\MM_\ell/I_{\ZZZ\mathsf{K}}^+$ for $\ell>1$.
The ideas presented already appeared in the proof of Lemma 5.2 of \cite{BEF}.
In the quantum setting, we have to reckon with the fact that $I_{\ZZZ\mathsf{K}}$ is only a \textit{left} ideal.
These identities will allow us to map $\AAA_\ell(\ZZZ,\mathsf{K})$ into a quotient of $\AAA_0(\mathsf{K}_1)$ for special values of $\mathsf{K}$.
For $\ell=1$, we already have the homomorphism $\Psi^1:\DD_1\rightarrow\DD_0$.

\subsubsection{Small corn-on-the-cob}
We start with the following:
\begin{lem}\label{SmallCob}
For $\ell>1$ and $2\le a\le\ell$, we have the following identity in $\MM_\ell/I_{\ZZZ\mathsf{K}}^+$:
\begin{equation}
\YLu^{(1)}\XX^{(1)}\cdots\XX^{(a-1)}\XX^{(a)}=\qqq^{-2n(a-1)}\left( \ZZZ_a\big/\ZZZ_1 \right)\mathsf{K}_2^{-2}\cdots\mathsf{K}_{a}^{-2}\XX^{(1)}\cdots\XX^{(a)}\YR^{(a)}
\label{Cob0}
\end{equation}
\end{lem}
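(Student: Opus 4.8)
The plan is to prove the identity by induction on $a$, where at each step we convert an expression of the form $\YLu^{(1)}(\text{word ending in }\XX^{(a)})$ into $(\text{word starting with }\XX^{(1)})\,\YR^{(a)}$ by pushing the $\YLu^{(1)}$-factor rightward past the $\XX$'s one at a time, using the commutation relations (\ref{YL1})--(\ref{YL6}) for ${}^L\YY^{(a)}$ together with the defining relation (\ref{GrRel}) coming from the moment-map ideal $I_{\ZZZ\mathsf{K}}^+$. The base case $a=2$ is the heart of the matter: starting from $\YLu^{(1)}\XX^{(1)}\XX^{(2)}$, I would first use (\ref{YL2}) (with $a=1$), i.e. ${}^L\YY^{(1)}_1\DDD^{(1)}_2=\DDD^{(1)}_2\RRR_{21}{}^L\YY^{(1)}_1\RRR$, together with the definition $\YLu^{(1)}=\mathbf{I}+\XX^{(1)}\DDD^{(1)}$, to understand how $\YLu^{(1)}$ interacts with $\XX^{(1)}$; then invoke the relation in $\MM_\ell/I_{\ZZZ\mathsf{K}}^+$ that says (after matrix manipulations and using (\ref{MuEll})) $\mu_{\MM_\ell}^{(2)}(\mathbf{M})$ acts by the scalar $\mathsf{C}_2^{-2}=(\ZZZ_2/\ZZZ_1)^{-1}\mathsf{K}_2^{-2}$, which is exactly the statement $\YLu^{(2)}$ equals, up to this scalar, $\YR^{(1)}$ modulo the ideal. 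The factor $\qqq^{-2n}$ should emerge from the quantum-trace-style normalization hidden in passing $\RRR$-matrices through, precisely as in the definition $\tr_\qqq$ and in the $\qqq^{-2}$ appearing in the $a=1$ branch of (\ref{MuEll}); I expect to track this by contracting the auxiliary $\mathbb{V}$-tensorands at the end.

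More concretely, I would work with the matrices carrying explicit tensor-leg subscripts and perform the following bookkeeping. Write $\mathbf{P}_a:=\XX^{(1)}\cdots\XX^{(a)}$. The inductive hypothesis gives $\YLu^{(1)}\mathbf{P}_{a-1}=c_{a-1}\,\mathbf{P}_{a-1}\YR^{(a-1)}$ with $c_{a-1}=\qqq^{-2n(a-2)}(\ZZZ_{a-1}/\ZZZ_1)\mathsf{K}_2^{-2}\cdots\mathsf{K}_{a-1}^{-2}$. Multiplying on the right by $\XX^{(a)}$, I need to move $\YR^{(a-1)}$ past $\XX^{(a)}$: relation (\ref{YR3}) governs $\YR^{(a-1)}$ against $\XX^{(a)}$ (since $(a-1)+1=a$), producing a conjugated $\XX^{(a)}\YR^{(a-1)}$ with $\RRR$-matrices that I then absorb. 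Finally I replace $\YR^{(a-1)}$ by $\YLu^{(a)}$ up to the scalar $\mathsf{C}_a^{-2}$ using the moment-map relation (\ref{GrRel}) valid in $\MM_\ell/I_{\ZZZ\mathsf{K}}^+$ for index $a\neq 1$, namely $\ZZZ_a\XX^{(a)}\DDD^{(a)}=\mathsf{K}_a^2\ZZZ_{a-1}\DDD^{(a-1)}\XX^{(a-1)}$, which rearranges to $\YLu^{(a)}=\mathbf{I}+\XX^{(a)}\DDD^{(a)}$ being scalar-proportional to $\mathbf{I}+\DDD^{(a-1)}\XX^{(a-1)}=\YR^{(a-1)}$. (One must be careful that (\ref{GrRel}) lives in the associated graded; but the needed identity in fact holds on the nose in $\MM_\ell/I_{\ZZZ\mathsf{K}}^+$ since the moment map equation $\mu^{(a)}_{\MM_\ell}(\mathbf{M})\equiv\mathsf{C}_a^{-2}\mathbf{I}$ is an exact relation there, not just a leading-order one.) Tracking the accumulated $\RRR$-matrix contractions contributes the extra $\qqq^{-2n}$ per step, giving $c_a=\qqq^{-2n}c_{a-1}\mathsf{K}_a^{-2}(\ZZZ_a/\ZZZ_{a-1})$, which telescopes to the claimed constant.

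The main obstacle I anticipate is the careful handling of the $\RRR$-matrices and auxiliary tensor legs when commuting ${}^L\YY^{(1)}$ and ${}^R\YY^{(a-1)}$ through a product of $\XX$'s: the relations (\ref{YL1})--(\ref{YR6}) introduce $\RRR$ and $\RRR_{21}$ factors that do not obviously cancel until one contracts, and one must ensure that the final answer has \emph{no} leftover $\RRR$-matrices, matching the clean statement of (\ref{Cob0}). This is exactly the kind of computation where a graphical-calculus argument (as in \ref{Representations}, \ref{D1Cross}) is cleanest: drawing $\YLu^{(1)}$, the strand of $\XX$'s, and $\YR^{(a)}$ as a braided tangle, the identity (\ref{Cob0}) becomes an isotopy, with the scalar $\qqq^{-2n(a-1)}$ accounting for the framing/ribbon twists picked up. I would present the $a=2$ case by this graphical method and then remark that the inductive step is the same picture with a longer strand, citing the analogous ``corn-on-the-cob'' computation in \cite{BEF} (proof of Lemma 5.2) for the pattern. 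The only genuinely new subtlety over \emph{loc. cit.} is that $I_{\ZZZ\mathsf{K}}$ is merely a left ideal, so each replacement of a $\YLu$/$\YR$ by a scalar must be done with that matrix appearing on the \emph{left} of whatever it multiplies; the order in which I move factors (always pushing $\YLu^{(1)}$ to the right, never pulling $\YR^{(a)}$ to the left) is chosen precisely to respect this.
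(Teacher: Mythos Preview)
Your induction on $a$ has a genuine gap: the inductive hypothesis
$\YLu^{(1)}\mathbf{P}_{a-1}=c_{a-1}\,\mathbf{P}_{a-1}\YR^{(a-1)}$ is an equality only in
$\MM_\ell/I_{\ZZZ\mathsf{K}}^+$, and since $I_{\ZZZ\mathsf{K}}^+$ is merely a \emph{left}
ideal, you are not permitted to right-multiply this identity by $\XX^{(a)}$. (In fact your
stated rule is backwards: for a left ideal, if $x-y\in I$ then $a(x-y)\in I$, so a
substitution is legal precisely when the element being replaced sits on the \emph{right},
not on the left, of everything else.) The paper's proof avoids this trap by never invoking
an inductive hypothesis that lives in the quotient. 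Instead it fixes $a$ from the outset and,
working entirely inside the single expression $\YLu^{(1)}\XX^{(1)}\cdots\XX^{(a)}$,
iterates the following: use the tautology $\YLu^{(b)}\XX^{(b)}=\XX^{(b)}\YR^{(b)}$, push
$\YR^{(b)}$ all the way to the far right via (\ref{YR3})--(\ref{YR4}), apply the moment-map
relation there (legal, since now nothing stands to its right), obtain $\YLu^{(b+1)}$, and
then commute it back leftwards via (\ref{YL1}) and (\ref{YL4}). Each round is pure algebra
in $\MM_\ell$ except for the single substitution, which always occurs at the rightmost slot.

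Two smaller points. First, your base-case sketch cites (\ref{YL2}), which governs
$\YL^{(1)}$ against $\DDD^{(1)}$; what you actually need for
$\YLu^{(1)}\XX^{(1)}$ is just the trivial identity
$(\mathbf{I}+\XX^{(1)}\DDD^{(1)})\XX^{(1)}=\XX^{(1)}(\mathbf{I}+\DDD^{(1)}\XX^{(1)})=\XX^{(1)}\YR^{(1)}$.
Second, the factor $\qqq^{-2n}$ is not an artifact of ``contracting auxiliary legs'' in some
vague sense: in the paper it arises from a specific ribbon isotopy (\ref{CornCob1}) after
separating the matrix product into tensor legs, and the scalar is computed from
(\ref{RibbonAct}) on $\mathbb{V}$ and $\mathbb{V}^*$. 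You will need to carry out that
graphical step explicitly rather than appeal to it.
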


\begin{proof}
First, we have
\begin{align}
\nonumber
\YLu^{(1)}\XX^{(1)}\cdots\XX^{(a)}&= \left( \mathbf{I}+\XX^{(1)}\DDD^{(1)} \right)\XX^{(1)}\cdots\XX^{(a)}\\
\label{Cob1}
&= \XX^{(1)}
\underbrace{\YR^{(1)}\XX^{(2)}\cdots\XX^{(a)}}
\end{align}
Let us look closer into the underbraced part of (\ref{Cob1}) without the matrix product between $\YR^{(1)}$ and $\XX^{(2)}$:
\begin{align*}
\underbrace{\YR^{(1)}_1\RRR\XX^{(2)}_2}_{(\ref{YR3})}
\cdots\XX^{(a)}_2
&= \RRR\XX_2^{(2)}
\underbrace{\YR^{(1)}_1\XX^{(3)}_2\cdots\XX^{(a)}_2}_{(\ref{YR4})}\\
&= \RRR\XX_2^{(2)}\cdots\XX_2^{(a)}\underbrace{\YR^{(1)}_1}_{(\ref{MuEll})}\\
&= \left(\ZZZ_2\big/\ZZZ_1\right)\mathsf{K}_2^{-2}\RRR\XX_2^{(2)}\underbrace{\cdots\XX_2^{(a)}\YLu^{(2)}_1}_{(\ref{YL4})}\\
&= \left(\ZZZ_2\big/\ZZZ_1\right)\mathsf{K}_2^{-2}\underbrace{\RRR\XX_2^{(2)}\YLu^{(2)}_1}_{(\ref{YL1})}
	\XX^{(3)}_2\cdots\XX_2^{(a)}\\
&= \left(\ZZZ_2\big/\ZZZ_1\right)\mathsf{K}_2^{-2}\YLu^{(2)}_1\RRR_{21}^{-1}\XX_2^{(2)}\cdots\XX_2^{(a)}
\end{align*}
Incorporating the matrix product, we have
\begin{equation}
\left(\YR^{(1)}_1\right)_{*j}\RRR\left( \XX_2^{(2)} \right)_{j*}\XX_2^{(3)}\cdots\XX_2^{(a)}
=
\left(\ZZZ_2\big/\ZZZ_1\right)\mathsf{K}_2^{-2}\left(\YLu_1^{(2)}\right)_{*j}\RRR_{21}^{-1}\left( \XX_2^{(2)} \right)_{j*}\XX_2^{(3)}\cdots\XX_2^{(a)}
\label{Cob2}
\end{equation}
Here, as in the proof of Lemma \ref{DellDet}, the subscripts $*j$ and $j*$ are the emphasize the matching indices for the matrix product.

Equation (\ref{Cob2}) can be depicted graphically as follows.
Let us view $\YR^{(1)}$, $\XX^{(b)}$, and $\YLu^{(2)}$ as maps $\mathbb{V}^*\otimes\mathbb{V}\rightarrow\DD_\ell$, and denote by $m_{\DD_\ell}$ the product in $\DD_\ell$.
We then have:
\begin{equation}
\includegraphics{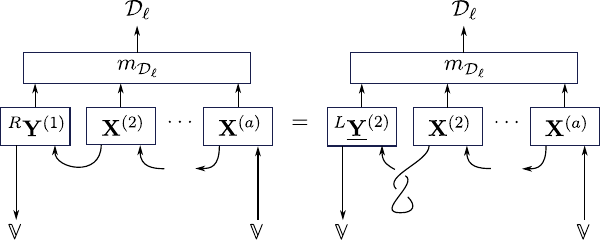}
\label{CornCob1}
\end{equation}
Using (\ref{RibbonAct}) to compute the effect of the ribbon twists, (\ref{Cob1}) becomes:
\[
\XX^{(1)}\YR^{(1)}\XX^{(2)}\cdots\XX^{(a)}=\qqq^{-2n}\left(\ZZZ_2\big/\ZZZ_1\right)\mathsf{K}_2^{-2}\XX^{(1)}\YLu^{(2)}\XX^{(2)}\cdots\XX^{(a)}
\]
We then continue as before but now starting with superscript index 2.
\end{proof}

\subsubsection{Small corn-on-the-cob and determinants}
We emphasize that the identity (\ref{Cob0}) is a consequence of the \textit{left} ideal $I_{\ZZZ\mathsf{K}}^+$.
Recall by Lemma \ref{DellDet} that $\det_\qqq(\XX^\circ)$ generates an Ore set in $\DD_\ell$.
It also generates an Ore set in $\MM_\ell$ because its commutation with $\WW$ is via the $R$-matrix.
Let $\DD_\ell^\circ$ and $\MM^\circ_\ell/I_{\ZZZ\mathsf{K}}^+$ denote the respective localizations at $\det_\qqq(\XX^\circ)$.
By Corollary \ref{XDetCor}, we can make sense of 
\[\XXu^{(a)}:=\left(\XX^{(a)}\right)^{-1}\]
in $\DD_\ell^\circ$, and thus by \textit{left} multiplication, the identity (\ref{Cob0}) yields
\begin{equation}
\XXu^{(a)}\cdots\XXu^{(1)}\YLu^{(1)}\XX^{(1)}\cdots\XX^{(a)}=\qqq^{-2n(a-1)}\left( \ZZZ_a\big/\ZZZ_1 \right)\mathsf{K}_2^{-2}\cdots\mathsf{K}_{a}^{-2}\YR^{(a)}
\label{Cob3}
\end{equation}
Again, because $I_{\ZZZ\mathsf{K}}^+$ is a left ideal, (\ref{Cob3}) does not \textit{a priori} allow us to identify corresponding products of entries of the two matrices.
Let $\YLu^{(1,a)}$ denote the matrix on the left hand side of (\ref{Cob3}).

\begin{lem}
In $\DD_\ell^\circ$, we have the identities:
\begin{align}
\label{Cob4}
\RRR_{21}\YLu^{(1,a)}_1\RRR\YLu^{(1,a)}_2&= \YLu^{(1,a)}_2\RRR_{21}\YLu^{(1,a)}_1\RRR\\
\label{Cob5}
\RRR_{21}\YR^{(a)}_1\RRR\YLu^{(1,a)}_1&= \YLu^{(1,a)}_2\RRR_{21}\YR^{(a)}_1\RRR
\end{align}
\end{lem}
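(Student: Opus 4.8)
The starting point is to recognize $\YLu^{(1,a)}$ as a conjugate of $\YR^{(1)}$. Since $\YLu^{(1)}=\mathbf{I}+\XX^{(1)}\DDD^{(1)}$, one has the $R$-matrix-free identity $\YLu^{(1)}\XX^{(1)}=\XX^{(1)}\big(\mathbf{I}+\DDD^{(1)}\XX^{(1)}\big)=\XX^{(1)}\,\YR^{(1)}$, and hence, after cancelling $\XXu^{(1)}\XX^{(1)}=\mathbf{I}$ in $\DD_\ell^\circ$ (legitimate by Corollary~\ref{XDetCor}),
\[
\YLu^{(1,a)}=\XXu^{(a)}\cdots\XXu^{(1)}\,\YLu^{(1)}\,\XX^{(1)}\cdots\XX^{(a)}=\mathbf{N}^{-1}\,\YR^{(1)}\,\mathbf{N},\qquad \mathbf{N}:=\XX^{(2)}\cdots\XX^{(a)} .
\]
By Proposition~\ref{DellMuProp}(1) the matrix $\YR^{(1)}=\mu_R^{(1)}(\mathbf{M})$ solves the reflection equation $\RRR_{21}\,\YR^{(1)}_1\RRR\,\YR^{(1)}_2=\YR^{(1)}_2\RRR_{21}\,\YR^{(1)}_1\RRR$, and (\ref{Cob4}) is the assertion that this is preserved by the conjugation $\mathbf{L}\mapsto\mathbf{N}^{-1}\mathbf{L}\mathbf{N}$. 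Thus both identities of the lemma are instances of ``gauge transformation of a reflection-equation solution'', and the whole proof is an $R$-matrix computation fed by a few commutation relations.

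First I would record those relations. (i) \emph{Interaction of $\YR^{(1)}$ with $\mathbf{N}$}: iterating (\ref{YR3}) for the factor $\XX^{(2)}$ and (\ref{YR4}) for $\XX^{(3)},\dots,\XX^{(a)}$ gives $\YR^{(1)}_1\RRR\,\mathbf{N}_2=\RRR\,\mathbf{N}_2\,\YR^{(1)}_1$, together with its tensor-slot-swapped companion $\YR^{(1)}_2\RRR_{21}\,\mathbf{N}_1=\RRR_{21}\,\mathbf{N}_1\,\YR^{(1)}_2$; equivalently, conjugating $\YR^{(1)}_1$ by $\mathbf{N}_2$ equals conjugating by $\RRR$. (ii) \emph{Internal braiding of $\mathbf{N}$}: iterating the $\OO_\ell$-relations (\ref{Xaa}), (\ref{XXad}), (\ref{XXcom}) by exactly the telescoping used for (\ref{X1circ})--(\ref{Xellcirc}) in the proof of Lemma~\ref{DellDet} yields the relation between $\mathbf{N}_1$ and $\mathbf{N}_2$. (iii) For (\ref{Cob5}): the interaction of $\YR^{(a)}$ with $\mathbf{N}$ --- only the last factor $\XX^{(a)}$ interacts, via (\ref{YR1}), while $\XX^{(2)},\dots,\XX^{(a-1)}$ commute with $\YR^{(a)}$ by (\ref{YR4}) --- and the braided commutation of $\YR^{(1)}$ with $\YR^{(a)}$, which is plain commutativity when the vertices $2$ and $a{+}1$ are non-adjacent and otherwise (i.e.\ $a=2$, or $a=\ell$ with $\ell$ large) must be produced by a short direct computation from (\ref{XXad}) and (\ref{DXadj1}). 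With (i)--(ii) in hand, (\ref{Cob4}) follows by substituting $\YLu^{(1,a)}=\mathbf{N}^{-1}\,\YR^{(1)}\,\mathbf{N}$ on both sides, pushing each $\mathbf{N}^{\pm1}$ through the intervening $R$-matrices and $\YR^{(1)}$'s until the conjugating $R$-matrices cancel in pairs, and invoking the reflection equation for $\YR^{(1)}$; (\ref{Cob5}) is the same, now also moving $\YR^{(a)}$ past $\mathbf{N}$ and past $\YR^{(1)}$ via (iii). (As a consistency check, when $a=\ell$ one has $\mathbf{N}=(\XX^{(1)})^{-1}\XX^\circ$, so $\YLu^{(1,\ell)}=(\XX^\circ)^{-1}\YLu^{(1)}\XX^\circ=\Phi^\ell\big(\mathbf{A}^{-1}\mathbf{B}^{-1}\mathbf{A}\big)$ by Lemma~\ref{D0DellLem}, and (\ref{Cob4}) reduces to the statement that $\mathbf{A}^{-1}\mathbf{B}^{-1}\mathbf{A}$ solves the reflection equation in $\DD_0^{\mathrm{IV}}$, which follows from (\ref{DAA}), (\ref{DBB}), (\ref{DAB}).)

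The main obstacle is the $R$-matrix bookkeeping in (ii) and in the final cancellation. Because $\mathbf{N}=\XX^{(2)}\cdots\XX^{(a)}$ is only a \emph{partial} segment of the cyclic word, it does not satisfy the clean reflection equation (\ref{DAA}) enjoyed by the full product $\XX^\circ$; one has to track precisely which of $\RRR$, $\RRR_{21}$ survives the telescoping (as happens in (\ref{X1circ})--(\ref{Xellcirc})) and verify that these are exactly the $R$-matrices annihilated when conjugating in tensor slots $1$ and $2$ --- with the quantum Yang--Baxter equation (\ref{QYBE}) and the Hecke condition (\ref{HeckeCond}) as the only tools. A secondary nuisance is the edge case $a=2$ of (\ref{Cob5}), where $\YR^{(1)}$ and $\YR^{(2)}$ sit at adjacent vertices of (\ref{CyclicQ}) and braid nontrivially; this is dispatched by a separate short calculation from the defining relations of $\DD_\ell$.
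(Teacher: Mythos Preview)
Your approach is correct and amounts to the same computation as the paper's, which peels off the conjugating $\XX^{(\bullet)}$ and $\XXu^{(\bullet)}$ layers from $\YLu^{(1,a)}$ one at a time via (\ref{Xaa})--(\ref{XXcom}) and (\ref{YL3})--(\ref{YL4}), leaving (\ref{Cob5}) as an exercise. Your preliminary cancellation $\YLu^{(1)}\XX^{(1)}=\XX^{(1)}\YR^{(1)}$ and the gauge-transformation framing are a pleasant reorganization, but the FRT relation for $\mathbf{N}$ and the braiding relation (i) that you need are precisely the $R$-matrix bookkeeping the paper performs directly.
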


\begin{proof}
These are straightforward applications of (\ref{Xaa})-(\ref{XXcom}), (\ref{YL3})-(\ref{YL4}), and (\ref{YR3})-(\ref{YR4}).
For (\ref{Cob4}), we have
\begin{align*}
&\quad\RRR_{21}\YLu^{(1,a)}_1\RRR\YLu^{(1,a)}_2\\
&= 
\RRR_{21}\XXu^{(a)}_1\cdots\XXu^{(1)}_1\YLu^{(1)}_1\XX^{(1)}_1\cdots
\underbrace{\XX^{(a)}_1\RRR\XXu^{(a)}_2}_{(\ref{Xaa})}
\cdots\XXu^{(1)}_2\YLu^{(1)}_2\XX^{(1)}_2\cdots\XX^{(a)}_2\\
&= 
\RRR_{21}\XXu^{(a)}_1\cdots\XXu^{(1)}_1\YLu^{(1)}_1\XX^{(1)}_1\cdots
\underbrace{\XX^{(a-1)}_1\XXu^{(a)}_2}_{(\ref{XXad})}
\underbrace{\RRR_{21}\XX^{(a)}_1\XXu^{(a-1)}_2}_{(\ref{XXad})}
\cdots\XXu^{(1)}_2\YLu^{(1)}_2\XX^{(1)}_2\cdots\XX^{(a)}_2\\
&= 
\RRR_{21}\XXu^{(a)}_1
\underbrace{\cdots\XXu^{(1)}_1\YLu^{(1)}_1\XX^{(1)}_1\cdots \XX^{(a-2)}_1\XXu^{(a)}_2}_{(\ref{XXcom})+(\ref{YL4})}
\XX^{(a-1)}_1\RRR
\XXu^{(a-1)}_2
\underbrace{\XX^{(a)}_1\XXu^{(a-2)}_2\cdots\XXu^{(1)}_2\YLu^{(1)}_2\XX^{(1)}_2\cdots}_{(\ref{XXcom})+(\ref{YL4})}
\XX^{(a)}_2\\
&= 
\RRR_{21}\XXu^{(a)}_1
\underbrace{\XXu^{(a-1)}_1\XXu^{(a)}_2}_{(\ref{XXad})}
\XXu^{(a-2)}\cdots\XXu^{(1)}_1\YLu^{(1)}_1\XX^{(1)}_1\cdots \XX^{(a-1)}_1\\
&\quad\times\RRR
\XXu^{(a-1)}_2\cdots\XXu^{(1)}_2\YLu^{(1)}_2\XX^{(1)}_2\cdots\XX^{(a-2)}_2
\underbrace{\XX^{(a)}_1\XX^{(a-1)}_2}_{(\ref{XXad})}
\XX^{(a)}_2\\
&=\underbrace{\RRR_{21}\XXu^{(a)}_1\XXu^{(a)}_2\RRR^{-1}}_{(\ref{Xaa})}
\XXu^{(a-1)}_1\cdots\XXu^{(1)}_1\YLu^{(1)}_1\XX^{(1)}_1\cdots \XX^{(a-1)}_1\\
&\quad\times\RRR
\XXu^{(a-1)}_2\cdots\XXu^{(1)}_2\YLu^{(1)}_2\XX^{(1)}_2\cdots\XX^{(a-1)}_2
\underbrace{\RRR_{21}\XX^{(a)}_1\XX^{(a)}_2}_{(\ref{Xaa})}\\
&=\XXu^{(a)}_2\XXu^{(a)}_1\cdots\XXu^{(1)}_1\YLu^{(1)}_1\XX^{(1)}_1\cdots \XX^{(a-1)}_1
\RRR
\XXu^{(a-1)}_2\cdots\XXu^{(1)}_2\YLu^{(1)}_2\XX^{(1)}_2\cdots\XX^{(a)}_2\XX^{(a)}_1\RRR
\end{align*}
The commutation of the remaining matrices works \textit{mutatis mutandis}.
Equation (\ref{Cob5}) is much easier to prove, and we leave it as an exercise.
\end{proof}

\begin{cor}\label{CobDetCor1}
In $\MM^\circ_\ell/I_{\ZZZ\mathsf{K}}^+$, for any $k\ge 0$, we have
\[
\YLu_1^{(1,a)}\cdots \YLu_k^{(1,a)}=\qqq^{-2kn(a-1)}\left( \ZZZ_a\big/\ZZZ_1 \right)^{k}\left( \mathsf{K}_2^{-2}\cdots\mathsf{K}_a^{-2} \right)^k\YR^{(a)}_1\cdots\YR^{(a)}_k
\]
\end{cor}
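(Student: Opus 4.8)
The plan is to induct on $k$, the cases $k\le 1$ being the base: $k=0$ is the empty product $1=1$, and $k=1$ is exactly equation (\ref{Cob3}). Throughout, abbreviate $c:=\qqq^{-2n(a-1)}(\ZZZ_a/\ZZZ_1)\mathsf{K}_2^{-2}\cdots\mathsf{K}_a^{-2}$, so that (\ref{Cob3}) reads $\YLu^{(1,a)}\equiv c\,\YR^{(a)}$ in $\MM^\circ_\ell/I_{\ZZZ\mathsf{K}}^+$. The one point needing care is the one already flagged in the text: $I_{\ZZZ\mathsf{K}}^+$ is only a \emph{left} ideal, so a factor $\YLu^{(1,a)}$ inside a product may be replaced by $c\,\YR^{(a)}$ modulo $I_{\ZZZ\mathsf{K}}^+$ only when it occupies the \emph{rightmost} tensor slot of that product — left multiplication by anything preserves $I_{\ZZZ\mathsf{K}}^+$, but right multiplication by an arbitrary element does not.

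For the inductive step I would factor $\YLu^{(1,a)}_1\cdots\YLu^{(1,a)}_{k+1}=\bigl(\YLu^{(1,a)}_1\cdots\YLu^{(1,a)}_k\bigr)\,\YLu^{(1,a)}_{k+1}$ and apply (\ref{Cob3}) in the last slot, which is legitimate by the left-ideal remark; this produces $c\,\bigl(\YLu^{(1,a)}_1\cdots\YLu^{(1,a)}_k\bigr)\,\YR^{(a)}_{k+1}$ modulo $I_{\ZZZ\mathsf{K}}^+$. To bring the induction hypothesis to bear on the block $\YLu^{(1,a)}_1\cdots\YLu^{(1,a)}_k$ one must first move $\YR^{(a)}_{k+1}$ out of the way, and here the relations (\ref{Cob4}) and (\ref{Cob5}) do the work: (\ref{Cob4}) says the entries of $\YLu^{(1,a)}$ obey the reflection equation relation, $\mathbf{M}\mapsto\YR^{(a)}$ is a homomorphism out of $\mathcal{R}\hbox{\it ef}$ by Proposition \ref{DellMuProp}, and (\ref{Cob5}) is precisely the mixed reflection equation relation between $\YR^{(a)}$ and $\YLu^{(1,a)}$ — together these exhibit $\YR^{(a)}$ and $\YLu^{(1,a)}$ as generating a single reflection-equation-type algebra. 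Hence $\YR^{(a)}_{k+1}$ can be slid all the way to the left of the block, the block emerging conjugated only by $R$-matrices acting on the auxiliary $\mathbb{V}$-factors. Since those $R$-matrices have coefficients in $\CC(\qqq)$ and $I_{\ZZZ\mathsf{K}}^+$ is a $\CC(\qqq)$-subspace, conjugating the induction hypothesis $\YLu^{(1,a)}_1\cdots\YLu^{(1,a)}_k\equiv c^{k}\YR^{(a)}_1\cdots\YR^{(a)}_k$ by them preserves the congruence, so the substitution is valid; sliding $\YR^{(a)}_{k+1}$ back to the rightmost slot using the reflection equation relation for $\YR^{(a)}$ then undoes the $R$-matrix moves made on the way out, leaving $c^{k+1}\YR^{(a)}_1\cdots\YR^{(a)}_{k+1}$.

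The main obstacle will be the $R$-matrix bookkeeping in this sliding argument — verifying that the conjugating $R$-matrices collected while commuting $\YR^{(a)}_{k+1}$ leftward through the $\YLu^{(1,a)}$-block are exactly cancelled when it is commuted back rightward through the resulting $\YR^{(a)}$-block. This cancellation is forced by the fact that (\ref{Cob5}) has the same shape as the reflection equation relation obeyed by $\YR^{(a)}$ with itself, so the two passages are inverse moves inside one reflection-equation algebra. If one prefers to avoid the round trip, an equivalent route is to bring both $\YLu^{(1,a)}_1\cdots\YLu^{(1,a)}_{k+1}$ and the target $c^{k+1}\YR^{(a)}_1\cdots\YR^{(a)}_{k+1}$ into a fixed normal-ordered form using the reflection equation relations (\ref{Cob4}) and the $\YR^{(a)}$-self relation, and then to convert the $\YLu^{(1,a)}$-factors to $\YR^{(a)}$-factors one at a time starting from the right, each conversion being an instance of (\ref{Cob3}) preceded by at most scalar $R$-matrix conjugations.
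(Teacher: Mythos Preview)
Your proposal is correct and follows essentially the same approach as the paper. The paper's proof is terser and phrased graphically, but the mechanism is identical: one uses (\ref{Cob3}) on the rightmost factor (the only place the left ideal $I_{\ZZZ\mathsf{K}}^+$ permits substitution), then exploits the fact that (\ref{Cob4}), (\ref{Cob5}), and the self-relation for $\YR^{(a)}$ are all of the same reflection-equation shape, so that sliding a factor out through one type of matrix and back through the other collects and then cancels the same $R$-matrix dressing. Your inductive organization (move $\YR^{(a)}_{k+1}$ left, apply the hypothesis, move it back) and the paper's iterative organization (move $\YR^{(a)}$ right, substitute via (\ref{Cob3}), reverse via (\ref{Cob4})) are two bookkeepings of the same cancellation.
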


\begin{proof}
This is a consequence of (\ref{Cob4}) and (\ref{Cob5}) being of the same form.
Graphically depicting (\ref{Cob5}), we can move $\YR^{(a)}$ to the right, apply relation (\ref{Cob3}) $\YLu^{(1)}$, and then by (\ref{Cob4}) apply the reverse graphical move to place $\YLu^{(1)}$ in the original location.
For $k=2$, this looks like:
\[
\includegraphics{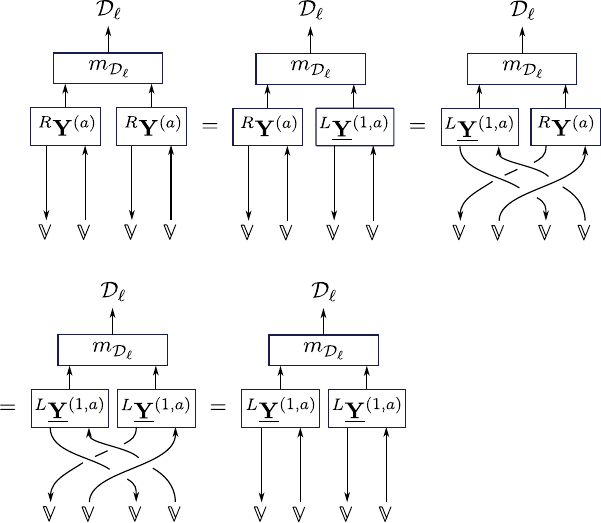}
\]
Here, we have omitted the extra scalars from (\ref{Cob1}).
\end{proof}

\begin{cor}\label{CobDetCor2}
For $\ell>1$ and any $a$, $\det_\qqq(\YR^{(a)})^{-1}$ and $\det_\qqq(\YLu^{(a)})^{-1}$ lie in the localization
\[
\textstyle\MM_\ell^\circ\big/I_{\ZZZ\mathsf{K}}^+\left[ \det_\qqq(\YLu^{(1)})^{-1} \right]=\MM_\ell/I_{\ZZZ\mathsf{K}}^+\left[ \det_\qqq(\XX^\circ)^{-1},\det_\qqq(\YLu^{(1)})^{-1} \right]
\]
\end{cor}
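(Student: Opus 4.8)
The plan is to reduce the whole statement to the single fact that $\det_\qqq(\YLu^{(1)})$ has been inverted, by propagating the ``small corn-on-the-cob'' identities around the cyclic quiver. First I note that $\MM^\circ_\ell/I_{\ZZZ\mathsf{K}}^+=\MM_\ell/I_{\ZZZ\mathsf{K}}^+[\det_\qqq(\XX^\circ)^{-1}]$ holds by definition, so the displayed equality of localizations is automatic and only the invertibility assertions need proof; write $R:=\MM^\circ_\ell/I_{\ZZZ\mathsf{K}}^+[\det_\qqq(\YLu^{(1)})^{-1}]$. I would dispose of the matrices $\YLu^{(a)}$ at the outset: since $\YLu^{(a)}=\mathbf{I}+\XX^{(a)}\DDD^{(a)}$ and $\YR^{(a)}=\mathbf{I}+\DDD^{(a)}\XX^{(a)}$ have the shapes $\mathbf{I}+\mathbf{U}\mathbf{V}$ and $\mathbf{I}+\mathbf{V}\mathbf{U}$ for the Weyl-type pair $(\XX^{(a)},\DDD^{(a)})$ of relation (\ref{DXa}), the attendant quantum Sylvester identity (cf.~\cite{JordanMult}) shows $\det_\qqq(\YLu^{(a)})$ and $\det_\qqq(\YR^{(a)})$ differ by a nonzero scalar; in particular $\det_\qqq(\YR^{(1)})$ is already a unit in $R$. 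Thus it suffices to prove that $\det_\qqq(\YR^{(a)})$ is a unit in $R$ for every $2\le a\le\ell$.

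Fix such an $a$ and abbreviate $c_a:=\qqq^{-2n(a-1)}(\ZZZ_a/\ZZZ_1)\mathsf{K}_2^{-2}\cdots\mathsf{K}_a^{-2}$, a nonzero scalar. Corollary \ref{CobDetCor1} with $k=n$ gives
\[
\YLu^{(1,a)}_1\cdots\YLu^{(1,a)}_n=c_a^{\,n}\,\YR^{(a)}_1\cdots\YR^{(a)}_n
\]
in $\MM^\circ_\ell/I_{\ZZZ\mathsf{K}}^+$. Both $\YLu^{(1,a)}$ (by (\ref{Cob4})) and $\YR^{(a)}$ (by Proposition \ref{DellMuProp}(1)) satisfy the reflection equation, so each generates a copy of $\mathcal{R}\hbox{\it ef}\,$ and has a well-defined quantum determinant, which is cut out of the $n$-fold product of the matrix --- after the appropriate $R$-matrix conjugations --- by the $\qqq$-antisymmetrizer on $\mathbb{V}^{\otimes n}$ (cf.~\cite{JorWhite,KlimSchm}). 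Applying these same constant matrices to both sides of the displayed identity and using $c_a\neq 0$, I would conclude
\[
\det_\qqq\bigl(\YLu^{(1,a)}\bigr)=c_a^{\,n}\,\det_\qqq\bigl(\YR^{(a)}\bigr)
\]
in $\MM^\circ_\ell/I_{\ZZZ\mathsf{K}}^+$, so that it remains only to see that $\det_\qqq(\YLu^{(1,a)})$ is a unit in $R$.

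For this, recall $\YLu^{(1,a)}=\XXu^{(a)}\cdots\XXu^{(1)}\,\YLu^{(1)}\,\XX^{(1)}\cdots\XX^{(a)}$. By Corollary \ref{XDetCor} the $\XX^{(b)}$, and hence the $\XXu^{(b)}$, are invertible matrices over $\DD^\circ_\ell$, while $\YLu^{(1)}$ becomes an invertible matrix once $\det_\qqq(\YLu^{(1)})$ is inverted (Proposition \ref{DellMuProp}(2)--(3): its entries generate a copy of $\iota_{\OO_G}(\mathcal{R}\hbox{\it ef}\,)$, and inverting that determinant makes the matrix invertible just as in $\OO_G$). Being a product of invertible matrices, $\YLu^{(1,a)}$ is an invertible matrix over $\DD^\circ_\ell[\det_\qqq(\YLu^{(1)})^{-1}]$, and since it also satisfies the reflection equation (\ref{Cob4}), its quantum determinant is a unit there --- for a reflection-equation matrix, matrix-invertibility is equivalent to invertibility of the quantum determinant (cf.~\cite{JorWhite} and the discussion following the definition of $\mathcal{R}\hbox{\it ef}\,$). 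Passing to the image in $R$ then yields $\det_\qqq(\YR^{(a)})^{-1}=c_a^{\,n}\,\det_\qqq(\YLu^{(1,a)})^{-1}\in R$, finishing the argument. The step I expect to be the main obstacle is the middle one: turning the matrix-\emph{product} identity of Corollary \ref{CobDetCor1} into an honest identity of quantum \emph{determinants} of reflection-equation (rather than FRT) matrices, and, relatedly, nailing down both the quantum Sylvester identity used above and the equivalence ``matrix invertible $\iff$ quantum determinant a unit'' for reflection-equation matrices --- facts that are standard in the reflection-equation literature but must be applied with care here.
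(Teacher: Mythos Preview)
Your overall architecture matches the paper's: reduce to invertibility of $\det_\qqq(\YR^{(a)})$, use Corollary~\ref{CobDetCor1} and (\ref{Cob4}) to identify $\det_\qqq(\YLu^{(1,a)})=C\det_\qqq(\YR^{(a)})$ for a nonzero scalar $C$, and then observe that $\det_\qqq(\YLu^{(1,a)})$ is invertible once $\det_\qqq(\YLu^{(1)})$ is, since $\YLu^{(1,a)}$ is a conjugate of $\YLu^{(1)}$ by matrices already invertible in $\DD_\ell^\circ$.

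The one genuine divergence is your first reduction. You relate $\det_\qqq(\YLu^{(a)})$ to $\det_\qqq(\YR^{(a)})$ (same $a$) via a quantum Sylvester identity for the pair $(\XX^{(a)},\DDD^{(a)})$. The paper instead relates $\det_\qqq(\YLu^{(a)})$ to $\det_\qqq(\YR^{(a-1)})$ (index shifted) using that $\Delta_{\OO_G}(\mathrm{qcoev}_{\mathbb{1}_\qqq}(1))=\mathrm{qcoev}_{\mathbb{1}_\qqq}(1)\otimes\mathrm{qcoev}_{\mathbb{1}_\qqq}(1)$: since $\mu_{\DD_\ell}^{(a)}=(\mu_L^{(a)}\otimes\mu_R^{(a-1)})\circ\Delta_{\OO_G}$, the grouplike property of the quantum determinant gives $\det_\qqq(\YLu^{(a)})=\det_\qqq(\YR^{(a-1)})$ in $\MM_\ell/I_{\ZZZ\mathsf{K}}^+$ (up to the character value). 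This is cleaner because it uses only structures already built in the paper, whereas your route imports an external Sylvester-type identity relating determinants of a $\iota_{\OO_G}(\mathcal{R}\hbox{\it ef}\,)$-matrix and a $\mathcal{R}\hbox{\it ef}\,$-matrix --- two \emph{different} reflection-equation algebras --- which you would need to state and justify precisely. Your self-flagged concern about passing from the $n$-fold matrix product of Corollary~\ref{CobDetCor1} to an identity of quantum determinants is exactly the step the paper also leaves implicit; both proofs rely on the same reflection-equation formalism there.
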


\begin{proof}
Because 
\[\Delta_{\OO_G}(\mathrm{qcoev}_{\mathbb{1}_\qqq}(1))=\mathrm{qcoev}_{\mathbb{1}_\qqq}(1)\otimes\mathrm{qcoev}_{\mathbb{1}_\qqq}(1)\]
it follows that in $\MM_\ell\big/I_{\ZZZ\mathsf{K}}^+$, we have
\[
\textstyle \det_\qqq(\YLu^{(a)})=\det_\qqq(\YR^{(a-1)})
\]
Thus, it suffices to only consider $\det_\qqq(\YR^{(a)})^{-1}$.
By (\ref{Cob4}), the entries of $\YLu^{(1,a)}$ generate a copy of $\mathcal{R}\hbox{\it ef}\,$, and thus it makes sense of $\det_\qqq(\YLu^{(1,a)})$.
Moreover, by Corollary \ref{CobDetCor1}, we have
\[
\textstyle\det_\qqq(\YLu^{(1,a)})=C\det_\qqq(\YR^{(a)})
\]
in $\MM_\ell^\circ/I_{\ZZZ\mathsf{K}}^+$ for some nonzero scalar $C\in\CC(\qqq)$.
The result follows from the observation that we have $\det_\qqq(\YLu^{(1,a)})^{-1}$ upon localizing at $\det_\qqq(\YLu^{(1)})$.
\end{proof}

\subsubsection{Big corn-on-the-cob}
Next, we consider a quantum analogue of Lemma 5.2 of \cite{BEF}.

\begin{lem}\label{BigCobLem1}
For $\ell>1$, the following equality holds in $\MM_\ell\big/I_\mathsf{C}^+$:
\begin{equation*}
\XX^{\circ}\DDD^{\circ}=\left(\YLu^{(1)}-\mathbf{I} \right)\left( \qqq^{2n}\left( \ZZZ_1\big/\ZZZ_2 \right)\mathsf{K}_2^{2}\YLu^{(1)}-\mathbf{I} \right)\cdots\left( \qqq^{2n(\ell-1)}\left( \ZZZ_1\big/\ZZZ_\ell \right)\mathsf{K}_2^{2}\cdots\mathsf{K}_\ell^{2}\YLu^{(1)}-\mathbf{I} \right)
\label{BCob1}
\end{equation*}
\end{lem}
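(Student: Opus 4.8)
The plan is to establish the identity by peeling the matrix factors $\XX^{(a)}\DDD^{(a)}$ out of the interior of
\[
\XX^\circ\DDD^\circ=\XX^{(1)}\cdots\XX^{(\ell)}\,\DDD^{(\ell)}\cdots\DDD^{(1)}
\]
one at a time, working from the innermost pair outward. Throughout we pass to the localization $\MM_\ell^\circ\big/I_{\ZZZ\mathsf{K}}^+$ at $\det_\qqq(\XX^\circ)$ (Lemma \ref{DellDet}), in which each $\XX^{(a)}$ is an invertible matrix by Corollary \ref{XDetCor}; since $\MM_\ell$ is a domain (by its PBW basis), $\MM_\ell/I_{\ZZZ\mathsf{K}}^+$ embeds into this localization, so it suffices to prove the identity there. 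We also note at the outset that every factor $c\,\YLu^{(1)}-\mathbf{I}$ appearing on the right-hand side is a polynomial in the single matrix $\YLu^{(1)}$; hence these factors commute with one another, and the order in which we peel is immaterial.

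For $1\le m\le\ell$ put $P_m:=\XX^{(1)}\cdots\XX^{(m)}\DDD^{(m)}\cdots\DDD^{(1)}$, so that $P_1=\XX^{(1)}\DDD^{(1)}=\YLu^{(1)}-\mathbf{I}$ and $P_\ell=\XX^\circ\DDD^\circ$. The lemma follows by iterating the recursion
\[
P_m=\left(c_m\,\YLu^{(1)}-\mathbf{I}\right)P_{m-1}\qquad(2\le m\le\ell),
\]
where $c_m=\qqq^{2n(m-1)}(\ZZZ_1/\ZZZ_m)\mathsf{K}_2^2\cdots\mathsf{K}_m^2$ is precisely the coefficient of $\YLu^{(1)}$ in the $m$-th factor of the claimed product (with $c_1=1$). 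To prove the recursion, collapse the innermost pair using $\XX^{(m)}\DDD^{(m)}=\YLu^{(m)}-\mathbf{I}$, obtaining
\[
P_m=\XX^{(1)}\cdots\XX^{(m-1)}\,\YLu^{(m)}\,\DDD^{(m-1)}\cdots\DDD^{(1)}-P_{m-1}.
\]
The subtracted term accounts for the $-\mathbf{I}$ in the new factor. For the remaining term one must slide $\YLu^{(m)}$ leftward past $\XX^{(1)}\cdots\XX^{(m-1)}$ and convert it into $\YLu^{(1)}$. This is done in two moves: first, the defining relations of $\MM_\ell/I_{\ZZZ\mathsf{K}}^+$ --- namely $\mu_{\MM_\ell}^{(m)}(\mathbf{M})=\chi_{\mathsf{C}_m}(\mathbf{M})=\mathsf{C}_m^{-2}\mathbf{I}$, which after left-multiplication by the honest element $\YLu^{(m)}$ reads $\YR^{(m-1)}=(\ZZZ_m/\ZZZ_{m-1})\mathsf{K}_m^{-2}\,\YLu^{(m)}$ in the quotient --- let us trade $\YLu^{(m)}$ for a scalar multiple of $\YR^{(m-1)}$; second, the small corn-on-the-cob identity \ref{SmallCob} (in the form (\ref{Cob0})) slides $\YR^{(m-1)}$ past the string $\XX^{(1)}\cdots\XX^{(m-1)}$, turning it into $\YLu^{(1)}$ while releasing the scalar whose $\qqq$-power is exactly the accumulated ribbon-twist contribution evaluated via (\ref{RibbonAct}). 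Because $I_{\ZZZ\mathsf{K}}^+$ is only a \emph{left} ideal, each such substitution may be performed only after left-multiplying the relevant relation into the honest element standing immediately to its left, exactly as in the passage from (\ref{Cob0}) to (\ref{Cob3}); this is why it matters that $\YLu^{(m)}$ sits to the right of the $\XX$-string rather than buried among the $\DDD$'s.

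The routine portion of the argument is the ladder of $\RRR$-matrix and ribbon manipulations that justifies the slide --- these are of the same nature as the computations already carried out for Lemma \ref{SmallCob} and for (\ref{Cob4})--(\ref{Cob5}). \textbf{The main obstacle} is the bookkeeping: one must keep careful track of the $\RRR$-conjugations picked up as $\YLu^{(m)}$ (equivalently $\YR^{(m-1)}$) crosses each factor $\XX^{(b)}$ and of the ribbon twists they close into, while respecting the one-sidedness of $I_{\ZZZ\mathsf{K}}$ at every step. Once the recursion is verified, iterating from $P_1$ to $P_\ell=\XX^\circ\DDD^\circ$ and invoking commutativity of the factors $c_j\YLu^{(1)}-\mathbf{I}$ yields the asserted product.
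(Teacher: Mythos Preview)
Your proposal has a genuine gap in precisely the place you flag as ``the main obstacle.'' You want to substitute $\YLu^{(m)}\to c'\,\YR^{(m-1)}$ (via the moment map relation) and then invoke Lemma~\ref{SmallCob} to slide leftward, all while the tail $\DDD^{(m-1)}\cdots\DDD^{(1)}$ still sits to the \emph{right} of the substitution site. But both of these identities hold only modulo the \emph{left} ideal $I_{\ZZZ\mathsf{K}}^+$: if $A-B\in I_{\ZZZ\mathsf{K}}^+$, one may freely left-multiply, but $(A-B)\cdot\DDD^{(m-1)}\cdots\DDD^{(1)}$ need not lie in $I_{\ZZZ\mathsf{K}}^+$. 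Your remark that ``$\YLu^{(m)}$ sits to the right of the $\XX$-string'' is not the relevant condition; what is needed is that the substitution occur at the far right of the \emph{entire} expression, and the $\DDD$-string blocks that. Consequently neither your step~1 (trading $\YLu^{(m)}$ for $\YR^{(m-1)}$) nor your step~2 (applying Lemma~\ref{SmallCob}) is justified as written.

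The paper's proof avoids this by moving in the opposite direction: it commutes $\YLu^{(a)}$ \emph{rightward} past the $\DDD$-string using the honest algebra relations (\ref{YL5})--(\ref{YL6}) (not quotient identities), applies the moment map at the far right where it is valid, and then commutes $\YR^{(a-1)}$ back. This produces the new factor on the \emph{right}, giving $P_\ell=P_{\ell-1}\bigl(c_\ell\YLu^{(1)}-\mathbf{I}\bigr)$, and subsequent iterations must additionally commute $\YLu^{(a)}$ past the accumulated $\YLu^{(1)}$-factors (which the paper checks). Your left-sided recursion $P_m=(c_m\YLu^{(1)}-\mathbf{I})P_{m-1}$ turns out to be true a~posteriori, but proving it directly would require redoing the corn-on-the-cob argument with the $\DDD$-tail present---i.e.\ commuting all the way to the right before each moment-map substitution---which is essentially the paper's route.
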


\begin{proof}
First, we have
\[
\XX^{\circ}\DDD^{\circ}=\XX^{(1)}\cdots\XX^{(\ell-1)}\left( \YLu^{(\ell)}-\mathbf{I} \right)\DDD^{(\ell-1)}\cdots\DDD^{(1)}
\]
Let us consider the $\YLu^{(\ell)}$ summand on its own.
Moreover, we will ignore the $\XX$-matrices and the matrix product between $\YLu^{(\ell)}$ and $\DDD^{(\ell-1)}$.
We make moves similar to the proof of Lemma \ref{SmallCob}:
\begin{align*}
\underbrace{\YLu^{(\ell)}_1\RRR_{21}^{-1}\DDD^{(\ell-1)}_2}_{(\ref{YL5})}
\cdots\DDD^{(1)}
&=
 \RRR_{21}^{-1}\DDD^{(\ell-1)}_2
	\underbrace{\YLu^{(\ell)}_1\DDD^{(\ell-2)}\cdots\DDD^{(1)}}_{(\ref{YL6})}\\
&=
 \RRR_{21}^{-1}\DDD^{(\ell-1)}_2\cdots\DDD^{(1)}
	\underbrace{\YLu^{(\ell)}_1}_{(\ref{MuEll})}\\
&=
 \left( \ZZZ_{\ell-1}\big/\ZZZ_\ell \right)\mathsf{K}_\ell^{2}\RRR_{21}^{-1}\DDD^{(\ell-1)}_2
	\underbrace{\cdots\DDD^{(1)}\YR^{(\ell-1)}_1}_{(\ref{YR6})}\\
&=
 \left( \ZZZ_{\ell-1}\big/\ZZZ_\ell \right)\mathsf{K}_\ell^{2}\RRR_{21}^{-1}\DDD^{(\ell-1)}_2\YR^{(\ell-1)}_1\DDD^{(\ell-1)}\cdots\DDD^{(1)}\\
&= \left( \ZZZ_{\ell-1}\big/\ZZZ_\ell \right)\mathsf{K}_\ell^{2}\YR^{(\ell-1)}_1\RRR\DDD^{(\ell-1)}_2\DDD^{(\ell-1)}\cdots\DDD^{(1)}
\end{align*}
As in the proof of Lemma \ref{SmallCob}, when we incorporate the matrix product between $\YLu^{(\ell)}$ and $\DDD^{(\ell-1)}$, we obtain an identity that is diagrammatically depicted like (\ref{CornCob1}) except that the crossings on the right-hand-side are reversed.
Altogether, we get:
\begin{align*}
&\quad\XX^{(1)}\cdots\XX^{(\ell-1)}\YLu^{(\ell)}\DDD^{(\ell-1)}\cdots\DDD^{(1)}\\
&= \qqq^{2n}\left( \ZZZ_{\ell-1}\big/\ZZZ_\ell \right)\mathsf{K}^{2}_\ell\XX^{(1)}\cdots\XX^{(\ell-1)}\YR^{(\ell-1)}\DDD^{(\ell-1)}\cdots\DDD^{(1)}\\
&= \qqq^{2n}\left( \ZZZ_{\ell-1}\big/\ZZZ_\ell \right)\mathsf{K}^{2}_\ell\XX^{(1)}\cdots\XX^{(\ell-1)}\DDD^{(\ell-1)}\YLu^{(\ell-1)}\DDD^{(\ell-2)}\cdots\DDD^{(1)}
\end{align*}
Proceeding in this manner, we obtain
\begin{align*}
&\quad\XX^{(1)}\cdots\XX^{(\ell)}\DDD^{(\ell)}\cdots\DDD^{(1)}\\
&=\XX^{(1)}\cdots\XX^{(\ell-1)}\left( \YLu^{(\ell)}-\mathbf{I} \right)\DDD^{(\ell-1)}\cdots\DDD^{(1)}\\
&= \XX^{(1)}\cdots\XX^{(\ell-1)}\DDD^{(\ell-1)}\cdots\DDD^{(1)}\left( \qqq^{2n(\ell-1)}\left( \ZZZ_1\big/\ZZZ_\ell \right)\mathsf{K}_2^{2}\cdots\mathsf{K}_\ell^{2}\YLu^{(1)}-\mathbf{I} \right)\\
&= \XX^{(1)}\cdots\XX^{(\ell-1)}\DDD^{(\ell-1)}\cdots\DDD^{(1)}\left( \qqq^{2n(\ell-1)}\left( \ZZZ_1\big/\ZZZ_\ell \right)\mathsf{K}_2^{2}\cdots\mathsf{K}_\ell^{2}\YLu^{(1)}-\mathbf{I} \right) 
\end{align*}

To proceed as above starting with $\XX^{(\ell-1)}\DDD^{(\ell-1)}$, we need to commute $\YLu^{(a)}$ for $a\not=1$ over past the $\YLu^{(1)}$ on the right, change it into $\YR^{(a-1)}$, and then commute it back.
The following is easy to prove using (\ref{YL1})-(\ref{YL2}) and (\ref{YL3})-(\ref{YL6}):
\begin{align*}
\YLu^{(a)}_1\YLu^{(1)}_2&= \YLu^{(1)}_2\YLu^{(a)}_1\hbox{ for }a\not=1\\
\YR^{(a)}_1\YLu^{(1)}_2&= \YLu^{(1)}_2\YR^{(a)}_1\hbox{ for all }a
\end{align*}
Thus, there is no issue.
Continuing this process of commuting right, exchanging, and then commuting back, we obtain the desired identity.
\end{proof}

\subsubsection{Big corn-on-the-cob of arbitrary length}
Let
\[
\left( \mathsf{K}^{2}\ZZZ\YLu^{(1)}-\mathbf{I} \right):=
\left( \YLu^{(1)}-\mathbf{I} \right)\left( \qqq^{2n}\left( \ZZZ_1\big/\ZZZ_2 \right)\mathsf{K}_2^{2}\YLu^{(1)}-\mathbf{I} \right)\cdots\left( \qqq^{2n(\ell-1)}\left( \ZZZ_1\big/\ZZZ_\ell \right)\mathsf{K}_2^{2}\cdots\mathsf{K}_\ell^{2}\YLu^{(1)}-\mathbf{I} \right)
\]
i.e. the right-hand-side of (\ref{BCob1}).
Recall that we denote by $\MM_\ell^\circ:=\MM_\ell[\det_\qqq(\XX^\circ)^{-1}]$.
Lemma \ref{BigCobLem1} implies that in $\MM_\ell^\circ\big/I_\mathsf{C}^+$, we have the identity
\[
\DDD^{\circ}=\left( \XX^\circ \right)^{-1}\left( \mathsf{K}^{2}\ZZZ\YLu^{(1)}-\mathbf{I} \right)
\]
We will need to derive a similar identity for products of the entries of $\DDD^\circ$.

\begin{lem}\label{BCobLem2}
For $\ell>1$ and $k\ge 1$, we have the following identity in $\MM_\ell^\circ\big/I_\mathsf{C}^+$:
\[
\DDD_1^\circ\cdots \DDD_k^\circ=\left( \XX_1^\circ \right)^{-1}\left( \mathsf{K}^{2}\ZZZ\YLu_1^{(1)}-\mathbf{I}_1 \right)\cdots
\left( \XX_k^\circ \right)^{-1}\left( \mathsf{K}^{2}\ZZZ\YLu_k^{(1)}-\mathbf{I}_k \right)
\]
\end{lem}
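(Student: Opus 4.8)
The plan is to argue by induction on $k$. The case $k=1$ is precisely Lemma \ref{BigCobLem1} combined with the invertibility of $\XX^\circ$ in $\DD_\ell^\circ$ from Corollary \ref{XDetCor}, which gives $\DDD^\circ=(\XX^\circ)^{-1}\mathbf{P}$ in $\MM_\ell^\circ/I_\mathsf{C}^+$, where I abbreviate $\mathbf{P}:=\mathsf{K}^{2}\ZZZ\YLu^{(1)}-\mathbf{I}$ (so that $\mathbf{P}_i$ denotes its copy in the $i$th tensor leg). Note that the matrix $(\XX^\circ)^{-1}\mathbf{P}$ is assembled solely from $\XX^\circ$ and $\YLu^{(1)}$; by Lemma \ref{D0DellLem} these are $\Phi^\ell(\mathbf{A})$ and $\Phi^\ell(\mathbf{B}^{-1})$, so they obey the clean reflection-equation relations (\ref{DAA})--(\ref{DAB}) of $\DD_0^{\mathrm{IV}}$ in any pair of tensor legs, and hence so do all matrices built from them, with no $\mathbf{\Omega}$-corrections.

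For the inductive step, write $\DDD_1^\circ\cdots\DDD_{k+1}^\circ=(\DDD_1^\circ\cdots\DDD_k^\circ)\DDD_{k+1}^\circ$. Since $\DDD_{k+1}^\circ$ occupies the extreme right and $I_\mathsf{C}^+$ is only a \emph{left} ideal, the case $k=1$ applies there after left-multiplying by $\DDD_1^\circ\cdots\DDD_k^\circ$, yielding
\[
\DDD_1^\circ\cdots\DDD_{k+1}^\circ=(\DDD_1^\circ\cdots\DDD_k^\circ)(\XX_{k+1}^\circ)^{-1}\mathbf{P}_{k+1}
\]
in $\MM_\ell^\circ/I_\mathsf{C}^+$. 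I would then transport the trailing factor $(\XX_{k+1}^\circ)^{-1}\mathbf{P}_{k+1}$ leftward past $\DDD_1^\circ,\dots,\DDD_k^\circ$ by honest $R$-matrix manipulations in $\DD_\ell^\circ$, apply the inductive hypothesis to the resulting $\DDD_1^\circ\cdots\DDD_k^\circ$ (legitimate since $I_\mathsf{C}^+$ is a left ideal), and finally transport $(\XX_{k+1}^\circ)^{-1}\mathbf{P}_{k+1}$ back to the right past the block $\prod_{i=1}^k(\XX_i^\circ)^{-1}\mathbf{P}_i$. For the two transports to cancel out, the commutation of a left-leg $\XX^\circ$ (resp.\ $\YLu^{(1)}$) past a right-leg $\DDD^\circ$ must be governed by the same $R$-matrix conjugation as its commutation past a right-leg $(\XX^\circ)^{-1}\mathbf{P}$. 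For the latter this is routine $\DD_0^{\mathrm{IV}}$-bookkeeping via (\ref{DAA})--(\ref{DAB}), of the type already carried out in Corollary \ref{CobDetCor1}; for the former I would combine the defining relations (\ref{Daa})--(\ref{DXadj2}) of $\DD_\ell$ with the $\YY$-relations (\ref{YL1})--(\ref{YR6}), straightening $\DDD^\circ$ against $\XX^\circ$ and against $\YLu^{(1)}$ one superscript index at a time, exactly as in the proof of Lemma \ref{BigCobLem1}.

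The only genuine difficulty is the one-sidedness of $I_\mathsf{C}^+$: a $\DDD^\circ$ may be replaced by $(\XX^\circ)^{-1}\mathbf{P}$ only when it is rightmost in a product, which forces the ``transport out of the way, substitute, transport back'' shape throughout and obliges one to track, in the style of Lemma \ref{BigCobLem1}, the $\mathbf{\Omega}$-type corrections that the $\DDD^\circ$-versus-$\XX^\circ$ commutations produce. These corrections reassemble---via the moment-map identities (\ref{MuEll}) read modulo $I_\mathsf{C}^+$ together with the scalars $\qqq^{-2n(a-1)}(\ZZZ_a/\ZZZ_1)\mathsf{K}_2^{-2}\cdots\mathsf{K}_a^{-2}$ that are already built into $\mathbf{P}$---into exactly the form needed, and the induction then closes as in Corollary \ref{CobDetCor1}.
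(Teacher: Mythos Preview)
Your induction setup and the base case are correct, and you have correctly identified the core difficulty: $I_\mathsf{C}^+$ is only a left ideal, so after writing
\[
\DDD_1^\circ\cdots\DDD_{k+1}^\circ=(\DDD_1^\circ\cdots\DDD_k^\circ)(\XX_{k+1}^\circ)^{-1}\mathbf{P}_{k+1},
\]
you cannot apply the inductive hypothesis to the left block directly. Your remedy---transport $(\XX_{k+1}^\circ)^{-1}\mathbf{P}_{k+1}$ leftward in $\DD_\ell^\circ$, substitute, transport back---has a real gap, however. For the two transports to cancel you need the commutation of $(\XX_{k+1}^\circ)^{-1}\mathbf{P}_{k+1}$ past $\DDD_j^\circ$ to be a pure $R$-matrix conjugation matching the one past $(\XX_j^\circ)^{-1}\mathbf{P}_j$. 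The second commutation is indeed clean, since both matrices lie in $\Phi^\ell(\DD_0^{\mathrm{IV}})$ and obey (\ref{DAA})--(\ref{DAB}). But the first is not: commuting $(\XX_{k+1}^\circ)^{-1}$ past $\DDD_j^{(a)}$ with the same superscript $a$ invokes (\ref{DXa}), which carries an inhomogeneous $\mathbf{\Omega}$-term, while the $\YLu^{(1)}$-past-$\DDD^\circ$ part (via (\ref{YL2}), (\ref{YL5}), (\ref{YL6})) is homogeneous. So the forward and backward transports are governed by \emph{different} rules, and your final sentence---that the $\mathbf{\Omega}$-corrections ``reassemble into exactly the form needed''---is asserted without argument and does not follow from anything you have cited.

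The paper avoids this by peeling from the \emph{left} rather than the right. One considers $\XX_1^\circ\DDD_1^\circ\cdots\DDD_k^\circ$ and runs the moves of Lemma~\ref{BigCobLem1} on the leftmost factor: write $\XX_1^\circ\DDD_1^\circ=\XX_1^{(1)}\cdots\XX_1^{(\ell-1)}(\YLu_1^{(\ell)}-\mathbf{I}_1)\DDD_1^{(\ell-1)}\cdots\DDD_1^{(1)}$, commute $\YLu_1^{(a)}$ all the way to the right (past $\DDD_2^\circ\cdots\DDD_k^\circ$ as well), exchange it for $\YR_1^{(a-1)}$ via the moment-map relation (legitimate since it now sits on the far right), and commute back. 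The point is that only $\YLu_1^{(a)}$ and $\YR_1^{(a-1)}$ are being transported, and their commutation with $\DDD_2^{(a)}\DDD_2^{(a-1)}$ is a pure $R$-matrix conjugation (from (\ref{YL2}), (\ref{YL5}), (\ref{YR2}), (\ref{YR5})) with \emph{identical} $R$-pattern for both, so the back-and-forth leaves $\DDD_2^\circ\cdots\DDD_k^\circ$ untouched. This yields
\[
\XX_1^\circ\DDD_1^\circ\cdots\DDD_k^\circ=\mathbf{P}_1\,\DDD_2^\circ\cdots\DDD_k^\circ
\]
in $\MM_\ell^\circ/I_\mathsf{C}^+$; left-multiply by $(\XX_1^\circ)^{-1}$ and apply the inductive hypothesis to $\DDD_2^\circ\cdots\DDD_k^\circ$, which now sits on the right. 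No $\mathbf{\Omega}$-bookkeeping is needed.
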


\begin{proof}
We prove this by induction; the case $k=1$ is Lemma \ref{BigCobLem1}.
Consider
\begin{align*}
\XX_1^\circ\DDD_1^\circ\cdots\DDD_k^\circ
&=\XX_1^{(1)}\cdots\XX^{(\ell-1)}_1\left( \YLu^{(\ell)}_1-\mathbf{I}_1 \right)\DDD^{(\ell-1)}_1\cdots\DDD^{(1)}_1\DDD_2^\circ\cdots\DDD_k^\circ
\end{align*}
We can try to make the same moves as in the proof of Lemma \ref{BigCobLem1}: for $a\not =1$, we commute $\YLu^{(a)}$ all the way to the right, exchange it for $\YR^{(a-1)}$, and then commute back.
However, there is a novelty once we encounter $\DDD^{(a)}_j\DDD^{(a-1)}_j$ for $j> 1$.
By (\ref{YL2}), (\ref{YR2}), (\ref{YL3}), and (\ref{YR3}), we have:
\begin{align*}
\YLu^{(a)}_1\DDD_2^{(a)}\DDD_2^{(a-1)}&= \DDD_2^{(a)}\RRR^{-1}\RRR^{-1}_{21}\DDD_2^{(a-1)}\YLu^{(a)}_1\\
\DDD_2^{(a)}\RRR^{-1}\RRR^{-1}_{21}\DDD_2^{(a-1)}\YR^{(a)}_1&= \YR^{(a)}_1\DDD_2^{(a)}\DDD_2^{(a-1)}
\end{align*}
Thus, when performing our back and forth moves, $\DDD^\circ_2\cdots\DDD^\circ_k$ remains unchanged.
We then obtain
\begin{align*}
\XX^\circ_1\DDD_1^\circ\DDD_2^\circ\cdots\DDD_k^\circ&= \left( \mathsf{K}^{2}\ZZZ\YLu_1^{(1)}-\mathbf{I}_1 \right)\DDD_2^\circ\cdots\DDD_k^\circ
\end{align*}
The result follows by \textit{left} multiplying by $\left( \XX^\circ_1 \right)^{-1}$ and applying induction to $\DDD^\circ_2\cdots\DDD_k^\circ$.
\end{proof}

\subsubsection{The map $\Psi^\ell_\ZZZ$}\label{Cycle}
Recall that for $\ell=1$, by Lemma \ref{D1D0Hom}, we have a $\UU$-equivariant algebra homomorphism $\Psi^1_\ZZZ:\DD_1\rightarrow\DD_0$.
By combining this with the identity map on $\WW$, it is easy to see that $\Psi^1_\ZZZ$ induces a map $\AAA_1(\ZZZ,\mathsf{K})\rightarrow\AAA_0(\mathsf{K})$.
Abusing notation, we will also denote this induced homomorphism by $\Psi^1_\ZZZ$.

On the other hand, for $\ell>1$, we only have a $\UU$-equivariant algebra homomorphism going the other direction $\Phi^\ell:\DD_0^{\mathrm{IV}}\rightarrow\DD_\ell$.
We can modify it to incorporate $\ZZZ$: define $\Phi^\ell_\ZZZ:\DD_0^{\mathrm{IV}}\rightarrow\DD_\ell$ by
\begin{equation}
\begin{aligned}
\Phi^\ell_\ZZZ(\mathbf{A})&= \XX^\circ,&
\Phi^\ell_\ZZZ(\mathbf{B}^{-1})&= \ZZZ_1\YLu^{(1)}
\end{aligned}
\label{PhiZ}
\end{equation}
This rescaling will only play a cosmetic effect in \ref{tqk}.
By Lemma \ref{DellDet}, we can localize at $\det_\qqq(\XX^\circ)$ and $\det_\qqq(\YLu^{(1)})$ to obtain a homomorphism
\[
\textstyle\Phi_\ZZZ^\ell:\DD_0\rightarrow\DD_\ell^\star\left[ \det_\qqq(\XX^\circ)^{-1} \right]
\]
Our aim here is to leverage this into a map going the opposite direction between the quantized multiplicative quiver varieties.
\begin{lem}\label{PsiLem}
For $\ell>1$, let
\[
\mathsf{K}_2=\cdots =\mathsf{K}_\ell=\qqq^{-n}
\]
We have the following:
\begin{enumerate}
\item For these parameters, $\Phi^\ell_\ZZZ$ induces a map $\Phi^\ell_\ZZZ:\AAA_0(\mathsf{K}_1)\rightarrow \AAA_\ell(\ZZZ,\mathsf{K})[\det_\qqq(\XX^\circ)^{-1}]$.
\item Moreover, if $\mathsf{K}_1=\qqq^k$ for $k\in\ZZ$, then the map from part (1) is an isomorphism.
We use $\Psi^\ell_\ZZZ$ to denote the natural map $\Psi^\ell_\ZZZ:\AAA_\ell(\ZZZ,\mathsf{K})\rightarrow \AAA_0(\qqq^k)$.
\end{enumerate}
\end{lem}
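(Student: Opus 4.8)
The plan is to build the map of part (1) by extending $\Phi^\ell_\ZZZ$ to the braided tensor products and checking compatibility with the quantum moment maps, and then to obtain part (2) by exhibiting an inverse on quantum-trace generators. For part (1), first I would extend $\Phi^\ell_\ZZZ$ to an algebra homomorphism $\MM_0^\star=\DD_0\,\widetilde\otimes\,\WW^\star\to\MM_\ell^\star[\det_\qqq(\XX^\circ)^{-1}]$ acting as the identity on $\WW^\star$; since $\XX^\circ$ and $\YLu^{(1)}$ are fixed by $\UU_{(a)}$ for $a\neq1$ (Lemma \ref{D0DellLem}), this extension is $\UU_{(1)}$-equivariant and its image consists of $\UU_{(a)}$-invariants for $a\neq1$. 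Next I would compute $\Phi^\ell_\ZZZ(\mu_{\MM_0}(\mathbf{M}))=\qqq^{-2}\YL^{(1)}(\XX^\circ)^{-1}\YLu^{(1)}\XX^\circ\big(\mathbf{I}+(\qqq^2-1)\mathbf{d}\mathbf{x}\big)$ and apply Lemma \ref{SmallCob} with $a=\ell$ and the parameter choice $\mathsf{K}_2=\cdots=\mathsf{K}_\ell=\qqq^{-n}$, which makes the scalar $\qqq^{-2n(\ell-1)}\mathsf{K}_2^{-2}\cdots\mathsf{K}_\ell^{-2}$ equal to $1$; left-multiplying the resulting identity by $(\XX^\circ)^{-1}$ — legitimate because $I_{\ZZZ\mathsf{K}}^+$ is a \emph{left} ideal — gives $(\XX^\circ)^{-1}\YLu^{(1)}\XX^\circ\equiv(\ZZZ_\ell/\ZZZ_1)\YR^{(\ell)}$ in the appropriate localization. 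Hence $\Phi^\ell_\ZZZ(\mu_{\MM_0}(\mathbf{M}))\equiv(\ZZZ_\ell/\ZZZ_1)\,\mu_{\MM_\ell}^{(1)}(\mathbf{M})\equiv(\ZZZ_\ell/\ZZZ_1)\mathsf{C}_1^{-2}\mathbf{I}=\mathsf{K}_1^{-2}\mathbf{I}=\chi_{\mathsf{K}_1}(\mathbf{M})$, using $\mathsf{C}_1=(\ZZZ_\ell/\ZZZ_1)^{1/2}\mathsf{K}_1$. Therefore $\Phi^\ell_\ZZZ(I_{\mathsf{K}_1})\subseteq I_{\ZZZ\mathsf{K}}[\det_\qqq(\XX^\circ)^{-1}]$, so $\Phi^\ell_\ZZZ$ descends to the quotients; restricting to $\UU$-invariants and using that the image lies in the $\UU_{(a)}$-invariants for $a\neq1$ yields the map $\AAA_0(\mathsf{K}_1)\to\AAA_\ell(\ZZZ,\mathsf{K})[\det_\qqq(\XX^\circ)^{-1}]$ of part (1).

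For the surjectivity half of part (2), with $\mathsf{K}_1=\qqq^k$: by Lemma \ref{QTraceLem}(3) the target is generated by the quantum traces $\tr_\qqq\big((\XX^\circ)^{k_1}(\YLu^{(1)})^{k_2}(\DDD^\circ)^{k_3}\big)$. Using Lemma \ref{BigCobLem1} (and Lemma \ref{BCobLem2} for iterated products), together with the parameter choice, each factor $\DDD^\circ$ may be replaced modulo $I_{\ZZZ\mathsf{K}}$ by $(\XX^\circ)^{-1}$ times the fixed matrix polynomial $(\YLu^{(1)}-\mathbf{I})\big((\ZZZ_1/\ZZZ_2)\YLu^{(1)}-\mathbf{I}\big)\cdots\big((\ZZZ_1/\ZZZ_\ell)\YLu^{(1)}-\mathbf{I}\big)$ in $\YLu^{(1)}$, so every generator becomes a quantum trace of a word in $\XX^\circ$, $(\XX^\circ)^{-1}$ and $\YLu^{(1)}$. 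Such a trace is $\Phi^\ell_\ZZZ$ applied to the corresponding quantum trace of the word in $\mathbf{A}$, $\mathbf{A}^{-1}$, $\ZZZ_1^{-1}\mathbf{B}^{-1}$, which is an element of $(\DD_0)^{\UU}$; combined with Proposition \ref{DGen} this proves surjectivity.

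For injectivity I would construct the inverse $\Psi^\ell_\ZZZ$ directly on generators, sending the quantum trace of a word in $\XX^\circ$, $(\XX^\circ)^{-1}$, $\YLu^{(1)}$ to the quantum trace of the same word in $\mathbf{A}$, $\mathbf{A}^{-1}$, $\ZZZ_1^{-1}\mathbf{B}^{-1}$. Well-definedness of this assignment as an algebra homomorphism $\AAA_\ell(\ZZZ,\mathsf{K})[\det_\qqq(\XX^\circ)^{-1}]\to\AAA_0(\qqq^k)$ rests on two inputs: that $\mathbf{A}\mapsto\XX^\circ$, $\mathbf{B}^{-1}\mapsto\YLu^{(1)}$ is already an algebra homomorphism $\DD_0^{\mathrm{IV}}\to\DD_\ell$ (Lemma \ref{D0DellLem}), in fact an injective one — which one checks on standard monomials via the PBW theorems (Theorem \ref{DellFlat}, Proposition \ref{D0Iso}(1)) — so that no relation among $\XX^\circ$, $\YLu^{(1)}$ is lost; and that the corn-on-the-cob identities of this section show that, on the image, the remaining moment-map constraints $\mu_{\MM_\ell}^{(a)}(\mathbf{M})\equiv\mathsf{C}_a^{-2}\mathbf{I}$ for $a\neq1$ are absorbed into the single constraint for $a=1$ together with the substitution for $\DDD^\circ$ used above. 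Granting this, $\Phi^\ell_\ZZZ\circ\Psi^\ell_\ZZZ$ and $\Psi^\ell_\ZZZ\circ\Phi^\ell_\ZZZ$ are both the identity on generators, so $\Phi^\ell_\ZZZ$ is an isomorphism with inverse $\Psi^\ell_\ZZZ$.

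The hard part — and the reason the preceding lemmas are stated in such precise forms — is exactly this bookkeeping. Because $I_{\ZZZ\mathsf{K}}$ is only a left ideal, every exchange between the ``$L$'' and ``$R$'' matrices and every passage to an inverse matrix must be performed by left multiplication and then tracked through the localization at $\det_\qqq(\XX^\circ)$ and $\det_\qqq(\YLu^{(1)})$; this is what forces us to package the identities as the determinant-twisted corn-on-the-cob statements (Lemma \ref{SmallCob}, Corollaries \ref{CobDetCor1} and \ref{CobDetCor2}, Lemmas \ref{BigCobLem1} and \ref{BCobLem2}) rather than as naive matrix equalities. If the purely formal verification of well-definedness of $\Psi^\ell_\ZZZ$ proves awkward, an alternative is a Poincaré-series comparison: equip both sides with the filtration $F^\DD$, note the surjection is filtered, and identify each associated graded with the coordinate ring of the corresponding classical multiplicative quiver variety localized at the analogue of $\det_\qqq(\XX^\circ)$, which by Proposition \ref{GanProp} is the same whether one starts from the $\widetilde{A}_{\ell-1}$ quiver or the Jordan quiver — but this route requires care in keeping the graded pieces finite-dimensional after localization, which is itself a mild nuisance.
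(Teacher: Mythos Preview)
Your Part~(1) and the surjectivity half of Part~(2) follow the paper's argument closely, but two points are glossed over. First, in Part~(1): after obtaining $(\XX^\circ)^{-1}\YLu^{(1)}\XX^\circ\equiv(\ZZZ_\ell/\ZZZ_1)\YR^{(\ell)}$ modulo the \emph{left} ideal, you substitute this into $\qqq^{-2}\YL^{(1)}[\cdots](\mathbf{I}+(\qqq^2-1)\mathbf{d}\mathbf{x})$. Left multiplication by $\YL^{(1)}$ is fine, but right multiplication by the $\WW$-factor is not automatically compatible with a left-ideal congruence. The paper handles this by the same commute-right/apply/commute-back maneuver as in Lemma~\ref{SmallCob}: one moves $\YLu^{(1)}\XX^\circ$ past $\mu_\WW(\mathbf{M})$ via the braiding, applies~\eqref{Cob0} at the rightmost position, and moves $\XX^\circ\YR^{(\ell)}$ back, the two braidings cancelling. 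Second, for surjectivity you must also argue that the extra localizations at $\det_\qqq(\YLu^{(a)})$ and $\det_\qqq(\YR^{(a)})$ built into $\AAA_\ell(\ZZZ,\mathsf{K})$ are already achieved once $\det_\qqq(\XX^\circ)$ and $\det_\qqq(\YLu^{(1)})$ are inverted; this is exactly Corollary~\ref{CobDetCor2}, which you do not invoke.

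The substantive divergence is your injectivity argument. Defining $\Psi^\ell_\ZZZ$ ``on generators'' does not produce a well-defined algebra homomorphism: $\AAA_\ell(\ZZZ,\mathsf{K})[\det_\qqq(\XX^\circ)^{-1}]$ is not presented by the quantum traces in $\XX^\circ,(\XX^\circ)^{-1},\YLu^{(1)}$ with an explicit list of relations, so there is nothing to check your assignment against; and injectivity of $\Phi^\ell:\DD_0^{\mathrm{IV}}\to\DD_\ell$ at the level of the big algebras (even granting it) says nothing about whether $(\Phi^\ell)^{-1}(I_{\ZZZ\mathsf{K}})$ equals $I_{\mathsf{K}_1}$ rather than strictly containing it. Your fallback Poincar\'e-series sketch is in the right spirit, but the paper avoids the finite-dimensionality nuisance you flag by \emph{not} localizing for this step: it restricts to $\Phi^\ell_\ZZZ:\AAA_0^{\mathrm{IV}}\to\AAA_\ell^+$, both of which have finite-dimensional graded (resp.\ filtered) pieces by Proposition~\ref{QTraceLem0} and Lemma~\ref{QTraceLem}, reduces injectivity to the $\qqq=1$ specialization via~\eqref{DimEq}, and there identifies the induced map with the restriction to $\Sigma_n$-invariants of the explicit ring map $\mathcal{R}_1\to\mathcal{R}_\ell$, $x_i\mapsto z_i^\ell$, $y_i\mapsto z_iw_i$, which is visibly injective. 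Injectivity after localization then follows by exactness.
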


\begin{proof}
First, we check that $\Phi^\ell_\ZZZ$ respects the moment map relations once we take the quotient $\MM_\ell^\star[\det_\qqq(\XX^\circ)^{-1}]$ by $I_{\ZZZ\mathsf{K}}$:
\[
\Phi^\ell_\ZZZ\left(\qqq^{-2}\mathbf{B}\mathbf{A}^{-1}\mathbf{B}^{-1}\mathbf{A}\left( \mathbf{I}+(\qqq^{2}-1)\mathbf{d}\mathbf{x} \right) \right)
=
\qqq^{-2}\YL^{(1)}\left( \XX^\circ \right)^{-1}\YLu^{(1)}\XX^\circ\left( \mathbf{I}+(\qqq^2-1)\mathbf{d}\mathbf{x} \right)
\] 
Because the commutation relations between elements of $\DD_\ell$ and $\WW$ are the same regardless of the specific elements of the two algebras, we can proceed as in the proof of Lemma \ref{SmallCob}.
Namely, we can commute $\YLu^{(1)}\XX^\circ$ past $\mu_\WW(\mathbf{M})$, apply (\ref{Cob0}), and then reverse the commutation.
We then obtain:
\begin{align*}
&\quad\qqq^{-2}\YL^{(1)}\left( \XX^\circ \right)^{-1}
\underbrace{\YLu^{(1)}\XX^\circ}_{(\ref{Cob0})}
\left( \mathbf{I}+(\qqq^2-1)\mathbf{d}\mathbf{x} \right)\\
&=\qqq^{-2} \left( \ZZZ_\ell\big/\ZZZ_1 \right)\YL^{(1)}\left( \XX^\circ \right)^{-1}\XX^\circ
\underbrace{\YR^{(\ell)}\left( \mathbf{I}+(\qqq^2-1)\mathbf{d}\mathbf{x} \right)}_{(\ref{MuEll})}\\
&= \mathsf{K}_1^{-2}\YL^{(1)}\YLu^{(1)}= \mathsf{K}_1^{-2}\mathbf{I}
\end{align*}
Thus, we do indeed obtain a map $\Phi_\ZZZ^\ell:\AAA_0(\mathsf{C}_1)\rightarrow\AAA_\ell(\ZZZ,\mathsf{C})[\det_\qqq(\XX^\circ)^{-1}]$.

For surjectivity at $\mathsf{K}_1=\qqq^k$, we first note that in this case, $\Phi_\ZZZ^\ell$ covers $\left[ \MM_\ell\big/I_{\ZZZ\mathsf{K}}^+ \right]^{\UU^{\otimes\ell}}$ due to Lemmas \ref{QTraceLem} and \ref{BCobLem2}.
Corollary \ref{CobDetCor2} then implies that upon localizing at $\{\det_\qqq(\XX^\circ),\det_\qqq(\YLu^{(1)})\}$, the additional localizations at $\{\det_\qqq(\YLu^{(a)}),\det_\qqq(\YR^{(a)})\}$ in the definition of $\AAA_\ell(\ZZZ,\mathsf{K})$ are superfluous.
For injectivity, it suffices to consider the restriction of $\Phi^\ell_\ZZZ$ to $\MM_0^{\mathrm{IV}}$ because localization is exact:
\[
\Phi^\ell_\ZZZ:\AAA_0^{\mathrm{IV}}:=\left( \MM_0^{\mathrm{IV}}\big/I_{\mathsf{K}_1}^{\mathrm{IV}} \right)^{\UU}\rightarrow\left( \MM_\ell\big/I_{\ZZZ\mathsf{K}}^+ \right)^{\UU^{\otimes\ell}}=:\AAA_\ell^+
\]
It is obvious that $\Phi^\ell_\ZZZ$ respects the integral forms.
Recall the grading on $\AAA_0^{\mathrm{IV}}$ from Proposition \ref{QTraceLem0}; by that proposition, each graded piece is finite-dimensional.
By (\ref{DimEq}), it then suffices to show that each graded piece of the kernel has dimension zero once $\qqq\mapsto 1$.
Combining the isomorphisms from Lemma \ref{QTraceLem} and Proposition \ref{QTraceLem0}, we have the following commutative diagram:
\[
\begin{tikzpicture}
\draw (0,0) node {$\AAA_0^{\mathrm{IV}}\big|_{\qqq=1}$};;
\draw (3,0) node {$\mathrm{gr}_{F^\DD}\left(\AAA_\ell^{+}\big|_{\qqq=1}\right)$};;
\draw (0,-2.5) node {$\mathcal{R}_1^{\Sigma_n}$};;
\draw (2.5,-2.5) node {$\mathcal{R}_\ell^{\Sigma_n}$};;
\draw[->] (.75,0)--(1.75,0);;
\draw[dashed,->] (.75,-2.5)--(1.75,-2.5);;
\draw[->] (0,-2)--(0,-.5);;
\draw[->] (2.5,-2)--(2.5,-.5);;
\draw (0.375,-1.25) node {$\cong$};;
\draw (2.125,-1.25) node {$\cong$};;
\draw (1.25, -0.375) node {$\Phi^\ell_\ZZZ$};;
\draw (1.25, -2.125) node {$f$};;
\end{tikzpicture}
\]
The induced map $f$ can be described as follows.
Letting
\begin{align*}
\mathcal{R}_1&=\CC[x_1,\ldots, x_n, y_1,\ldots, y_n]\\
\mathcal{R}_\ell&= \CC[z_1,\ldots,z_n,w_1,\ldots, w_n]^{(\ZZ/\ell\ZZ)^n}
\end{align*}
then $f$ is the restriction to $\Sigma_n$-invariants of the homomorphism $\mathcal{R}_1\rightarrow\mathcal{R}_\ell$ where 
\begin{equation*}
\begin{aligned}
x_i&\mapsto z_i^\ell,& y_i&\mapsto z_iw_i
\end{aligned}
\end{equation*}
This latter homomorphism is injective; thus, $f$ and $\Phi^\ell_\ZZZ$ are injective as well.
\end{proof}

\subsection{Radial parts}
For the rest of this section, we set the $\mathsf{K}$-parameter as
\begin{equation}
\begin{aligned}
\mathsf{Q}^k&:=(\qqq^k,\qqq^{-n},\ldots,\qqq^{-n})\\
\mathsf{T}&:=(\qqq^{-1}\ttt,\qqq^{-n},\ldots,\qqq^{-n})
\end{aligned}
\label{QTParam}
\end{equation}
The relationship between $\AAA_\ell(\ZZZ,\mathsf{T})$ to $\SHH_n^\ell(\ZZZ,\qqq,\ttt)$ is facilitated by Etingof--Kirillov's realization of Macdonald polynomials via \textit{intertwiners}.
Namely, we relate the latter to a function representation for $\AAA_\ell(\ZZZ,\mathsf{T})$.
To establish an isomorphism between the two algebras, we can use Lemma \ref{QTraceLem}(1) to reduce the problem to showing the image of $\AAA_\ell(\ZZZ,\mathsf{Q}^k)$ under the action on intertwiners contains the image of $\SHH_n^\ell(\ZZZ,\qqq,\qqq^k)$ under its action on symmetric polynomials.

\subsubsection{Etingof-Kirillov theory at $\ttt=\qqq^k$}\label{EtKirqk}
Recall the quantum symmetric power $S_\qqq^m\mathbb{V}$ from \ref{QSym}.
For $k\ge 1$, let
\[
U_k:= S_\qqq^{n(k-1)}\mathbb{V}\otimes \mathbb{1}_{\qqq^{-(k-1)}}
\]
We note that the zero weight space $U_k[0]$ is one-dimensional; let us make a choice of spanning vector $u\in U_k[0]$.
For a representation $V\in\mathfrak{C}_\ZZ$, we denote by $I(V,U_k)$ the space of $U_k$-\textit{intertwiners}:
\[
I(V, U_k):=\Hom_\UU(V, V\otimes U_k)
\]

For $\mu\in P$ and $\Phi\in I(V, U_k)$, we can take the $\qqq^\mu$-weighted trace:
\[
\tr_{V}\left( \Phi(\qqq^\mu\cdot -) \right)\in U_k[0]
\]
Using our fixed spanning vector $u\in U_k[0]$, we can identify this trace with an element of $\CC(\qqq)$.
Letting $\mu$ vary, we obtain a \textit{weighted trace function}.
We can identify a weighted trace function with an element of the ring of Laurent polynomials $\CC(\qqq)[x_1^{\pm 1},\ldots,x_n^{\pm 1}]$ via the map
\[
x_i\mapsto \qqq^{\langle \epsilon_i, -\rangle}
\]

Finally, recall now the staircase partition $\delta=(n-1,n-2,\ldots, 1, 0)$.
We set 
\[
V_\lambda^k:=V_{\lambda+(k-1)\delta}
\]

\begin{thm}[\cite{EtKirQuant}]\label{EtKirThm}
Let $k\ge 1$. 
We have the following:
\begin{enumerate}
\item For $\lambda\in P^+$, the space $I(V^k_\lambda, U_k)$ is one-dimensional.
There exists a unique intertwiner $\Phi_\lambda^k\in I(V^k_\lambda, U_k)$ such that for any choice of highest weight vector $v_{\lambda+k\delta}\in V^k_\lambda$,
\[
\Phi_\lambda^k(v_{\lambda+k\delta})=v_{\lambda+k\delta}\otimes u +\hbox{ {\rm terms of lower weight in} }V^k_\lambda
\]
\item Let $\varphi^k_\lambda$ denote the weighted trace function of $\Phi_\lambda^k$.
Then we have
\[
P_\lambda(\qqq,\qqq^k)=\frac{\varphi_\lambda^k}{\varphi_0^k}
\]
\end{enumerate}
\end{thm}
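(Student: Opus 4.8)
Both parts are due to Etingof--Kirillov \cite{EtKirQuant}, and the plan is to recover them while matching conventions. For (1), I would first check that $U_k[0]$ is one-dimensional: untwisting the character $\mathbb{1}_{\qqq^{-(k-1)}}$, a zero-weight vector of $U_k$ is a weight vector of $S^{n(k-1)}_\qqq\mathbb{V}$ of weight $(k-1,\ldots,k-1)$, and by the monomial basis recalled in \ref{QSym} there is, up to scalar, exactly one such vector, namely $x_1^{k-1}\cdots x_n^{k-1}$. The equality $\dim I(V^k_\lambda,U_k)=1$ then follows from the standard multiplicity formula
\[
\dim\Hom_\UU(V_\nu,V_\nu\otimes U)=\sum_{w}(-1)^{l(w)}\dim U[w(\nu+\rho)-(\nu+\rho)]
\]
applied with $\nu=\lambda+(k-1)\delta$ and $U=U_k$: every weight $\mu$ of $U_k$ satisfies $\langle\mu,\epsilon_i\rangle\ge-(k-1)$ for all $i$ and $\sum_i\langle\mu,\epsilon_i\rangle=0$, whereas $\langle\nu+\rho,\alpha_i\rangle\ge k$ for each simple root, so for $w\ne e$ the element $w(\nu+\rho)-(\nu+\rho)$ has some coordinate strictly below $-(k-1)$ and is not a weight of $U_k$; hence only $w=e$ survives and $\dim I(V^k_\lambda,U_k)=\dim U_k[0]=1$. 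The prescribed normalization then singles out $\Phi^k_\lambda$, since the component of an intertwiner in the highest weight space of $V^k_\lambda$ must be a multiple of (highest weight vector)$\,\otimes\,u$.

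For (2), I would follow \cite{EtKirQuant}. The normalization in (1) forces $\varphi^k_\lambda$ to have leading monomial $x^{\lambda+(k-1)\delta}$ with coefficient $1$; since $\varphi^k_0$ is a nonzero Laurent polynomial (the Macdonald ``denominator'' factor), $\varphi^k_\lambda/\varphi^k_0$ has leading monomial $x^\lambda$ with coefficient $1$, matching Theorem \ref{MacThm}(1)(b). The essential point is that $\varphi^k_\lambda/\varphi^k_0$ is a joint eigenfunction of the operators $\rrr(f(Y_1,\ldots,Y_n))$ for symmetric $f$, with eigenvalue $f(\qqq^{2\lambda_1}\qqq^{k(n-1)},\ldots,\qqq^{2\lambda_n}\qqq^{k(1-n)})$, i.e. the eigenvalue of Theorem \ref{MacThm}(1)(a) specialized at $\ttt=\qqq^k$. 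Here the radial-parts argument enters: one fuses $\Phi^k_\lambda$ with the action of $\widetilde{\RRR}$ on an auxiliary exterior power $\wedge^r_\qqq\mathbb{V}$, then uses the commutation property (\ref{RCoprod}) of $\widetilde{\RRR}$ together with cyclicity of the $\qqq^\mu$-weighted trace to slide the auxiliary strand around the trace. The upshot is an identity of the form ``($\qqq$-difference operator from the auxiliary trace, applied to $\varphi^k_\lambda$) $=$ (central element of $\UU$ from the same trace, acting on $V^k_\lambda$) $\times\,\varphi^k_\lambda$''; the operator on the left is conjugate to the $r$-th Macdonald operator and the scalar on the right is the corresponding Macdonald eigenvalue above. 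Dividing through by $\varphi^k_0$ and invoking the uniqueness in Theorem \ref{MacThm}(1) identifies $\varphi^k_\lambda/\varphi^k_0$ with $P_\lambda(\qqq,\qqq^k)$.

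The main obstacle is exactly this last radial-parts computation---pinning down the $\qqq$-difference operator produced by the $\widetilde{\RRR}$-twisted weighted trace as an honest Macdonald operator and reading off the resulting eigenvalue. This is the technical core of \cite{EtKirQuant}; part (1) and the leading-term normalization in (2) are routine by comparison.
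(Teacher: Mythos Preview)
The paper does not supply its own proof of this theorem; it is quoted from \cite{EtKirQuant} and used as a black box. Your sketch is a faithful outline of the original Etingof--Kirillov argument: the dimension count in (1) via the alternating-sum multiplicity formula with the observation that $\langle\lambda+(k-1)\delta+\rho,\alpha_i\rangle\ge k$ kills all $w\neq e$, and in (2) the fusion/$R$-matrix computation that identifies the weighted-trace construction with the Macdonald operators. There is nothing to compare against in the present paper, and your proposal is correct as a summary of the cited proof.
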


It follows from Theorem \ref{EtKirThm}(2) that for fixed $k$, a $U_k$-intertwiner is determined by its weighted trace function.
This gives a realization of Macdonald polynomials (and thus symmetric Laurent polynomials) that directly interacts with our rings of quantum differential operators.
Specifically, to a $U_k$-intertwiner $\Phi\in I(V,U_k)$, we associate an invariant vector ${}_{\cup}\Phi\in (V^*\otimes V\otimes U_k)^\UU$ using $\mathrm{qcoev}_V$:
\[
{}_\cup\Phi:=(1\otimes\Phi)\circ \mathrm{qcoev}_{V}
\]
\[
\includegraphics{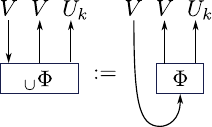}
\]
This is clearly a vector space isomorphism.

Recall that $\DD_0$ and $\WW$ have their function representations on $\OO_G$ and $S_\qqq\mathbb{V}$, respectively.
Thus, $\MM_0$ acts equivariantly on the \textit{braided} tensor product of modules $\OO_G\,\widetilde{\otimes}\,S_\qqq\mathbb{V}$: here, elements of $\WW$ commute past elements of $\OO_G$ via the $R$-matrix.
Let us denote the representation map by $\mathfrak{R}$.
It follows that $\MM_0^\UU$ preserves the $\mathbb{1}_{\qqq^{k-1}}$-isotypic component of $\OO_G\,\widetilde{\otimes}\,S_\qqq\mathbb{V}$, which is precisely $(\OO_G\otimes U_k)^\UU$.
\begin{thm}[\cite{EtKirQuant, VarVassRoot}]\label{QHC0}
For $k\ge 1$, we have the following:
\begin{enumerate}
\item The action of $\MM_0^\UU$ on $(\OO_G\otimes U_k)^\UU$ descends to an action of $\AAA_0(\qqq^{k-1})$.
\item For $\mathrm{qcoev}_{V_\mu}^{\mathbf{A}}(1):=\mathrm{qcoev}_{V_\mu}(1)\in(\OO_G^{\mathbf{A}})^\UU$, the weighted trace function of $\mathrm{qcoev}_{V_\mu}^{\mathbf{A}}(1)\cdot{}_{\cup}\Phi_\lambda^k$ is $s_\mu P_\lambda(\qqq,\qqq^k)$, where $s_\mu$ is the Schur function.
\item For $\mathrm{qcoev}_{V_\mu}^{\mathbf{B}}(1):=\mathrm{qcoev}_{V_\mu}(1)\in(\OO_G^{\mathbf{B}})^\UU$, we have
\[
\mathrm{qcoev}_{V_\mu}^{\mathbf{B}}(1)\cdot{}_\cup\Phi_\lambda^k=\qqq^{-|\mu|(n-1)}s_\mu(\qqq^{2(\lambda_1+k(n-1))}, \qqq^{2(\lambda_2+k(n-2))},\ldots,\qqq^{2\lambda_n}){}_\cup\Phi_\lambda^k
\]
\end{enumerate}
\end{thm}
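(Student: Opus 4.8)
The plan is to establish all three parts by translating everything into the language of $U_\qqq(\gl_n)$-module maps and the Etingof--Kirillov intertwiner realization, so that the action of the distinguished generators of $\MM_0^\UU$ becomes a concrete operation on intertwiners.

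First, for part (1), I would note that $\MM_0$ acts on $\OO_G\,\widetilde{\otimes}\, S_\qqq\mathbb{V}$ via the representation $\mathfrak{R}$, and $\MM_0^\UU$ commutes with the $\UU$-action; hence it preserves every isotypic component. The $\mathbb{1}_{\qqq^{k-1}}$-isotypic piece of $\OO_G\,\widetilde{\otimes}\,S_\qqq\mathbb{V}$ is exactly $(\OO_G\otimes U_k)^\UU$ (after twisting $S_\qqq^{n(k-1)}\mathbb{V}$ by the character to form $U_k$). To see that this action factors through $\AAA_0(\qqq^{k-1})=[\MM_0^\star/I_{\qqq^{k-1}}]^\UU$, I would check that the moment map image $\mu_{\MM_0}(\mathbf{M})$ acts on the $\mathbb{1}_{\qqq^{k-1}}$-isotypic component by the scalar prescribed by $\chi_{\qqq^{k-1}}$, i.e. $\qqq^{-2(k-1)}\mathbf{I}$; this is a computation using the function-representation formula of Proposition \ref{FuncFaith} together with $\kappa(\det_\qqq(\mathbf{M}))=\qqq^{-2\omega_n}$ and the action on the weight-$0$ vector $u\in U_k[0]$. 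Consequently $I_{\qqq^{k-1}}$ acts by zero and the action descends. This is essentially the argument of \cite{VarVassRoot} and \cite{EtKirQuant}; I would cite those and fill in the scalar check.

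For parts (2) and (3), the key is that ${}_\cup\Phi:=(1\otimes\Phi)\circ\mathrm{qcoev}_V$ identifies intertwiners with invariant vectors, and that $\mathrm{qcoev}_{V_\mu}(1)\in\OO_G^\UU$ placed in either the $\mathbf{A}$-tensorand or the $\mathbf{B}$-tensorand acts by a graphically transparent operation. In the $\mathbf{A}$-case, multiplication by $\mathrm{qcoev}_{V_\mu}^\mathbf{A}(1)$ in $\OO_G^\mathbf{A}\cong\OO_G$ corresponds, at the level of weighted trace functions, to multiplying by the character of $V_\mu$ evaluated on the torus, which is the Schur polynomial $s_\mu$; combined with Theorem \ref{EtKirThm}(2) this gives the weighted trace function $s_\mu P_\lambda(\qqq,\qqq^k)$. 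Here I would use that the algebra structure on $\OO_G$ is the coend product, that $\mathrm{qcoev}_{V_\mu}(1)$ is the canonical invariant in $V_\mu^*\otimes V_\mu$, and that its contribution to the trace over $V_\lambda^k$ (after fusing $V_\mu$ into $V_\lambda^k\otimes U_k$ and projecting) is governed by the character $\chi_{V_\mu}$ — diagrammatically this is a loop colored by $V_\mu$ with a $\qqq^\mu$-insertion, whose value is $s_\mu(\qqq^{\langle\epsilon_\bullet,-\rangle})$. For part (3), placing $\mathrm{qcoev}_{V_\mu}^\mathbf{B}(1)$ in the $\mathbf{B}$-tensorand instead means, by Proposition \ref{FuncFaith}, acting on the $V^*$-leg of the coend element by $\kappa(\mathrm{qcoev}_{V_\mu}(1))$; since $\mathrm{qcoev}_{V_\mu}(1)$ is $\UU$-invariant, $\kappa$ sends it into the center of $\UU$, so it acts on the irreducible $V_\lambda^k=V_{\lambda+(k-1)\delta}$ by a scalar. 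That scalar is the central character of $V_\mu$ evaluated at the highest weight $\lambda+(k-1)\delta+\rho$, i.e. $s_\mu$ of the ``content'' exponentials $\qqq^{2(\lambda_i+k(n-i))}$, up to the overall normalization $\qqq^{-|\mu|(n-1)}$ coming from the $\qqq^{-2\rho}$ twist built into $\mathrm{qcoev}$; I would read this scalar off from the Harish-Chandra/Drinfeld description of the center and the value $\kappa(\det_\qqq(\mathbf{M}))=\qqq^{-2\omega_n}$ for the one-dimensional constituents, assembling general $V_\mu$ from exterior powers.

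The main obstacle I anticipate is bookkeeping the normalization constants precisely — the powers of $\qqq$ coming from $\qqq^{\pm 2\rho}$ in $\mathrm{qcoev}$ versus $\mathrm{qev}$, the character twist $\mathbb{1}_{\qqq^{-(k-1)}}$ defining $U_k$, the shift $\lambda\mapsto\lambda+(k-1)\delta$, and the $\qqq^{-n+2i}$ weights implicit in quantum traces — so that the scalar in part (3) comes out exactly as $\qqq^{-|\mu|(n-1)}s_\mu(\qqq^{2(\lambda_1+k(n-1))},\ldots,\qqq^{2\lambda_n})$ and the trace function in part (2) is exactly $s_\mu P_\lambda(\qqq,\qqq^k)$ with no stray factors. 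The conceptual content (invariant $\Leftrightarrow$ intertwiner, $\mathbf{A}$-side multiplies trace functions, $\mathbf{B}$-side acts by central characters) is straightforward given the machinery already set up; I would organize the proof around the graphical calculus of \ref{Representations} to keep these constants under control, and cite \cite{EtKirQuant,VarVassRoot} for the parts that are verbatim their arguments.
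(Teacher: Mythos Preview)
The paper does not prove Theorem \ref{QHC0}; it is stated with attribution to \cite{EtKirQuant,VarVassRoot} and used as a black box. So there is no ``paper's own proof'' to compare against. Your outline is consonant with the arguments in those references: part (1) is the moment-map/scalar check from \cite{VarVassRoot}, part (2) is the observation that the $\mathbf{A}$-side acts on trace functions by multiplication by characters (Schur polynomials), and part (3) is the Joseph--Letzter--Rosso picture that $\kappa$ sends invariants to the center, so the $\mathbf{B}$-side acts on each irreducible summand by a Harish-Chandra scalar. The only substantive hazard you flag yourself---tracking the $\qqq^{\pm 2\rho}$, the $(k-1)\delta$ shift, and the $\mathbb{1}_{\qqq^{-(k-1)}}$ twist---is exactly where a careless write-up would produce the wrong power of $\qqq$ in part (3); if you actually carry this out, compute the scalar first for $\mu=\omega_i$ (where it must reproduce the Macdonald operator eigenvalue in Theorem \ref{MacThm}(1) at $\ttt=\qqq^k$) and then use the ring structure of $\OO_G^\UU$ to pass to general $\mu$.
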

\noindent Setting $\mu=\omega_i$ in part (3), we obtain scaled versions of the Macdonald operators.
Theorem \ref{QHC0}(1) gives an algebra homomorphism
\[
\mathfrak{rad}^0_{\qqq^k}: \AAA_0(\qqq^{k-1})\rightarrow \mathrm{End}\left(\Lambda_n^\pm(\qqq)\right)
\] 
called the \textit{quantum radial parts map}, which was first defined in \cite{VarVassRoot}.
he following is one of the main results of \cite{QHarish}:
\begin{thm}
For $k>2n$, $\mathfrak{rad}^0_{\qqq^k}$ is an isomorphism onto the image of $\SHH_n^0(\qqq,\qqq^k)$ under the polynomial representation map $\mathfrak{r}$.
\end{thm}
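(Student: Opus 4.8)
The plan is to show that $\mathfrak{rad}^0_{\qqq^k}$ is injective and that its image inside $\mathrm{End}(\Lambda_n^\pm(\qqq))$ coincides with $\mathfrak{r}(\SHH_n^0(\qqq,\qqq^k))$; this is the $\ell=0$ prototype for the cyclotomic argument and follows \cite{QHarish}. Since $\qqq$ is transcendental over $\CC$ it is not a root of unity, so by Theorem \ref{PolyRep} the polynomial representation $\mathfrak{r}$ remains faithful after specializing $\ttt=\qqq^k$; hence $\mathfrak{r}$ identifies $\SHH_n^0(\qqq,\qqq^k)$ with $\mathfrak{r}(\SHH_n^0(\qqq,\qqq^k))$, and it suffices to prove the two inclusions $\mathrm{im}(\mathfrak{rad}^0_{\qqq^k}) = \mathfrak{r}(\SHH_n^0(\qqq,\qqq^k))$ and then injectivity.

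First I would establish $\mathfrak{r}(\SHH_n^0(\qqq,\qqq^k))\subseteq\mathrm{im}(\mathfrak{rad}^0_{\qqq^k})$. By Lemma \ref{GenLem0} — this is where the hypothesis $k>2n$ is used — $\SHH_n^0(\qqq,\qqq^k)$ is generated by $\ee\CC(\qqq)[X_1^{\pm1},\ldots,X_n^{\pm1}]\ee$ and $\ee\CC(\qqq)[Y_1^{\pm1},\ldots,Y_n^{\pm1}]\ee$; under $\mathfrak{r}$ the former acts by multiplication by symmetric Laurent polynomials and, by Theorem \ref{MacThm}(1), the latter acts diagonally in the Macdonald basis with prescribed eigenvalues. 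On the other side, Theorem \ref{EtKirThm} identifies the weighted trace functions with $\Lambda_n^\pm(\qqq)$, and Theorem \ref{QHC0}(2),(3) compute that the classes $\mathrm{qcoev}^{\mathbf A}_{V_\mu}(1),\mathrm{qcoev}^{\mathbf B}_{V_\mu}(1)\in\AAA_0(\qqq^{k-1})$ act respectively as multiplication by the Schur function $s_\mu$ and as the diagonal operator with eigenvalue $\qqq^{-|\mu|(n-1)}s_\mu(\qqq^{2(\lambda_i+k(n-i))})$. Letting $\mu$ range over $P^+$, using $\mu=\omega_n$ together with the localization to invert $e_n$, the first family produces every symmetric-Laurent multiplication operator while the second produces the same algebra of diagonal operators as $\mathfrak{r}(\ee\CC(\qqq)[Y^{\pm1}]\ee)$; hence all generators of $\mathfrak{r}(\SHH_n^0(\qqq,\qqq^k))$ lie in the image.

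Next I would prove the reverse inclusion $\mathrm{im}(\mathfrak{rad}^0_{\qqq^k})\subseteq\mathfrak{r}(\SHH_n^0(\qqq,\qqq^k))$. By the $\ell=0$ form of Proposition \ref{DGen}, $\AAA_0(\qqq^{k-1})$ is generated by the image of $(\DD_0)^{\UU}$, i.e.\ by quantum traces of cyclic words in $\mathbf A$ and $\mathbf B^{-1}$; the pure-$\mathbf A$ and pure-$\mathbf B^{-1}$ traces are already in $\mathfrak{r}(\SHH_n^0(\qqq,\qqq^k))$ by the first step, so the task is to do the same for the mixed ones. The mechanism is that the relation imposed by quantum Hamiltonian reduction at the character $\chi_{\qqq^{k-1}}$ — the vanishing of the entries of $\mu_{\MM_0}(\mathbf M)-\qqq^{-2(k-1)}\mathbf I$ modulo the ideal — becomes, under $\mathfrak{rad}^0_{\qqq^k}$, exactly the image of the DAHA cross-relation $X_1Y_2=Y_2T_1^2X_1$ of presentation (\ref{DAHAY}); using this one straightens each mixed quantum trace, by induction on the number of $\mathbf B^{-1}$-blocks, into an expression in separated $\mathbf A$- and $\mathbf B^{-1}$-generators lying in $\mathfrak{r}(\SHH_n^0(\qqq,\qqq^k))$. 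Combining the two inclusions gives $\mathrm{im}(\mathfrak{rad}^0_{\qqq^k})=\mathfrak{r}(\SHH_n^0(\qqq,\qqq^k))$.

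Finally I would deduce injectivity by a filtered dimension count. The algebra $\SHH_n^0(\qqq,\qqq^k)$ is the Ore localization of $\SHH_n^0(\qqq,\qqq^k)^+$, which by the $GL_n$ analogue of Corollary \ref{PBWCor} is a flat filtered deformation with associated graded $\CC[z_1,\ldots,z_n,w_1,\ldots,w_n]^{\Sigma_n}=\mathcal{R}_1^{\Sigma_n}$, while the corresponding non-localized version of $\AAA_0(\qqq^{k-1})$ carries the filtration of Proposition \ref{QTraceLem0} whose associated graded surjects onto $\mathcal{R}_1^{\Sigma_n}$ with graded dimension bounded above by it (using Lemma \ref{QTraceLem}(2) and Proposition \ref{GanProp}). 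Since $\mathfrak{rad}^0_{\qqq^k}$ is filtered and, by the previous two paragraphs, surjects onto $\mathfrak{r}(\SHH_n^0(\qqq,\qqq^k))$, its associated graded is a degreewise-surjective map of graded vector spaces whose source has graded dimension at most that of its target in each degree, hence a degreewise isomorphism; by (\ref{DimEq}) and Nakayama's lemma applied to each finite-dimensional filtered piece, $\mathfrak{rad}^0_{\qqq^k}$ is injective. I expect the main obstacle to be the reverse inclusion of images — taming the mixed quantum traces and matching the reduced moment-map relation with the DAHA cross-relation — with everything else being routine given the cited structural input.
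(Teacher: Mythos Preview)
This theorem is not proved in the present paper---it is quoted from \cite{QHarish}---but the cyclotomic analogue (the theorem immediately following in \S\ref{tqk}) is proved here by the same mechanism, so one can compare against that. Your steps 1 and 3 are essentially the right ones and match the paper's approach: show $\mathfrak{r}(\SHH_n^0(\qqq,\qqq^k))\subset\mathrm{im}(\mathfrak{rad}^0_{\qqq^k})$ via generators (Lemma~\ref{GenLem0} plus Theorem~\ref{QHC0}(2),(3)), and then run a filtered dimension count using Proposition~\ref{QTraceLem0}/Lemma~\ref{QTraceLem}(2) against the PBW size of $\SHH_n^0(\qqq,\qqq^k)^+$.

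The problem is your step 2. First, it is unnecessary: once step 1 gives a filtered surjection from the non-localized piece $\AAA_0^{\mathrm{IV}}$ onto $\mathfrak{r}(\SHH_n^0(\qqq,\qqq^k)^+)$, the inequality $\dim^k\mathrm{gr}\,\AAA_0^{\mathrm{IV}}\le\dim^k\mathcal{R}_1^{\Sigma_n}$ together with the equality $\dim^k\mathrm{gr}\,\SHH_n^0(\qqq,\qqq^k)^+=\dim^k\mathcal{R}_1^{\Sigma_n}$ forces the map to be a bijection in each degree---this yields \emph{both} injectivity and the reverse image inclusion at once. This is exactly how the paper argues in the proof after Lemma~\ref{XDqk}: ``Corollary~\ref{PBWCor} gives the opposite inequality to Lemma~\ref{QTraceLem}(2), from which the theorem follows.'' You do not need a separate proof that $\mathrm{im}(\mathfrak{rad}^0_{\qqq^k})\subset\mathfrak{r}(\SHH_n^0(\qqq,\qqq^k))$.

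Second, the argument you sketch for step 2 does not work as written. The assertion that the reduced moment-map relation ``becomes exactly the image of the DAHA cross-relation $X_1Y_2=Y_2T_1^2X_1$'' is not a precise statement: the moment-map ideal is generated by the entries of an $n\times n$ matrix identity in $\MM_0$, while the DAHA cross-relation lives in a Hecke-twisted setting with no obvious entrywise dictionary. The proposed induction on the number of $\mathbf{B}^{-1}$-blocks in a mixed quantum trace has no clear well-founded ordering in this noncommutative context, and there is no mechanism shown for why straightening terminates in pure $\mathbf{A}$- and $\mathbf{B}^{-1}$-words modulo $I_{\qqq^{k-1}}$. Drop step 2 entirely and let the dimension count do the work.
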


\subsubsection{Harish-Chandra isomorphism at $\ttt=\qqq^k$}\label{tqk}
For $k\ge 1$, Etingof--Kirillov theory allows us to define an action of $\AAA_0(\qqq^{k-1})$ on Macdonald polynomials at $\ttt=\qqq^k$ via its function representation.
By precomposing the action map with $\Psi_\ZZZ^\ell:\AAA_\ell(\ZZZ,\mathsf{Q}^{k-1})\rightarrow\AAA_0(\qqq^{k-1})$, we thus have an action of $\AAA_\ell(\ZZZ,\mathsf{Q}^{k-1})$ on Macdonald polynomials as well.
Thus, we can define a radial parts map at $\ell>0$:
\[
\mathfrak{rad}^\ell_{\qqq^k}: \AAA_\ell(\ZZZ,\mathsf{Q}^{k-1})\rightarrow \mathrm{End}\left(\Lambda_n^\pm(\qqq)\right)
\]
Our main goal is to show that $\mathrm{im}(\mathfrak{rad}_{\qqq^k}^\ell)$ contains $\mathfrak{r}\left( \SHH_n^\ell(\ZZZ,\qqq,\qqq^k) \right)$.

For $k>2n\ell$, recall that by Lemma \ref{CycGen}, $\SHH_n^\ell(\ZZZ,\qqq,\qqq^k)$ is generated by
\begin{equation}
\ee\CC(\qqq)[X_1,\ldots, X_n]\ee\cup\ee\CC(\qqq)[Y_1^{\pm 1},\ldots, Y^{\pm 1}_n]\ee\cup\ee\CC(\qqq)[D_1,\ldots D_n]\ee
\label{XYD}
\end{equation}
Let $\mathcal{R}(\mathbf{X^\circ})\subset\MM_\ell$ be the subalgebra generated by the entries of $\XX^\circ$ and let $\OO_G(\YLu^{(1)})\subset\MM_\ell$ be the sublagebra generated by the entries of $\YLu^{(1)}$ and $\YL^{(1)}$.
Additionally, let $\mathcal{R}(\mathbf{A}^{-1})\subset\MM_0$ be the subalgebra generated by the entries of $\mathbf{A}^{-1}$.
From Theorem \ref{QHC0}, one can deduce 
\begin{align*}
\mathfrak{rad}_{\qqq^k}^\ell\left( \mathcal{R}(\XX^\circ)^{\UU^{\otimes\ell}}\right) &= \ee \CC(\qqq)[X_1,\ldots, X_n]\ee\\
\mathfrak{rad}_{\qqq^k}^\ell\left(\OO_G(\YLu^{(1)})^{\UU^{\otimes\ell}}\right)&= \ee  \CC(\qqq)[Y_1,\ldots, Y_n]\ee\\
\mathfrak{rad}_{\qqq^k}^0\left( \mathcal{R}(\mathbf{A}^{-1})^\UU \right)&= \ee\CC(\qqq)[X_1^{-1},\ldots, X_n^{-1}]\ee
\end{align*}
Thus, the first two subalgebras in (\ref{XYD}) lie in $\mathrm{im}( \mathfrak{rad}_{\qqq^k}^\ell )$.

To consider $\ee \CC(\qqq)[D_1,\ldots, D_n]\ee$, recall the element $\gamma_\ZZZ$ (\ref{GammaZ}).
By Proposition \ref{GammaZProp}(1), we have
\begin{equation}
\ee\gamma_\ZZZ \CC(\qqq)[X_1^{-1},\ldots, X_n^{-1}]\gamma_\ZZZ^{-1}\ee= \ee \CC(\qqq)[D_1,\ldots, D_n]\ee
\label{GammaXD}
\end{equation}
By Proposition \ref{GammaZProp}(2), we can extend $\mathfrak{r}(\gamma_\ZZZ)$ to an operator $\mathfrak{R}(\gamma_\ZZZ)$ on $\OO_G\,\widetilde{\otimes}\, S_\qqq\mathbb{V}$ by
\[
\mathfrak{R}(\gamma_\ZZZ)\big|_{V_\lambda^*\otimes V_\lambda\otimes S_\qqq\mathbb{V}}=\left( \prod_{a=1}^\ell\prod_{i=1}^n\left( \qqq^{-2(n-i)-1}\ZZZ_a^{-1};\qqq^{2} \right)_{-\lambda_i} \right)
\]
Notice that we have set $\ttt=\qqq^k$ in (\ref{GammaEigen}) and incorporated the shift in highest weight $V^k_{\lambda}:=V_{\lambda+(k-1)\delta}$.
This extended operator is invertible.
The following is an equality analogous to (\ref{GammaXD}) but for $\mathrm{im}( \mathfrak{rad}_{\qqq^k}^\ell )$:
\begin{lem}\label{XDqk}
Let $\mathcal{R}(\DDD^\circ)\subset\MM_\ell$ be the subalgebra generated by the entries of $\DDD^\circ$.
We have
\[
\mathfrak{R}(\gamma_\ZZZ)\mathfrak{rad}_{\qqq^k}^0\left( \mathcal{R}(\mathbf{A}^{-1})^\UU \right)\mathfrak{R}(\gamma_\ZZZ)^{-1}=\mathfrak{rad}_{\qqq^k}^\ell\left(\mathcal{R}(\DDD^\circ)^{\UU^{\otimes\ell}} \right)
\]
\end{lem}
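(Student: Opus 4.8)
The plan is to prove that both sides of the asserted identity coincide with $\mathfrak{r}\big(\ee\,\CC(\qqq)[D_1,\ldots,D_n]\,\ee\big)\subset\mathrm{End}\big(\Lambda_n^\pm(\qqq)\big)$.

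\textbf{Reducing the left-hand side.} By construction $\mathfrak{R}(\gamma_\ZZZ)$ extends $\mathfrak{r}(\gamma_\ZZZ)$ and is diagonal on the summands $V_\mu^*\otimes V_\mu\otimes S_\qqq\mathbb{V}$, so it preserves the $\mathbb{1}_{\qqq^{k-1}}$-isotypic subspace and restricts there to $\mathfrak{r}(\gamma_\ZZZ)$ on $\Lambda_n^\pm(\qqq)$. Plugging in the recorded identity $\mathfrak{rad}^0_{\qqq^k}\big(\mathcal{R}(\mathbf{A}^{-1})^\UU\big)=\mathfrak{r}\big(\ee\,\CC(\qqq)[X_1^{-1},\ldots,X_n^{-1}]\,\ee\big)$, the left-hand side becomes $\mathfrak{r}(\gamma_\ZZZ)\,\mathfrak{r}\big(\ee\,\CC(\qqq)[X_i^{-1}]\,\ee\big)\,\mathfrak{r}(\gamma_\ZZZ)^{-1}$. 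Each $\gamma_{\ZZZ_a}$ is a scalar times the exponential of a sum of power sums in the $Y_i^{-1}$, hence lies in a completion of the center $\CC(\qqq)[Y_1^{\pm1},\ldots,Y_n^{\pm1}]^{\Sigma_n}$ of the affine Hecke subalgebra $\langle T_1,\ldots,T_{n-1},Y_1,\ldots,Y_n\rangle$ and commutes with $\ee$; so this subalgebra equals $\mathfrak{r}\big(\ee\,\gamma_\ZZZ\CC(\qqq)[X_i^{-1}]\gamma_\ZZZ^{-1}\,\ee\big)$, which by (\ref{GammaXD}) (itself a consequence of Proposition \ref{GammaZProp}(1)) is precisely $\mathfrak{r}\big(\ee\,\CC(\qqq)[D_1,\ldots,D_n]\,\ee\big)$.

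\textbf{Reducing the right-hand side.} Here $\mathfrak{rad}^\ell_{\qqq^k}=\mathfrak{rad}^0_{\qqq^k}\circ\Psi^\ell_\ZZZ$ with $\Psi^\ell_\ZZZ=(\Phi^\ell_\ZZZ)^{-1}$ (and $\Psi^1_\ZZZ$ as in Lemma \ref{D1D0Hom} for $\ell=1$), so I would first identify the image of $\DDD^\circ$. Specializing the ``big corn-on-the-cob'' identity of Lemma \ref{BCobLem2} to the parameter $\mathsf{Q}^{k-1}$, i.e.\ to $\mathsf{K}_2=\cdots=\mathsf{K}_\ell=\qqq^{-n}$, makes every prefactor $\qqq^{2n(a-1)}\mathsf{K}_2^2\cdots\mathsf{K}_a^2$ equal to $1$, so in $\MM_\ell^\circ\big/I^+_{\ZZZ\mathsf{Q}^{k-1}}$ one has $\DDD^\circ=(\XX^\circ)^{-1}\prod_{a=1}^\ell\big((\ZZZ_1/\ZZZ_a)\,\YLu^{(1)}-\mathbf{I}\big)$, and likewise for products of entries. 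By (\ref{PhiZ}), $\Phi^\ell_\ZZZ(\mathbf{A})=\XX^\circ$ and $\Phi^\ell_\ZZZ(\mathbf{B}^{-1})=\ZZZ_1\YLu^{(1)}$, so this reads $\Phi^\ell_\ZZZ(\mathbf{N})=\DDD^\circ$ with $\mathbf{N}:=\mathbf{A}^{-1}\prod_{a=1}^\ell(\ZZZ_a^{-1}\mathbf{B}^{-1}-\mathbf{I})$. Since $\mathcal{R}(\DDD^\circ)^{\UU^{\otimes\ell}}$ is generated by the quantum traces $\tr_\qqq\big((\DDD^\circ)^m\big)$ of the single cyclic-word matrix $\DDD^\circ$ — by the same quantum-trace generation argument as for $\XX^\circ$ (Le Bruyn--Procesi at $\qqq=1$ for one matrix, then Nakayama on each filtered piece) — the isomorphism $\Phi^\ell_\ZZZ$ carries $\mathcal{R}(\mathbf{N})^\UU$ onto it, and hence the right-hand side equals $\mathfrak{rad}^0_{\qqq^k}\big(\mathcal{R}(\mathbf{N})^\UU\big)$.

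\textbf{Matching, and the crux.} It remains to prove $\mathfrak{rad}^0_{\qqq^k}\big(\mathcal{R}(\mathbf{N})^\UU\big)=\mathfrak{r}\big(\ee\,\CC(\qqq)[D_1,\ldots,D_n]\,\ee\big)$, which I would extract from the $\qqq$-Harish--Chandra isomorphism of \cite{QHarish}: in matricial form it intertwines $\mathfrak{rad}^0_{\qqq^k}$ applied to $\mathbf{A}$ and $\mathbf{B}$ with $\mathfrak{r}$ applied to the reflection-equation matrices assembled from $\{X_i\}$ and from $\{Y_i\}$, after the rescaling making $\mathbf{B}^{-1}$ act on Macdonald polynomials exactly as $\qqq^{-1}\ttt^{1-n}Y_i^{-1}$ does (as forced by Theorem \ref{MacThm}(1) and Theorem \ref{QHC0}(3)). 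Transporting $\mathbf{N}=\mathbf{A}^{-1}\prod_a(\ZZZ_a^{-1}\mathbf{B}^{-1}-\mathbf{I})$ across this isomorphism identifies $\mathfrak{rad}^0_{\qqq^k}(\mathbf{N})$, up to an invertible scalar, with the matrix of $\qqq$-Dunkl elements $X_i^{-1}\prod_a(\qqq^{-1}\ttt^{1-n}Y_i^{-1}-\ZZZ_a)$; hence $\mathfrak{rad}^0_{\qqq^k}\big(\tr_\qqq(\mathbf{N}^m)\big)$ is, up to a unit, $\mathfrak{r}\big(\ee\,p_m(D_1,\ldots,D_n)\,\ee\big)$. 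As the $\{\ee\,p_m(D)\,\ee\}_{m\ge1}$ generate $\ee\,\CC(\qqq)[D_1,\ldots,D_n]\,\ee$ (the $D_i$ commute and satisfy $D_{i+1}=T_i^{-1}D_iT_i^{-1}$, whence $\ee\,\CC(\qqq)[D_i]\,\ee=\ee\,\CC(\qqq)[D_i]^{\Sigma_n}\,\ee$ just as for the $X_i$), while the $\{\tr_\qqq(\mathbf{N}^m)\}$ generate $\mathcal{R}(\mathbf{N})^\UU$, the two subalgebras agree. I expect this last paragraph to be the main obstacle: because $\mathbf{A}^{-1}$ and $\mathbf{B}^{-1}$ are neither commuting nor simultaneously diagonalized by $\mathfrak{rad}^0_{\qqq^k}$, reading the diagonal combination $\sum_i D_i^m$ out of the quantum trace of their product forces one to keep precise track of all powers of $\qqq$, $\ttt$, $\ZZZ_a$ (and of the rescaling built into $\Phi^\ell_\ZZZ$ and into the parameter relation (\ref{ZX})), for which the Gaussian manipulations of \cite{DiKedqt} are the right device; granting the first two steps, this is precisely the statement that the sphericalized cyclotomic $\qqq$-Dunkl elements lie in the image of $\mathfrak{rad}^\ell_{\qqq^k}$.
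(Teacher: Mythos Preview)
Your first two reductions are correct and indeed match the paper's setup: the left-hand side becomes $\mathfrak{r}\big(\ee\,\CC(\qqq)[D_1,\ldots,D_n]\,\ee\big)$ via (\ref{GammaXD}), and on the right-hand side the big corn-on-the-cob lemma identifies $\DDD^\circ$ with $\Phi^\ell_\ZZZ(\mathbf{N})$ for $\mathbf{N}=\mathbf{A}^{-1}\prod_a(\ZZZ_a^{-1}\mathbf{B}^{-1}-\mathbf{I})$.

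The gap is exactly where you flag it. The $\qqq$-Harish--Chandra isomorphism of \cite{QHarish} is an isomorphism of algebras of $\UU$-\emph{invariants}; it does not come in a ``matricial form'' that transports $\mathbf{A}$, $\mathbf{B}$, or $\mathbf{N}$ entrywise to DAHA elements. Individual matrix entries are not invariants, and quantum traces of products of noncommuting matrices are not determined by quantum traces of the factors separately. So the assertion that ``transporting $\mathbf{N}$ across this isomorphism identifies $\mathfrak{rad}^0_{\qqq^k}(\mathbf{N})$ with the matrix of $\qqq$-Dunkl elements'' is precisely the statement to be proved, not a consequence of anything already established in \cite{QHarish}.

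The paper bypasses this entirely by working one level up, in the function representation $\mathfrak{R}$ on all of $\OO_G\,\widetilde{\otimes}\,S_\qqq\mathbb{V}$ rather than on its $\UU$-invariants, and proving the \emph{matrix} identity
\[
\mathfrak{R}(\gamma_\ZZZ)\,\mathfrak{R}(\mathbf{A}^{-1})\,\mathfrak{R}(\gamma_\ZZZ)^{-1}=(-1)^\ell\,\mathfrak{R}\big(\Psi^\ell_\ZZZ(\DDD^\circ)\big)
\]
directly. The mechanism is: entries of $\mathbf{A}^{-1}$ lie in $\mathbb{V}\otimes\mathbb{V}^*$, so multiplying into $V_\lambda^*\otimes V_\lambda\otimes S_\qqq\mathbb{V}$ shifts $\lambda$ by some $-\epsilon_i$; since $\mathfrak{R}(\gamma_\ZZZ)$ is scalar on each isotypic summand, conjugation multiplies by the ratio of eigenvalues (\ref{GammaRat}). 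That ratio is then rewritten as a polynomial in $\ad^{-1}_{\mathfrak{R}(\nu)}$, where $\nu$ is the ribbon element, using (\ref{RibbonAct}) to compute $\mathfrak{R}(\nu)|_{V_\mu^*}/\mathfrak{R}(\nu)|_{V_\lambda^*}=\qqq^{2\lambda_i+n+1-2i}$. The key input---replacing your appeal to a nonexistent matricial Harish--Chandra map---is Lemma~A.8 of \cite{QHarish}, which gives
\[
\ad^{-1}_{\mathfrak{R}(\nu)}\big(\mathfrak{R}(\mathbf{A}^{-1})\big)=\qqq^n\,\mathfrak{R}(\mathbf{A}^{-1}\mathbf{B}^{-1}),\qquad
\ad^{-1}_{\mathfrak{R}(\nu)}\big(\mathfrak{R}(\mathbf{B}^{-1})\big)=\mathfrak{R}(\mathbf{B}^{-1}).
\]
Iterating yields $\mathfrak{R}\big(\mathbf{A}^{-1}\prod_a(\mathbf{I}-\ZZZ_a^{-1}\mathbf{B}^{-1})\big)$, which is $(-1)^\ell\mathfrak{R}(\Psi^\ell_\ZZZ(\DDD^\circ))$ by Lemma~\ref{BigCobLem1}. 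Since conjugation is a ring homomorphism, this single matrix computation proves the equality of the invariant subalgebras without ever needing to track individual quantum traces.
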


\begin{proof}
Since conjugation is a ring homomorphism, it suffices to show that
\[
\mathfrak{R}(\gamma_\ZZZ)\mathfrak{R}(\mathbf{A}^{-1})\mathfrak{R}(\gamma_\ZZZ)^{-1}\overset{?}{=}(-1)^\ell\mathfrak{R}\left(\Psi_\ZZZ^{\ell}(\DDD^\circ)\right)
\]
To that end, recall that the entries of $\mathbf{A}^{-1}$ lie in $\mathbb{V}\otimes\mathbb{V}^*\cong\mathbb{V}^{**}\otimes\mathbb{V}^*$.
When they are multiplied to an element of $V_\lambda^*\otimes V_\lambda\otimes S_\qqq\mathbb{V}$, the result lies in the direct sum of $V_\mu^*\otimes V_\mu\otimes S_\qqq\mathbb{V}$ where  $\lambda-\mu=\epsilon_i$ for some $i$.
For such a $\mu$, we have:
\begin{equation}
\mathfrak{R}(\gamma_\ZZ)\big|_{V_\mu^*\otimes V_\mu\otimes S_\qqq\mathbb{V}}\bigg/
\mathfrak{R}(\gamma_\ZZ)\big|_{V_\lambda^*\otimes V_\lambda\otimes S_\qqq\mathbb{V}}
=
\prod_{a=1}^\ell\left( 1-\qqq^{-2\lambda_i-2(n-i)-1}\ZZZ_a^{-1} \right)
\label{GammaRat}
\end{equation}

We can write the ratio of eigenvaues (\ref{GammaRat}) in terms of the ribbon element $\nu$.
Specifically, let us define an operator $\mathfrak{R}(\nu)$ on $\OO_G\,\widetilde{\otimes}\,S_\qqq\mathbb{V}$ where for $v^*\otimes v\otimes x\in V^*\otimes V\otimes S_\qqq\mathbb{V}$,
\[
\mathfrak{R}(\nu)\cdot\left( v^*\otimes v\otimes x \right)= (\nu v^*)\otimes v\otimes x
\]
Using (\ref{RibbonAct}), we compute:
\begin{align*}
\mathfrak{R}(\nu)\big|_{V^*_\mu\otimes V_\mu\otimes S_\qqq\mathbb{V}}\bigg/\mathfrak{R}(\nu)\big|_{V^*_\lambda\otimes V_\lambda\otimes S_\qqq\mathbb{V}}
&= \qqq^{-\langle\lambda-\epsilon_i,\lambda-\epsilon_i+2\rho\rangle+\langle\lambda,\lambda+2\rho\rangle}\\
&= \qqq^{2\lambda_i+n+1-2i}
\end{align*}
In comparison with (\ref{GammaRat}), we have:
\[
\mathfrak{R}(\gamma_\ZZZ)\mathfrak{R}(\mathbf{A}^{-1})\mathfrak{R}(\gamma_\ZZZ)^{-1}
=\bigg[\left( 1-\ZZZ_1^{-1}\qqq^{-n}\ad_{\mathfrak{R}(\nu)}^{-1} \right)\circ\cdots\circ\left( 1-\ZZZ_\ell^{-1}\qqq^{-n}\ad_{\mathfrak{R}(\nu)}^{-1} \right)\bigg]\left( \mathfrak{R}(\mathbf{A}^{-1}) \right)
\]
Finally, we apply Lemma A.8 from \cite{QHarish}:
\begin{align*}
\ad^{-1}_{\mathfrak{R}(\nu)}\left(\mathfrak{R}(\mathbf{A}^{-1})\right)&=\qqq^n\mathfrak{R}\left( \mathbf{A}^{-1}\mathbf{B}^{-1} \right)\\
\ad^{-1}_{\mathfrak{R}(\nu)}\left(\mathfrak{R}(\mathbf{B}^{-1})\right)&= \mathfrak{R}\left(\mathbf{B}^{-1}\right)
\end{align*}
We thus have
\begin{align*}
\mathfrak{R}(\gamma_\ZZZ)\mathfrak{R}(\mathbf{A}^{-1})\mathfrak{R}(\gamma_\ZZZ)^{-1}
&= \mathfrak{R}\left(\mathbf{A}^{-1}\left(\mathbf{I}-\ZZZ_1^{-1}\mathbf{B}^{-1}  \right)\cdots\left( \mathbf{I-\ZZZ_\ell^{-1}\mathbf{B}}^{-1} \right)\right)\\
&= (-1)^\ell\mathfrak{R}\left( \Psi_\ZZZ^\ell(\DDD^\circ) \right)
\end{align*}
by Lemma \ref{BigCobLem1} (cf. (\ref{PhiZ})).
\end{proof}

\begin{thm}
For $k>2n\ell$, the radial parts map induces an isomorphism:
\[
\mathfrak{r}^{-1}\circ\mathfrak{rad}_{\qqq^k}^\ell:\AAA_\ell(\ZZZ,\mathsf{Q}^{k-1})\rightarrow\SHH_n^\ell(\ZZZ,\qqq,\qqq^k)
\]
\end{thm}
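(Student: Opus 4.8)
The plan is to show that the ring homomorphism $\mathfrak{rad}^\ell_{\qqq^k}$ is injective with image exactly $\mathfrak{r}\bigl(\SHH_n^\ell(\ZZZ,\qqq,\qqq^k)\bigr)\subset\mathrm{End}(\Lambda_n^\pm(\qqq))$. Since $\mathfrak{r}$ remains faithful at $\ttt=\qqq^k$ (Theorem \ref{PolyRep}), and the idempotent $\ee$ then makes the induced action of $\SHH_n^0(\qqq,\qqq^k)$ on $\Lambda_n^\pm(\qqq)$ faithful, a fortiori so is its restriction to the subalgebra $\SHH_n^\ell(\ZZZ,\qqq,\qqq^k)$; granting the two facts above, $\mathfrak{r}^{-1}\circ\mathfrak{rad}^\ell_{\qqq^k}$ is then a well-defined algebra isomorphism onto $\SHH_n^\ell(\ZZZ,\qqq,\qqq^k)$, as claimed. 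So the proof breaks into: injectivity of $\mathfrak{rad}^\ell_{\qqq^k}$, the inclusion $\mathfrak{r}(\SHH_n^\ell)\subseteq\mathrm{im}(\mathfrak{rad}^\ell_{\qqq^k})$, and the reverse inclusion.

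\emph{Injectivity.} By construction $\mathfrak{rad}^\ell_{\qqq^k}=\mathfrak{rad}^0_{\qqq^k}\circ\Psi^\ell_\ZZZ$. Since $k>2n\ell\ge 2n$, the map $\mathfrak{rad}^0_{\qqq^k}$ is injective by the theorem of \cite{QHarish} quoted in \ref{EtKirqk}, and $\Psi^\ell_\ZZZ$ is injective: for $\ell>1$ this is Lemma \ref{PsiLem}(2), while for $\ell=1$ it follows from the injectivity of $\Psi^1\colon\DD_1\to\DD_0$, which descends to the quantum Hamiltonian reductions because at generic $\qqq$ both $(-)^{\UU}$ and the relevant localizations are exact. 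For the inclusion $\mathfrak{r}(\SHH_n^\ell)\subseteq\mathrm{im}(\mathfrak{rad}^\ell_{\qqq^k})$: by Lemma \ref{CycGen}, for $k>2n\ell$ the algebra $\SHH_n^\ell(\ZZZ,\qqq,\qqq^k)$ is generated by $\ee\CC(\qqq)[X_1,\dots,X_n]\ee$, $\ee\CC(\qqq)[Y_1^{\pm1},\dots,Y_n^{\pm1}]\ee$ and $\ee\CC(\qqq)[D_1,\dots,D_n]\ee$, and the displayed identities in \ref{tqk} (from Theorem \ref{QHC0}) together with Lemma \ref{XDqk} identify these three families with $\mathfrak{rad}^\ell_{\qqq^k}\bigl(\mathcal{R}(\XX^\circ)^{\UU^{\otimes\ell}}\bigr)$, $\mathfrak{rad}^\ell_{\qqq^k}\bigl(\OO_G(\YLu^{(1)})^{\UU^{\otimes\ell}}\bigr)$ and $\mathfrak{rad}^\ell_{\qqq^k}\bigl(\mathcal{R}(\DDD^\circ)^{\UU^{\otimes\ell}}\bigr)$ respectively. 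Hence all three generating families lie in $\mathrm{im}(\mathfrak{rad}^\ell_{\qqq^k})$.

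\emph{The reverse inclusion.} The key step here is the multiplicative-quiver-variety counterpart of Lemma \ref{CycGen}: that $\AAA_\ell(\ZZZ,\mathsf{Q}^{k-1})$ is generated, as an algebra, by $\mathcal{R}(\XX^\circ)^{\UU^{\otimes\ell}}\cup\OO_G(\YLu^{(1)})^{\UU^{\otimes\ell}}\cup\mathcal{R}(\DDD^\circ)^{\UU^{\otimes\ell}}$. Granting this, the ring homomorphism $\mathfrak{rad}^\ell_{\qqq^k}$ carries $\AAA_\ell(\ZZZ,\mathsf{Q}^{k-1})$ into the subalgebra of $\mathfrak{r}(\SHH_n^\ell)$ generated by the images of these three pieces, which by the previous paragraph and Lemma \ref{CycGen} is all of $\mathfrak{r}(\SHH_n^\ell)$; combined with the inclusion just established this gives $\mathrm{im}(\mathfrak{rad}^\ell_{\qqq^k})=\mathfrak{r}(\SHH_n^\ell)$. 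To prove the generation statement I would follow the proof of Lemma \ref{CycGen}: by Lemma \ref{QTraceLem}(3), $\AAA_\ell(\ZZZ,\mathsf{Q}^{k-1})$ is generated by the mixed quantum traces $\tr_\qqq\bigl((\XX^\circ)^{k_1}(\YLu^{(1)})^{k_2}(\DDD^\circ)^{k_3}\bigr)$, and one rewrites each of these as a polynomial in the pure quantum traces $\tr_\qqq((\XX^\circ)^m)$, $\tr_\qqq((\YLu^{(1)})^m)$, $\tr_\qqq((\DDD^\circ)^m)$ by running the reordering argument of Lemma A.15.2 of \cite{VarVassRoot} (adapted as in the proof of Lemma \ref{CycGen}), using the cross-relations (\ref{YL3})--(\ref{YR6}) together with the relations in $\MM_\ell/I_{\ZZZ\mathsf{K}}^+$ — which on the $F^\DD$-associated graded degenerate to exactly the $\qqq$-commutators $[Y_i^{-1},X_i]=(\qqq^{-2}-1)X_iY_i^{-1}$, $[D_i,Y_i^{-1}]=(\qqq^{-2}-1)Y_i^{-1}D_i$, $[D_i,X_i]=(\qqq^{-2\ell}-1)X_iD_i$ appearing there; as in Lemma \ref{CycGen}, the hypothesis $k>2n\ell$ is what forces the reordering to terminate.

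I expect this generation statement to be the main obstacle — everything else is bookkeeping with the lemmas already in place. The subtlety is that at $\qqq=1$ the three pure families genuinely fail to generate (one cannot recover $\sum_i X_iY_i$ from symmetric functions of the $X_i$ and of the $Y_i$ separately), so there is no Nakayama-over-$\CC[\qqq^{\pm1}]$ or flatness shortcut reducing it to the classical picture; one must run the quantum reordering directly, carefully tracking the lower-order $R$-matrix corrections produced when commuting $\XX^\circ$, $\YLu^{(1)}$ and $\DDD^\circ$ past one another inside $\MM_\ell/I_{\ZZZ\mathsf{K}}^+$.
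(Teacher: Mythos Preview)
Your forward inclusion $\mathfrak{r}(\SHH_n^\ell)\subseteq\mathrm{im}(\mathfrak{rad}^\ell_{\qqq^k})$ via Lemma~\ref{CycGen} and Lemma~\ref{XDqk} matches the paper. However, your injectivity argument has a gap, and the paper handles both injectivity and the reverse inclusion by a different, single step that makes your proposed generation lemma unnecessary.

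\textbf{The gap.} Lemma~\ref{PsiLem}(2) asserts that $\Phi^\ell_\ZZZ:\AAA_0(\qqq^{k-1})\to\AAA_\ell(\ZZZ,\mathsf{Q}^{k-1})[\det_\qqq(\XX^\circ)^{-1}]$ is an isomorphism, and then \emph{defines} $\Psi^\ell_\ZZZ$ as the composite of the localization $\AAA_\ell\to\AAA_\ell[\det_\qqq(\XX^\circ)^{-1}]$ with $(\Phi^\ell_\ZZZ)^{-1}$. For $\Psi^\ell_\ZZZ$ to be injective you therefore need that localization map to be injective, i.e.\ that $\det_\qqq(\XX^\circ)$ is a non-zero-divisor in $\AAA_\ell(\ZZZ,\mathsf{Q}^{k-1})$. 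Nothing available at this point establishes that $\AAA_\ell$ is a domain: Lemma~\ref{QTraceLem}(2) gives only a dimension \emph{upper bound}, not flatness, so this is known only \emph{a posteriori} once the isomorphism with $\SHH_n^\ell$ is proved. (The $\ell=1$ case has the analogous issue: injectivity of $\Psi^1_\ZZZ$ on $\DD_1$ does not automatically survive passage to the Hamiltonian reductions.)

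\textbf{The paper's route.} Rather than proving injectivity of $\Psi^\ell_\ZZZ$ and a generation lemma for $\AAA_\ell$ separately, the paper runs a dimension count. Restrict to $\AAA_\ell^+:=(\MM_\ell/I_{\ZZZ\mathsf{Q}^{k-1}}^+)^{\UU^{\otimes\ell}}$ and observe that $\mathfrak{rad}^\ell_{\qqq^k}$ carries the filtration $F^\DD$ to the filtration of Proposition~\ref{CyclicPBW} on $\SHH_n^\ell(\ZZZ,\qqq,\qqq^k)^-$ (the generators $\XX^\circ,\YLu^{(1)},\DDD^\circ$ have $F^\DD$-degrees $\ell,2,\ell$, matching $\deg X_i,\deg Y_i^{-1},\deg D_i$). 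Lemma~\ref{QTraceLem}(2) bounds $\dim^k\mathrm{gr}_{F^\DD}\AAA_\ell^+\le\dim^k\mathcal{R}_\ell^{\Sigma_n}$, while Corollary~\ref{PBWCor} (passed to the spherical summand as in the proof of Lemma~\ref{CycGen}) gives $\mathrm{gr}\,\SHH_n^\ell(\ZZZ,\qqq,\qqq^k)^-\cong\mathcal{R}_\ell^{\Sigma_n}$. Since the image of each filtered piece already contains the corresponding filtered piece of $\mathfrak{r}(\SHH_n^\ell^-)$, the restriction of $\mathfrak{rad}^\ell_{\qqq^k}$ to each $F^\DD_k$ is a surjection from a space of dimension at most that of its target --- hence an isomorphism. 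Injectivity and the equality $\mathrm{im}(\mathfrak{rad}^\ell_{\qqq^k})=\mathfrak{r}(\SHH_n^\ell)$ fall out together, and the quantum reordering argument you identify as the main obstacle is never needed.
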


\begin{proof}
By Lemma \ref{XDqk}, $\mathrm{im}(\mathfrak{rad}_{\qqq^k}^\ell)$ contains $\mathfrak{r}(\SHH_n^\ell(\ZZZ,\qqq,\qqq^k))$.
Recall the notation from the proof of Lemma \ref{PsiLem}:
\[
\AAA_\ell^+:=\left( \MM_\ell\big/I_{\ZZZ\mathsf{Q}^{k-1}}^+ \right)^{\UU^{\otimes\ell}}
\]
We have that $\mathfrak{rad}_{\qqq^k}^\ell(\AAA_\ell^+)$ contains $\mathfrak{r}(\SHH_n^\ell(\ZZZ,\qqq,\qqq^k)^-)$, and moreover, the filtration $F_\DD$ is sent to the filtration defined in Proposition \ref{CyclicPBW}.
Taking the associated graded, Corollary \ref{PBWCor} gives the opposite inequality to Lemma \ref{QTraceLem}(2), from which the theorem follows.
\end{proof}

\subsubsection{Etingof-Kirillov theory at generic $\ttt$}
Here, we review the necessary changes to \ref{EtKirqk} for considering the case of generic $\ttt$.
To discuss Etingof--Kirillov theory here, we will need to base change to $\KK:=\CC(\qqq,\ttt)$.
We will formally denote $\ttt=\qqq^\alpha$, although a logarithm for $\qqq$ is unnecessary and all the objects below can be defined in terms of $\ttt$.
Recall that at $\ttt=\qqq^k$, we had a vector space isomorphism:
\begin{align*}
\Lambda^\pm_n(\qqq)&\cong\left( \OO_G\otimes S_\qqq^{n(k-1)}\mathbb{V}\otimes\mathbb{1}_{-(k-1)} \right)^\UU\\
P_\lambda(\qqq,\qqq^k)&\mapsto {}_\cup\Phi_\lambda^k\in(V_{\lambda+(k-1)\delta}^*\otimes V_{\lambda+(k-1)\delta}\otimes S_\qqq^{n(k-1)}\mathbb{V}\otimes\mathbb{1}_{-(k-1)})^\UU
\end{align*}
Each tensorand on the right-hand-side will need to be replaced.

First, recall the isomorphism from \ref{WeylDiff} between $S_\qqq\mathbb{V}$ and a \textit{commutative} polynomial ring $\CC(\qqq)[\mathbf{z}_n]$.
This allowed us to write the $\UU$-equivariant $\WW$-action on $S_\qqq\mathbb{V}$ in terms of a homomorphism into a ring of $\qqq$-difference operators:
\[
\mathfrak{qdiff}:\WW\rtimes\UU\rightarrow\mathcal{D}\hbox{\it iff}\,^+_\qqq(\mathbf{z}_n)
\]
The ring $\mathcal{D}\hbox{\it iff}\,^+_\qqq(\mathbf{z}_n)$ acts on the $\KK$-vector space $W_\alpha$ spanned by the monomials
\[
\left\{ z_1^{m_1+\alpha-1}\cdots z_n^{m_n+\alpha-1}\,\middle|\, (m_1,\ldots, m_n)\in\ZZ^n \right\}
\]
(here, we can omit the $\alpha$ and just impose that the shift operator $\tau_{z_i,\qqq}$ multiplies by $\ttt$).
Precomposing with $\mathfrak{qdiff}$, we obtain a $\UU$-equivariant action of $\WW$ on $W_\alpha$.
We note that the zero weight space of $W_{\alpha}\otimes\mathbb{1}_{\qqq\ttt^{-1}}$ is one-dimensional.
As in the $\ttt=\qqq^k$ case, we pick a spanning vector $u$.

Next, the finite dimensional irreducible $\UU$-module $V^k_\lambda:=V_{\lambda+k\delta}$ will need to be replaced with the \textit{Verma module} $M^\alpha_\lambda:=M_{\lambda+(\alpha-1)\delta}$.
We note that we can define the highest weight of the Verma module in terms of $\ttt$ rather than $\alpha$.
The structures in \ref{Representations} can be applied to $M^\alpha_\lambda$ except those involving the coevaluation because it is infinite-dimensional.
Nonetheless, we can still define a weighted trace over $M^\alpha_\lambda$, which will return a power series.

\begin{thm}[\cite{EtKirQuant}]
For $\lambda\in P^+$, there exists a unique intertwiner 
\[\Phi_\lambda^\alpha\in \Hom_\UU(M_\lambda^\alpha, M_\lambda^\alpha\otimes W_\alpha\otimes\mathbb{1}_{\qqq\ttt^{-1}})\]
such that for a highest weight vector $v^\alpha_\lambda\in M_\lambda^\alpha$,
\[
\Phi_\lambda^\alpha(v^\alpha_\lambda)=v^\alpha_\lambda\otimes u +\hbox{\rm lower order terms}
\]
Let $\varphi_\lambda^\alpha$ denote its weighted trace over $M_\lambda^\alpha$.
We then have
\[
P_\lambda(\qqq,\ttt)=\frac{\varphi^\alpha_\lambda}{\varphi^\alpha_0}
\]
\end{thm}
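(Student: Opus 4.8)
The plan is to reconstruct the argument of Etingof--Kirillov \cite{EtKirQuant} (in the spirit of Etingof--Styrkas): construct the intertwiner by a weight count, show that its weighted trace function is a joint eigenfunction of the Macdonald $\qqq$-difference operators, and then invoke the characterization of $P_\lambda(\qqq,\ttt)$ in Theorem \ref{MacThm}. \emph{Step 1 (the intertwiner $\Phi_\lambda^\alpha$).} Write $\Lambda:=\lambda+(\alpha-1)\delta$ and $U:=W_\alpha\otimes\mathbb{1}_{\qqq\ttt^{-1}}$, so that we seek $\Phi\in\Hom_\UU(M_\lambda^\alpha, M_\lambda^\alpha\otimes U)$. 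Since $M_\lambda^\alpha$ is a Verma module, such a $\Phi$ is determined by the singular vector $\Phi(v_\lambda^\alpha)\in M_\lambda^\alpha\otimes U$ of weight $\Lambda$; imposing $\Phi(v_\lambda^\alpha)=v_\lambda^\alpha\otimes u+(\text{lower weight})$ turns the equations $E_i\Phi(v_\lambda^\alpha)=0$ into a recursion for the lower-weight corrections, solvable because the only obstructions are resonances (vanishing of certain differences of $\qqq$-Cartan eigenvalues), which do not occur for $\ttt$ (equivalently $\alpha$) generic. This gives existence and uniqueness of $\Phi_\lambda^\alpha$. Equivalently, via the tensor identity $M_\lambda^\alpha\otimes U\cong\mathrm{Ind}(\CC_\Lambda\otimes\mathrm{Res}\,U)$ and a weight filtration, one finds $\dim\Hom_\UU(M_\lambda^\alpha,M_\lambda^\alpha\otimes U)=\dim U[0]=1$, the last equality being the one-dimensionality of the zero weight space noted in the text.

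\emph{Step 2 (difference equations for the trace).} Define $\varphi_\lambda^\alpha$ by $\varphi_\lambda^\alpha(\mu)=\tr_{M_\lambda^\alpha}\big(\Phi_\lambda^\alpha\circ\qqq^\mu\big)$, extracting the $U[0]$-component via the fixed vector $u$; this is a well-defined formal series in $x_i=\qqq^{\langle\epsilon_i,-\rangle}$ since the weight spaces of $M_\lambda^\alpha$ are finite-dimensional and $\Phi_\lambda^\alpha$ shifts weights only by the (locally finite) weights of $U$, and by Step 1 it equals $x^\lambda$ plus lower terms relative to $\varphi_0^\alpha$. The key input is that $\Phi_\lambda^\alpha$ intertwines the action of the center $Z(\UU)$: for $z\in Z(\UU)$ one has $z|_{M_\lambda^\alpha}\cdot\Phi_\lambda^\alpha=\Phi_\lambda^\alpha\cdot\big(z|_{M_\lambda^\alpha\otimes U}\big)$. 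Taking $\qqq^\mu$-weighted traces of both sides, using cyclicity of the trace, and feeding in the explicit R-matrix description of the quantum Casimirs $C_r$ (the quantum traces $\tr_\qqq$ of powers of $\widetilde{\RRR}_{21}\widetilde{\RRR}$ acting on the vector representation, dressed by $\qqq^{2\rho}$), the action on the $U$-factor is converted into an explicit $\qqq$-difference operator in $\mu$ — namely the Macdonald operator, up to conjugation by $\varphi_0^\alpha$ and a normalizing scalar. Hence $\varphi_\lambda^\alpha/\varphi_0^\alpha$ is a joint eigenfunction of the Macdonald operators, with eigenvalue the central character evaluated at $\Lambda=\lambda+(\alpha-1)\delta$.

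\emph{Step 3 (identification) and the obstacle.} The function $f:=\varphi_\lambda^\alpha/\varphi_0^\alpha$ is $\Sigma_n$-invariant and a (Laurent) polynomial: the eigenvalue is Weyl-invariant and separates dominant weights, so $f$ is supported on weights $\le\lambda$ with leading coefficient $1$ by the normalization of $\Phi_\lambda^\alpha$. Matching the $C_r$-eigenvalue with the eigenvalue of $f(Y_1,\ldots,Y_n)$ in Theorem \ref{MacThm}(1) — both being evaluation at the spectral point $(\qqq^{2\lambda_1}\ttt^{n-1},\ldots,\qqq^{2\lambda_n}\ttt^{1-n})$ — the uniqueness in that theorem forces $f=P_\lambda(\qqq,\ttt)$. (Alternatively, one can note that the coefficients of $f$ are rational in $\ttt$ with denominators nonvanishing at $\ttt=\qqq^k$ for $k\ge1$ and agree there with those of $P_\lambda(\qqq,\qqq^k)$ by Theorem \ref{EtKirThm}, hence agree identically.) The main obstacle is Step 2: identifying the ``radial part'' of the quantum Casimir with the Macdonald $\qqq$-difference operator. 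Unlike the rational or trigonometric Calogero--Moser setting, this genuinely requires the R-matrix construction of $Z(\UU)$ and careful bookkeeping with $\tr_\qqq$ and the ribbon and Drinfeld elements in order for the Macdonald operators to appear; one must also be vigilant about convergence of the weighted traces over the infinite-dimensional Verma modules $M_\lambda^\alpha$ and about the formal manipulations used to reorganize them into difference operators.
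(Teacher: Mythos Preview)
The paper does not prove this theorem; it is stated as a citation to \cite{EtKirQuant} with no accompanying proof. Your proposal is a reasonable reconstruction of the original Etingof--Kirillov argument (singular vector construction for $\Phi_\lambda^\alpha$, radial part of the quantum Casimirs yielding Macdonald operators, identification via the eigenfunction characterization in Theorem~\ref{MacThm}), so there is nothing in the paper to compare against beyond noting that you have correctly identified the source and outline of the proof.
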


\subsubsection{Harish-Chandra isomorphism at generic $\ttt$}
The lack of coevaluation maps for $M_\lambda^\alpha$ forces us to work with intertwiners rather than invariant vectors.
Consider the space
\begin{align*}
\mathrm{tHom}_\alpha&:=\bigoplus_{M\hbox{ {\tiny is highest weight}}}
\Hom_\KK\left( M,M\otimes W_\alpha\otimes\mathbb{1}_{\qqq\ttt^{-1}} \right)\bigg/\\
&\left\langle
(f\otimes 1)\circ\psi-\psi\circ f\,\middle|\,
\begin{array}{l}
\psi\in \Hom_{\KK}\left( M',M\otimes W_\alpha\otimes\mathbb{1}_{\qqq\ttt^{-1}} \right),\\
f\in\Hom_\UU(M,M')
\end{array}
\right\rangle
\end{align*}
The relations are analogous to the coend relation (\ref{Coend}).
We also set $\mathrm{Int}^+_\alpha\subset \mathrm{tHom}_\alpha$ to be the subspace spanned by the intertwiners $\{\Phi_\lambda^\alpha\}_{\lambda\in P^+}$.

Now, let us address the base change of quantum differential operators.
Let $\LL_\qqq:=\CC(\qqq)[\ttt^{\pm 1}]$ and recall the parameter $\mathsf{T}$ (\ref{QTParam}).
We denote by
\begin{align*}
\MM_{0,\LL}&:=\MM_0\otimes\LL_\qqq\\
\AAA_{0}(\qqq^{-1}\ttt)_\LL&:=\left( \MM_{0,\LL}\big/I_{\qqq^{-1}\ttt} \right)^\UU\\
\AAA_{0}(\qqq^{-1}\ttt)_\KK&:=\AAA_0(\qqq^{-1}\ttt)_\LL\otimes\KK\\
\MM^\star_{\ell,\LL}&:=\MM^\star_\ell\otimes\LL_\qqq\\
\AAA_\ell(\ZZZ,\mathsf{T})_\LL&:=\left( \MM^\star_{\ell,\LL}\big/I_{\ZZZ\mathsf{T}} \right)^{\UU^{\otimes\ell}}\\
\AAA_\ell(\ZZZ,\mathsf{T})_\KK&:=\AAA_\ell(\ZZZ,\mathsf{T})_\LL\otimes\KK
\end{align*}

\begin{prop}[\cite{QHarish}]
There exists a subspace $\mathrm{tHom}_\alpha^D\subset \mathrm{tHom}_\alpha$ on which $\MM_{0,\LL}$ acts.
This subspace contains $\mathrm{Int}_\alpha^+$ and the action of $\MM_{0,\LL}^\UU$ preserves $\mathrm{Int}_\alpha^+$.
Moreover, the action of $\MM_{0,\LL}^\UU$ on $\mathrm{Int}_\alpha^+$ factors through $\AAA_0(\qqq^{-1}\ttt)_\LL$.
\end{prop}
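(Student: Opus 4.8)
The plan is to carry over, essentially verbatim, the construction underlying Theorem~\ref{QHC0}, replacing the finite-dimensional modules $V_{\lambda+(k-1)\delta}$ by the Verma modules $M^\alpha_\lambda$ and base-changing everything to $\LL_\qqq=\CC(\qqq)[\ttt^{\pm 1}]$. Recall that $\DD_0\cong\OO_G^{\mathbf A}\otimes\OO_G^{\mathbf B}$ acts on $\OO_G^{\mathbf A}$ via its function representation (Proposition~\ref{FuncFaith}), with $\OO_G^{\mathbf A}$ acting by left multiplication in the coend $\OO_G$ and $\OO_G^{\mathbf B}$ acting through $\kappa$ on the ``dual'' slot, while $\WW$ acts on $W_\alpha$ through $\mathfrak{qdiff}$, and $\MM_{0,\LL}=\DD_0\,\widetilde{\otimes}\,\WW$ acts on the braided tensor product $\OO_G\,\widetilde{\otimes}\, W_\alpha$. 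First I would exhibit the analogous action on intertwiners: given $\Phi\in\mathrm{Hom}_\KK(M,M\otimes W_\alpha\otimes\mathbb{1}_{\qqq\ttt^{-1}})$, let $\OO_G^{\mathbf A}$ act on the $M^*\otimes M$-part as in the coend, let $\OO_G^{\mathbf B}$ act via $\kappa$ on the restricted dual $M^*$, and let $\WW$ act on the $W_\alpha$-factor, gluing these by the braiding as dictated by $\,\widetilde{\otimes}\,$. The crucial observation, exactly as in \cite{QHarish}, is that every one of these operations, together with the relations among them --- the presentations (\ref{DAA})--(\ref{DAB}), the braided commutation with $\WW$, and the localization of $\OO_G^{\mathbf B}$ at $\det_\qqq(\mathbf B)$ --- is encoded by an equality of morphisms in $\CCC$, i.e.\ by the graphical calculus of \ref{Representations}, which holds for an arbitrary $\UU$-module on a strand as long as one only ever uses $\mathrm{ev}$, $\mathrm{qev}$ on the infinite-dimensional strand and never $\mathrm{coev}$, $\mathrm{qcoev}$ there. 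Because $\Phi$ is a genuine $\UU$-intertwiner out of a highest-weight module and each weight space of $M^\alpha_\lambda$ and of $W_\alpha$ is finite-dimensional, all the pairings and weighted traces that occur are legitimate formal power series, so the passage from the finite case of \cite{QHarish} to the present one is formal.

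Concretely I would \emph{define} $\mathrm{tHom}_\alpha^D\subset\mathrm{tHom}_\alpha$ to be the $\LL_\qqq$-span of all $\DD_0$-translates (then $\WW$-translates, then after inverting $\det_\qqq(\mathbf B)$) of the intertwiners $\{\Phi^\alpha_\lambda\}_{\lambda\in P^+}$. With this choice the statement decomposes as follows. (1) $\MM_{0,\LL}$ acts on $\mathrm{tHom}_\alpha^D$: the work here is to check that each generator acts in a way compatible with the coend relation defining $\mathrm{tHom}_\alpha$ (which holds because each operation is natural in $M$, so it intertwines $(f\otimes 1)\circ\psi$ and $\psi\circ f$) and that the relations of $\MM_{0,\LL}$ hold on $\mathrm{tHom}_\alpha^D$; once this is done, stability of $\mathrm{tHom}_\alpha^D$ is immediate from its definition. (2) $\mathrm{Int}^+_\alpha\subset\mathrm{tHom}_\alpha^D$: immediate. (3) $\MM_{0,\LL}^\UU$ preserves $\mathrm{Int}^+_\alpha$: for $D\in\MM_{0,\LL}^\UU$, applying $D$ to $\Phi^\alpha_\lambda$ again yields a $\UU$-intertwiner of a highest-weight module into $(-)\otimes W_\alpha\otimes\mathbb{1}_{\qqq\ttt^{-1}}$, supported on finitely many dominant weights, hence by the uniqueness part of the Etingof--Kirillov theorem quoted above it is an $\LL_\qqq$-combination of the $\Phi^\alpha_\mu$.

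It remains to see that the action of $\MM_{0,\LL}^\UU$ on $\mathrm{Int}^+_\alpha$ factors through $\AAA_0(\qqq^{-1}\ttt)_\LL=(\MM_{0,\LL}/I_{\qqq^{-1}\ttt})^\UU$, i.e.\ that $\mu_{\MM_0}(\ker\chi_{\qqq^{-1}\ttt})$ annihilates $\mathrm{Int}^+_\alpha$. The decoration $\mathbb{1}_{\qqq\ttt^{-1}}$ forces $\mathrm{Int}^+_\alpha$ to sit in a $\UU$-isotypic piece on which, by the moment-map equation (\ref{MomentDef}) together with $\chi_{\mathsf C}(\mathbf M)=\mathsf C^{-2}\mathbf I$ on $\OO_G$, the operator $\mu_{\MM_0}(\mathbf M)=\qqq^{-2}\mathbf B\mathbf A^{-1}\mathbf B^{-1}\mathbf A\bigl(\mathbf I+(\qqq^2-1)\mathbf{d}\mathbf{x}\bigr)$ must act by a scalar matrix; a computation identical to the one behind Theorem~\ref{QHC0} identifies this scalar as $\chi_{\qqq^{-1}\ttt}(\mathbf M)$ (the two $\qqq$-twists and the factor $\qqq^{-2}$ conspire exactly as at $\ttt=\qqq^k$). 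Hence $\mu_{\MM_0}(m)-\chi_{\qqq^{-1}\ttt}(m)$ kills $\mathrm{Int}^+_\alpha$ for all $m\in\OO_G$, which is the factorization. I expect the genuine obstacle to be step (1): choosing $\mathrm{tHom}_\alpha^D$ large enough to be stable under all of $\MM_{0,\LL}$ yet controlled enough that every composition and weighted trace stays inside $\mathrm{tHom}_\alpha$ with no convergence pathology, and verifying that the $\DD_0$-relations, in particular those involving $\mathbf B^{-1}$ and $\det_\qqq(\mathbf B)^{-1}$, survive when the finite-dimensional strand of \cite{QHarish} is replaced by a Verma module for which only the restricted dual is available.
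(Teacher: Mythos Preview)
The paper does not actually prove this proposition: it is stated with the citation \cite{QHarish} and no proof is given in the present text. So there is nothing in this paper to compare your proposal against line by line.

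That said, your outline is broadly in the right spirit for what the proof in \cite{QHarish} should look like, given how the present paper sets things up: replace the finite-dimensional $V^k_\lambda$ by Verma modules $M^\alpha_\lambda$, act on intertwiners rather than on invariant vectors (precisely because $\mathrm{coev}$ and $\mathrm{qcoev}$ are unavailable), and use the graphical calculus only with caps on the infinite-dimensional strand. You have also correctly located the genuine content in step~(1), namely exhibiting a subspace $\mathrm{tHom}_\alpha^D$ that is simultaneously large enough to be $\MM_{0,\LL}$-stable and small enough that all the manipulations with $R$-matrices, $\kappa$, and the localization at $\det_\qqq(\mathbf B)$ make sense when one tensorand is a Verma module with only a restricted dual. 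One point to be careful about: your description of the $\OO_G^{\mathbf B}$-action as ``$\kappa$ on the restricted dual $M^*$'' is slightly off in light of Proposition~\ref{FuncFaith}, where $\kappa(b)$ acts on the $V^*$-slot of $V^*\otimes V$; in the intertwiner picture there is no $M^*$-slot, and the correct translation is that $\OO_G^{\mathbf B}$ acts by composing $\Phi$ with the $\UU$-action on the source $M$ (this is what the coend relation in $\mathrm{tHom}_\alpha$ is designed to absorb). Apart from that, your sketch matches the structure one would expect.
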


Identifying $\mathrm{Int}^+_\alpha$ with $\Lambda_n^\pm(\qqq,\ttt)$ by sending $\Phi_\lambda^\alpha$ to $P_\lambda(\qqq,\ttt)$, we obtain the radial parts maps
\begin{align*}
\mathfrak{rad}_{\ttt}^0:\AAA_0(\qqq^{-1}\ttt)&\rightarrow\mathrm{End}\left( \Lambda_n^\pm(\qqq,\ttt) \right)
\end{align*}

\begin{prop}[\cite{QHarish}]
The radial parts map satisfies the following:
\begin{enumerate}
\item the image of $\mathrm{qcoev}_{V_\mu}^\mathbf{A}(1):=\mathrm{qcoev}_{V_\mu}(1)\in\OO_G^\mathbf{A}$ is
\[
\mathfrak{rad}_\ttt^0(\mathrm{qcoev}_{V_\mu}^{\mathbf{A}}(1))=\mathfrak{r}\left(\ee s_\mu(X_1,\ldots, X_n)\ee\right)
\]
\item the image of $\mathrm{qcoev}_{V_\mu}^\mathbf{B}(1):=\mathrm{qcoev}_{V_\mu}(1)\in\OO_G^\mathbf{B}$ is
\[
\mathfrak{rad}_\ttt^0(\mathrm{qcoev}_{V_\mu}^{\mathbf{B}}(1))=t^{-|\mu|(n-1)}\mathfrak{r}\left(\ee s_\mu(Y_1,\ldots, Y_n)\ee\right)
\]
\end{enumerate}
\end{prop}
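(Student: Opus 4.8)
The plan is to compute directly how the two $\UU$‑invariant elements $\mathrm{qcoev}_{V_\mu}^{\mathbf A}(1)$ and $\mathrm{qcoev}_{V_\mu}^{\mathbf B}(1)$ act on the intertwiners $\Phi_\lambda^\alpha$ through the function representation of $\MM_{0,\LL}$ on $\mathrm{tHom}_\alpha^D$, following verbatim the argument proving Theorem~\ref{QHC0}(2)--(3), the only structural change being that finite‑dimensional modules are replaced by Verma modules $M_\lambda^\alpha=M_{\lambda+(\alpha-1)\delta}$ and invariant vectors by intertwiners, so that weighted traces become power series rather than Laurent polynomials. Under the identification $\Phi_\lambda^\alpha\leftrightarrow P_\lambda(\qqq,\ttt)$ each computation becomes an eigenvalue statement for $\mathfrak r(\ee s_\mu(X)\ee)$ and $\mathfrak r(\ee s_\mu(Y)\ee)$ on Macdonald polynomials. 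Since $\mathfrak r$ is faithful (Theorem~\ref{PolyRep}) and $\{P_\lambda(\qqq,\ttt)\}$ is a $\KK$‑basis of $\Lambda_n^\pm(\qqq,\ttt)$ (Theorem~\ref{MacThm}), it suffices to check both identities after applying each side to an arbitrary $P_\lambda(\qqq,\ttt)$.

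For (1), the entries of $\mathbf A$ generate the copy $\OO_G^{\mathbf A}\subset\DD_0$ which acts on the function representation by left multiplication in the coend algebra (Proposition~\ref{D0Iso}), and this multiplication is compatible with the $\WW^\star$‑factor so it descends to $\MM_{0,\LL}$ and preserves $\mathrm{Int}_\alpha^+$. The key point is that multiplying the invariant vector $\mathrm{qcoev}_{V_\mu}(1)\in(V_\mu^*\otimes V_\mu)^\UU$ into the $V_\lambda^*\otimes V_\lambda$‑leg of $\Phi_\lambda^\alpha$ and then taking the $\qqq^h$‑weighted trace over $M_\lambda^\alpha$ multiplies the weighted trace function by the ordinary character of $V_\mu$, i.e.\ by $s_\mu$ under $x_i=\qqq^{\langle\epsilon_i,-\rangle}$; this is just multiplicativity of the weighted trace in the auxiliary leg, exactly as in \cite{EtKirQuant,QHarish}. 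Dividing by $\varphi_0^\alpha$ gives $s_\mu\cdot P_\lambda(\qqq,\ttt)$, which is $\mathfrak r(\ee s_\mu(X)\ee)\cdot P_\lambda(\qqq,\ttt)$ since $s_\mu$ and $P_\lambda$ are already symmetric.

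For (2), by the Verma‑module analogue of Proposition~\ref{FuncFaith} an element $b\in\OO_G^{\mathbf B}$ acts on $v^*\otimes v$ via $\kappa(b)\bullet v^*$ on the $(M_\lambda^\alpha)^*$‑leg. As $\mathrm{qcoev}_{V_\mu}(1)$ is $\UU$‑invariant, $\kappa(\mathrm{qcoev}_{V_\mu}(1))$ lies in the center of $\UU$ (Joseph--Letzter--Rosso, \cite{JosLetztLocal,RossoKilling}), hence acts on $M_\lambda^\alpha$ by the scalar obtained by evaluating the associated central character at the highest weight $\lambda+(\alpha-1)\delta$. Computing the quantum Harish--Chandra image of $\kappa(\mathrm{qcoev}_{V_\mu}(1))$ — namely that its value on a highest weight module of highest weight $\Lambda$ is $s_\mu$ evaluated at the $\rho$‑shifted $\qqq$‑content $\bigl(\qqq^{\langle 2\Lambda+2\rho,\epsilon_i\rangle}\bigr)_i$, up to the grouplike normalization recorded by $\kappa(\det_\qqq(\mathbf M))=\qqq^{-2\omega_n}$ — one substitutes $\Lambda_i=\lambda_i+(\alpha-1)(n-i)$ and $\ttt=\qqq^\alpha$. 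A short calculation shows the resulting tuple of exponents differs from $\bigl(\qqq^{2\lambda_i}\ttt^{n+1-2i}\bigr)_i$ only by an $i$‑independent factor, which the homogeneity of $s_\mu$ turns into the stated prefactor $\ttt^{-|\mu|(n-1)}$; by Theorem~\ref{MacThm}(1) the remaining scalar $s_\mu(\qqq^{2\lambda_1}\ttt^{n-1},\dots,\qqq^{2\lambda_n}\ttt^{1-n})$ is precisely the eigenvalue of $\mathfrak r(\ee s_\mu(Y)\ee)$ on $P_\lambda(\qqq,\ttt)$.

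The main obstacle is the generic‑$\ttt$ bookkeeping rather than any conceptual difficulty: one must verify that the function representation of $\MM_{0,\LL}$ on $\mathrm{tHom}_\alpha^D$ genuinely reproduces the $\kappa$‑formula of Proposition~\ref{FuncFaith} for $\OO_G^{\mathbf B}$ in a setting where $\mathrm{qcoev}$ is unavailable, so that the ``bunny‑ear'' manipulations producing $\Delta_{\OO_G}$ must be re‑justified against intertwiners and power‑series weighted traces; and one must pin down the exact power of $\ttt$, the delicate interplay being between the shift $M_\lambda^\alpha=M_{\lambda+(\alpha-1)\delta}$, the $\rho$‑shift of the Harish--Chandra homomorphism, and the normalization of $\kappa$ on quantum determinants.
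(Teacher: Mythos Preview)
The paper does not supply its own proof of this proposition: it is quoted from the earlier work \cite{QHarish} and stated without argument. Your sketch is essentially the argument one finds there, and the strategy is correct: act on the basis $\{\Phi_\lambda^\alpha\}$, use that $\OO_G^{\mathbf A}$ acts by multiplication in the coend (so the weighted trace picks up the character $s_\mu$), and for $\OO_G^{\mathbf B}$ use Proposition~\ref{FuncFaith} together with centrality of $\kappa(\mathrm{qcoev}_{V_\mu}(1))$ and the quantum Harish--Chandra homomorphism to extract the eigenvalue, then match with Theorem~\ref{MacThm}(1). There is nothing to compare against in this paper beyond the $\ttt=\qqq^k$ statement (Theorem~\ref{QHC0}), whose proof your outline correctly parallels.
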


To define a radial parts map for $\ell>1$, we need the following:

\begin{prop}
The map from Lemma \ref{PsiLem}(1)
\[
\Phi_\ZZZ^\ell:\AAA_0(\qqq^{-1},\ttt)\rightarrow \AAA_\ell(\ZZZ,\mathsf{T})[\textstyle\det_\qqq(\XX^\circ)^{-1}]
\] 
is injective.
\end{prop}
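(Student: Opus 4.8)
The plan is to follow the proof of Lemma~\ref{PsiLem}(2) and reduce the statement to the injectivity of the base map $\mathcal{R}_1^{\Sigma_n}\hookrightarrow\mathcal{R}_\ell^{\Sigma_n}$ appearing there. Since localization is exact, it suffices to prove injectivity of the restriction $\Phi^\ell_\ZZZ\colon\AAA_0^{\mathrm{IV}}(\qqq^{-1}\ttt)_\LL\to\big(\MM_{\ell,\LL}\big/I_{\ZZZ\mathsf{T}}^+\big)^{\UU^{\otimes\ell}}=:\AAA_\ell^+$; this map is filtered for the grading of $\AAA_0^{\mathrm{IV}}$ (the generic-$\ttt$ analogue of the one in Proposition~\ref{QTraceLem0}) and the filtration $F^\DD$ on $\AAA_\ell^+$, so it is enough to prove that the induced map $\bar\Phi$ on associated graded algebras is injective after $-\otimes_{\LL_\qqq}\KK$. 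Both $\mathrm{gr}\,\AAA_0^{\mathrm{IV}}(\qqq^{-1}\ttt)_\LL$ and $\mathrm{gr}\,\AAA_\ell^+$ are graded $\LL_\qqq$-algebras with finitely generated graded pieces.

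The key input I would establish is a flatness statement: in every degree $d$, the $\CC[\qqq^{\pm1},\ttt^{\pm1}]$-integral forms of the degree-$d$ pieces of $\mathrm{gr}\,\AAA_0^{\mathrm{IV}}(\qqq^{-1}\ttt)$ and $\mathrm{gr}\,\AAA_\ell^+$ are free, of ranks $\dim^d\mathcal{R}_1^{\Sigma_n}$ and $\dim^d\mathcal{R}_\ell^{\Sigma_n}$ respectively. This is the two-parameter version of Proposition~\ref{QTraceLem0} and Lemma~\ref{QTraceLem}: the integral forms of $\MM_0^{\mathrm{IV}}$, $\MM_\ell$ and $\WW$ are free over $\CC[\qqq^{\pm1}]$, hence over $\CC[\qqq^{\pm1},\ttt^{\pm1}]$ after base change; quantum-trace generation (\S\ref{QTrace}) produces, in each degree, surjections from $\mathcal{R}_1^{\Sigma_n}$, resp.\ $\mathcal{R}_\ell^{\Sigma_n}$, onto the graded pieces, bounding their ranks from above; and at the specialization $(\qqq,\ttt)=(1,1)$ the moment-map conditions degenerate to the Nakajima relations (\ref{NakRel}), so that the Le Bruyn--Procesi and Gan--Ginzburg identifications of Proposition~\ref{GanProp} make the fibres equal to $\mathcal{R}_1^{\Sigma_n}$, resp.\ $\mathcal{R}_\ell^{\Sigma_n}$, of the predicted dimensions. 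Constancy of the fibre dimension then yields freeness of the asserted ranks.

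Granting this, the conclusion follows as in Lemma~\ref{PsiLem}(2). The kernel $K$ of $\bar\Phi$ on integral forms is a submodule of a free module over $\CC[\qqq^{\pm1},\ttt^{\pm1}]$, hence (Quillen--Suslin) itself free; since the target is free as well, tensoring $0\to K\to\mathrm{gr}\,\AAA_0^{\mathrm{IV}}\to\mathrm{gr}\,\AAA_\ell^+$ with $\CC=\CC[\qqq^{\pm1},\ttt^{\pm1}]/(\qqq-1,\ttt-1)$ stays left exact, so $K\otimes\CC$ injects into the kernel of the $(\qqq,\ttt)=(1,1)$ degeneration of $\bar\Phi$. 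The latter, by the computation in Lemma~\ref{PsiLem}(2), is the restriction to $\Sigma_n$-invariants of the ring homomorphism $\mathcal{R}_1\to\mathcal{R}_\ell$, $x_i\mapsto z_i^\ell$, $y_i\mapsto z_iw_i$, which is injective; hence $K\otimes\CC=0$ and, by freeness, $K=0$. Therefore $\bar\Phi$ is injective over $\LL_\qqq$, and, running the reductions backwards (exactness of $-\otimes_{\LL_\qqq}\KK$ and of localization), $\Phi^\ell_\ZZZ$ is injective.

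The main obstacle is the flatness input of the second paragraph: one has to verify that the entire quiver-variety apparatus of \S\ref{Class}--\S\ref{QTrace}---quantum-trace generation, the Le Bruyn--Procesi and Gan--Ginzburg theorems, and the reducedness of $\mathfrak{M}_\ell$---transfers to the base $\CC[\qqq^{\pm1},\ttt^{\pm1}]$ despite the fact that the moment-map ideals $I_{\qqq^{-1}\ttt}$ and $I_{\ZZZ\mathsf{T}}$ are only \emph{left} ideals and depend nontrivially on $\ttt$. Once those graded pieces are known to be free of the expected ranks, the remaining bookkeeping with specializations over the regular ring $\CC[\qqq^{\pm1},\ttt^{\pm1}]$ is routine.
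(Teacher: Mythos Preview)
Your reduction to the unlocalized map $\Phi^\ell_\ZZZ:\AAA_0^{\mathrm{IV}}\to\AAA_\ell^+$ and to individual graded pieces is exactly what the paper does, but after that the two arguments diverge sharply, and your version contains a genuine gap.

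The paper does not attempt any flatness over the two-parameter ring $\CC[\qqq^{\pm1},\ttt^{\pm1}]$. Instead it works over the one-variable ring $\LL_\qqq=\CC(\qqq)[\ttt^{\pm1}]$, which is a PID, and exploits Lemma~\ref{PsiLem}(2): the map $\Phi^\ell_{\ZZZ,\LL}$ is already known to be injective at every specialization $\ttt=\qqq^k$, $k\in\ZZ$. On a fixed graded piece the kernel is a finitely generated $\LL_\qqq$-module whose image in the source vanishes modulo $(\ttt-\qqq^k)$ for infinitely many $k$; over a PID this forces the kernel to be torsion, hence to die upon base change to $\KK$. No freeness of source or target, and no degeneration to $(\qqq,\ttt)=(1,1)$, is required.

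Your route has two concrete problems. First, the appeal to Quillen--Suslin is incorrect: that theorem says finitely generated \emph{projective} modules over a Laurent polynomial ring are free, but a submodule of a free module over a two-dimensional ring need not be projective (e.g.\ the maximal ideal $(\qqq-1,\ttt-1)\subset\CC[\qqq^{\pm1},\ttt^{\pm1}]$). Second, even granting freeness of $K$, your left-exactness claim is not justified: in $0\to K\to A\to B$, freeness of $B$ controls $\mathrm{Tor}_1(B,\CC)$, but what you need is vanishing of $\mathrm{Tor}_1(\mathrm{im}\,\bar\Phi,\CC)$. There is a salvage---if both source and target are genuinely free and $\bar\Phi\otimes\CC$ is injective, then some maximal minor of the matrix of $\bar\Phi$ is nonzero at $(1,1)$, hence nonzero in the integral domain, and $\bar\Phi$ is injective---but this still hinges on the two-parameter flatness you flag as the ``main obstacle,'' which you do not prove and which is not needed for the paper's argument. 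The upshot: replace the base ring by $\LL_\qqq$ and use the infinitely many specializations $\ttt=\qqq^k$ already supplied by Lemma~\ref{PsiLem}(2); the torsion argument over a PID then finishes the proof in two lines.
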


\begin{proof}
Borrowing notation from the proof of Lemma \ref{PsiLem}, it suffices to show that the restricted map
\[
\Phi_\ZZZ^\ell:\AAA_0^{\mathrm{IV}}(\qqq^{-1}\ttt)\rightarrow\AAA_\ell^+(\ZZZ,\mathsf{T})
\]
is injective.
Moreover, the map above is defined for the corresponding algebras defined over $\LL_\qqq$, which we denote by 
\[
\Phi_{\ZZZ,\LL}^\ell:\AAA_0^{\mathrm{IV}}(\qqq^{-1}\ttt)_\LL\rightarrow\AAA_\ell^{+}(\ZZZ,\mathsf{T})_\LL
\]
Endowing $\AAA_0^{\mathrm{IV}}$ with a grading from Proposition \ref{QTraceLem0}, we only need to check injectivity on each finite-dimensional graded piece.
On the other hand, by Lemma \ref{PsiLem}(2), $\Phi_{\ZZZ,\LL}^\ell\big|_{\ttt=\qqq^k}$ is injective for all $k\in\ZZ$.
This implies that on each graded piece, the kernel of $\Phi_{\ZZZ,\LL}^\ell$ must be torsion and thus vanishes upon base change to $\KK$.
\end{proof}

Thus, for $\ell>1$, we can define 
\[\Psi_\ZZZ^\ell:\AAA_\ell(\ZZZ,\mathsf{T})\rightarrow\AAA_0(\qqq^{-1}\ttt)\]
For all $\ell>0$, we define the radial parts map by
\[
\mathfrak{rad}_{\ttt}^\ell:=\mathfrak{rad}_{\ttt}^0\circ\Psi_\ZZZ^\ell
\]

\begin{thm}
The radial parts map induces an isomorphism
\[
\mathfrak{r}^{-1}\circ\mathfrak{rad}_\ttt^\ell:\AAA_\ell(\ZZZ,\mathsf{T})\rightarrow \SHH_n^\ell(\ZZZ,\qqq,\ttt)
\]
\end{thm}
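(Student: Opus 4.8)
The plan is to reuse the argument of the $\ttt=\qqq^k$ theorem (\S\ref{tqk}) essentially verbatim, replacing finite-dimensional $\UU$-modules by Verma modules, the $U_k$-intertwiners by the $W_\alpha$-intertwiners spanning $\mathrm{Int}^+_\alpha$, and all the quantum differential operator algebras by their base changes to $\KK$ (and to $\LL_\qqq$ for the filtration step). Two things must be shown: $\mathfrak{r}(\SHH_n^\ell(\ZZZ,\qqq,\ttt))\subseteq\mathrm{im}(\mathfrak{rad}_\ttt^\ell)$, and the reverse containment together with the injectivity of $\mathfrak{rad}_\ttt^\ell$.

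\textbf{The first containment.} By Lemma \ref{CycGen}, $\SHH_n^\ell(\ZZZ,\qqq,\ttt)$ is generated over $\KK$ by $\ee\KK[X_i]\ee$, $\ee\KK[Y_i^{\pm1}]\ee$ and $\ee\KK[D_i]\ee$. The first two families are hit by $\mathfrak{rad}_\ttt^0$ applied to $\mathrm{qcoev}_{V_\mu}^{\mathbf{A}}(1)$ and $\mathrm{qcoev}_{V_\mu}^{\mathbf{B}}(1)$ (Schur functions generate the symmetric functions; the negative powers of $Y_i$, and the powers of $X_i^{-1}$ needed below, arise from the $\det_\qqq$-localizations built into $\MM^\star_{\ell,\LL}$ and from applying the $\mathrm{qcoev}^{\mathbf{A}}$-formula to dual representations). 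For $\ee\KK[D_i]\ee$ I would prove the generic-$\ttt$ analogue of Lemma \ref{XDqk}. The eigenvalue identity of Proposition \ref{GammaZProp}(2) is stated for arbitrary $(\qqq,\ttt)$, so after incorporating the highest-weight shift from $V_\lambda^k$ to the Verma module $M_\lambda^\alpha$ it still defines an invertible operator $\mathfrak{R}(\gamma_\ZZZ)$ on $\mathrm{Int}^+_\alpha$; conjugating $\mathfrak{R}(\mathbf{A}^{-1})$ by it and rerunning the computation of Lemma \ref{XDqk} — using Lemma A.8 of \cite{QHarish}, Lemma \ref{BigCobLem1}, and the definition (\ref{PhiZ}) of $\Phi_\ZZZ^\ell$ — yields $\mathfrak{R}(\gamma_\ZZZ)\mathfrak{R}(\mathbf{A}^{-1})\mathfrak{R}(\gamma_\ZZZ)^{-1}=(-1)^\ell\mathfrak{R}(\Psi_\ZZZ^\ell(\DDD^\circ))$ and hence $\ee\KK[D_i]\ee\subseteq\mathrm{im}(\mathfrak{rad}_\ttt^\ell)$.

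\textbf{The reverse containment and injectivity.} I would restrict to the minus subalgebras $\SHH_n^\ell(\ZZZ,\qqq,\ttt)^-$ and $\AAA_\ell^+(\ZZZ,\mathsf{T})$, working over $\LL_\qqq$ localized so that Macdonald polynomials exist (cf. Theorem \ref{MacThm}(2)). By the first containment this restricted map is surjective, and it carries the filtration $F^\DD$ to the filtration of Proposition \ref{CyclicPBW}; each graded piece of either side is a finite-rank $\LL_\qqq$-module. Specializing $\ttt=\qqq^k$ with $k>2n\ell$ turns $\mathsf{T}$ into $\mathsf{Q}^{k-1}$ and recovers the isomorphism $\mathfrak{r}^{-1}\circ\mathfrak{rad}^\ell_{\qqq^k}$ of \S\ref{tqk}, which respects filtrations and — by the matching of associated-graded dimensions between Corollary \ref{PBWCor} and Lemma \ref{QTraceLem}(2) — is an isomorphism on every graded piece. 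A surjection of finite-rank $\LL_\qqq$-modules that becomes an isomorphism at infinitely many points $\ttt=\qqq^k$ has torsion kernel, hence is an isomorphism after $\otimes\KK$; so $\mathfrak{r}^{-1}\circ\mathfrak{rad}_\ttt^\ell$ is an isomorphism on the minus subalgebras. Inverting the images of the Ore elements $\ee Y_1\cdots Y_n\ee$ and of the relevant quantum determinants then promotes this to the asserted isomorphism $\AAA_\ell(\ZZZ,\mathsf{T})\to\SHH_n^\ell(\ZZZ,\qqq,\ttt)$.

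\textbf{Main obstacle.} The technical heart is the generic-$\ttt$ form of Lemma \ref{XDqk}: since Verma modules carry no coevaluation, one must work throughout with intertwiners (the space $\mathrm{tHom}_\alpha^D$) rather than invariant vectors, verify that $\mathfrak{R}(\gamma_\ZZZ)$ is a bona fide invertible operator there, and confirm that the ribbon and bunny-ear manipulations underlying Lemma \ref{XDqk} and Lemma \ref{BigCobLem1} transfer unchanged; the parallel bookkeeping of $\det_\qqq$-localizations needed to realize $Y_i^{-1}$ and $D_i$ inside $\MM^\star_{\ell,\LL}/I_{\ZZZ\mathsf{T}}$ is the other spot demanding care.
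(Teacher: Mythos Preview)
Your proposal follows the paper's strategy closely: the generic-$\ttt$ version of Lemma \ref{XDqk} for the first containment, and the torsion/specialization argument over $\LL_\qqq$ for injectivity, are exactly what the paper does.

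There is one genuine gap in your ``reverse containment'' step. You write ``by the first containment this restricted map is surjective'' and then treat $\mathfrak{r}^{-1}\circ\mathfrak{rad}_\ttt^\ell$ as a surjection $\AAA_\ell^+(\ZZZ,\mathsf{T})\to\SHH_n^\ell(\ZZZ,\qqq,\ttt)^-$. But the first containment only shows the image \emph{contains} $\mathfrak{r}(\SHH_n^{\ell,-})$; it does not show the image is \emph{contained in} $\mathfrak{r}(\SHH_n^{\ell,-})$. So you do not yet have a map landing in $\SHH_n^{\ell,-}$, and your ``surjection of finite-rank $\LL_\qqq$-modules'' sentence is not well-posed as stated. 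The paper handles this separately: it proves, via Nakayama on each finite-dimensional filtered piece $F_{l_1,l_2}$ of $\AAA_\ell^+(\ZZZ,\mathsf{T})_\LL$ and the known generation at $\ttt=\qqq^k$ from Lemma \ref{QTraceLem}(3), that $\AAA_\ell(\ZZZ,\mathsf{T})$ is generated by $\mathcal{R}(\XX^\circ)^{\UU^{\otimes\ell}}$, $\OO_G(\YLu^{(1)})^{\UU^{\otimes\ell}}$, and $\mathcal{R}(\DDD^\circ)^{\UU^{\otimes\ell}}$, each of which maps into $\mathfrak{r}(\SHH_n^\ell)$ by the explicit formulas. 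Only then does the image lie in $\SHH_n^\ell$ and your surjection argument become valid. Your dimension-matching remarks can be repackaged to give the same conclusion (upper semicontinuity bounds $\dim_\KK F_k\AAA_\ell^+$ by its value at $\ttt=\qqq^k$, which equals $\dim_\KK F_k\SHH_n^{\ell,-}$ by flatness from Corollary \ref{PBWCor}), but as written this step is conflated with surjectivity rather than argued.
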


\begin{proof}
The proof that the $\mathrm{im}(\mathfrak{rad}_\ttt^\ell)$ contains $\mathfrak{r}(\SHH_n^\ell(\ZZZ,\qqq,\ttt))$ is similar to the case of $\ttt=\qqq^k$.
To show that the latter is the entire image, it suffices to show that $\AAA_\ell(\ZZZ,\mathsf{T})$ is generated by the three subalgebras
\[
\mathcal{R}(\XX^\circ)^{\UU^{\otimes\ell}}\cup\OO_G(\YLu^{(1)})^{\UU^{\otimes\ell}}\cup\mathcal{R}(\DDD^\circ)^{\UU^{\otimes\ell}}
\]
For that, it suffices to show generation for $\AAA_\ell^+(\ZZZ,\mathsf{T})$.
On each finite-dimesional filtered piece $F_{l_1,l_2}$ (cf. \ref{Class}) of $\AAA_\ell^+(\ZZZ,\mathsf{T})_\LL$, this is the case for $\ttt=\qqq^k$.
Generation over $\KK$ then follows from Nakayama's Lemma.

For injectivity, consider the map
\[
\mathfrak{rad}_{\ttt,\LL}^\ell:\AAA_\ell^+(\ZZZ,\mathsf{T})_\LL\rightarrow\mathrm{End}\left( \Lambda_n^\pm(\qqq,\ttt) \right)
\]
given by precomposing $\mathfrak{rad}_\ttt^\ell$ with the localization map.
On each filtered piece $F_{l_1,l_2}$, this map projects away the torsion submodule and then applies $\mathfrak{rad}_\ttt^\ell$.
Moreover, this map is injective on $F_{l_1,l_2}$ at infinitely many evaluations $\ttt=\qqq^k$, and thus the kernel must be torsion.
\end{proof}

\appendix

\section{Shifted quantum toroidal algebras}
In this appendix, we show that $\SHH_n^\ell(\ZZZ,\qqq,\ttt)$ is isomorphic to the image of a \textit{shifted quantum toroidal algebra} under Tsymbaliuk's \textit{GKLO homomorphism} \cite{TsymGKLO}.
The latter is expected to be isomorphic to a $K$-\textit{theoretic Coulomb branch} \cite{BFN,FTMult, FTShift}, and thus we make marginal progress towards the conjecture in \cite{BEF} concerning these Coulomb branches.
Our main reference is \cite{TsymGKLO}, although note that to be consistent with our DAHA conventions, we have squared the parameters $(\qqq,\ttt)$.

\subsection{Definition of shifted quantum toroidal algebras}
For a nonnegative integer $\ell$, the \textit{shifted quantum toroidal algebra} $U_{\qqq,\ttt}^\ell(\ddot{\gl}_1)$ is a $\KK$-algebra with generators
\[
\left\{ e_r, f_r \right\}_{r\in\ZZ}\cup\left\{\psi_s^+\right\}_{s\ge 0}\cup\left\{ \psi_{s}^{-} \right\}_{s\le \ell}\cup\left\{(\psi_0^+)^{-1},(\psi_{\ell}^-)^{-1}\right\}
\]
To describe its relations, we place the generators into \textit{currents}:
\begin{equation*}
\begin{aligned}
e(z)&:= \sum_{r\in\ZZ}e_rz^{-r},&
f(z)&:= \sum_{r\in\ZZ}f_rz^{-r},&
\psi^+(z)&:= \sum_{s\ge 0}\psi_s^+z^{-s},&
\psi^-(z)&:= \sum_{s\le\ell}\psi_{s}^-z^{-s}
\end{aligned}
\end{equation*}
The relations are then:
\begin{gather}
\nonumber
\left[ \psi^\epsilon(z),\psi^{\epsilon'}(w) \right]=0,\, \psi^+_0\left( \psi^+_0 \right)^{-1}=\psi^-_\ell\left( \psi^-_\ell \right)^{-1}=1,\\
\nonumber
\left(z-\qqq^2 w\right)\left(z-\ttt^2 w\right)\left(z-\qqq^{-2}\ttt^{-2}w\right)e(z)e(w)=\left(\qqq^2 z-w\right)\left(\ttt^2 z-w\right)\left(\qqq^{-2}\ttt^{-2} z-w\right)e(w)e(z),\\
\nonumber
\left(\qqq^2 z-w\right)\left(\ttt^2 z-w\right)\left(\qqq^{-2}\ttt^{-2} z-w\right)f(z)f(w)=\left(z-\qqq^2 w\right)\left(z-\ttt^2 w\right)\left(z-\qqq^{-2}\ttt^{-2}w\right)f(w)f(z),\\
\label{PsiE}
\left(z-\qqq^2 w\right)\left(z-\ttt^2 w\right)\left(z-\qqq^{-2}\ttt^{-2}w\right)\psi^\epsilon(z)e(w)=\left(\qqq^2 z-w\right)\left(\ttt^2 z-w\right)\left(\qqq^{-2}\ttt^{-2} z-w\right)e(w)\psi^\epsilon(z),\\
\label{PsiF}
\left(\qqq^2 z-w\right)\left(\ttt^2 z-w\right)\left(\qqq^{-2}\ttt^{-2} z-w\right)\psi^\epsilon(z)f(w)=\left(z-\qqq^2 w\right)\left(z-\ttt^2 w\right)\left(z-\qqq^{-2}\ttt^{-2}w\right)f(w)\psi^\epsilon(z),\\
\nonumber
\left[ e(z),f(w) \right]=\frac{\delta(z/w)}{\left(1-\qqq^2\right)\left(1-\ttt^2\right)\left(1-\qqq^{-2}\ttt^{-2}\right)}\left( \psi^+(z)-\psi^-(z) \right),\\
\nonumber
\mathrm{Sym}_{z_1,z_2,z_3}\frac{z_2}{z_3}\left[ e(z_1),\left[ e(z_2),e(z_3) \right] \right]=0,\\
\nonumber
\mathrm{Sym}_{z_1,z_2,z_3}\frac{z_2}{z_3}\left[ f(z_1),\left[ f(z_2),f(z_3) \right] \right]=0
\end{gather}
where $\epsilon,\epsilon'\in\{+,-\}$ and $\delta(z)$ is the usual delta-function
\[
\delta(z)=\sum_{r\in \ZZ}z^r
\]

\begin{prop}\label{TorGen}
For any pair of nonzero $\{\mathsf{c}_+,\mathsf{c}_-\}\subset\KK^\times$, quotient $U_{\qqq,\ttt}^\ell(\ddot{\gl}_1)\big/\langle\psi_0^+=\mathsf{c}_+,\psi_\ell^-=\mathsf{c}_-\rangle$ is generated by
\[
\{e_0, f_0\}\cup\left\{ \psi^+_s \right\}_{s\ge 0}\cup\left\{ \psi^-_s \right\}_{s\le \ell}\cup\left\{ (\psi_0^+)^{-1},(\psi_\ell^+)^{-1} \right\}
\]
\end{prop}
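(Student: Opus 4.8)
The plan is to recover every mode $e_r,f_r$ with $r\neq 0$ from $e_0$, $f_0$, and the Cartan modes $\psi^\pm_s$, using relations \eqref{PsiE} and \eqref{PsiF}, read as coefficient-wise identities of formal currents. Write $P(z,w)=(z-\qqq^2w)(z-\ttt^2w)(z-\qqq^{-2}\ttt^{-2}w)=z^3-awz^2+bw^2z-w^3$ and $Q(z,w)=(\qqq^2z-w)(\ttt^2z-w)(\qqq^{-2}\ttt^{-2}z-w)=z^3-bwz^2+aw^2z-w^3$, where $a=\qqq^2+\ttt^2+\qqq^{-2}\ttt^{-2}$ and $b=\qqq^2\ttt^2+\ttt^{-2}+\qqq^{-2}$; a short computation gives $a-b=-\qqq^{-2}\ttt^{-2}(\qqq^2-1)(\ttt^2-1)(\qqq^2\ttt^2-1)$, which is nonzero in $\KK$. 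First I would record that in the quotient $\psi_0^+$ and $\psi_\ell^-$ act as the invertible scalars $\mathsf{c}_+$ and $\mathsf{c}_-$: they commute with all $\psi$'s by the first relation, and taking the top $z$-coefficient ($z^3$) of \eqref{PsiE} and \eqref{PsiF} for $\epsilon=+$, respectively the bottom $z$-coefficient ($z^{-\ell}$) for $\epsilon=-$, shows they commute with $e(w)$ and $f(w)$ as well, hence with all of the algebra.

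Next I would extract the $z^2$-coefficient of \eqref{PsiE} with $\epsilon=+$. Using $[\psi_0^+,-]=0$ this collapses to $[\psi_1^+,e(w)]=(a-b)\,\psi_0^+\,w\,e(w)$, and comparing coefficients of $w^{-r}$ yields $[\psi_1^+,e_r]=(a-b)\,\mathsf{c}_+\,e_{r+1}$; since $a-b\neq 0$ this recovers every $e_r$ with $r\ge 0$ from $e_0$ and $\psi_1^+$. Symmetrically, the $z^{-\ell+1}$-coefficient of \eqref{PsiE} with $\epsilon=-$ gives $[\psi_{\ell-1}^-,e_r]=(b-a)\,\mathsf{c}_-\,e_{r-1}$, recovering every $e_r$ with $r\le 0$ from $e_0$ and $\psi_{\ell-1}^-$. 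Running the same two extractions on \eqref{PsiF} (where the roles of $P$ and $Q$ are interchanged) produces $[\psi_1^+,f_r]=(b-a)\,\mathsf{c}_+\,f_{r+1}$ and $[\psi_{\ell-1}^-,f_r]=(a-b)\,\mathsf{c}_-\,f_{r-1}$, recovering all $f_r$ from $f_0$, $\psi_1^+$, $\psi_{\ell-1}^-$. As $\psi_1^+$ and $\psi_{\ell-1}^-$ both lie in the proposed generating set (note $\ell-1\le\ell$), we conclude that all $e_r$ and $f_r$ lie in the subalgebra it generates, and together with the $\psi$-modes already present this subalgebra is the whole quotient.

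The only points requiring genuine care, rather than bookkeeping, are the nonvanishing of $a-b$ in $\KK$ (handled by the explicit factorization above) and the bookkeeping of the shifted current $\psi^-(z)=\sum_{s\le\ell}\psi^-_sz^{-s}$: because this series is supported on $z^{-s}$ with $s\le\ell$, the relevant coefficient extractions sit at the bottom of the current ($z^{-\ell}$ and $z^{-\ell+1}$) rather than at positions symmetric to the $\psi^+$ case, and one must check that the modes $\psi^-_{\ell+1},\psi^-_{\ell+2},\dots$ which would otherwise intervene are simply zero. I do not anticipate a conceptual obstacle: this is the standard mechanism by which a Drinfeld current in a quantum affinization is generated by its lowest mode together with the first Cartan mode.
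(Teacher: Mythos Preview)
Your proposal is correct and follows essentially the same approach as the paper: both extract from \eqref{PsiE} and \eqref{PsiF} the commutator identities $[\psi_1^+,e_r]\propto e_{r+1}$, $[\psi_{\ell-1}^-,e_r]\propto e_{r-1}$ (and analogously for $f$), and use these to generate all $e_r,f_r$ from $e_0,f_0$. You are slightly more explicit than the paper in verifying that $\psi_0^+,\psi_\ell^-$ are central and that the structure constant $a-b$ is nonzero, but the argument is the same.
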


\begin{proof}
Unpacking (\ref{PsiE}) and (\ref{PsiF}), we have:
\begin{align*}
\left[\psi^+_1,e_{r}\right]&= \mathsf{c}_+\left( \qqq^{-2}+\ttt^{-2}+\qqq^2\ttt^2-\qqq^2-\ttt^2-\qqq^{-2}\ttt^{-2} \right)e_{r+1}\\
\left[\psi^-_{\ell-1},e_{r}\right]&= \mathsf{c}_-\left( \qqq^2+\ttt^2+\qqq^{-2}\ttt^{-2}-\qqq^{-2}-\ttt^{-2}-\qqq^2\ttt^2 \right)e_{r-1}\\
\left[\psi^+_1,f_{r}\right]&= \mathsf{c}_+\left( \qqq^2+\ttt^2+\qqq^{-2}\ttt^{-2}-\qqq^{-2}-\ttt^{-2}-\qqq^2\ttt^2 \right)f_{r+1}\\
\left[\psi^-_{\ell-1},f_{r}\right]&= \mathsf{c}_-\left( \qqq^{-2}+\ttt^{-2}+\qqq^2\ttt^2-\qqq^2-\ttt^2-\qqq^{-2}\ttt^{-2} \right)f_{r-1}\qedhere
\end{align*}
\end{proof}

\subsection{GKLO homomorphism}
Here, we will map $U_{\qqq,\ttt}^\ell(\ddot{\gl}_1)$ into the ring of $\qqq$-difference operators on the variables $\mathbf{x}_n:=\{x_1,\ldots, x_n\}$.
Let $\tau_{x_i,\qqq}$ denote the $\qqq$-shift operator on $x_i$:
\[
\tau_{x_i,\qqq}(x_j)=\qqq^{\delta_{i,j}}x_j
\]
We denote by $\mathcal{D}\hbox{\it iff}\,_\qqq^\circ(\mathbf{x}_n)$ the ring of $\qqq$-difference operators with coefficients in $\KK(\mathbf{x}_n)$.
The following was proved by Tsymbaliuk:
\begin{prop}[\cite{TsymGKLO}]
For $\ZZZ=(\ZZZ_1,\ldots,\ZZZ_\ell)\in(\CC^\times)^\ell$, the assignment
\begin{align}
\label{TsymE}
e(z)&\mapsto \frac{1}{\qqq^{-2}-1}\sum_{i=1}^n\delta\left( \frac{x_i}{z} \right)\prod_{a=1}^\ell\left( 1-\frac{\ZZZ_a}{\qqq x_i} \right)\prod_{\substack{1\le j\le n\\j\not=i}}\frac{x_i-\ttt^{-2}x_j}{x_i-x_j}\tau_{x_i,\qqq^2}^{-1}\\
\label{TsymF}
f(z)&\mapsto \frac{1}{1-\qqq^2}\sum_{i=1}^n\delta\left( \frac{\qqq^2 x_i}{z} \right)\prod_{\substack{1\le j\le n\\j\not=i}}\frac{x_i-\ttt^2 x_j}{x_i-x_j}\tau_{x_i,\qqq^2}\\
\label{TsymPsi}
\psi^\pm(z)&\mapsto \left[ \prod_{a=1}^\ell\left( 1-\frac{\ZZZ_a}{\qqq z} \right)\prod_{i=1}^n\frac{\left( z-\ttt^{-2} x_i \right)\left( z-\qqq^2\ttt^2 x_i \right)}{\left( z-x_i \right)\left( z-\qqq^2 x_i \right)} \right]^\mp
\end{align}
defines a homomorphism of $\KK$ algebras $\mathrm{T}_\ZZZ:U_{\qqq,\ttt}^\ell(\ddot{\gl}_1)\rightarrow\mathcal{D}\hbox{\it iff}\,_\qqq^\circ(\mathbf{x}_n)$.
In (\ref{TsymPsi}), the $\mp$ on the right hand side denotes taking the Laurent series expansion in $z^{\mp 1}$.
\end{prop}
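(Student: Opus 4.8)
The statement is, after the substitution $(\qqq,\ttt)\mapsto(\qqq^2,\ttt^2)$ relative to the conventions of \cite{TsymGKLO}, the shifted GKLO homomorphism constructed there, so the quickest route is to invoke \emph{loc.\ cit.} For a self-contained argument, the plan is to check that the images of the currents satisfy each defining relation of $U_{\qqq,\ttt}^\ell(\ddot{\gl}_1)$. First I would observe that all the rational prefactors $\prod_{j\neq i}(x_i-\ttt^{\pm2}x_j)/(x_i-x_j)$ and $\prod_a(1-\ZZZ_a/(\qqq x_i))$ are genuine elements of $\DD\hbox{\it iff}\,_\qqq^\circ(\mathbf{x}_n)$, so the formulas define currents valued in $\DD\hbox{\it iff}\,_\qqq^\circ(\mathbf{x}_n)[[z,z^{-1}]]$, and that $\mathrm{T}_\ZZZ(\psi^+(z))$ and $\mathrm{T}_\ZZZ(\psi^-(z))$ are, respectively, the expansions at $z=\infty$ and at $z=0$ of the single rational function
\[
G(z):=\prod_{a=1}^\ell\left(1-\frac{\ZZZ_a}{\qqq z}\right)\prod_{i=1}^n\frac{(z-\ttt^{-2}x_i)(z-\qqq^2\ttt^2 x_i)}{(z-x_i)(z-\qqq^2 x_i)}.
\]
Since all coefficients of these expansions are multiplications by functions of the pairwise-commuting $x_i$, the relations $[\psi^\epsilon(z),\psi^{\epsilon'}(w)]=0$ are automatic; and since $G\to1$ at $z=\infty$ and has a pole of order exactly $\ell$ at $z=0$ with leading coefficient $(-1)^\ell\qqq^{-\ell}\prod_a\ZZZ_a$, the elements $\mathrm{T}_\ZZZ(\psi_0^+)$ and $\mathrm{T}_\ZZZ(\psi_\ell^-)$ are units, giving the invertibility relations.

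The ``$ee$'', ``$ff$'', ``$\psi e$'', and ``$\psi f$'' relations all yield to the same mechanism. Each of $\mathrm{T}_\ZZZ(e(z))$, $\mathrm{T}_\ZZZ(f(z))$ is a sum over $i$ of a delta-function $\delta(x_i/z)$ (resp.\ $\delta(\qqq^2 x_i/z)$) times a rational coefficient and a single shift $\tau_{x_i,\qqq^2}^{\mp1}$. In a product of two such currents the terms with distinct indices involve shifts acting on disjoint variables, and the relation becomes a finite identity among $\qqq$-difference operators obtained by specializing $z\mapsto x_i$, $w\mapsto x_j$ and clearing the structure functions $(z-\qqq^2 w)(z-\ttt^2 w)(z-\qqq^{-2}\ttt^{-2}w)$ --- a routine computation. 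The diagonal ($i=j$) terms are supported where $z=\qqq^2 w$ (resp.\ $\qqq^2 z=w$) and are killed by the corresponding linear factor of the structure function, since $(a-b)\,\delta(a/b)=0$. For the $\psi e$, $\psi f$ relations one uses in addition that carrying $\mathrm{T}_\ZZZ(\psi^\epsilon(z))$ past the shift $\tau_{x_i,\qqq^2}^{\mp1}$ inside $\mathrm{T}_\ZZZ(e(w))$ (resp.\ $f(w)$) replaces $x_i$ by $\qqq^{\pm2}x_i$ in $G$, and evaluating on the support $x_i=w$ (resp.\ $\qqq^2 w$) produces exactly the required ratio of structure functions.

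The cross-relation
\[
[e(z),f(w)]=\frac{\delta(z/w)}{(1-\qqq^2)(1-\ttt^2)(1-\qqq^{-2}\ttt^{-2})}\left(\psi^+(z)-\psi^-(z)\right)
\]
is the technical heart. Off-diagonal contributions to $\mathrm{T}_\ZZZ(e(z))\mathrm{T}_\ZZZ(f(w))$ and $\mathrm{T}_\ZZZ(f(w))\mathrm{T}_\ZZZ(e(z))$ cancel, again because the relevant shifts act on disjoint variables; the diagonal contributions, after commuting each shift to the right and using $\delta(a/z)\delta(a/w)=\delta(z/w)\delta(a/z)$, collapse onto the points $z=x_i$ and $z=\qqq^2 x_i$. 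On the other hand, the partial-fraction expansion of $G$ gives
\[
\mathrm{T}_\ZZZ(\psi^+(z))-\mathrm{T}_\ZZZ(\psi^-(z))=\sum_{i=1}^n\Big(\mathrm{Res}_{\zeta=x_i}\tfrac{G(\zeta)}{\zeta}\,\delta(x_i/z)+\mathrm{Res}_{\zeta=\qqq^2 x_i}\tfrac{G(\zeta)}{\zeta}\,\delta(\qqq^2 x_i/z)\Big),
\]
so I would match these two residues termwise against the diagonal commutator, checking that the overall constant is precisely $\big((1-\qqq^2)(1-\ttt^2)(1-\qqq^{-2}\ttt^{-2})\big)^{-1}$. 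Reconciling $\prod_{j\neq i}(x_i-\ttt^{-2}x_j)/(x_i-x_j)$ from $e$ and $\prod_{j\neq i}(x_i-\ttt^{2}x_j)/(x_i-x_j)$ from $f$ with $\mathrm{Res}_{\zeta=x_i}G(\zeta)$ and $\mathrm{Res}_{\zeta=\qqq^2 x_i}G(\zeta)$ is where essentially all the bookkeeping lives, and is the step I expect to be the main obstacle.

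Finally, the quantum Serre relations $\mathrm{Sym}_{z_1,z_2,z_3}\frac{z_2}{z_3}[e(z_1),[e(z_2),e(z_3)]]=0$ and its analogue for $f$: the diagonal contributions are annihilated by the same structure-function mechanism, so only terms with pairwise-distinct indices survive, where the shifts commute and the relation reduces to the vanishing of an explicit symmetrized sum of rational prefactors --- a finite combinatorial identity, identical to the one verified for the unshifted GKLO presentation. Collecting these checks shows that $\mathrm{T}_\ZZZ$ respects all defining relations and is therefore a well-defined homomorphism of $\KK$-algebras.
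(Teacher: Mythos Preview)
The paper does not prove this proposition at all: it is stated with the attribution \cite{TsymGKLO} and no argument is given beyond the citation. Your opening sentence already matches the paper's approach exactly---invoke Tsymbaliuk---so in that sense you are done.

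Your additional outline of a direct verification is more than the paper offers, and as a sketch it is sound: the decomposition into diagonal and off-diagonal contributions, the use of $(a-b)\delta(a/b)=0$ to kill the diagonal pieces in the quadratic relations, the conjugation of $G(z)$ by the shift operators for the $\psi e$ and $\psi f$ relations, and the partial-fraction identification of $\psi^+(z)-\psi^-(z)$ with the residues of $G$ for the cross-relation are exactly the mechanisms one would use. The one place where your sketch is genuinely incomplete rather than just terse is the Serre relation: saying it ``reduces to a finite combinatorial identity identical to the unshifted case'' is correct in spirit, but that identity still requires a nontrivial symmetrization argument that you have not supplied. Since the paper itself is content to cite the result, this is not a gap relative to the paper, only relative to a fully self-contained proof.
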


We will take a closer look at (\ref{TsymE}) and (\ref{TsymF}) later.
For now, we note that (\ref{TsymPsi}) shows that the currents $\psi^{\pm}(z)$ generate operators given by multiplication with symmetric Laurent polynomials.
To see this, let us take logarithmic generators 
\[\{b_m\,|\, m\in\ZZ\backslash\{0\} \}\]
such that
\begin{equation*}
\begin{aligned}
\psi^+(z)&=\exp\left( \sum_{m>1}b_m\frac{z^{-m}}{m} \right),&
\psi^-(z)&=z^{-\ell}\exp\left( \sum_{m>1}b_{-m}\frac{z^{m}}{m} \right)
\end{aligned}
\end{equation*}
Formula (\ref{TsymPsi}) then yields that for $m>0$,
\begin{align*}
\mathrm{T}_\ZZZ(b_m)&= \left(1-\ttt^{-2m}\right)\left(1-\qqq^{2m}\ttt^{2m}\right)\left( x_1^m+\cdots+ x_n^m \right)-\qqq^{-m}\left(\ZZZ_1^m+\cdots+\ZZZ_\ell^m\right)\\
\mathrm{T}_\ZZZ(b_{-m})&= \frac{(-1)^\ell\ZZZ_1\cdots\ZZZ_\ell}{\qqq^\ell}\bigg(
\left( 1-\ttt^{2m} \right)\left( 1-\qqq^{-2m}\ttt^{-2m} \right)\left( x_1^{-m}+\cdots +x_n^{-m} \right)-\qqq^m\left( \ZZZ_1^{-m}+\cdots+\ZZZ_\ell^{-m} \right) \bigg)
\end{align*}
These clearly generate multiplication by power sums.
Finally, we note that $\mathrm{T}_\ZZZ(\psi_0^+)$ and $\mathrm{T}_\ZZZ(\psi_\ell^-)$ are constants and thus Proposition \ref{TorGen} applies to the image of $\mathrm{T}_\ZZZ$.

\subsection{Isomorphism with cyclotomic DAHA}
Before comparing $U_{\qqq,\ttt}^\ell(\ddot{\gl}_1)$ and $\SHH_n^\ell(\ZZZ,\qqq,\ttt)$, we will need some preparatory steps.

\subsubsection{Automorphisms}
The following is in 3.7 of \cite{ChereDAHA}:
\begin{prop}
The formulas below define a $\KK$-algebra automorphism $\tau_X$ of $\HH^0_n(\qqq,\ttt)$:
\begin{align*}
\tau_X(T_i)&= T_i, & \tau_X(X_i)&= X_i, & \tau_X(Y_1\cdots Y_i)&=\qqq^{-i}(Y_1\cdots Y_i)(X_1\cdots X_i)
\end{align*}
Additionally, we have a $\CC$-algebra anti-involution $\varepsilon$ given by:
\begin{align*}
\varepsilon(T_i)&= T_i^{-1}, & \varepsilon(X_i)&= \ttt^{n-1}Y_i, & \varepsilon(\ttt^{n-1}Y_i)&= X_i, & \varepsilon(\qqq)&= \qqq^{-1}, & \varepsilon(\ttt)&= \ttt^{-1}
\end{align*}
\end{prop}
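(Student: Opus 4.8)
The proposition is classical, due to Cherednik (see 3.7 of \cite{ChereDAHA}); I outline the verifications. Both statements are checked on the generators of the presentation (\ref{DAHAY}), so it suffices to show that the prescribed assignments take the defining relations of $\HH_n^0(\qqq,\ttt)$ to valid identities --- respecting the order of factors for $\tau_X$, reversing it for $\varepsilon$ --- and that $\varepsilon^2$ is the identity on generators.

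For $\tau_X$, the first task is to extract its effect on each $Y_i$ from the formula on the partial products $Y_1\cdots Y_i$: since the $Y_j$ pairwise commute and the $X_j$ pairwise commute, one obtains $\tau_X(Y_i)=\qqq^{-1}(X_1\cdots X_{i-1})^{-1}Y_i(X_1\cdots X_i)$, so $\tau_X$ is at least a well-defined endomorphism of the free algebra on $T_i,X_i^{\pm1},Y_i^{\pm1}$. The relations of (\ref{DAHAY}) not involving the $Y_j$ are preserved on the nose. The remaining ones require short computations: that the elements $\tau_X(Y_i)$ still pairwise commute; that $T_i^{-1}\tau_X(Y_i)T_i^{-1}=\tau_X(Y_{i+1})$ and $T_i\tau_X(Y_j)=\tau_X(Y_j)T_i$ for $j\neq i,i+1$; that the global cross-relations $Y_1\cdots Y_nX_j=\qqq^2X_jY_1\cdots Y_n$ and $X_1\cdots X_nY_j=\qqq^{-2}Y_jX_1\cdots X_n$ survive, the monomial $X_1\cdots X_i$ emitted by $\tau_X(Y_i)$ being reabsorbed by these very relations; and that the Heisenberg relation $X_1Y_2=Y_2T_1^2X_1$ is preserved. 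A cleaner, equivalent route --- in the spirit of (\ref{GammaZ}) --- is to realize $\tau_X$ as the conjugation $h\mapsto\gamma_X h\gamma_X^{-1}$ by a ``Gaussian'' $\gamma_X$ lying in a completion of $\HH_n^0(\qqq,\ttt)$ and built out of the $X_i$; equivalently, acting on the polynomial representation $\mathfrak{r}$ of Theorem \ref{PolyRep} as multiplication by a formal Gaussian function $g(\xx_n)$ with $g(x_1,\ldots,\qqq^{-2}x_i,\ldots,x_n)/g(\xx_n)$ proportional to $x_1\cdots x_i$. Since $\gamma_X$ commutes with every $X_i$, conjugation by it fixes the $X_i$, and a one-line computation of $\gamma_X(Y_1\cdots Y_i)\gamma_X^{-1}$ reproduces the stated formula; granting the existence of $\gamma_X$, the automorphism property of $\tau_X$, with inverse $\mathrm{Ad}(\gamma_X^{-1})$, is immediate, and faithfulness of $\mathfrak{r}$ makes this rigorous.

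For $\varepsilon$: the assignment $T_i\mapsto T_i^{-1}$ is the standard anti-automorphism of the finite Hecke subalgebra (it preserves the braid relations and the quadratic relation, the latter after $\ttt\mapsto\ttt^{-1}$), while $X_i\leftrightarrow\ttt^{n-1}Y_i$ together with $\qqq\mapsto\qqq^{-1}$, $\ttt\mapsto\ttt^{-1}$ interchanges the two commuting polynomial subalgebras $\KKK[X_i^{\pm1}]$ and $\KKK[Y_i^{\pm1}]$. One then checks relation by relation that applying $\varepsilon$ to each relation of (\ref{DAHAY}) and reversing the order of the word produces a consequence of (\ref{DAHAY}): the relations $T_iX_iT_i=X_{i+1}$ and $T_i^{-1}Y_iT_i^{-1}=Y_{i+1}$ are exchanged, as are $T_iX_j=X_jT_i$ and $T_iY_j=Y_jT_i$ for $j\neq i,i+1$; the two global cross-relations are exchanged; and the Heisenberg relation $X_1Y_2=Y_2T_1^2X_1$ is carried to a companion Heisenberg relation, itself deduced from (\ref{DAHAY}) using $T_1X_1T_1=X_2$ and $T_1^{-1}Y_1T_1^{-1}=Y_2$. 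Finally one verifies $\varepsilon^2=\mathrm{id}$ directly on $T_i$, $X_i$, $Y_i$, $\qqq$, $\ttt$.

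The only real work is the bookkeeping in the two verifications above: for $\tau_X$, tracking how the monomials $X_1\cdots X_i$ emitted by $\tau_X(Y_i)$ migrate through the cross- and Heisenberg relations; for $\varepsilon$, pinning down the companion Heisenberg relation and keeping the powers of $\qqq$ and $\ttt$ straight under $\qqq\mapsto\qqq^{-1}$, $\ttt\mapsto\ttt^{-1}$. Neither step is conceptually difficult, and the Gaussian reformulation of $\tau_X$ makes the first half essentially automatic.
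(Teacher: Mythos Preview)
The paper does not prove this proposition; it simply attributes it to Cherednik with the preface ``The following is in 3.7 of \cite{ChereDAHA}'' and states the result. Your proposal is therefore strictly more detailed than what the paper offers: you supply a correct outline of the generator-by-generator verification against the presentation (\ref{DAHAY}), and you anticipate the Gaussian realization that the paper states separately (as $\tau_X=\ad_{\gamma_X^{-1}}$, citing \cite{ChereDAHA,DiKedqt}). One small caveat: your description writes $\tau_X$ as $h\mapsto\gamma_X h\gamma_X^{-1}$, whereas the paper's convention is $\tau_X=\ad_{\gamma_X^{-1}}$; this is only a sign convention on which Gaussian one calls $\gamma_X$, but if you want consistency with the subsequent proposition in the paper you should match its choice.
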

\noindent Note that $\varepsilon(\ee)$ is still a projector onto the ($\ttt$-deformed) trivial representation of the Hecke algebra, so $\varepsilon(\ee)=\ee$.
Thus, both $\varepsilon$ and $\tau_X$ restrict to $\SHH_n^0(\qqq,\ttt)$.

The automorphism $\tau_X$ is in some sense internal.
We can define the following \textit{Gaussian} in a suitable completion of $\HH_n(\qqq,\ttt)$:
\begin{equation*}
\begin{aligned}
\gamma_X&:=\exp\left( \sum_{i=1}^n\frac{\log(X_i)^2}{4\log(\qqq)} \right),&
\end{aligned}
\end{equation*}
\begin{prop}[\cite{ChereDAHA,DiKedqt}]
We have $\tau_X= \ad_{\gamma_X^{-1}}$.
\end{prop}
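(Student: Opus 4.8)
The plan is to prove $\tau_X=\ad_{\gamma_X^{-1}}$ by checking the identity directly on a generating set of $\HH_n(\qqq,\ttt)$, after fixing the topological completion of $\HH_n(\qqq,\ttt)$ in which $\gamma_X$ and the formal expressions $\log X_i$ live. Since $\HH_n(\qqq,\ttt)$ is generated by $\{T_i, X_i^{\pm 1}, \pi^{\pm 1}\}$ and $\tau_X$ is already known to be a well-defined $\KK$-algebra automorphism by \cite{ChereDAHA}, it suffices to show that conjugation by $\gamma_X^{-1}$ sends each of these generators to its $\tau_X$-image. First I would record what the target values are: $\tau_X(X_i)=X_i$ and $\tau_X(T_i)=T_i$ are given, while from $Y_1=T_1\cdots T_{n-1}\pi^{-1}$ (the $i=1$ case of \eqref{YDef}) one has $\pi=Y_1^{-1}T_1\cdots T_{n-1}$, so the prescribed value $\tau_X(Y_1)=\qqq^{-1}Y_1X_1$ forces $\tau_X(\pi)=\qqq X_1^{-1}\pi$. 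Matching $\ad_{\gamma_X^{-1}}$ with $\tau_X$ on $T_i$, $X_i$, $\pi$ then automatically recovers the stated formula for $\tau_X(Y_1\cdots Y_i)$ as a consistency check.

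Two of the three checks are immediate. Since $\gamma_X$ is a formal function of the pairwise-commuting $X_1,\dots,X_n$, it commutes with every $X_i$, so $\ad_{\gamma_X^{-1}}(X_i)=X_i=\tau_X(X_i)$. For $T_i$, note that the exponent $\sum_j \log(X_j)^2$ is invariant under the transposition $s_i$ interchanging $X_i$ and $X_{i+1}$; by the Demazure--Lusztig formula
\[
T_i\, f(X) \;=\; (s_i f)(X)\,T_i \;+\; (\ttt-\ttt^{-1})\,\frac{f(X)-(s_i f)(X)}{1-X_i X_{i+1}^{-1}},
\]
valid for Laurent polynomials and extended continuously to the completion, any $s_i$-invariant formal function of the $X$'s commutes with $T_i$. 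Hence $\gamma_X$ commutes with $T_i$ and $\ad_{\gamma_X^{-1}}(T_i)=T_i=\tau_X(T_i)$.

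The substance is the check on $\pi$. Using the relations $\pi X_i\pi^{-1}=X_{i+1}$ for $i<n$ and $\pi X_n\pi^{-1}=\qqq^{-2}X_1$, conjugation by $\pi$ replaces $\sum_{i=1}^n\log(X_i)^2$ by $\sum_{i=2}^n\log(X_i)^2 + (\log X_1 - 2\log\qqq)^2$, and completing the square in $\log X_1$ gives
\[
\pi\gamma_X\pi^{-1}
= \exp\!\left(\frac{1}{4\log\qqq}\left(\sum_{i=1}^{n}\log(X_i)^2 - 4(\log\qqq)(\log X_1) + 4(\log\qqq)^2\right)\right)
= \qqq\, X_1^{-1}\gamma_X .
\]
Since $\gamma_X$ commutes with $X_1$, this rearranges to $\gamma_X^{-1}\pi\gamma_X = \qqq X_1^{-1}\pi = \tau_X(\pi)$. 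Combining the three checks, $\ad_{\gamma_X^{-1}}$ and $\tau_X$ agree on a generating set, hence coincide on all of $\HH_n(\qqq,\ttt)$.

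The one genuine obstacle is making the completion and the formal manipulations with $\log X_i$, $\exp$, and the series implicit in the Demazure relation rigorous. I expect the cleanest route is to transport everything into the faithful polynomial representation $\mathfrak{r}$ of Theorem~\ref{PolyRep} on an appropriate completion of $\LL[\mathbf{x}_n^{\pm}]$: there $\gamma_X$ acts by multiplication by $\exp\!\bigl(\tfrac{1}{4\log\qqq}\sum_j\log(x_j)^2\bigr)$, every identity above becomes an elementary computation with $\qqq$-shift operators, and equality of operators lifts back to equality in $\HH_n(\qqq,\ttt)$ by faithfulness. This is the formalism of \cite{ChereDAHA}, and the manipulations are of the same flavor as the Gaussian computations in \cite{DiKedqt}. (Alternatively one could bypass $\pi$ and verify the claim on the $Y_i$ directly from $X_1 Y_2 = Y_2 T_1^2 X_1$ and $Y_i Y_j = Y_j Y_i$, but the single $\pi$-identity above is the most economical.)
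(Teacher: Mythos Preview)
Your verification is correct and is essentially the standard argument. Note that the paper does not supply its own proof of this proposition: it is quoted from the literature (Cherednik's book and the Di Francesco--Kedem paper), so there is no internal argument to compare against. Your computation on the generators $X_i$, $T_i$, $\pi$ recovers exactly the classical proof: the $s_i$-invariance of $\sum_j \log(X_j)^2$ gives $[T_i,\gamma_X]=0$, and the single nontrivial check $\gamma_X^{-1}\pi\gamma_X=\qqq X_1^{-1}\pi$ follows from $\pi X_n\pi^{-1}=\qqq^{-2}X_1$ precisely as you wrote. The only caveat, which you already flag, is that the completion in which $\gamma_X$ lives must be specified; working in the faithful polynomial representation (or its suitable extension) as in \cite{ChereDAHA,DiKedqt} handles this, and is the route those references take.
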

\noindent Finally, we note that the image of the cyclotomic $\qqq$-Dunkl element $\varepsilon(D_1)$ can be written in terms of $\tau_X$ and hence $\ad_{\gamma_X^{-1}}$:
\begin{equation}
\varepsilon(D_1)=\bigg[\left( \ad_{\gamma_X}^{-1}-\ZZZ_1 \right)\circ\cdots\circ\left( \ad_{\gamma_X}^{-1}-\ZZZ_\ell \right)\bigg]\left(\ttt^{1-n}Y_1^{-1}\right)
\label{DunklGauss}
\end{equation}

\subsubsection{$\SHH_n^\ell(\ZZZ,\qqq,\ttt)$ and GKLO}
We need one more preparatory result.
The following is an analogue of Proposition \ref{TorGen}.

\begin{prop}\label{CycAppGen}
$\SHH_n^\ell(\ZZZ,\qqq,\ttt)$ is generated by
\[
\left\{ \ee X_1\ee,\, \ee D_1\ee\right\}\cup\ee\KK\left[ Y_1^{\pm 1},\ldots, Y_n^{\pm 1}\right]\ee
\]
\end{prop}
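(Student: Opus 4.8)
The plan is to reduce the statement to a generation result for the \emph{non-cyclotomic} spherical DAHA. By Lemma~\ref{CycGen}, $\SHH_n^\ell(\ZZZ,\qqq,\ttt)$ is generated by $\ee\KK[X_1,\ldots,X_n]\ee$, $\ee\KK[Y_1^{\pm1},\ldots,Y_n^{\pm1}]\ee$, and $\ee\KK[D_1,\ldots,D_n]\ee$, so writing $\mathcal{A}$ for the subalgebra generated by $\ee X_1\ee$, $\ee D_1\ee$, and $\ee\KK[Y_1^{\pm1},\ldots,Y_n^{\pm1}]\ee$, it is enough to show $\ee\KK[X_1,\ldots,X_n]\ee\subset\mathcal{A}$ and $\ee\KK[D_1,\ldots,D_n]\ee\subset\mathcal{A}$. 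From the Hecke identities $\ee T_i=\ttt\ee$ and $T_i\ee=\ttt\ee$ one gets $\ee X_i\ee=\ttt^{2(i-1)}\ee X_1\ee$ and $\ee X_i^{-1}\ee=\ttt^{-2(i-1)}\ee X_1^{-1}\ee$, and (again by the Bernstein-center property of $\langle T_i,X_j\rangle$) symmetric functions in the $X_i$ commute with $\ee$, so $\ee\KK[X_1,\ldots,X_n]\ee$ is the commutative subalgebra generated by the power-sum elements $\ee p_k(X)\ee$, and $\ee X_1\ee$ is a nonzero multiple of $\ee p_1(X)\ee$. Hence the first containment is equivalent to $\ee p_k(X)\ee\in\mathcal{A}$ for all $k\ge1$.

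For the $D$-part I would pass through the twist $\gamma_\ZZZ$ of~(\ref{GammaZ}). Inspecting that formula, $\gamma_\ZZZ$ is a completed symmetric function of $Y_1,\ldots,Y_n$, so by the Bernstein-center property of the affine Hecke subalgebra $\langle T_i,Y_j\rangle$ it commutes with $\ee$ and with every element of $\ee\KK[Y_1^{\pm1},\ldots,Y_n^{\pm1}]\ee$; meanwhile Proposition~\ref{GammaZProp}(1) says $\gamma_\ZZZ X_i^{-1}\gamma_\ZZZ^{-1}$ is a nonzero scalar multiple of $D_i$. Therefore conjugation by $\gamma_\ZZZ$ fixes $\ee\KK[Y^{\pm1}]\ee$ pointwise and sends $\ee\KK[X_1^{-1},\ldots,X_n^{-1}]\ee$ (resp.\ $\ee X_1^{-1}\ee$) onto $\ee\KK[D_1,\ldots,D_n]\ee$ (resp.\ a multiple of $\ee D_1\ee$), so the second containment will follow once we know that $\ee\KK[X_1^{-1},\ldots,X_n^{-1}]\ee$ is generated, \emph{inside} $\SHH_n^0(\qqq,\ttt)$, by $\ee p_1(X^{-1})\ee$ together with $\ee\KK[Y_1^{\pm1},\ldots,Y_n^{\pm1}]\ee$. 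Thus both containments reduce to the single claim: for $\epsilon\in\{+1,-1\}$, the subalgebra $\ee\KK[X_1^{\epsilon},\ldots,X_n^{\epsilon}]\ee$ of $\SHH_n^0(\qqq,\ttt)$ is generated by $\ee p_1(X^{\epsilon})\ee$ and $\ee\KK[Y_1^{\pm1},\ldots,Y_n^{\pm1}]\ee$.

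This last claim is the exact DAHA counterpart of the unpacking of relations~(\ref{PsiE})--(\ref{PsiF}) in the proof of Proposition~\ref{TorGen}, and I would prove it in the faithful polynomial representation $\mathfrak{r}$ of Theorem~\ref{PolyRep}: there $\ee\KK[Y^{\pm1}]^{\Sigma_n}\ee$ acts diagonally in the Macdonald basis $\{P_\lambda(\qqq,\ttt)\}$ by Theorem~\ref{MacThm}, while $\ee p_1(X^{\epsilon})\ee$ acts by multiplication and is a Pieri-type operator $P_\lambda\mapsto\sum_i c_{\lambda,i}P_{\lambda+\epsilon\,\epsilon_i}$. Commuting $\ee p_1(X^{\epsilon})\ee$ with a suitable $\ee f(Y)\ee$ rescales the coefficients $c_{\lambda,i}$ by differences of $Y$-eigenvalues, and an induction on degree --- at each step isolating the new width-$k$ contribution inside $(\ee p_1(X^\epsilon)\ee)^k$ from the already available lower compositions --- recovers multiplication by every $\ee p_k(X^\epsilon)\ee$. (Alternatively, one can argue directly from the cross-relation $X_1Y_2=Y_2T_1^2X_1$ and the $Y$--$X$ commutations, or invoke the generation results of \cite{VarVassRoot,QHarish}.) The main obstacle is precisely this inductive commutator estimate: showing that the relevant Pieri and eigenvalue-difference coefficients are generically nonzero so that the top-degree term can be separated off and each $\ee p_k(X^\epsilon)\ee$ genuinely lands in $\mathcal{A}$; the reductions above are formal.
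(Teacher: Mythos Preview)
Your reductions are correct and the overall plan is sound, but you take a genuinely different route from the paper. The paper's (one-line) proof runs the FFJMM-style commutator argument \emph{directly} in $\SHH_n^\ell(\ZZZ,\qqq,\ttt)$, aiming to produce each $P_{a,k,b}=\ee\sum_i X_i^a Y_i^{-k} D_i^b\ee$ from $\ee X_1\ee$, $\ee D_1\ee$, and the $Y$-symmetric functions; since the $P_{a,k,b}$ were already shown (in the proof of Lemma~\ref{CycGen}) to generate $\SHH_n^\ell$, this suffices. You instead invoke the \emph{conclusion} of Lemma~\ref{CycGen} to reduce to the three commutative subalgebras, and then use the $\gamma_\ZZZ$-conjugation of Proposition~\ref{GammaZProp} to collapse the $D$-problem onto the $X^{-1}$-problem inside the non-cyclotomic $\SHH_n^0(\qqq,\ttt)$. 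This buys you a pleasant economy: a single commutator computation in the simpler algebra $\SHH_n^0$ handles both the $X$- and $D$-towers at once, whereas the paper's direct approach treats them in parallel while tracking the cyclotomic relations. Either way the crux is the same FFJMM-type commutator estimate---your Pieri/induction sketch is exactly the polynomial-representation incarnation of that estimate, and (as you correctly flag) it is where the real work lies; your sketch is plausible but no more fleshed out than the paper's reference to \cite{FFJMM}.
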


\begin{proof}
The proof is similar to that of Lemma 5.2 of \cite{FFJMM}.
Here, instead we try to generate $P_{a,k,b}$ from (\ref{Pakb}).
\end{proof}

\begin{thm}
For $\ZZZ=(\ZZZ_1,\ldots,\ZZZ_\ell)\in(\CC^\times)^\ell$, let $\ZZZ^{-1}:=(\ZZZ_1^{-1},\ldots,\ZZZ_\ell^{-1})$.
The polynomial representation $\mathfrak{r}$ gives an isomorphism:
\[
\mathfrak{r}:\varepsilon\left( \SHH_n^\ell(\ZZZ,\qqq,\ttt) \right)\rightarrow \mathrm{T}_{\ZZZ^{-1}}\left( U_{\qqq,\ttt}^\ell(\ddot{\gl}_1) \right)
\]
\end{thm}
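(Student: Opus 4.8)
The plan is to identify both sides of the claimed isomorphism inside the ring $\mathcal{D}\hbox{\it iff}\,_\qqq^\circ(\mathbf{x}_n)$ of $\qqq$-difference operators and match generating sets. First I would record the image of $\varepsilon\left(\SHH_n^\ell(\ZZZ,\qqq,\ttt)\right)$ under $\mathfrak{r}$. By Proposition \ref{CycAppGen}, $\SHH_n^\ell(\ZZZ,\qqq,\ttt)$ is generated by $\ee X_1\ee$, $\ee D_1\ee$, and $\ee\KK[Y_1^{\pm1},\ldots,Y_n^{\pm1}]\ee$; applying the anti-involution $\varepsilon$ turns these into generators of $\varepsilon\left(\SHH_n^\ell(\ZZZ,\qqq,\ttt)\right)$ of the form $\ee\,\ttt^{1-n}Y_1\,\ee$ (up to scalar, from $\varepsilon(X_1)=\ttt^{n-1}Y_1$), $\varepsilon(\ee D_1\ee)$, and $\ee\KK[X_1^{\pm1},\ldots,X_n^{\pm1}]\ee$ (from $\varepsilon(\ttt^{n-1}Y_i)=X_i$), since $\varepsilon(\ee)=\ee$. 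Under $\mathfrak{r}$, the symmetric Laurent polynomials in the $X_i$ act by multiplication, the symmetric Laurent polynomials in the $Y_i$ act by the Macdonald $\qqq$-difference operators (Theorem \ref{MacThm}(1)), and the Dunkl part $\varepsilon(D_1)$ is computed via the Gaussian formula (\ref{DunklGauss}): $\varepsilon(D_1)=\bigl[(\ad_{\gamma_X}^{-1}-\ZZZ_1)\circ\cdots\circ(\ad_{\gamma_X}^{-1}-\ZZZ_\ell)\bigr](\ttt^{1-n}Y_1^{-1})$, and $\ad_{\gamma_X}^{-1}=\tau_X^{-1}$ is an explicit automorphism.

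Next I would unpack the right-hand side. Tsymbaliuk's homomorphism $\mathrm{T}_{\ZZZ^{-1}}$ sends the currents $\psi^\pm(z)$ to multiplication by symmetric Laurent polynomials: from the logarithmic generators $b_m$ one reads off that $\mathrm{T}_{\ZZZ^{-1}}(b_m)$ and $\mathrm{T}_{\ZZZ^{-1}}(b_{-m})$ are (affine-linear combinations of) power sums $\sum_i x_i^{\pm m}$, so these generate $\mathrm{T}_{\ZZZ^{-1}}$ applied to multiplication by $\KK[\mathbf{x}_n^{\pm}]^{\Sigma_n}$ — which matches $\mathfrak{r}(\ee\KK[X_i^{\pm1}]\ee)$. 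By Propositions \ref{TorGen} and the remark that $\mathrm{T}_{\ZZZ^{-1}}(\psi_0^+)$, $\mathrm{T}_{\ZZZ^{-1}}(\psi_\ell^-)$ are constants, the image $\mathrm{T}_{\ZZZ^{-1}}(U_{\qqq,\ttt}^\ell(\ddot{\gl}_1))$ is generated by $\mathrm{T}_{\ZZZ^{-1}}(e_0)$, $\mathrm{T}_{\ZZZ^{-1}}(f_0)$, and the $\psi$-part. So the theorem reduces to two matchings: (i) $\mathrm{T}_{\ZZZ^{-1}}(f_0)$ equals (a scalar multiple of) $\mathfrak{r}$ of the Macdonald operator $\ee\,\ttt^{1-n}Y_1\,\ee$ — i.e. the operator attached to the first fundamental weight — and (ii) $\mathrm{T}_{\ZZZ^{-1}}(e_0)$ equals (a scalar multiple of) $\mathfrak{r}(\varepsilon(D_1))$.

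For matching (i), one compares the formula (\ref{TsymF}) at $z^0$, namely $f_0\mapsto \frac{1}{1-\qqq^2}\sum_i\prod_{j\ne i}\frac{x_i-\ttt^2x_j}{x_i-x_j}\tau_{x_i,\qqq^2}$, with the classical Macdonald first-order operator; since the paper's $P_\lambda(\qqq,\ttt)$ are Macdonald polynomials with parameters $(\qqq^2,\ttt^2)$ in the usual normalization, the coefficient $\prod_{j\ne i}\frac{x_i-\ttt^2x_j}{x_i-x_j}$ and shift $\tau_{x_i,\qqq^2}$ are exactly Macdonald's $D_1$ operator (Theorem \ref{MacThm}(1) with $f=e_1(Y)$), up to an explicit scalar/additive constant. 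For matching (ii), I would take the current identity (\ref{PsiF}) or directly the structure of (\ref{TsymE}): $\mathrm{T}_{\ZZZ^{-1}}(e(z))$ involves $\prod_{a=1}^\ell(1-\frac{\ZZZ_a^{-1}}{\qqq x_i})$ times a Macdonald-type lowering operator $\prod_{j\ne i}\frac{x_i-\ttt^{-2}x_j}{x_i-x_j}\tau_{x_i,\qqq^2}^{-1}$. This is visibly a product $\prod_{a}(1-\ZZZ_a^{-1}\cdot(\text{something}))$ applied to $\ttt^{1-n}Y_1^{-1}$, which should be conjugate-by-Gaussian to the formula (\ref{DunklGauss}) for $\varepsilon(D_1)$ once one checks that $\ad_{\gamma_X}^{-1}$ acts on the multiplicative Dunkl/Macdonald operators by the scalar $\qqq^{-1}x_i$-type shift matching the $1-\frac{\ZZZ_a^{-1}}{\qqq x_i}$ factors. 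This is precisely the manipulation done in the body of the paper (the analogue of Lemma \ref{BigCobLem1} / the proof of Lemma \ref{XDqk}), and reusing that calculation with $\ZZZ\rightsquigarrow\ZZZ^{-1}$ gives the identification.

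\textbf{Main obstacle.} The hard part is step (ii): carefully tracking all the scalar prefactors, the expansion conventions ($z^{\mp1}$ Laurent expansions, the $\qqq$ vs.\ $\qqq^2$ conventions flagged in the appendix's opening remark), and the role of the anti-involution $\varepsilon$ and the Gaussian conjugation $\ad_{\gamma_X}$, so that the product $\prod_a(1-\ZZZ_a^{-1}\qqq^{-1}x_i^{-1})$ appearing in Tsymbaliuk's $e(z)$ matches $\prod_a(\ad_{\gamma_X}^{-1}-\ZZZ_a)$ applied to $\ttt^{1-n}Y_1^{-1}$ up to the correct unit. Once the generating sets on both sides are seen to be $\mathfrak{r}$-images of each other, injectivity is automatic (the polynomial representation $\mathfrak{r}$ is faithful on $\HH_n^0(\qqq,\ttt)_\LL$ by Theorem \ref{PolyRep}, hence on the subalgebra $\varepsilon(\SHH_n^\ell(\ZZZ,\qqq,\ttt))$), and surjectivity onto $\mathrm{T}_{\ZZZ^{-1}}(U_{\qqq,\ttt}^\ell(\ddot{\gl}_1))$ follows from the generation statements (Propositions \ref{TorGen} and \ref{CycAppGen}), so the theorem follows.
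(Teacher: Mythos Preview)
Your proposal is correct and follows essentially the same approach as the paper: reduce via Propositions \ref{TorGen} and \ref{CycAppGen} to matching the $\psi$-part with multiplication operators, $f_0$ with the first Macdonald operator, and $e_0$ with $\mathfrak{r}(\ee\varepsilon(D_1)\ee)$ via Gaussian conjugation and (\ref{DunklGauss}). One small correction: for step (ii) the paper does not reuse Lemma \ref{XDqk} (which involves $\gamma_\ZZZ$ in the $Y$-variables and the ribbon element) but instead invokes directly the commutation relation $\tau_{x_i,\qqq^2}\mathfrak{r}(\gamma_X)=\qqq x_i\,\mathfrak{r}(\gamma_X)\tau_{x_i,\qqq^2}$ (Lemma 2.7 of \cite{DiKedqt}), from which the identification of $\mathrm{T}_{\ZZZ^{-1}}(e_0)$ with $\bigl[(\ad_{\mathfrak{r}(\gamma_X)}^{-1}-\ZZZ_1)\circ\cdots\circ(\ad_{\mathfrak{r}(\gamma_X)}^{-1}-\ZZZ_\ell)\bigr]$ applied to the dual Macdonald operator (\ref{DualMac}) is immediate.
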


\begin{proof}
It is evident that
\[
\mathfrak{r}\left( \ee\KK[X_1^{\pm 1},\ldots, X_n^{\pm1}] \right)=\mathrm{T}_{\ZZZ^{-1}}\left( \langle \psi_r^+,\psi_{\ell-r}^-\, |\, r\in\ZZ_{\ge 0}\rangle \right)
\]
Thus, by Propositions \ref{TorGen} and \ref{CycAppGen}, it suffices to show that
\begin{align}
\label{Yf}
\mathfrak{r}\left( \ee Y_1\ee \right)&= C_1\mathrm{T}_{\ZZZ^{-1}}(f_0)\\
\label{De}
\mathfrak{r}\left( \ee \varepsilon(D_1)\ee \right)&= C_2\mathrm{T}_{\ZZZ^{-1}}(e_0)
\end{align}
for some nonzero constants $C$ and $D$.
The first equality (\ref{Yf}) is well known due to the formulas for the first Macdonald operators (cf. \cite{MacBook}):
\begin{align}
\nonumber
\ttt^{n-1}\mathfrak{r}\left( \ee(Y_1+\cdots +Y_n)\ee \right)&= \sum_{i=1}^n\prod_{\substack{1\le j\le n\\ j\not=i}}\frac{x_i-\ttt^2 x_j}{x_i-x_j}\tau_{x_i,\qqq^2}\\
\ttt^{1-n}\mathfrak{r}\left( \ee(Y_1^{-1}+\cdots +Y_n^{-1})\ee \right)&= \sum_{i=1}^n\prod_{\substack{1\le j\le n\\ j\not=i}}\frac{x_i-\ttt^{-2} x_j}{x_i-x_j}\tau_{x_i,\qqq^2}^{-1}
\label{DualMac}
\end{align}
The formula for $\mathrm{T}_{\ZZZ^{-1}}(e_0)$ is not too far from the right-hand-side of (\ref{DualMac}).
By Lemma 2.7 from \cite{DiKedqt}, we have
\[
\tau_{x_i,\qqq^2}\mathfrak{r}(\gamma_X)=\qqq x_i\mathfrak{r}(\gamma_X)\tau_{x_i,\qqq^2}
\]
It follows that $\mathrm{T}_{\ZZZ^{-1}}(e_0)$ is a nonzero scalar multiple of
\[
\bigg[ \big( \ad_{\mathfrak{r}(\gamma_X)}^{-1}-\ZZZ_1 \big)\circ\cdots\circ\big( \ad_{\mathfrak{r}(\gamma_X)}^{-1}-\ZZZ_\ell \big) \bigg]\left( \sum_{i=1}^n\prod_{\substack{1\le j\le n\\ j\not=i}}\frac{x_i-\ttt^{-2} x_j}{x_i-x_j}\tau_{x_i,\qqq^2}^{-1} \right)
\]
Equality (\ref{De}) then follows from (\ref{DunklGauss}).
\end{proof}


\bibliographystyle{alpha}
\bibliography{QHarish}

\end{document}